\setlist[itemize]{noitemsep}
\pgfplotsset{compat=1.15}
\numberwithin{equation}{section}
\theoremstyle{plain}
\newtheorem{theorem}{Theorem}[section]
\newtheorem{proposition}[theorem]{Proposition}
\newtheorem{corollary}[theorem]{Corollary}
\newtheorem{lemma}[theorem]{Lemma}
\newtheorem{notation}[theorem]{Notation}
\newtheorem{setup}[theorem]{Setup}
\theoremstyle{definition}
\newtheorem{definition}[theorem]{Definition}
\newtheorem{remark}[theorem]{Remark}
\newtheorem{example}[theorem]{Example}
\newtheorem{question}[theorem]{Question}
\newcommand\restr[2]{{
  \left.\kern-\nulldelimiterspace 
  #1 
  \vphantom{\big|} 
  \right|_{#2} 
  }}
\def\mathcenterto#1#2{\mathclap{\phantom{#1}\mathclap{#2}}\phantom{#1}}
\let\old@widetilde\widetilde
\def\widetildeto#1#2{\mathcenterto{#2}{\old@widetilde{\mathcenterto{#1}{#2\,}}}}
\let\old@widehat\widehat
\def\widehatto#1#2{\mathcenterto{#2}{\old@widehat{\mathcenterto{#1}{#2\,}}}}
\newcommand{\ip}[2]{\langle  #1,#2 \rangle} 
\newcommand{\is}[1]{\langle  #1 \rangle} 
\newcommand{\size}[1]{\left| #1 \right|} 
\newcommand{\pare}[1]{\left (#1 \right)} 
\newcommand{\set}[1]{{\left\{ #1 \right\}}} 
\newcommand{\corch}[1]{\left[ #1 \right]} 
\newcommand*\closure[1]{\overline{#1}}
\DeclareMathOperator{\rank}{rk}
\DeclareMathOperator{\lift}{lift}
\DeclareMathOperator{\gr}{graph}
\def\dim{\operatorname{dim}}
\DeclareMathOperator{\sign}{sign}
\DeclareMathOperator{\CC}{\mathbb{C}}
\def\Id{\operatorname{Id}}
\setlist[itemize]{noitemsep}
\title{Paving Matroids: Defining Equations and Associated Varieties
}
\author{Emiliano Liwski and Fatemeh Mohammadi}
\begin{document}
\maketitle

\begin{abstract}
\noindent We study paving matroids, their realization spaces, and their closures, along with matroid varieties and circuit varieties. Within this context, we introduce 
two distinct methods for generating polynomials within the associated ideals of these varieties across any dimension. Additionally, we explain the relationship between polynomials constructed using these different methods. We then compute a comprehensive and finite set of defining equations for matroid varieties associated with specific classes of paving matroids. 
Furthermore, we provide a decomposition for the associated circuit variety of 
paving matroids, where all points have a degree less than $3$. Finally, we present several examples applying our results and compare them with known cases in the literature.

\end{abstract}

{\hypersetup{linkcolor=black}
{\tableofcontents}}


\section{Introduction}
We study paving matroids from an algebro-geometric viewpoint. A paving matroid $M$ is a matroid in which every circuit has a size equal to either the rank $r(M)$ or $r(M)+1$, where $r(M)$ represents the rank of $M$. We call a paving matroid an $n$-paving matroid if its rank $r(M)$ equals $n$. It is conjectured in \cite{mayhew2011asymptotic} that asymptotically almost all matroids are paving; see also~\cite{lowrance2013properties}. 
~Our focus lies on the matroid variety $V_{M}$, defined as the Zariski closure of the matroid's realization space, which was originally introduced in \cite{gelfand1987combinatorial} and has since been extensively studied; see~e.g.~\cite{clarke2021matroid,sidman2021geometric,Fatemeh3}. 
\smallskip

The primary question is to compute the defining equations, or equivalently, the ideal of the matroid variety $I_{M}=I(V_{M})$ and to represent such polynomials geometrically or combinatorially. Equally significant is the task of determining an irredundant irreducible decomposition of the matroid variety $V_{M}$. 
In \cite{mnev1988universality}, the Mnëv-Sturmfels Universality Theorem~demonstrates that matroid varieties adhere to the ``Murphy's Law in Algebraic Geometry," indicating that any singularity in a semi-algebraic set corresponds to a matroid variety with the same singularity. Additionally, 
it is noted in \cite{knutson2013positroid} that the irreducible decomposition of $V_{M}$ remains a difficult task in general and is only known for specific examples of matroids.
Moreover, it is known that the circuit ideal $I_{\mathcal{C}(M)}$ (see Definition~\ref{cir}), generated by the polynomials associated with the circuits of $M$, is always a subset of $I_{M}$. However, the reverse is rarely true. In the case of a positroid, it was demonstrated in \cite{knutson2013positroid} that the two ideals $I_{\mathcal{C}(M)}$ and $I_{M}$ coincide. Additionally, positroid varieties are also irreducible. Indeed, positroids are in bijection with various combinatorial objects that encode their algebraic invariants, offering profound insights into their structure and properties; see,~e.g.~\cite{knutson2013positroid, postnikov2006total,mohammadi2022computing,mohammadi2024combinatorics}. 
In \cite{computationalgorithms}, Sturmfels introduced a minimal example where $I_{\mathcal{C}(M)}$ is strictly contained in the ideal $I_{M}$, and $I_{M}$ is reducible.

In \cite{sidman2021geometric}, the authors computed some polynomials in $I_{M}\backslash I_{\mathcal{C}(M)}$ using the Grassmann-Cayley algebra, for a specific family of point-line configurations. A classical method for computing such polynomials within $I_M$ applies the Grassmann-Cayley algebra, which offers a combinatorial approach. However, fully generating all polynomials in $I_M$, or verifying whether those generated by the Grassmann-Cayley method are sufficient to generate $I_{M}$, remains challenging. In \cite{clarke2021matroid}, the ideal $I_{M}$ is shown to coincide with the saturation of $I_{\mathcal{C}(M)}$ with respect to the product of all minors corresponding to the bases of the matroid; see also~\cite{white2017geometric, sturmfels1989matroid, Stu4, sitharam2017handbook, sidman2021geometric, caminata2021pascal}. However, performing this saturation can be computationally challenging, which motivates the search for alternative descriptions of the defining ideal $I_{M}$.

Existing algorithms for computing the saturation of ideals are characterized by high complexity, thereby limiting their practicality to small-scale matroids. Furthermore, the resulting polynomials often lack geometric insight, being lengthy and challenging to interpret. This limitation is illustrated by examples in \cite{pfister2019primary, Fatemeh3}. 
Given the complexity associated with computing a generating set of $I_M$, there has been considerable interest in generating certain polynomials, albeit not necessarily the entire generating set, for the ideal $I_M$, particularly for specific families of matroids. 
For instance, in \cite{sidman2021geometric}, the authors applied the Grassmann-Cayley algebra in combination with geometric tools from \cite{caminata2021pascal, eisenbud1996cayley} to generate polynomials for a family of examples of point-line configurations.
Another geometric approach, not relying on the Grassmann-Cayley algebra, was applied in \cite{Fatemeh3} for rank-3 matroids, leading to a minimal generating set for the matroid ideals of quadrilateral sets and the $3\times 4$ grid.  Such ideals also appear in the context of conditional independence statements in algebraic statistics; see, for example, \cite{clarke2022conditional, clarke2020conditional, caines2022lattice, mohammadi2018prime, ene2013determinantal, alexandersson2026decomposing, Mohammadi5, Mohammadi4}.
Our work extends these geometric techniques to matroids of arbitrary rank. 


\medskip
\noindent{\bf Our contributions.}
We will now summarize the main results of this paper. 
In Section~\ref{constructing}, we introduce the concept of liftability, which serves as a geometric technique for constructing polynomials within the ideal $I_{M}$. This idea is motivated by the following question. 
\begin{question}\label{question_intro}
Consider an $n$-paving matroid $M$, i.e.~$M$ is of rank $n$, and every circuit of $M$ is either of size  $n$ or $n+1$. Let $\gamma$ be a collection of vectors in $\mathbb{C}^{n}$ indexed by the ground set of $M$, lying within a hyperplane $H$ of $\mathbb{C}^{n}$, with $q \in \mathbb{C}^{n} \setminus H$. What condition is both necessary and sufficient for the existence of a lifting of the vectors from $\gamma$ at point $q$ to a non-degenerate collection $\widetilde{\gamma} \in V_{\mathcal{C}(M)}$? 
\end{question}

We address this question in Lemma~\ref{lift}, where we establish that the desired lifting exists if and only if certain minors of a so-called liftability matrix vanish on the vectors of $\gamma$. 
More precisely, for any vector $q \in \mathbb{C}^{n}$ and any $n$-paving matroid $M$, we define the \textit{liftability matrix} $\mathcal{M}_{q}(M)$ with columns indexed by the elements of $M$ and rows indexed by the $n$-circuits of $M$. For each $n$-circuit $c=\{c_{1},c_{2},\ldots,c_{n}\}$, the $c_{i}^{\rm th}$ coordinate of the corresponding row is given by the polynomial 
$(-1)^{i-1}\left[c_{1},c_{2},\ldots,c_{i-1},\hat{c_{i}},c_{i+1},\ldots, c_{n},q\right]$,
while the other entries of the row are set to $0$.

\medskip
Notably, projecting the vectors from any realization of an $n$-paving matroid $M$ onto a hyperplane in $\mathbb{C}^{n}$, from a  vector outside of it, ensures that the projected vectors satisfy the condition for non-degenerate lifting to exist. This observation holds true for any submatroid of full rank within $M$, as well. Using this geometric insight, we can construct polynomials within $I_{M}$.

\begin{theorem}[Theorem~\ref{sub}] Let $M$ be an $n$-paving matroid and $N$ be a submatroid of $M$ with full rank. 
Then, for any $q\in \mathbb{C}^{n}$, the $(|N|-n+1)$ minors of the matrix $\mathcal{M}_{q}(N)$ are polynomials in~$I_{M}$.
\end{theorem}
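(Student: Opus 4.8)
The goal is to show that each $(|N|-n+1)$-minor $f$ of $\mathcal{M}_{q}(N)$ vanishes on every realization of $M$; since $V_{M}$ is by definition the Zariski closure of the realization space of $M$ and $f$ is a polynomial, this is equivalent to $f\in I(V_{M})=I_{M}$. I would begin with two reductions. First, $N$ is itself an $n$-paving matroid: its circuits are exactly the circuits of $M$ contained in the ground set of $N$, hence have size $n$ or $n+1$, and $r(N)=n$ by hypothesis, so Lemma~\ref{lift} applies to $N$. Second, if $\gamma=(\gamma_{e})_{e\in M}$ is any realization of $M$, then its restriction $\gamma|_{N}$ is a realization of $N$ --- a subset $T$ of the ground set of $N$ is linearly independent in $\gamma$ iff it is independent in $M$ iff it is independent in $N$ --- and it is non-degenerate since $N$ has full rank, so its vectors span $\mathbb{C}^{n}$; in particular $\gamma|_{N}\in V_{\mathcal{C}(N)}$ because every circuit of $N$ is dependent in it. Finally, $f$ only involves the coordinates of the vectors indexed by the elements of $N\subseteq M$, so it lies in the polynomial ring in the entries of the vectors indexed by the elements of $M$, i.e.\ in the coordinate ring of the ambient space of $V_{M}$; hence it suffices to prove $f(\gamma|_{N})=0$.

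Next I would run the liftability argument. If $q=0$ then every entry of $\mathcal{M}_{q}(N)$ is a bracket with a zero column, so $\mathcal{M}_{q}(N)\equiv 0$ and there is nothing to prove. If $q\neq 0$, pick a hyperplane $H\subseteq\mathbb{C}^{n}$ with $q\notin H$ and let $\pi\colon\mathbb{C}^{n}\to H$ be the projection with kernel $\langle q\rangle$. Put $\bar\gamma=\pi(\gamma|_{N})$, a collection of vectors indexed by $N$ lying in $H$, and write $\gamma_{e}=\bar\gamma_{e}+\lambda_{e}q$ with $\lambda_{e}\in\mathbb{C}$. Then $\gamma|_{N}$ is a non-degenerate lifting of $\bar\gamma$ at $q$ lying in $V_{\mathcal{C}(N)}$, so $\bar\gamma$ satisfies the liftability condition of Question~\ref{question_intro}. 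By Lemma~\ref{lift} this says precisely that all $(|N|-n+1)$-minors of $\mathcal{M}_{q}(N)$ vanish on $\bar\gamma$; in particular $f(\bar\gamma)=0$.

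To conclude, I would transfer this vanishing from $\bar\gamma$ to $\gamma|_{N}$. Every entry of $\mathcal{M}_{q}(N)$ is, up to sign, a bracket $[\,\cdots,q\,]$ whose last column is $q$, and since the bracket is alternating, adding scalar multiples of that last column to any of the others does not change its value. As $\bar\gamma_{e}=\gamma_{e}-\lambda_{e}q$ for every $e\in N$, this yields the equality of numerical matrices $\mathcal{M}_{q}(N)(\bar\gamma)=\mathcal{M}_{q}(N)(\gamma|_{N})$, and hence $f(\gamma|_{N})=f(\bar\gamma)=0$. Since $\gamma$ was an arbitrary realization of $M$, $f$ vanishes on the realization space of $M$, hence on its closure $V_{M}$, so $f\in I_{M}$.

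The one step that needs care --- and the only genuine obstacle once Lemma~\ref{lift} is available --- is checking that the hypotheses of that lemma really are met for $N$: that $N$ is $n$-paving, that the projected configuration $\bar\gamma$ actually admits a \emph{non-degenerate} lift into $V_{\mathcal{C}(N)}$ (this is exactly where the full-rank assumption on $N$ is used, the lift being $\gamma|_{N}$ itself), and that no genericity of $q$ sneaks in. The last point is what forces the argument to pass through the projection along $q$ rather than a generic $q$, and it is the alternating property of the bracket that makes $\mathcal{M}_{q}(N)$ numerically identical on $\bar\gamma$ and on $\gamma|_{N}$ --- this is precisely the bridge between ``the projection satisfies the liftability condition'' and ``so does the original configuration''.
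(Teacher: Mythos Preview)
Your argument is correct and is essentially the paper's own proof, just unpacked: the paper factors it as Proposition~\ref{lp} (the minors lie in $I_{N}$, via the same project-along-$q$ and lift-back argument you give) together with Lemma~\ref{lem sub} ($I_{N}\subset I_{M}$, which is exactly your observation that $\gamma|_{N}\in\Gamma_{N}$). The only cosmetic point is that Lemma~\ref{lift} is stated for the normalized situation $q=e_{n}$, $H=\{x_{n}=0\}$; the paper reduces to this by a projective transformation (or invokes Lemma~\ref{lifting 3}), whereas your bracket-invariance step $\mathcal{M}_{q}(N)(\bar\gamma)=\mathcal{M}_{q}(N)(\gamma|_{N})$ achieves the same transfer directly.
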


\noindent
The polynomials constructed in this manner are called {\em lifting polynomials}, and we denote by $I_{M}^{\lift}\subset I_{M}$ the ideal generated by all these lifting polynomials.

\medskip

In Section~\ref{sec 4}, we focus on $n$-paving matroids with no points of degree greater than two, where each point is incident to at most two different dependent hyperplanes. 
For rank $3$ matroids, this includes point-line configurations with no three concurrent lines. {We present a set of defining equations for the matroid varieties of these paving matroids.}

\begin{theorem}[Theorem~\ref{theo lift}]
Let $M$ be an $n$-paving matroid with no points of degree greater than two. Then, the circuit and lifting polynomials generate the matroid ideal, up to radical, as follows:
\[I_{M} = \sqrt{I_{\mathcal{C}(M)}+I_{M}^{\lift}}.\]
\end{theorem}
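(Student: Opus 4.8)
Throughout write $J=I_{\mathcal{C}(M)}+I_{M}^{\lift}$. The inclusion $\sqrt{J}\subseteq I_{M}$ is immediate and needs none of the hypotheses: the circuit ideal always satisfies $I_{\mathcal{C}(M)}\subseteq I_{M}$, Theorem~\ref{sub} gives $I_{M}^{\lift}\subseteq I_{M}$, and $I_{M}=I(V_{M})$ is radical, so $\sqrt{J}\subseteq\sqrt{I_{M}}=I_{M}$. Hence the whole content is the reverse inclusion $I_{M}\subseteq\sqrt{J}$, which, passing to vanishing loci, is the geometric statement
\[
V_{\mathcal{C}(M)}\cap V\!\left(I_{M}^{\lift}\right)\ \subseteq\ V_{M}.
\]
(The opposite containment of varieties also holds, since $J\subseteq I_{M}$, so the displayed inclusion is in fact the only missing ingredient.) The plan is to take an arbitrary point $X$ of the left-hand side — a configuration of $|M|$ vectors in $\mathbb{C}^{n}$ on which every circuit polynomial and every lifting minor vanishes — and show it is a limit of honest realizations of $M$.

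First I would record the structure of $V_{\mathcal{C}(M)}$ forced by the two hypotheses. Since $M$ is $n$-paving, a set of size $n$ fails to be a basis exactly when it is an $n$-circuit; thus membership in $V_{\mathcal{C}(M)}$ only imposes that, for each $n$-circuit $c$, the vectors $X_{c}$ span a proper subspace, while every $(n+1)$-circuit is automatically dependent in $\mathbb{C}^{n}$. Moreover, the assumption that no element has degree greater than two means any two distinct dependent hyperplanes of $M$ meet in an independent set (of size $\le n-1$), so the collapsing conditions attached to different hyperplanes are ``almost independent'' of one another. To such an $X$ I would attach a coarse combinatorial type recording, hyperplane by hyperplane, the dimension of the span of the corresponding vectors together with which bases of $M$ vanish at $X$, and I would induct on a well-founded complexity measure of this type — for instance $\sum_{F}\bigl((n-1)-\dim\langle X_{F}\rangle\bigr)$ over the dependent hyperplanes $F$, plus the number of bases $B$ with $[B]$ vanishing at $X$.

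The base case is that $X$ is non-degenerate, i.e. $[B]\neq 0$ at $X$ for every basis $B$: then, again by the paving property, the matroid of $X$ has exactly the $n$-circuits of $M$ as its non-bases and full rank, hence equals $M$, so $X\in\Gamma_{M}\subseteq V_{M}$. For the inductive step, suppose some dependent hyperplane $F$ has collapsed, say $\dim\langle X_{F}\rangle=(n-1)-k$ with $k\ge 1$; choose a hyperplane $H$ containing $\langle X_{F}\rangle$, so that the sub-configuration $X_{N}$ associated with a suitable full-rank submatroid $N$ carrying the structure around $F$ lies inside $H$. The hypothesis $X\in V(I_{M}^{\lift})$ says precisely that the minors of the liftability matrix $\mathcal{M}_{q}(N)$ vanish on $X_{N}$, which by Lemma~\ref{lift} is exactly the necessary and sufficient condition for $X_{N}$ to admit a non-degenerate lifting $\widetilde{X}_{N}\in V_{\mathcal{C}(N)}$. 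Replacing the $F$-block of $X$ by such a lift and interpolating along a line produces a one-parameter family $X_{t}\in V_{\mathcal{C}(M)}$ with $X_{0}=X$ whose general member is strictly less degenerate in the chosen measure; if one checks that $X_{t}$ still satisfies all lifting polynomials, the induction hypothesis gives $X_{t}\in V_{M}$ for $t\neq 0$, and therefore $X=X_{0}\in V_{M}$.

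The main obstacle is the bookkeeping in this inductive step. Three things must be verified: that un-collapsing the hyperplane $F$ via Lemma~\ref{lift} does not destroy the circuit relations of the other dependent hyperplanes $F'$; that the deformed family really stays inside $V(I_{M}^{\lift})$ for every full-rank submatroid and every $q$; and that the complexity measure strictly decreases so the recursion terminates. This is exactly the point where ``no element of degree greater than two'' is essential: because $F\cap F'$ is then an independent set, moving the $F$-vectors within $H$ leaves the dependence of the $F'$-vectors unaffected, so the dependent hyperplanes can be lifted one at a time without interference, and it is what excludes the higher-order relations (of Grassmann–Cayley type) that make the statement fail for general matroids. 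In the rank-$3$ case, where the hypothesis specializes to point-line configurations with no three concurrent lines, I would also cross-check the argument against the decomposition of circuit varieties of point-line configurations in \cite{clarke2021matroid}, using it to confirm that the extra components of $V_{\mathcal{C}(M)}$ are precisely the ones killed by the lifting polynomials.
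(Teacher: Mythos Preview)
Your overall shape is right --- show $V_{\mathcal{C}(M)}\cap V(I_{M}^{\lift})\subseteq V_{M}$ by perturbing any $X$ in the left-hand side to a realization --- and this is exactly what the paper does. But the induction you sketch misuses the lifting ideal and skips the step where the real work lies.

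First, the lifting polynomials do not detect that a \emph{single} hyperplane $F$ has collapsed (i.e.\ $\dim\langle X_{F}\rangle<n-1$); they detect that the vectors of a whole full-rank submatroid $N$ lie in a common hyperplane of $\mathbb{C}^{n}$ and can be lifted out of it. The relevant degeneracy is therefore not your dimension defect but the \emph{lifting number} $M_{\gamma}$ of Definition~\ref{lif num}: the number of pairs of regular hyperplanes $l_{1}\neq l_{2}$ with $\gamma_{l_{1}}=\gamma_{l_{2}}$. The paper's induction (Propositions~\ref{propo} and~\ref{propo 2}) reduces $M_{\gamma}$ to zero by, for each block $\mathcal{L}_{i}\subset\mathcal{L}$ of coinciding hyperplanes, lifting the submatroid of hyperplanes of $\mathcal{L}_{i}$ non-degenerately. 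Your ``lift $F$ via Lemma~\ref{lift}'' step does not fit this picture: if only $F$ collapses there is nothing to lift, and lifting the points of $F$ alone ignores that each such point may also lie on another hyperplane.

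Second, the invariant you propose to carry through the induction --- membership in $V(I_{M}^{\lift})$ --- is not what the paper tracks, and you acknowledge you cannot verify it. The paper instead tracks the weaker, more geometric property \emph{fully liftable from $q$} (this is the content of Proposition~\ref{propo} and of Lemma~\ref{lift otra}), which is stable under the specific liftings performed; this is what makes the iteration go through. Also, your reading of the degree-$\le 2$ hypothesis is off: $|F\cap F'|\le n-2$ holds in \emph{any} paving matroid (Lemma~\ref{sub h}); what degree $\le 2$ buys is that each point lies on at most two hyperplanes, so moving it only has to respect two incidence constraints, which is exactly what Lemma~\ref{move} handles.

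Finally, and most importantly, once $M_{\gamma}=0$ the lifting polynomials are no longer useful, and one still has to perturb to an actual realization. That is Proposition~\ref{coincide}, which is the technical heart of the argument (Hall's theorem, the choice property for intersections of hyperplanes, Lemmas~\ref{ñ}, \ref{lemm 2}, \ref{choo}). In your outline this is hidden inside the ``base case'', but it is the part that genuinely requires the degree-$\le 2$ assumption and contains essentially all the difficulty.
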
 
Applying the above theorem, which provides a comprehensive description of the defining equations of the matroid varieties of this type, we proceed to compare the matroid variety and the circuit variety of $M$. 
In the following theorem, we summarize our main results from Section~\ref{sec 4}.
Below, we use the notation $V_{\mathcal{C}(M)}$ and $V_M$ for the circuit variety and the matroid variety, respectively.  We denote $V_{0}$ for the matroid variety of the uniform matroid $U_{n-1
,|M|}$ of rank $n-1$ on $|M|$ elements.
\begin{theorem}\label{thm:intro} Let $M$ be an $n$-paving matroid with no points of degree greater than two. Then:
\begin{itemize}
    \item  If all submatroids of 
    are liftable, then $V_{\mathcal{C}(M)}=V_{M}.$\hfill{\rm(Proposition~\ref{ico})}
   \item[] \item If all proper submatroids of 
   $M$ are liftable, then $V_{\mathcal{C}(M)}=V_{M}\cup V_{0}$.  
    \hfill{\rm(Proposition~\ref{des 2})}
\end{itemize}
\end{theorem}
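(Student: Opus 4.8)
The plan is to prove both statements by analyzing the irreducible components of $V_{\mathcal{C}(M)}$ and showing that, under the respective liftability hypotheses, the only components that can appear are $V_M$ and (in the second case) $V_0$. First I would recall that $V_{\mathcal{C}(M)} \supseteq V_M$ always holds, and that by the preceding theorem $I_M = \sqrt{I_{\mathcal{C}(M)} + I_M^{\lift}}$, so the difference between the two varieties is controlled precisely by the lifting polynomials: a point of $V_{\mathcal{C}(M)}$ fails to lie in $V_M$ exactly when it violates some minor of some liftability matrix $\mathcal{M}_q(N)$ with $N$ a full-rank submatroid of $M$. The first step, then, is to stratify $V_{\mathcal{C}(M)}$ by the matroid structure realized by a generic point of each irreducible component, in the spirit of the Gelfand--Serganova stratification and the decomposition of \cite{clarke2021matroid}: each component is the closure of the realization space of a matroid $N$ that is a ``degeneration'' of $M$ (obtained by adding dependencies), and since $M$ is paving with no point of degree $>2$, the combinatorial possibilities for such $N$ are tightly constrained — either $N$ still has full rank $n-1$ on the ground set, or it collapses to something of lower rank, the extreme case being the rank-$(n-1)$ uniform matroid $U_{n-1,|M|}$ whose variety is $V_0$.

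For the first statement, the hypothesis is that \emph{all} submatroids of full rank are liftable, meaning: whenever vectors indexed by $N$ lie in a hyperplane and we pick $q$ off that hyperplane, the relevant minors vanishing is equivalent to the existence of a non-degenerate lift in $V_{\mathcal{C}(N)}$ (this is the content of Lemma~\ref{lift}). I would argue that this forces every point $\gamma \in V_{\mathcal{C}(M)}$ already to lie in $V_M$: take a component, let $N$ be the matroid of its generic point; if $N$ has full rank then liftability of $N$ together with the projection observation from the paragraph before Theorem~\ref{sub} lets us realize a generic point of that component as a projection-then-lift of a genuine realization of $M$, so the component is contained in $V_M$; if $N$ has lower rank, liftability applied inductively (projecting down to the hyperplane repeatedly) again places the component inside $V_M$. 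Hence $V_{\mathcal{C}(M)} \subseteq V_M$, giving equality. The second statement is the same argument but with the liftability hypothesis weakened to \emph{proper} submatroids only: now the one component we can no longer control by a lifting argument is the one whose generic point realizes the most degenerate configuration, namely all vectors in general position within the hyperplane — this is exactly $V_0 = V_{U_{n-1,|M|}}$. So every component is either inside $V_M$ or equals $V_0$, and since $V_0 \subseteq V_{\mathcal{C}(M)}$ (the uniform matroid of rank $n-1$ satisfies all the circuit relations of $M$, as each $n$-circuit of $M$ becomes a dependency and each $(n+1)$-subset is automatically dependent in rank $n-1$), we get $V_{\mathcal{C}(M)} = V_M \cup V_0$.

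The main obstacle I anticipate is the combinatorial bookkeeping in the stratification step: one must show that for a paving matroid with no points of degree $>2$, the matroids $N$ arising as generic-point matroids of components of $V_{\mathcal{C}(M)}$ are exactly the ones to which the liftability machinery applies, with no stray components. This is where the ``degree $\le 2$'' hypothesis is essential — it prevents the kind of extra incidences (three hyperplanes through a point) that would create components not reachable by a single projection/lift, and it is presumably the reason these results are confined to this class. A secondary technical point is making the ``projection then lift'' correspondence precise at the level of varieties rather than just points, i.e.\ checking it is compatible with taking Zariski closures; this should follow from the constructions in Section~\ref{constructing} but needs to be stated carefully. I would also need to verify the containment $V_0 \subseteq V_{\mathcal{C}(M)}$ explicitly by checking each circuit polynomial of $M$ vanishes on a generic point of $V_0$, which is immediate from the rank count but should be recorded.
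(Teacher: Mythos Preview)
Your overall shape --- decompose $V_{\mathcal{C}(M)}$ into pieces and argue each piece lands in $V_M$ (or $V_M\cup V_0$) --- matches the paper's, but the mechanism you propose does not work, and one essential ingredient is missing.

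First, you misread what ``liftable'' buys. Saying that a submatroid of hyperplanes $N$ is liftable (Definition~\ref{liftable}) means that any rank-$(n-1)$ configuration indexed by $N$ can be lifted non-degenerately to $V_{\mathcal{C}(N)}$, the \emph{circuit variety} of $N$. It does not produce a point of $\Gamma_M$. So the sentence ``liftability of $N$ \ldots\ lets us realize a generic point of that component as a projection-then-lift of a genuine realization of $M$'' is simply false: the lift separates coinciding hyperplanes but leaves you with another point of $V_{\mathcal{C}(M)}$, not of $\Gamma_M$. Relatedly, the hypothesis concerns \emph{submatroids of hyperplanes}, not arbitrary full-rank submatroids, so there is a mismatch if you try to argue directly through $I_M^{\lift}$ and Theorem~\ref{theo lift}: liftability of those particular $N$'s does not force all minors of all $\mathcal{M}_q(N')$ to vanish on $V_{\mathcal{C}(M)}$.

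Second, and more seriously, you are missing the step that actually closes the argument. The paper's route is: (i) every $\gamma\in V_{\mathcal{C}(M)}$ can be infinitesimally moved to an \emph{unliftable} configuration (Lemma~\ref{lem_never_liftable}), giving the decomposition $V_{\mathcal{C}(M)}=\bigcup_{\gamma\in U(M)}V_\gamma$ of Proposition~\ref{dec}; (ii) under the hypothesis that all (resp.\ all proper) submatroids of hyperplanes are liftable, an unliftable $\gamma$ must have lifting number $M_\gamma=0$ (resp.\ $M_\gamma=0$ or all vectors in one hyperplane); (iii) \emph{Proposition~\ref{coincide}}: any $\gamma\in V_{\mathcal{C}(M)}$ with $M_\gamma=0$ can be infinitesimally perturbed into $\Gamma_M$, hence lies in $V_M$. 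Step~(iii) is the real content, proved via a sequence of infinitesimal-motion lemmas (Lemmas~\ref{move}--\ref{choo}) that carefully fix the unintended dependencies one at a time; nothing in your sketch supplies this. Your Gelfand--Serganova stratification idea does not replace it: knowing that the generic point of a component has full rank and satisfies all the lifting constraints still does not tell you it lies in $\overline{\Gamma_M}$ without an argument like Proposition~\ref{coincide}. The verification $V_0\subset V_{\mathcal{C}(M)}$ at the end of your sketch is fine, but it is the easy direction.
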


\medskip

In Section~\ref{sec 5}, we introduce a combinatorial approach for generating polynomials within $I_{M}$. Each polynomial produced by this method is uniquely linked to a specific graph. We demonstrate that these polynomials, along with the circuit polynomials, collectively form a generating set for the matroid ideal, up to radical. Importantly, we establish that this generating set is finite, unlike the set derived from the lifting polynomials discussed in Section~\ref{sec 4}.

\medskip

Let $M$ be an $n$-paving matroid. Consider two subsets of elements of $M$, $J$ and $P=\{p_{1},\ldots,p_{k}\}$, such that $P\cap \overline{J}=\emptyset$, where $\closure{J}$ denotes the matroid closure. Suppose $C=\{c_{1},\ldots,c_{k}\}$ is a set of circuits of size $n$ whose points belong to $J\cup P$, and $p_{i}\in c_{i}$ for every $i\in [k]$. 
 We construct a directed weighted 
graph $G$, with vertex set $V(G)=P$ and edge set $E(G)=\{(p_{i},p_{j}):p_{j}\in c_{i}\}$. Each edge $(p_{i},p_{j})\in E(G)$ is assigned a weight $\alpha_{i,j}$, an indeterminate. 
In Remark~\ref{rem 5}, we illustrate that these weights can be substituted with quotients of polynomial brackets by selecting an additional set of vectors $\{q_{p}:p\in P\}\subset \mathbb{C}^{n}$. Theorem~\ref{thm ci} provides a polynomial in these brackets, which 
belongs to $I_{M}$. We refer to such polynomials as {\em graph polynomials}, and we denote the ideal generated by all graph polynomials by $I_M^{\gr}$.

\begin{theorem}[Theorem~\ref{inc} and Corollary~\ref{fin}]
Let $M$ be an $n$-paving matroid. Then $I_{M}^{\lift}\subset I_{M}^{\gr}$. If in addition $M$ has no points of degree greater than two, 
then the circuit polynomials and graph polynomials together form a {\em finite} generating set for the matroid ideal, up to radical, as follows:
\[I_{M} = \sqrt{I_{\mathcal{C}(M)}+I_{M}^{\gr}}.\]
\end{theorem}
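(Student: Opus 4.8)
The plan is to prove the two assertions separately, leveraging the structural results already established in the paper, principally Theorem~\ref{theo lift} (the radical equality $I_M=\sqrt{I_{\mathcal C(M)}+I_M^{\lift}}$ for paving matroids with no points of degree $>2$) and the geometric description of lifting polynomials as minors of the liftability matrices $\mathcal M_q(N)$.

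\textbf{Step 1: the inclusion $I_M^{\lift}\subset I_M^{\gr}$.} I would argue directly at the level of the defining polynomials. Fix a full-rank submatroid $N$ of $M$ and a vector $q\in\mathbb C^n$, and consider a maximal minor of $\mathcal M_q(N)$, i.e.\ one of size $|N|-n+1$. The key observation is that the rows of $\mathcal M_q(N)$ are indexed by $n$-circuits $c=\{c_1,\dots,c_n\}$ of $N$ and record, up to sign, the brackets $[c_1,\dots,\hat c_i,\dots,c_n,q]$, which are precisely the coefficients of the linear dependency among the vectors $\gamma_{c_1},\dots,\gamma_{c_n},\gamma_q$ (treating $q$ as an auxiliary vector). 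This is exactly the combinatorial data out of which the graph $G$ of Section~\ref{sec 5} is built: take $P$ to be the column-support of the chosen minor, $J$ the remaining relevant elements of $N$, $C$ the circuits indexing the chosen rows, and the $q_p$ all equal to $q$. The vanishing of the minor is then, after clearing denominators as in Remark~\ref{rem 5}, one of the graph polynomials coming from the relation \eqref{eq r}; more precisely the determinant of the liftability submatrix is (a multiple of) the weighted-graph polynomial attached to this configuration. Hence every generator of $I_M^{\lift}$ lies in $I_M^{\gr}$, giving the inclusion. The thing to be careful about here is matching conventions: signs, the precise indexing of rows/columns, and checking that the ``lifting exists'' criterion of Lemma~\ref{lift} (rank drop of $\mathcal M_q(N)$) translates into exactly the polynomial identity \eqref{eq r} among the graph weights rather than merely into some polynomial consequence of it. This bookkeeping is the main obstacle in Step~1.

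\textbf{Step 2: the radical equality when $M$ has no points of degree $>2$.} Here I would sandwich $I_M^{\gr}$ between $I_M^{\lift}$ and $I_M$. On one side, $I_M^{\lift}\subset I_M^{\gr}$ from Step~1, so $I_{\mathcal C(M)}+I_M^{\lift}\subset I_{\mathcal C(M)}+I_M^{\gr}$. On the other side, each graph polynomial vanishes on every realization $\gamma\in\Gamma_M$ by construction (Theorem~\ref{thm: first way} together with Remark~\ref{rem 5}: the weights satisfy \eqref{eq r}, and substituting brackets and clearing denominators preserves vanishing on $\Gamma_M$), so $I_M^{\gr}\subset I(V_M)=I_M$; likewise $I_{\mathcal C(M)}\subset I_M$ always. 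Therefore
\[
I_{\mathcal C(M)}+I_M^{\lift}\ \subseteq\ I_{\mathcal C(M)}+I_M^{\gr}\ \subseteq\ I_M .
\]
Taking radicals and invoking Theorem~\ref{theo lift}, which says the outer-left radical already equals $I_M$, forces all three radicals to coincide:
\[
I_M=\sqrt{I_{\mathcal C(M)}+I_M^{\lift}}\ \subseteq\ \sqrt{I_{\mathcal C(M)}+I_M^{\gr}}\ \subseteq\ \sqrt{I_M}=I_M,
\]
whence $I_M=\sqrt{I_{\mathcal C(M)}+I_M^{\gr}}$. This step is essentially formal once Step~1 is in place.

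\textbf{Step 3: finiteness.} It remains to see that finitely many graph polynomials suffice. The point is that, unlike the lifting construction where $q$ and the full-rank submatroid $N$ range over infinite/large families, the graph polynomials are governed by finite combinatorial data: the pair $(J,P)$ with $P\cap\overline J=\emptyset$, the finite list $C$ of $n$-circuits with $p_i\in c_i$, and the graph $G$ these determine, all live in a finite set since $M$ has finitely many elements and finitely many circuits. One must check that the auxiliary vectors $q_p$ can be chosen from a finite set (e.g.\ among coordinate vectors, or more robustly that a generic choice already yields, after clearing denominators, polynomials whose vanishing locus is the same) so that only finitely many graph polynomials are needed to cut out $V_M$ set-theoretically; then $\sqrt{I_{\mathcal C(M)}+I_M^{\gr}}$ equals the radical of the ideal generated by this finite subcollection together with the (finitely many) circuit polynomials. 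I expect the mild subtlety here, and the only real content of Step~3, to be the argument that varying $q_p$ over all of $\mathbb C^n$ does not enlarge the common zero set beyond what finitely many choices already give; this should follow from a Noetherianity/genericity argument or from an explicit reduction showing the $q_p$-dependence is, up to the already-present circuit equations, inessential.
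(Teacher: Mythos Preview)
Your Step~2 is correct and is exactly the paper's argument for Corollary~\ref{fin}: sandwich $I_M^{\gr}$ between $I_M^{\lift}$ and $I_M$, take radicals, and invoke Theorem~\ref{theo lift}.

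Step~1, however, has two genuine gaps beyond the ``signs and conventions'' you flag. First, when you set $P$ equal to the column-support of a chosen $(|N|-n+1)$-minor and $C$ equal to the circuits indexing its rows, there is no guarantee that you can assign to each $p_i\in P$ a \emph{distinct} circuit $c_i\in C$ with $p_i\in c_i$; without such a system of distinct representatives the data $(J,P,C)$ does not define a graph polynomial at all. The paper handles this via Corollary~\ref{dependent}: if an SDR exists one invokes Lemma~\ref{det} (which is precisely your identification of the minor with a graph polynomial), and if not, Hall's Marriage Theorem forces $r$ of the columns to have support in only $r-1$ of the rows, so the minor vanishes identically. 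Second, the hypothesis $P\cap\overline{J}=\emptyset$ in Theorem~\ref{thm ci} is not automatic: with $|J|=n-1$, the closure $\overline{J}$ can be strictly larger (whenever $J$ lies in a dependent hyperplane of $N$), and then some of your chosen columns may land in $\overline{J}$. The paper's fix is to replace $J$ by $\overline{J}$, apply Corollary~\ref{dependent} to the smaller square submatrix on $N\setminus\overline{J}$, and then observe that the original $(|N|-n+1)$-minors lie in the ideal generated by these smaller minors via Laplace expansion.

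For Step~3, your Noetherianity/genericity outline is more complicated than necessary and does not directly give what is claimed. The paper's observation (Remark~\ref{free}) is that every graph polynomial is \emph{multilinear} in each auxiliary vector $q_p$ (each $q_p$ appears exactly once in every bracket monomial), so expanding each $q_p$ in the canonical basis $\{e_1,\dots,e_n\}$ exhibits an arbitrary graph polynomial as a linear combination of graph polynomials with $q_p\in\{e_1,\dots,e_n\}$. This shows that $I_M^{\gr}$ is itself generated by the finite set obtained by restricting all $q_p$ to basis vectors---an ideal equality, not merely an equality of zero sets.
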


\medskip\noindent{\bf Related work.}
Before reviewing the contents of the paper, we would like to comment on some related works. In \cite{computationalgorithms}, Sturmfels introduced the concept of projection, focusing specifically on the quadrilateral set. He provided an explicit polynomial condition for the liftability of six collinear points to a quadrilateral set and algorithmically derived their standard bracket representation. In \cite{richter2011perspectives},  Richter-Gebert applied a similar projection technique to characterize the liftability of six points on a line to a quadrilateral set using determinant equations involving the center of projection. A generalization of this liftability technique was also presented in \cite{Fatemeh3} for 3-rank matroids and was applied to explicitly compute a minimal generating set for the associated ideals of a family of matroids of rank $3$. 
In our work, we extend these techniques to any paving matroid of any dimension by projecting onto a hyperplane instead of a line.

In \cite{clarke2021matroid}, it was established that forest-type point-line configurations are realizable. Proposition~\ref{real 2} demonstrates that $n$-paving matroids with no points of degree greater than two are also realizable. This expansion broadens the scope of known realizable matroids.
We also note that the polynomials constructed using our methods differ from those constructed using the Grassmann-Cayley algebra. For example, concerning the Pascal matroid studied in \cite
{sidman2021geometric}, Proposition~\ref{notin} illustrates that the polynomials constructed in Theorem \ref{thm ci} are not contained in the ideal generated by the polynomials constructed in \cite{sidman2021geometric} using the Grassmann-Cayley algebra. Additionally, in \cite[Theorem 4.2.2]{sidman2021geometric}, the authors constructed a matroid from $d+4$ points on a rational normal curve of degree $d$ via the Grassmann-Cayley algebra. Applying Theorem~5.1 of \cite{caminata2021pascal}, they demonstrated the existence of non-trivial quartics in the associated matroid ideal using the Grassmann-Cayley algebra. In Example~\ref{ext}, we construct additional polynomials for such matroids. 

\medskip\noindent{\bf Outline.}
In Section~\ref{2}, we introduce key concepts such as paving matroids, point-line configurations, and matroid varieties. Section~\ref{constructing} outlines a geometric method to generate lifting polynomials within the matroid ideals. Applying this in Section~\ref{sec 4}, we derive defining equations for paving matroids, albeit infinite. Section~\ref{sec 5} introduces a combinatorial method for polynomial generation. In Section~\ref{rel 6}, we discuss the relationship between these methods, yielding a finite 
set of defining equations for the associated 
matroid varieties of paving matroids. Section~\ref{sec 7} features various examples and applications, including constructing polynomials for the matroid ideals studied in \cite{sidman2021geometric, clarke2021matroid}.

\medskip\noindent{\bf Acknowledgement.} The authors would like to thank Oliver Clarke and Giacomo Masiero for helpful discussions. E.L. is supported by the FWO grant G0F5921N. F.M. was partially supported by the grants G0F5921N (Odysseus programme) and G023721N from the Research Foundation - Flanders (FWO), the UiT Aurora project MASCOT and the grant iBOF/23/064 from the KU Leuven.

\section{Preliminaries}\label{2}

We first fix our notation throughout the note. Let $K$ be an algebraically closed field. We will mainly work
over $K = \mathbb{C}$, but most of the results also work for $\mathbb{R}$. We denote $[n]$ as the set $\{1,\ldots,n\}$. For a $n\times d$ matrix $X=\pare{x_{i,j}}$ of indeterminates, we denote $R=K[X]$ for the polynomial ring on the variables $x_{ij}$. Given the subsets $A\subset[n]$ and $B\subset[d]$ with $\size{A}=\size{B}$, we denote  $[A|B]_{X}$ for the minor of $X$ with rows indexed by $A$ and columns indexed by $B$. 

\begin{notation}\label{initial notation}
 To any matroid $M$ of rank $n$ over the ground set $E$, we associate $X$, an $n\times \size{E}$ matrix of indeterminates, whose columns are indexed by the elements of $E$. For any subset $\{p_{1},\ldots,p_{k}\}\subset E$ with $k\leq n$, and vectors $v_{1},\ldots,v_{n-k}\in \CC^{n}$, 
we denote by $[p_{1},\ldots,p_{k},v_{1},\ldots,v_{n-k}]\in \CC[X]$ the determinant of the matrix formed by taking the columns of $X$ associated with $p_{1},\ldots,p_{k}$ 
and the vectors $v_{1},\ldots,v_{n-k}$ as additional columns. Observe that $[p_{1},\ldots,p_{k},v_{1},\ldots,v_{n-k}]$ is a polynomial in $\CC[X]$, and not a number.
\end{notation}

\subsection{Realization spaces of matroids and associated varieties}
We first review some results about matroids and their algebraic counterparts to fix our notation and keep the paper self-contained.  We refer to \cite{Oxley, piff1970vector} for an introduction to matroids, and to \cite{mnev1985manifolds, mnev1988universality, sturmfels1989matroid} for their realization spaces and associated varieties. We use the notation from \cite{bruns2003determinantal} for determinantal varieties and \cite{richter2011perspectives,lee2013mnev} for projective geometry, and recall some necessary results from \textup{\cite{clarke2021matroid, sidman2021geometric}}.

\begin{definition}
 Let $M$ be a matroid of rank $n$ on the ground set $[d]$ and let $r\geq n$. A realization of a matroid $M$ in $\CC^{r}$ is a collection of vectors $Y=\{v_{1},\ldots,v_{d}\}\subset \CC^{r}$ such that
\[\{v_{i_{1}},\ldots,v_{i_{p}}\}\ \text{is linearly dependent} \Longleftrightarrow \{i_{1},\ldots,i_{p}\} \ \text{is a dependent set of $M$.}\]
The realization space of $M$ in $\CC^{r}$ is 
$\Gamma_{M,r}=\{Y\subset \CC^{r}: Y \ \text{is a realization of $M$}\}.$
Each element of $\Gamma_{M,r}$ is identified with an $r\times d$ matrix over $\CC$. In this work, we only consider the case $r=n$, and we simply denote the realization space $\Gamma_{M,n}\subset \CC^{nd}$ by $\Gamma_{M}$.
\end{definition}

\begin{definition}[Matroid variety]
    The {\em matroid variety} $V_M$ 
    is defined as the Zariski closure  of 
    $\Gamma_M$  in $\CC^{nd}$. We denote as $I_{M}=I\pare{V_{M}}$ the corresponding  {\em matroid ideal}.
\end{definition}

\begin{remark}\label{relation between varieties}
 Given a rank $n$ matroid $M$ on 
$[d]$, both $\Gamma_{M}$ and $V_{M}$ are subsets of $\CC^{nd}$. Observe that this definition of the matroid variety coincides with those given in \cite{clarke2021matroid, Fatemeh3}. However, it differs from the one given in \cite{gelfand1987combinatorial, sidman2021geometric},  where the realization space of $M$ is defined as the set of all points in $\text{Gr}(n,d)$ that realize $M$, with the matroid variety being the Zariski closure of this realization space within the Grassmannian. Under this definition, we denote the corresponding ideal of the matroid variety by $I_{M}\rq$.
Any polynomial in $I_{M}\rq$ 
gives rise to a polynomial living within $I_{M}$, by evaluating its Pl\"ucker coordinates on the associated Pl\"ucker minors; see~Example~\ref{gc3}. However, the converse is not necessarily true.
\end{remark}

One of the main problems that we study is to find a finite set of generators for the matroid ideal $I_M$. We now introduce the \textit{circuit ideals} and \textit{basis ideals} of matroids.

\begin{definition}[Circuit and basis ideals]\normalfont\label{cir}
Let $M$ be a matroid of rank $n$ on $[d]$. We denote $\mathcal{D}(M)$ as the set of 
dependencies of $M$, $\mathcal{C}(M)$ as the set of circuits of $M$, and $\mathcal{B}(M)$ as the set of all bases of $M$.
Consider the $n\times d$ matrix $X=\pare{x_{i,j}}$ of indeterminates. We define the \textit{circuit ideal} as:
$$ I_{\mathcal{C}(M)} = \{ [A|B]_X:\ B \in \mathcal{C}(M),\ A \subset [n],\ \text{and}\ |A| = |B| \}. $$
Note that the polynomials in $I_{\mathcal{C}(M)}$ vanish in any realization of $M$, implying $I_{\mathcal{C}(M)} \subset I_M$.
We say that $\gamma$, a collection of vectors of $\CC^{n}$ indexed by $[d]$, includes the dependencies of $M$ if it satisfies:
\[\set{i_{1},\ldots,i_{k}}\  \text{is a dependent set of $M$} \Longrightarrow \set{\gamma_{i_{1}},\ldots,\gamma_{i_{k}}}\ \text{is linearly dependent}. \] 
The {\em circuit variety} of $M$ is $V_{\mathcal{C}(M)}=V(I_{\mathcal{C}(M)})=\{\gamma:\text{$\gamma$ includes the dependencies of $M$}\}$.
Moreover, we define the \textit{basis ideal} of $M$ as:
\[
J_{M}=\sqrt{
\prod_{B\in \mathcal{B}\pare{M}}\ip{[A|B]_{X}:\ A\subset [n]}{\size{A}=\size{B}}.}
\]
Note that a collection of vectors belongs to $V(J_{M})$ if and only if it has a dependent subset not in $\mathcal{D}({M})$, where $\mathcal{D}(M)$ is the set of all dependent sets of $M$. Consequently, $\Gamma_{M} \cap V(J_{M}) = \emptyset$.

\end{definition}
If the collection of dependent sets of matroid $M$ is a subset of the collection of dependent sets of matroid $N$, then we write $M\leq N$. We also recall that for $I,J\subset R=\CC[x_{1},\ldots,x_{n}]$ we denote 
$I:J^\infty=\{f\in R:\ fJ^i\subset I\ \text{for some $i$}\}$.

\medskip
We now recall the Proposition 3.9 from \cite{clarke2021matroid}.
\begin{proposition}\label{sat}
The matroid ideal can be obtained 
as $I_{M}=I_{\mathcal{C}\pare{M}}:J_{M}^\infty.$ 
\end{proposition}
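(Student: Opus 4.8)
The plan is to show the two inclusions $I_M \supseteq I_{\mathcal{C}(M)} : J_M^\infty$ and $I_M \subseteq I_{\mathcal{C}(M)} : J_M^\infty$ separately, using the fact that $V_M = \overline{\Gamma_M}$ is irreducible's complement to the basis locus and the standard dictionary between saturation and removing a subvariety. First I would set $V = V(I_{\mathcal{C}(M)})$, the circuit variety, and recall from the discussion after Definition~\ref{cir} that the points of $V$ are exactly the collections of vectors whose dependent sets contain $\mathcal{D}(M)$, while $V(J_M)$ consists of those collections having a dependent set \emph{not} in $\mathcal{D}(M)$. Hence $V \setminus V(J_M)$ is precisely the set of collections whose dependent sets equal $\mathcal{D}(M)$, i.e. the realization space $\Gamma_M$. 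This is the geometric heart of the argument.

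For the first inclusion, take $f \in I_{\mathcal{C}(M)} : J_M^\infty$, so $f \cdot J_M^i \subseteq I_{\mathcal{C}(M)}$ for some $i$. Then $f$ vanishes on $V(I_{\mathcal{C}(M)}) \setminus V(J_M) = \Gamma_M$: indeed at any point $p \in \Gamma_M$ there is some generator $g$ of $J_M$ with $g(p) \neq 0$, and since $f g^i$ vanishes on all of $V(I_{\mathcal{C}(M)}) \ni p$ we get $f(p) g(p)^i = 0$, forcing $f(p) = 0$. Since $f$ vanishes on $\Gamma_M$, it vanishes on its Zariski closure $V_M$, so $f \in I(V_M) = I_M$. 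For the reverse inclusion, recall the standard commutative-algebra identity $I(\overline{V(I_{\mathcal{C}(M)}) \setminus V(J_M)}) = \sqrt{I_{\mathcal{C}(M)}} : J_M^\infty$; combined with $V(I_{\mathcal{C}(M)}) \setminus V(J_M) = \Gamma_M$ and hence $\overline{V(I_{\mathcal{C}(M)}) \setminus V(J_M)} = V_M$, this gives $I_M = I(V_M) = \sqrt{I_{\mathcal{C}(M)}} : J_M^\infty$. It remains to pass from $\sqrt{I_{\mathcal{C}(M)}} : J_M^\infty$ to $I_{\mathcal{C}(M)} : J_M^\infty$, which follows because saturation commutes with radical in the sense that $I_{\mathcal{C}(M)} : J_M^\infty$ is already radical: its vanishing locus is the closed set $V_M$ and, being a saturation, it equals $I(V_M)$ — alternatively, one checks $\sqrt{I : J^\infty} = \sqrt{I} : J^\infty$ and that $I_{\mathcal{C}(M)} : J_M^\infty = \sqrt{I_{\mathcal{C}(M)} : J_M^\infty}$ since for any $g \in J_M$ localizing at $g$ the ideal $(I_{\mathcal{C}(M)})_g$ has the same radical as $(I_{\mathcal{C}(M)} : g^\infty)_g$.

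The main obstacle is the bookkeeping around radicals: one must be careful that $I_{\mathcal{C}(M)}$ need not itself be radical, so the cleanest route is to prove $I_M = \sqrt{I_{\mathcal{C}(M)}} : J_M^\infty$ first via the Nullstellensatz and the set-theoretic identity $V(I_{\mathcal{C}(M)}) \setminus V(J_M) = \Gamma_M$, and then invoke the lemma $\sqrt{I} : J^\infty = \sqrt{\,I : J^\infty\,}$ together with the observation that saturating by $J_M$ already produces a radical ideal here because $V_M$ is a variety and the saturation is its full vanishing ideal. I would state the needed set-theoretic equality $V(I_{\mathcal{C}(M)}) \setminus V(J_M) = \Gamma_M$ as the key claim, prove it directly from the matroid-theoretic descriptions of the two loci, and let the rest follow from standard properties of saturation.
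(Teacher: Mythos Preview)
The paper does not supply its own proof of this proposition; it is simply recalled from \cite{clarke2021matroid}. So there is no in-paper argument to compare against, and your proposal must stand on its own.

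Your set-theoretic core is correct and is indeed the heart of the matter: $V(I_{\mathcal{C}(M)})\setminus V(J_M)=\Gamma_M$, hence $I_M=I(\overline{\Gamma_M})=\sqrt{I_{\mathcal{C}(M)}}:J_M^\infty=\sqrt{I_{\mathcal{C}(M)}:J_M^\infty}$. Your first inclusion $I_{\mathcal{C}(M)}:J_M^\infty\subseteq I_M$ is also fine. The gap is the final step, where you want to drop the radical and conclude that $I_{\mathcal{C}(M)}:J_M^\infty$ is itself radical. Both justifications you offer are circular: saying ``being a saturation, it equals $I(V_M)$'' is precisely the statement to be proved, and the localization remark (``$(I_{\mathcal{C}(M)})_g$ has the same radical as $(I_{\mathcal{C}(M)}:g^\infty)_g$'') is a tautology that says nothing about whether the saturation is radical. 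In general, $I:J^\infty$ need not be radical when $I$ is not, so this step requires a genuine argument specific to the situation --- e.g.\ showing $I_{\mathcal{C}(M)}$ is radical, or exhibiting directly that for $f\in I_M$ some product $f\cdot g$ with $g\in J_M^i$ lies in $I_{\mathcal{C}(M)}$ (not merely in its radical). As written, your proposal proves $I_M=\sqrt{I_{\mathcal{C}(M)}:J_M^\infty}$ but not the sharper equality claimed.
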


Computing the saturation of ideals can be very challenging. Our aim is to explore alternative methods for computing $I_{M}$. In particular, we will focus on the family of paving matroids; 
see \cite{Oxley,welsh2010matroid}.

\begin{definition}[Paving matroid]\normalfont \label{pav}
A rank $n$ matroid $M$ is called \textit{paving} if every circuit of $M$ has a size of either $n$ or $n+1$. In this case, we refer to it as an \textit{$n$-paving matroid}. 
Every paving matroid is entirely characterized by its circuits of size $n$. 
We fix the following notions for an $n$-paving matroid: 
\begin{itemize}
\item We define a \textit{dependent hyperplane} as a maximal subset of elements  in the ground set of $M$, with a size of at least $n$, where every subset of $n$ elements forms a circuit. For the special case where $n=3$, we simply call them \textit{lines}.  We denote the set of all dependent hyperplanes of $M$ by $\mathcal{L}_M$. 

\item We refer to elements  in the ground set of $M$ as {\em points} and we denote the set of points by $\mathcal{P}_M$. For a point $p \in \mathcal{P}_M$, we denote the set of dependent hyperplanes that contain $p$ as $\mathcal{L}_{p}$, and we define the {\em degree} of $p$ as the number of dependent hyperplanes in $\mathcal{L}_{p}$, denoted by $|\mathcal{L}_{p}|$.

\item 
Let $\gamma$ be a collection of vectors of $\CC^{n}$ indexed by $\mathcal{P}_M$. We denote by $\gamma_{p}$ the vector of $\gamma$ associated to $p\in \mathcal{P}_{M}$.
For a dependent hyperplane $l$, we denote the subspace $\text{span}(\gamma_{p}:p \in l)$ by $\gamma_{l}$. 

\item Let $\gamma\in V_{\mathcal{C}(M)}$. A dependent hyperplane $l \in \mathcal{L}_M$ is called \textit{$\gamma$-regular} 
if $\text{dim}(\gamma_{l}) = n-1$. We denote by $R_{\gamma} \subset \mathcal{L}_M$ the set of  $\gamma$-regular hyperplanes. 
If $R_{\gamma} = \mathcal{L}_M$, we say that $\gamma$ is \textit{regular}. 

\item For $\gamma \in V_{\mathcal{C}(M)}$, we denote by $\overline{R_{\gamma}}$ the partition of $R_{\gamma}$ in which two dependent hyperplanes $l_{1}, l_{2} \in R_{\gamma}$ belong to the same subset of the partition if and only if $\gamma_{l_{1}} = \gamma_{l_{2}}$. Note that this condition is equivalent to $\text{span}(\gamma_{p} : p \in l_1) = \text{span}(\gamma_{p} : p \in l_2)$, meaning that the vectors in $\gamma$ associated to $l_1$ and those associated to $l_2$ span the same hyperplane. This may happen even when $l_1 \neq l_2$.

\item A vector $q \in \mathbb{C}^{n}$ is called \textit{outside} $\gamma$ if it does not belong to any subspace $\gamma_{l}$ for any $l \in \mathcal{L}_M$.
\end{itemize}
For ease of notation, when it is clear, we drop the index $M$ and 
simply write $\mathcal{L}$ and $\mathcal{P}$. 
Note that $\gamma \in V_{\mathcal{C}(M)}$ if and only if $\text{dim}(\gamma_{l}) \leq n-1$ for every $l \in \mathcal{L}$, and if $\gamma \in \Gamma_{M}$, then $\gamma_{l}$ is a hyperplane in $\mathbb{C}^{n}$. 

\end{definition}

We now recall the general notion of submatroids. 

\begin{definition}[Submatroid]\normalfont \label{subm}
Let $M$ be a matroid over the ground set $[d]$, and let $S \subset [d]$. 
Then, there exists a matroid structure over the ground set $S$, where the dependent sets are those of $M$ 
contained in $S$, and the rank function on $S$ is the rank function of $[d]$ restricted to $S$. Unless otherwise stated, we always assume that the subsets of $[d]$ have this structure, and we refer to them as \textit{submatroids} of $M$. 
For an $n$-paving matroid, all of its submatroids of full rank are also $n$-paving matroids. 
\end{definition}

We have the following lemma for submatroids, which is useful for computing the matroid~ideals.

\begin{lemma}\label{lem sub}
Let $N\subset M$ be a submatroid  of full rank of $M$. Then $I_{N}\subset I_{M}$.
\end{lemma}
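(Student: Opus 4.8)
The plan is to reduce the statement to the level of realization spaces and then observe that restricting a realization of $M$ to the ground set of $N$ produces a realization of $N$. First recall the elementary fact that for any subset $A$ of an affine space, $I(A)=I(\overline{A})$, since a polynomial vanishing on $A$ is continuous and hence vanishes on the Zariski closure. In particular $I_{M}=I(V_{M})=I(\overline{\Gamma_{M}})=I(\Gamma_{M})$, and likewise $I_{N}=I(\Gamma_{N})$. So it suffices to show that every polynomial vanishing on $\Gamma_{N}$ also vanishes on $\Gamma_{M}$.

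Next I would fix coordinates. Write $E$ for the ground set of $M$, set $n=\operatorname{rk}(M)$, and let $S\subseteq E$ be the ground set of $N$, so that realizations of $M$ are $n\times |E|$ matrices and realizations of $N$ are obtained by keeping only the columns indexed by $S$; thus the polynomial ring of $V_{N}$ is the subring of $R=K[X]$ generated by the entries of the columns indexed by $S$. The key claim is: if $\gamma\in\Gamma_{M}$ and $\gamma_{S}$ denotes the submatrix of $\gamma$ with columns indexed by $S$, then $\gamma_{S}\in\Gamma_{N}$. Indeed, a subset $T\subseteq S$ is dependent for the vectors $\gamma_{S}$ if and only if it is dependent for $\gamma$, which—since $\gamma$ realizes $M$—happens exactly when $T$ is a dependent set of $M$; by Definition~\ref{subm} these are precisely the dependent sets of $N$. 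Hence $\gamma_{S}$ realizes $N$.

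To conclude, take $f\in I_{N}=I(\Gamma_{N})$; viewed inside $R$ it involves only the variables coming from the columns indexed by $S$. For any $\gamma\in\Gamma_{M}$ we then have $f(\gamma)=f(\gamma_{S})=0$ by the claim, so $f$ vanishes on all of $\Gamma_{M}$ and therefore $f\in I(\Gamma_{M})=I_{M}$, giving $I_{N}\subseteq I_{M}$. The only delicate point—the ``main obstacle,'' such as it is—is the matching of ambient spaces: one must take realizations of $N$ inside $\mathbb{C}^{n}$ rather than inside $\mathbb{C}^{\operatorname{rk}(N)}$ so that the inclusion of polynomial rings is the obvious coordinate inclusion; once this convention is fixed the argument is immediate, and in all the applications in this paper $N$ is a full-rank submatroid, so $\operatorname{rk}(N)=n$ and the issue does not even arise.
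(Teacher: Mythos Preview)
Your proof is correct and follows the same approach as the paper: both arguments hinge on the observation that restricting a realization of $M$ to the ground set of $N$ yields a realization of $N$, so any polynomial vanishing on $\Gamma_{N}$ automatically vanishes on $\Gamma_{M}$. Your version is simply more explicit—spelling out the identity $I_{M}=I(\Gamma_{M})$ and flagging the ambient-dimension convention—whereas the paper compresses the argument into two lines.
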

\begin{proof}
Let $n = \text{rank}(M) = \text{rank}(N)$, and let $[d]$ be the ground set of $M$. The realization space $\Gamma_{M}$ is defined as the set of all collections of vectors $\{\gamma_{1}, \ldots, \gamma_{d}\} \subset \mathbb{C}^{n}$ that realize $M$. The restriction of any such collection to the indices corresponding to $N$ yields a collection realizing $N$, which belongs to $\Gamma_{N}$ since $\text{rank}(N) = n$. Thus, the restriction of $\Gamma_{M}$ to $N$ is contained in $\Gamma_{N}$. If $p \in I_{N}$, then $p(\Gamma_{N}) = 0$. Since the restriction of $\Gamma_{M}$ to $N$ is a subset of $\Gamma_{N}$, it follows that $p(\Gamma_{M}) = 0$, and therefore $p \in I_{M}$.
\end{proof}

The following lemma from \cite{hartmanis1959lattice} gives a hypergraph characterization of paving matroids. See also \textup{\cite[Proposition~2.1.24]{Oxley}}.

\begin{lemma}\label{sub h}
Consider two integers $d, n$ such that $d\geq n+1$. Let $\mathcal{L}$ be a collection of subsets of $[d]$ such that $\size{l}\geq n$ for every $l\in \mathcal{L}$. Then the subsets of $\mathcal{L}$ determine the dependent hyperplanes of an $n$-paving matroid $M$ over the ground set $[d]$ if and only if $\size{l_{1}\cap l_{2}}\leq n-2$ for every $l_{1}\neq l_{2}\in \mathcal{L}$.
\end{lemma}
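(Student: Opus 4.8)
The plan is to prove both directions of the biconditional in Lemma~\ref{sub h} by reducing everything to the defining axiom of a paving matroid: namely that every subset of $n$ elements is independent (equivalently, the only circuits have size $n$ or $n+1$). A convenient way to package this is to work with the rank function: a rank-$n$ matroid on $[d]$ in which every $n$-subset is a basis is exactly a paving matroid, and its hyperplanes (flats of rank $n-1$) are precisely the maximal subsets $H$ with $r(H)=n-1$, which for paving matroids coincide with the maximal subsets of size $\geq n$ in which every $n$-subset is a circuit. So the lemma is really the statement that such a collection $\mathcal{L}$ of ``potential hyperplanes'' can be realized as the hyperplanes of a paving matroid iff the pairwise intersection condition $\size{l_1\cap l_2}\leq n-2$ holds.

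First I would prove the forward direction (necessity). Suppose $M$ is an $n$-paving matroid with hyperplane set $\mathcal{L}$, and take $l_1\neq l_2\in\mathcal{L}$. If $\size{l_1\cap l_2}\geq n-1$, pick an $(n-1)$-subset $S\subseteq l_1\cap l_2$. Since $l_1\neq l_2$, choose $a\in l_1\setminus l_2$ and $b\in l_2\setminus l_1$ (these exist by maximality: neither hyperplane contains the other). In a paving matroid, $S\cup\{a\}\subseteq l_1$ is a circuit, so $r(S\cup\{a\})=n-1$ and $r(S)=n-1$ (the closure of $S$ contains $a$); similarly $r(S\cup\{b\})=n-1$. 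Then $S\cup\{a,b\}$ has rank $n-1$ as well, so $a$ and $b$ lie in the common flat $\overline{S}$, a rank-$(n-1)$ flat containing both $l_1$ and $l_2$. By maximality of hyperplanes this forces $l_1=l_2=\overline{S}$, a contradiction. Hence $\size{l_1\cap l_2}\leq n-2$.

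Next I would prove the reverse direction (sufficiency), which is the more substantial part. Given $\mathcal{L}$ with the intersection property, I want to exhibit a paving matroid. The natural construction: declare the circuits of size $n$ to be exactly those $n$-subsets $c$ such that $c\subseteq l$ for some $l\in\mathcal{L}$, and declare a subset of size $n+1$ to be dependent iff it contains such an $n$-circuit, together with all $n$-subsets not contained in any $l\in\mathcal{L}$ being independent. Equivalently, define the rank function $r(A)=\min(\size{A},\,n)$ for all $A$ not contained in a member of $\mathcal{L}$, and $r(A)=\min(\size{A},\,n-1)$ for $A\subseteq l$ with $l\in\mathcal{L}$ and $\size{l}\geq n$ (one must check this is well-defined and submodular). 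The key point where the hypothesis $\size{l_1\cap l_2}\leq n-2$ enters is in verifying the circuit elimination axiom (or equivalently submodularity of $r$): if two distinct $n$-circuits $c_1\subseteq l_1$, $c_2\subseteq l_2$ share an element, I need their ``union'' to behave correctly, and the danger is exactly that $c_1\cup c_2$ could be forced to have too small rank if $l_1\cap l_2$ were large — the bound $\leq n-2$ prevents two different hyperplanes from sharing an $(n-1)$-subset, which is precisely what would break well-definedness of $r$ on the overlap and hence the matroid axioms. Once the matroid $M$ is constructed, I would check that its hyperplanes are exactly the members of $\mathcal{L}$: each $l\in\mathcal{L}$ is a rank-$(n-1)$ flat by construction, it is maximal with that rank because any $p\notin l$ together with an $(n-1)$-independent subset of $l$ spans rank $n$ (using that $\{p\}\cup(\text{that subset})$ is not inside any hyperplane, again by the intersection bound), and conversely every rank-$(n-1)$ flat of a paving matroid of size $\geq n$ must be a member of $\mathcal{L}$ by the same closure argument as in the forward direction.

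The main obstacle I anticipate is the well-definedness and submodularity check in the sufficiency direction: one has to be careful that an $n$-subset $c$ is not simultaneously forced to be a circuit (because $c\subseteq l_1$) and required to be independent, and that overlapping hyperplanes do not create inconsistent rank assignments on $c\cup c'$ for $c\subseteq l_1$, $c'\subseteq l_2$. I expect that a clean way around the bookkeeping is to instead verify directly that the proposed collection of $n$-subsets satisfies the circuit axioms for the size-$n$ circuits of a paving matroid — it suffices to check that no $n$-subset properly contains another (automatic, same size) and that the ``circuit exchange'' needed to generate the $(n+1)$-circuits is consistent, which is where $\size{l_1\cap l_2}\leq n-2$ does all the work: an $(n+1)$-set containing two distinct $n$-circuits from two different hyperplanes would need those hyperplanes to meet in $\geq n-1$ points, contradicting the hypothesis. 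I would present the argument via the rank function since submodularity is a single inequality to verify, splitting into cases according to whether $A$, $B$, $A\cup B$, $A\cap B$ are each contained in some member of $\mathcal{L}$, and in each case the intersection bound gives the needed slack.
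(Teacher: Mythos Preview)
Your proposal is correct and self-contained, whereas the paper's own proof consists of a single sentence citing Lemma~2 and Theorem~1 of \cite{mederos2022method}. So the two approaches differ only in that you actually prove the result while the paper delegates it to an external reference.

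Your forward direction is essentially the standard closure argument and is complete as written (modulo the easy observation, which you implicitly use, that the \emph{same} argument giving $a,b\in\overline{S}$ shows every point of $l_1$ and of $l_2$ lies in $\overline{S}$, so that $l_1,l_2\subseteq\overline{S}$ and maximality applies). Your reverse direction via the rank function $r(A)=\min(\lvert A\rvert,n)$ when $A$ is not contained in any $l\in\mathcal{L}$ and $r(A)=\min(\lvert A\rvert,n-1)$ otherwise is the right construction; the well-definedness check (that a set of size $\geq n-1$ cannot lie in two distinct members of $\mathcal{L}$) and the submodularity case analysis both go through exactly because of the intersection bound, as you identify. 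The circuit-axiom route you mention as an alternative also works and is arguably cleaner: the only nontrivial case of weak elimination is two size-$n$ circuits $c_1,c_2$ sharing an element, and if $c_1\subseteq l_1$, $c_2\subseteq l_2$ with $l_1\neq l_2$ then $\lvert c_1\cap c_2\rvert\leq\lvert l_1\cap l_2\rvert\leq n-2$, so $\lvert c_1\cup c_2\rvert\geq n+2$ and any $n$-subset of $(c_1\cup c_2)\setminus\{e\}$ not lying in a hyperplane is a basis, while if $l_1=l_2$ then $(c_1\cup c_2)\setminus\{e\}$ still contains an $n$-subset of $l_1$.

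What each approach buys: the paper's citation is a one-line proof but makes the lemma depend on an outside source; your argument is a page long but keeps the paper self-contained and makes transparent exactly where the hypothesis $\lvert l_1\cap l_2\rvert\leq n-2$ is used, which is pedagogically valuable given how often the lemma is invoked later (e.g.\ in defining submatroids of hyperplanes and in the final example).
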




\begin{definition}[Submatroid of hyperplanes]\normalfont\label{submatroid of hyperplanes}
Let $M$ be an $n$-paving matroid, and let $L \subset \mathcal{L}_M$ be a subset containing at least two dependent hyperplanes. 
 By Lemma~\ref{sub h}, we can endow the set of points $\cup_{l\in L}l$ with a structure of an $n$-paving matroid, with $L$ as its set of dependent hyperplanes. We call this matroid the \textit{submatroid of hyperplanes} of $L$, and we denote it by $M^{L}$. Note that this matroid structure on $\mathcal{P}_{M^{L}}=\cup_{l\in L}l$ differs from the one given as a submatroid of $M$, in Definition~\ref{subm}.
\end{definition}

\begin{definition}[Uniform matroid]\normalfont\label{uniform 3}
A uniform matroid $U_{n, d}$ on the ground set $[d]$ of rank $n$ is defined as follows: every subset $S$ of $[d]$ of size $|S| \leq n$ is independent, and every subset $S$ with $|S| > n$ is dependent. 
In particular, an $n$-paving matroid is \textit{uniform} if all its circuits have size $n+1$.
\end{definition}

If $M$ is $n$-paving, then each of its dependent hyperplanes is a uniform matroid of rank $n-1$.
A particular class of paving matroids consists of those of rank $3$, which are matroids of rank $3$ without double points or loops, also known as simple matroids. This class of matroids is also referred to as \textit{point-line configurations}; see e.g. \cite{clarke2021matroid}.


\begin{example}[Three concurrent lines]\label{three lines}
Consider 
 the point-line configuration in Figure~\ref{fig:combined}~(Left). In this case, $\mathcal{P}=\{  {p_{1}},\ldots,  {p_7}\}$, and lines $\mathcal{L}=\{   \{{p_1},  {p_2}, {p_7}\} , \{{p_3},  {p_4},   {p_7}\}  ,   \{{p_5},  {p_6}, {p_7}\} \}$. A line $l$ is  $\gamma$-regular for a collection of vectors $\gamma\in V_{\mathcal{C}(M)}$ if and only if $\dim(\gamma_{l})=2$. For instance, $  \{{p_1},{p_2},{p_7}\}  $ is  $\gamma$-regular 
when $\rank \set{\gamma_{  {p_7}},\gamma_{  {p_1}},\gamma_{  {p_2}}}=2$.
\end{example}

\begin{figure}[h]
    \centering
    \begin{subfigure}[b]{0.3\textwidth}
        \centering
        \begin{tikzpicture}[x=0.75pt,y=0.75pt,yscale=-1,xscale=1]

\tikzset{every picture/.style={line width=0.75pt}} 

\draw    (81.69,116.61) -- (191.16,174.3) ;
\draw    (77,131.88) -- (224,131.88) ;
\draw    (80.13,150.55) -- (191.16,79.27) ;
\draw [fill={rgb, 255:red, 173; green, 216; blue, 230}, fill opacity=1]
(107.34,131.2) .. controls (107.34,133.07) and (108.63,134.58) .. (110.23,134.58) .. controls (111.83,134.58) and (113.12,133.07) .. (113.12,131.2) .. controls (113.12,129.33) and (111.83,127.81) .. (110.23,127.81) .. controls (108.63,127.81) and (107.34,129.33) .. (107.34,131.2) -- cycle ;
\draw [fill={rgb, 255:red, 173; green, 216; blue, 230}, fill opacity=1]
(142.62,149.93) .. controls (142.62,151.8) and (143.91,153.32) .. (145.51,153.32) .. controls (147.11,153.32) and (148.4,151.8) .. (148.4,149.93) .. controls (148.4,148.06) and (147.11,146.55) .. (145.51,146.55) .. controls (143.91,146.55) and (142.62,148.06) .. (142.62,149.93) -- cycle ;
\draw [fill={rgb, 255:red, 173; green, 216; blue, 230}, fill opacity=1]
(173.7,166.79) .. controls (173.7,168.66) and (174.99,170.18) .. (176.59,170.18) .. controls (178.19,170.18) and (179.48,168.66) .. (179.48,166.79) .. controls (179.48,164.92) and (178.19,163.41) .. (176.59,163.41) .. controls (174.99,163.41) and (173.7,164.92) .. (173.7,166.79) -- cycle ;
\draw [fill={rgb, 255:red, 173; green, 216; blue, 230}, fill opacity=1] (201.42,131.2) .. controls (201.42,133.07) and (202.71,134.58) .. (204.31,134.58) .. controls (205.91,134.58) and (207.2,133.07) .. (207.2,131.2) .. controls (207.2,129.33) and (205.91,127.81) .. (204.31,127.81) .. controls (202.71,127.81) and (201.42,129.33) .. (201.42,131.2) -- cycle ;
\draw [fill={rgb, 255:red, 173; green, 216; blue, 230}, fill opacity=1] (167.82,131.2) .. controls (167.82,133.07) and (169.11,134.58) .. (170.71,134.58) .. controls (172.31,134.58) and (173.6,133.07) .. (173.6,131.2) .. controls (173.6,129.33) and (172.31,127.81) .. (170.71,127.81) .. controls (169.11,127.81) and (167.82,129.33) .. (167.82,131.2) -- cycle ;
\draw [fill={rgb, 255:red, 173; green, 216; blue, 230}, fill opacity=1] (177.06,87.18) .. controls (177.06,89.05) and (178.35,90.56) .. (179.95,90.56) .. controls (181.55,90.56) and (182.84,89.05) .. (182.84,87.18) .. controls (182.84,85.31) and (181.55,83.79) .. (179.95,83.79) .. controls (178.35,83.79) and (177.06,85.31) .. (177.06,87.18) -- cycle ;
\draw [fill={rgb, 255:red, 173; green, 216; blue, 230}, fill opacity=1] (145.14,105.91) .. controls (145.14,107.78) and (146.43,109.3) .. (148.03,109.3) .. controls (149.63,109.3) and (150.92,107.78) .. (150.92,105.91) .. controls (150.92,104.04) and (149.63,102.52) .. (148.03,102.52) .. controls (146.43,102.52) and (145.14,104.04) .. (145.14,105.91) -- cycle ;

\draw (166.25,71.57) node [anchor=north west][inner sep=0.75pt]   [align=left] {{\scriptsize $p_1$}};
\draw (206.28,115.51) node [anchor=north west][inner sep=0.75pt]   [align=left] {{\scriptsize $p_{3}$}};
\draw (138.91,153.35) node [anchor=north west][inner sep=0.75pt]   [align=left] {{\scriptsize $p_6$}};
\draw (170.08,173.62) node [anchor=north west][inner sep=0.75pt]   [align=left] {{\scriptsize $p_5$}};
\draw (173.44,116.45) node [anchor=north west][inner sep=0.75pt]   [align=left] {{\scriptsize $p_4$}};
\draw (106.8,111.36) node [anchor=north west][inner sep=0.75pt]   [align=left] {{\scriptsize $p_7$}};
\draw (136.66,90.59) node [anchor=north west][inner sep=0.75pt]   [align=left] {{\scriptsize $p_2$}};

   \end{tikzpicture}
        \label{fig:quadrilateral 2}
    \end{subfigure}
    \hfill
    \begin{subfigure}[b]{0.3\textwidth}
        \centering

\tikzset{every picture/.style={line width=0.75pt}} 

\begin{tikzpicture}[x=0.75pt,y=0.75pt,yscale=-1,xscale=1]

\draw [line width=0.75]    (247.65,123.92+60) -- (201.96,207.53+60) ;
\draw [line width=0.75]    (247.65,123.92+60) -- (291.68,207.53+60) ;
\draw [line width=0.75]    (219.23,174.98+60) -- (291.68,207.53+60) ;
\draw [line width=0.75]    (274.41,174.98+60) -- (201.96,207.53+60) ;
\draw [fill={rgb, 255:red, 173; green, 216; blue, 230}, fill opacity=1] (244.87,123.92+60) .. controls (244.87,125.68+60) and (246.12,127.11+60) .. (247.65,127.11+60) .. controls (249.19,127.11+60) and (250.44,125.68+60) .. (250.44,123.92+60) .. controls (250.44,122.15+60) and (249.19,120.73+60) .. (247.65,120.73+60) .. controls (246.12,120.73+60) and (244.87,122.15+60) .. (244.87,123.92+60) -- cycle ;
\draw [fill={rgb, 255:red, 173; green, 216; blue, 230}, fill opacity=1] (244.31,187.1+60) .. controls (244.31,188.87+60) and (245.56,190.29+60) .. (247.1,190.29+60) .. controls (248.64,190.29+60) and (249.88,188.87+60) .. (249.88,187.1+60) .. controls (249.88,185.34+60) and (248.64,183.91+60) .. (247.1,183.91+60) .. controls (245.56,183.91+60) and (244.31,185.34+60) .. (244.31,187.1+60) -- cycle ;
\draw [fill={rgb, 255:red, 173; green, 216; blue, 230}, fill opacity=1] (216.45,174.98+60) .. controls (216.45,176.74+60) and (217.69,178.17+60) .. (219.23,178.17+60) .. controls (220.77,178.17+60) and (222.02,176.74+60) .. (222.02,174.98+60) .. controls (222.02,173.21+60) and (220.77,171.79+60) .. (219.23,171.79+60) .. controls (217.69,171.79+60) and (216.45,173.21+60) .. (216.45,174.98+60) -- cycle ;
\draw [fill={rgb, 255:red, 173; green, 216; blue, 230}, fill opacity=1] (271.62,174.98+60) .. controls (271.62,176.74+60) and (272.87,178.17+60) .. (274.41,178.17+60) .. controls (275.94,178.17+60) and (277.19,176.74+60) .. (277.19,174.98+60) .. controls (277.19,173.21+60) and (275.94,171.79+60) .. (274.41,171.79+60) .. controls (272.87,171.79+60) and (271.62,173.21+60) .. (271.62,174.98+60) -- cycle ;
\draw [fill={rgb, 255:red, 173; green, 216; blue, 230}, fill opacity=1] (288.9,207.53+60) .. controls (288.9,209.29+60) and (290.14,210.72+60) .. (291.68,210.72+60) .. controls (293.22,210.72+60) and (294.47,209.29+60) .. (294.47,207.53+60) .. controls (294.47,205.77+60) and (293.22,204.34+60) .. (291.68,204.34+60) .. controls (290.14,204.34+60) and (288.9,205.77+60) .. (288.9,207.53+60) -- cycle ;
\draw [fill={rgb, 255:red, 173; green, 216; blue, 230}, fill opacity=1] (199.17,207.53+60) .. controls (199.17,209.29+60) and (200.42,210.72+60) .. (201.96,210.72+60) .. controls (203.49,210.72+60) and (204.74,209.29+60) .. (204.74,207.53+60) .. controls (204.74,205.77+60) and (203.49,204.34+60) .. (201.96,204.34+60) .. controls (200.42,204.34+60) and (199.17,205.77+60) .. (199.17,207.53+60) -- cycle ;

\draw (243.29,107.07+60) node [anchor=north west][inner sep=0.75pt]   [align=left] {{\scriptsize $p_1$}};
\draw (200.77,165.67+60) node [anchor=north west][inner sep=0.75pt]   [align=left] {{\scriptsize $p_2$}};
\draw (184.94,205.77+60) node [anchor=north west][inner sep=0.75pt]   [align=left] {{\scriptsize $p_3$}};
\draw (241.88,169.35+60) node [anchor=north west][inner sep=0.75pt]   [align=left] {{\scriptsize $p_4$}};
\draw (277.98,164.77+60) node [anchor=north west][inner sep=0.75pt]   [align=left] {{\scriptsize $p_5$}};
\draw (292.84,206.21+60) node [anchor=north west][inner sep=0.75pt]   [align=left] {{\scriptsize $p_6$}};

\end{tikzpicture}
        \label{fig:pascal}
    \end{subfigure}
    \hfill
    \begin{subfigure}[b]{0.3\textwidth}
        \centering

\tikzset{every picture/.style={line width=0.75pt}} 

\begin{tikzpicture}[x=0.75pt,y=0.75pt,yscale=-1,xscale=1]

\draw  [dash pattern={on 4.5pt off 4.5pt}] (171,191.4) .. controls (171,167.83) and (209.5,148.72) .. (257,148.72) .. controls (304.5,148.72) and (343,167.83) .. (343,191.4) .. controls (343,214.98) and (304.5,234.09) .. (257,234.09) .. controls (209.5,234.09) and (171,214.98) .. (171,191.4) -- cycle ;
\draw    (193.78,161.68) -- (247.97,234.09) ;
\draw    (190.63,182.26) -- (313.94,208.17) ;
\draw    (302.95,154.82) -- (247.97,234.09) ;
\draw    (250.32,148.72) -- (185.92,215.03) ;
\draw    (193.78,161.68) -- (285.67,232.56) ;
\draw    (302.95,154.82) -- (185.92,215.03) ;
\draw    (250.32,148.72) -- (285.67,232.56) ;
\draw [fill={rgb, 255:red, 173; green, 216; blue, 230}, fill opacity=1] (191.42,161.68) .. controls (191.42,162.94) and (192.47,163.96) .. (193.78,163.96) .. controls (195.08,163.96) and (196.13,162.94) .. (196.13,161.68) .. controls (196.13,160.41) and (195.08,159.39) .. (193.78,159.39) .. controls (192.47,159.39) and (191.42,160.41) .. (191.42,161.68) -- cycle ;
\draw [fill={rgb, 255:red, 173; green, 216; blue, 230}, fill opacity=1] (229.9,191.4) .. controls (229.9,192.67) and (230.96,193.69) .. (232.26,193.69) .. controls (233.56,193.69) and (234.62,192.67) .. (234.62,191.4) .. controls (234.62,190.14) and (233.56,189.12) .. (232.26,189.12) .. controls (230.96,189.12) and (229.9,190.14) .. (229.9,191.4) -- cycle ;
\draw [fill={rgb, 255:red, 173; green, 216; blue, 230}, fill opacity=1] (283.31,232.56) .. controls (283.31,233.82) and (284.37,234.85) .. (285.67,234.85) .. controls (286.97,234.85) and (288.02,233.82) .. (288.02,232.56) .. controls (288.02,231.3) and (286.97,230.27) .. (285.67,230.27) .. controls (284.37,230.27) and (283.31,231.3) .. (283.31,232.56) -- cycle ;
\draw [fill={rgb, 255:red, 173; green, 216; blue, 230}, fill opacity=1] (245.61,234.09) .. controls (245.61,235.35) and (246.67,236.37) .. (247.97,236.37) .. controls (249.27,236.37) and (250.32,235.35) .. (250.32,234.09) .. controls (250.32,232.82) and (249.27,231.8) .. (247.97,231.8) .. controls (246.67,231.8) and (245.61,232.82) .. (245.61,234.09) -- cycle ;
\draw [fill={rgb, 255:red, 173; green, 216; blue, 230}, fill opacity=1] (210.27,186.83) .. controls (210.27,188.09) and (211.32,189.12) .. (212.63,189.12) .. controls (213.93,189.12) and (214.98,188.09) .. (214.98,186.83) .. controls (214.98,185.57) and (213.93,184.54) .. (212.63,184.54) .. controls (211.32,184.54) and (210.27,185.57) .. (210.27,186.83) -- cycle ;
\draw [fill={rgb, 255:red, 173; green, 216; blue, 230}, fill opacity=1] (183.57,215.03) .. controls (183.57,216.29) and (184.62,217.32) .. (185.92,217.32) .. controls (187.22,217.32) and (188.28,216.29) .. (188.28,215.03) .. controls (188.28,213.77) and (187.22,212.74) .. (185.92,212.74) .. controls (184.62,212.74) and (183.57,213.77) .. (183.57,215.03) -- cycle ;
\draw [fill={rgb, 255:red, 173; green, 216; blue, 230}, fill opacity=1] (300.59,154.82) .. controls (300.59,156.08) and (301.64,157.1) .. (302.95,157.1) .. controls (304.25,157.1) and (305.3,156.08) .. (305.3,154.82) .. controls (305.3,153.55) and (304.25,152.53) .. (302.95,152.53) .. controls (301.64,152.53) and (300.59,153.55) .. (300.59,154.82) -- cycle ;
\draw [fill={rgb, 255:red, 173; green, 216; blue, 230}, fill opacity=1] (247.97,148.72) .. controls (247.97,149.98) and (249.02,151.01) .. (250.32,151.01) .. controls (251.63,151.01) and (252.68,149.98) .. (252.68,148.72) .. controls (252.68,147.46) and (251.63,146.43) .. (250.32,146.43) .. controls (249.02,146.43) and (247.97,147.46) .. (247.97,148.72) -- cycle ;
\draw [fill={rgb, 255:red, 173; green, 216; blue, 230}, fill opacity=1] (269.17,199.02) .. controls (269.17,200.29) and (270.23,201.31) .. (271.53,201.31) .. controls (272.83,201.31) and (273.89,200.29) .. (273.89,199.02) .. controls (273.89,197.76) and (272.83,196.74) .. (271.53,196.74) .. controls (270.23,196.74) and (269.17,197.76) .. (269.17,199.02) -- cycle ;

\draw (183.96,150.51) node [anchor=north west][inner sep=0.75pt]   [align=left] {{\scriptsize $p_1$}};
\draw (277.42,188.62) node [anchor=north west][inner sep=0.75pt]   [align=left] {{\scriptsize $p_8$}};
\draw (226.37,193.19) node [anchor=north west][inner sep=0.75pt]   [align=left] {{\scriptsize $p_9$}};
\draw (192.88,185.57) node [anchor=north west][inner sep=0.75pt]   [align=left] {{\scriptsize $p_7$}};
\draw (291.55,228.25) node [anchor=north west][inner sep=0.75pt]   [align=left] {{\scriptsize $p_6$}};
\draw (250.71,134.5) node [anchor=north west][inner sep=0.75pt]   [align=left] {{\scriptsize $p_5$}};
\draw (168.1,210.72) node [anchor=north west][inner sep=0.75pt]   [align=left] {{\scriptsize $p_4$}};
\draw (305.69,142.89) node [anchor=north west][inner sep=0.75pt]   [align=left] {{\scriptsize $p_3$}};
\draw (248.36,234.35) node [anchor=north west][inner sep=0.75pt]   [align=left] {{\scriptsize $p_2$}};

\end{tikzpicture}
        \label{fig:pencil}
    \end{subfigure}
    \caption{(Left) Three concurrent lines; (Center) Quadrilateral set; (Right) Pascal configuration.}
    \label{fig:combined}
\end{figure}

\subsection{Grassmann-Cayley algebra}
In this subsection, we adopt the notation from \textup{\cite[Section~3.3]{computationalgorithms}}. The Grassmann-Cayley algebra is the usual exterior algebra $\bigwedge (\mathbb{C}^{d})$ equipped with two operations: the \textit{join} and the \textit{meet}, which are denoted by $\vee$ and $\wedge$, respectively.
The join operation corresponds to the exterior product, though the symbol $\vee$ is used in the Grassmann-Cayley algebra instead of the usual $\wedge$. If $\{e_{1},\ldots,e_{d}\}$ denotes the standard basis of $\CC^{d}$, then
$v_1 \vee \cdots \vee v_{d} = [v_{1} \cdots v_{d}] \, e_{1}\vee \cdots \vee e_{d}$,
where $[v_{1} \cdots v_{d}]$ denotes the determinant of the matrix with columns $v_{1}, \ldots, v_{d}$.
We refer to a join $v_{1}\vee \cdots \vee v_{k}$ of vectors $v_{1},\ldots, v_{k}\in \CC^{d}$ as an \emph{extensor} of length $k$, which we also denote by $v_{1}\cdots v_{k}$. The Grassmann-Cayley algebra is generated by such extensors, with the join operation acting on two of them as follows:
\[
(v_{1}\vee \cdots \vee v_{k})\vee (w_{1}\vee \cdots \vee w_{j}) = v_{1}\vee \cdots \vee v_{k}\vee w_{1}\vee \cdots \vee w_{j}.
\]
The meet operation  
is defined on two extensors $v = v_{1}\cdots v_{k}$ and $w = w_{1}\cdots w_{j}$ of lengths $k$ and $j$,~respectively,~with~$j+k\geq d$,~as:
$$v\wedge w=\sum_{\sigma\in \mathcal{S}(k,j,d)}
\corch{v_{\sigma(1)}\cdots v_{\sigma(d-j)}w_{1}\cdots w_{j}}\cdot v_{\sigma(d-j+1)}\cdots v_{\sigma(k)}$$
where $\mathcal{S}(k,j,d)$ is the set of all permutations of ${1,\ldots,k}$ that satisfy $\sigma(1)<\cdots <\sigma(d-j)$ and $\sigma(d-j+1)<\cdots <\sigma(d)$. If $j+k<d$, the meet of the two extensors is defined as $0$.
Both the join and the meet operations are extended linearly to any elements of the exterior algebra.
Every extensor $v = v_{1}\cdots v_{k}$ is associated with the subspace $\overline{v} = \text{span}\{v_{1},\ldots,v_{k}\}$, and the following properties hold.

\begin{lemma}
Consider the extensors $v=v_{1}\cdots v_{k}$ and $w=w_{1}\cdots w_{j}$ with $j+k\geq d$. Then:
\begin{itemize}
\item The vectors $v_{1},\ldots,v_{k}$ are linearly dependent if and only if $v=0$.
\item Any extensor $v$ is uniquely determined by $\overline{v}$ up to a scalar multiple.
\item The meet of two extensors is an extensor.
\item We have that $v\wedge w\neq 0$ if and only if $\overline{v}+\overline{w}=\CC^{d}$. In this case, we have $\overline{v}\cap\overline{w}=\overline{v\wedge w}.$
\end{itemize}
\end{lemma}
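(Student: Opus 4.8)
The plan is to settle the first two statements with standard facts about the exterior algebra, and then to obtain the last two at once by computing $v\wedge w$ in a basis of $\CC^{d}$ adapted simultaneously to $\overline{v}$ and $\overline{w}$.

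For the first item, if $v_{1},\dots,v_{k}$ are linearly dependent then, after relabeling, $v_{k}=\sum_{i<k}\lambda_{i}v_{i}$, and expanding $v=v_{1}\cdots v_{k-1}\bigl(\sum_{i<k}\lambda_{i}v_{i}\bigr)$ by multilinearity yields a sum of extensors each containing a repeated factor, so $v=0$; conversely, if $v_{1},\dots,v_{k}$ are independent, extend them to a basis of $\CC^{d}$, so that $v$ becomes one of the standard basis extensors of $\bigwedge^{k}\CC^{d}$ and hence $v\neq 0$. For the remaining items we may assume $v\neq 0\neq w$, since otherwise the assertions about $\overline{v}$ and $\overline{w}$ as $k$- and $j$-dimensional subspaces are vacuous; thus $v_{1},\dots,v_{k}$ and $w_{1},\dots,w_{j}$ are independent, $\dim\overline{v}=k$ and $\dim\overline{w}=j$. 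The second item is then immediate, as $\bigwedge^{k}\overline{v}$ is one-dimensional, both $v$ and any other extensor with span $\overline{v}$ lie in it, and so they differ by a scalar.

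For the third and fourth items, put $U=\overline{v}\cap\overline{w}$ and $W=\overline{v}+\overline{w}$, with $\dim W=d'$ and $u:=\dim U=k+j-d'$. I would choose a basis $e_{1},\dots,e_{u}$ of $U$, extend it to a basis $e_{1},\dots,e_{k}$ of $\overline{v}$, extend $e_{1},\dots,e_{u}$ to a basis $e_{1},\dots,e_{u},e_{k+1},\dots,e_{d'}$ of $\overline{w}$ (using that $W/U=(\overline v/U)\oplus(\overline w/U)$, so these choices are compatible and $e_{1},\dots,e_{d'}$ is a basis of $W$), and finally extend to a basis $e_{1},\dots,e_{d}$ of $\CC^{d}$. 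By the second item, $v$ and $w$ are scalar multiples of the coordinate extensor $e_{1}\cdots e_{k}$ and of a reordering of $e_{1}\cdots e_{u}e_{k+1}\cdots e_{d'}$, respectively. Substituting these into the defining shuffle formula for $v\wedge w$, every bracket becomes a $d\times d$ determinant whose $d$ columns are chosen among $e_{1},\dots,e_{d'}$: if $d'<d$ those columns are always linearly dependent, so every summand vanishes and $v\wedge w=0$; if $d'=d$, a summand is nonzero precisely when its $d$ columns are distinct basis vectors, and a short index count shows this singles out exactly one shuffle $\sigma$ (the one whose bracket uses the $v$-indices $\{u+1,\dots,k\}$ and whose trailing extensor uses $\{1,\dots,u\}$), leaving $v\wedge w$ equal to a nonzero scalar times $e_{1}\cdots e_{u}$. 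Hence in every case $v\wedge w$ is zero or an extensor, which is the third item; and $v\wedge w\neq 0$ iff $d'=d$, i.e. iff $\langle\overline{v},\overline{w}\rangle=\CC^{d}$, in which case $\overline{v\wedge w}=\langle e_{1},\dots,e_{u}\rangle=U=\overline{v}\cap\overline{w}$, which is the fourth.

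The main difficulty is organizational rather than conceptual: one must choose the adapted basis so that $v$ and $w$ are simultaneously coordinate extensors, and then track the index ranges and signs in the shuffle sum carefully enough to confirm that exactly one term survives when $W=\CC^{d}$. Once the basis is fixed, each remaining step is a routine determinant-and-permutation computation, and all of the geometric content reduces to the first item together with elementary linear algebra.
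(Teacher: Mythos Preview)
Your proof is correct. The paper does not actually prove this lemma: it is stated without proof in the preliminaries on the Grassmann--Cayley algebra, with the reader referred to Sturmfels' text for background. Your argument via an adapted basis simultaneously diagonalizing $v$ and $w$, followed by the observation that in the shuffle sum every bracket is a determinant in the $e_i$'s and hence vanishes unless its $d$ columns are distinct basis vectors, is the standard route and is carried out cleanly. The index count $k-u=d-j$ pinning down the unique surviving shuffle when $W=\CC^d$ is the crux, and you identified it correctly.
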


The proof of the last two items is not immediate. We refer the reader to~\textup{\cite[Theorem~3.3.2]{computationalgorithms}} for a complete proof.

\begin{example}\label{gc3}
Consider the 
point-line configuration in Figure~\ref{fig:combined}~(Left). Since,  in any realization, the lines $\{p_{1}p_{2},p_{3}p_{4},p_{5}p_{6}\}$ are concurrent, the intersection point of the lines $p_{1}p_{2}$ and $p_{3}p_{4}$ lies on the line $p_{5}p_6$. Thus, the condition for 
$\{p_1 p_2,p_3 p_4,p_5 p_6\}$ to be concurrent is the vanishing of the polynomial:
$$(p_3 p_4) \wedge (p_1 p_2) \vee (p_5 p_6) = (\corch{p_3 p_1 p_2} p_4 - \corch{p_4 p_1 p_2} p_3) \vee p_5 p_6 = \corch{p_1 p_2 p_3} \corch{p_4 p_5 p_6} - \corch{p_1 p_2 p_4} \corch{p_3 p_5 p_6}.$$
Consequently, we can infer that this polynomial lies within the associated matroid ideal.
\end{example}


\section{Computing lifting polynomials 
}\label{constructing}
In this section, we consider Question~\ref{question_intro}, exploring methods for constructing polynomials within the matroid ideals $I_{M}$, where $M$ represents an $n$-paving matroid. We introduce a method rooted in projective geometry, based on the projection and lifting of points within the matroid with respect to a hyperplane. Applying this technique, we provide a 
set of defining equations for the matroid 
variety of paving matroids, in Theorem~\ref{theo lift}. To begin, we provide an overview of the underlying framework.

\medskip

Throughout this section, we use the following notation.  Let $M$ be an $n$-paving matroid with the set of points $\mathcal{P}$ and the set of dependent hyperplanes $\mathcal{L}$. We denote a vector in $\mathbb{C}^{n}$ as $\mathbf{x} = (x_{1}, 
\ldots,
x_{n})$.

\begin{definition}
 Consider a collection of vectors $\gamma = \{\gamma_{p} : p \in \mathcal{P}\}\subset \CC^{n}$ and let $q\in \CC^{n}$. We say that  $\widetilde{\gamma}=\{\widetilde{\gamma}_{p}:p\in \mathcal{P}\}\subset \CC^{n}$ is a {\em lifting} of $\gamma$ from the vector $q$, if for each $p\in \mathcal{P}$ there exists $\lambda_{p}\in \CC$ such that $\widetilde{\gamma}_{p}=\gamma_{p}+\lambda_{p}q$.
\end{definition}


\begin{definition}[Liftability condition]\label{def_lift}
   Let $M$ be an $n$-paving matroid with the set of points $\mathcal{P}$. Consider a collection of vectors $\gamma = \{\gamma_{p} : p \in \mathcal{P}\}\subset \CC^{n}$ of rank $n-1$. Assume these vectors span the hyperplane $H$ of $\CC^{n}$, and let $q\not \in H$. 
    Then 
    $\gamma$ is called {\em liftable} for $M$ from the vector $q$, if 
     the vectors $\gamma_p$ can be lifted  from the point $q$ to a collection of vectors $\widetilde{\gamma} \in V_{\mathcal{C}(M)}$ in a non-degenerate manner. By \emph{non-degenerate} we mean that the lifted vectors $\widetilde{\gamma}$ do not all lie in a common hyperplane. We refer to this condition as the {\em liftability condition}~of~$\gamma$.
\end{definition}

\begin{example}
 Consider the matroid depicted in Figure~\ref{fig:combined} (Center), known as the quadrilateral set $QS$. In Figure~\ref{new figure 2}, we present an example of a liftable collection of vectors for 
$QS$, represented in the projective plane $\mathbb{P}^{2}$. More precisely, the collection of points $\gamma=\{\gamma_{p_{1}},\ldots,\gamma_{p_6}\}\subset \mathbb{P}^{2}$ can be lifted in a non-degenerate manner from the point $q$ to a collection of points within $V_{\mathcal{C}(QS)}$.
\end{example}

\begin{figure}[H]
    \centering
    \includegraphics[width=0.5\textwidth, trim=0 0 0 0, clip]{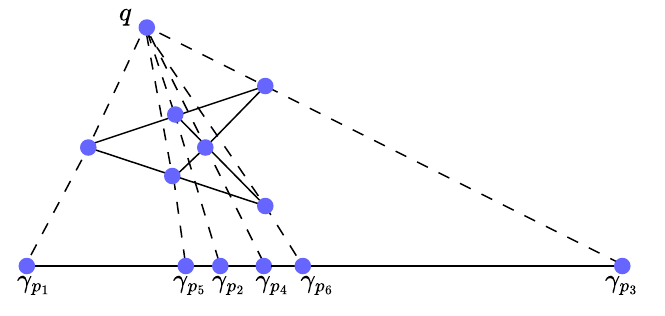}
    \caption{Example of a liftable collection of vectors.}
    \label{new figure 2}
\end{figure}

The setup below is used throughout this subsection.

%


\begin{setup}\label{notation liftings}
 Let $M$ be an $n$-paving matroid with the set of points $\mathcal{P}$. Consider a collection of vectors $\gamma = \{\gamma_{p} : p \in \mathcal{P}\}\subset \mathbb{C}^n$ of rank $n-1$. Assume these vectors span the hyperplane $x_{n} = 0$. Denote these points as $\gamma_{p} = (y_{p}, 
0)$, where $y_{p} \in \mathbb{C}^{ n-1}$ for $p \in \mathcal{P}$, and consider the canonical vector $q = (0, 
\ldots, 0, 1)\in \mathbb{C}^n$.
We write $\corch{ p_{1}, p_{2}, \ldots, p_{n-1}}_{\gamma}$ to represent the determinant of the matrix with column vectors 
 $y_{p_{1}},\ldots,y_{p_{n-1}}$,
for any $p_{1},\ldots,p_{n-1}\in \mathcal{P}$. Next, we define the matrix $\mathcal{M}^{\gamma}(M)$, where the columns are indexed by  $\mathcal{P}$ 
and the rows are indexed by the circuits of size $n$ of $M$, as follows. For each $n$-circuit $c=\{c_{1},c_{2},\ldots,c_{n}\}$, the $c_{i}^{\rm th}$ coordinate of the corresponding row is given by: 
\[ (-1)^{i-1}\left[c_{1},c_{2},\ldots,c_{i-1},\hat{c_{i}},c_{i+1},\ldots, c_{n}\right]_{\gamma}, \]
while the other entries of the row are set to $0$.
Note that the non-zero entries of the row corresponding to $c$ are precisely the $(n-1)$-minors of the matrix with column vectors 
 $y_{c_{1}},\ldots,y_{c_{n}}$.
\end{setup}

\begin{notation}\label{size matroid}
 For a matroid $M$, let $\lvert M \rvert$ denote the size of its ground set.
\end{notation}

The following lemma illustrates the main idea behind constructing lifting polynomials of $M$.

\begin{lemma}\label{lift}
Following Setup~\ref{notation liftings}, $\gamma$ is liftable if all $(\size{M}-n+1)$-minors of 
$\mathcal{M}^{\gamma}(M)$ vanish.
\end{lemma}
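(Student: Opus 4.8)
The plan is to reduce the liftability problem to a system of linear equations whose solvability is governed by the rank of the matrix $\mathcal{M}^{\gamma}(M)$. Since $\gamma$ spans the hyperplane $x_n = 0$ and $q = (0,\ldots,0,1)$, any lift $\widetilde{\gamma}_p$ of $\gamma_p = (y_p, 1, 0)$ from $q$ must be of the form $\widetilde{\gamma}_p = (y_p, 1, t_p)$ for some scalar $t_p \in \mathbb{C}$ (up to the irrelevant scaling, lifting along the line through $\gamma_p$ in direction $q$ adds only a multiple of $q$ to the last coordinate). The collection $\widetilde{\gamma}$ is non-degenerate — i.e.\ not all vectors lie in a common hyperplane — precisely when the $t_p$ are not all equal (if they were all equal to some $t$, all $\widetilde{\gamma}_p$ would lie in $x_n = t\,x_{n-1}$, recalling the second coordinate is $1$). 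So the task is: find $t = (t_p)_{p \in \mathcal{P}} \in \mathbb{C}^{\mathcal{P}}$, not constant, such that $\widetilde{\gamma} \in V_{\mathcal{C}(M)}$.

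The second step is to translate the membership $\widetilde{\gamma} \in V_{\mathcal{C}(M)}$ into equations on $t$. By Definition~\ref{pav}, $\widetilde{\gamma} \in V_{\mathcal{C}(M)}$ iff $\dim(\widetilde{\gamma}_l) \le n-1$ for every hyperplane $l$, and since $M$ is $n$-paving it suffices to check that every $n$-circuit $c = \{c_1, \ldots, c_n\}$ is dependent, i.e.\ $\det\big(\widetilde{\gamma}_{c_1}, \ldots, \widetilde{\gamma}_{c_n}\big) = 0$. Expanding this determinant along the last row $(t_{c_1}, \ldots, t_{c_n})$ gives
\[
\sum_{i=1}^{n} (-1)^{i-1}\, t_{c_i}\, \big[c_1, \ldots, \widehat{c_i}, \ldots, c_n\big]_{\gamma} = 0,
\]
which is exactly the statement that $t$, viewed as a vector indexed by $\mathcal{P}$, is annihilated by the row of $\mathcal{M}^{\gamma}(M)$ corresponding to $c$ (the entries in columns outside $c$ are zero, matching the definition of the matrix). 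Hence $\widetilde{\gamma} \in V_{\mathcal{C}(M)}$ if and only if $t \in \ker \mathcal{M}^{\gamma}(M)$, where $\mathcal{M}^{\gamma}(M)$ acts on the column vector $t \in \mathbb{C}^{\mathcal{P}}$.

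The third step is the rank count. The all-ones vector $\mathbf{1} = (1, \ldots, 1)$ always lies in $\ker \mathcal{M}^{\gamma}(M)$: its pairing with the row for $c$ is $\sum_i (-1)^{i-1}[c_1,\ldots,\widehat{c_i},\ldots,c_n]_\gamma$, the Laplace expansion of the determinant of the $(n-1)\times n$ matrix with columns $(y_{c_j},1)$ augmented by a repeated row, hence $0$. (This just reflects that $\widetilde{\gamma} = \gamma$ when all $t_p$ are equal.) A non-constant lift therefore exists if and only if $\dim \ker \mathcal{M}^{\gamma}(M) \ge 2$, equivalently $\rank \mathcal{M}^{\gamma}(M) \le |M| - 2$, equivalently all $(|M|-n+1)$-minors vanish — wait, one must match the stated bound: $\mathcal{M}^{\gamma}(M)$ has $|M|$ columns, so $\rank \le |M|-2$ is the vanishing of all $(|M|-1)$-minors; to connect with the $(|M|-n+1)$-minors as in the statement one uses that $\mathcal{M}^{\gamma}(M)$ has at most $|\mathcal{L}|$ meaningful rows and the combinatorial structure of $n$-paving matroids forces extra rank drops, so the relevant threshold is $|M|-n+1$. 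Thus if all $(|M|-n+1)$-minors of $\mathcal{M}^{\gamma}(M)$ vanish then $\rank \mathcal{M}^{\gamma}(M) \le |M|-n$, which in particular is $\le |M|-2$ as soon as $n \ge 2$, so the kernel has dimension $\ge 2$ and a non-constant $t$ exists.

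I expect the main obstacle to be the bookkeeping in the third step: being careful that "not all vectors in the same hyperplane" is genuinely equivalent to "$t$ non-constant" (one should check that a non-constant $t$ cannot accidentally produce a degenerate configuration where all $\widetilde{\gamma}_p$ lie in some \emph{other} common hyperplane not of the form $x_n = c\,x_{n-1}$ — this uses that $\gamma$ already has rank $n-1$, so the only way to add a common linear relation is through the last coordinate), and verifying the precise exponent $|M|-n+1$ rather than $|M|-1$ in the minor condition by exploiting that the rows of $\mathcal{M}^{\gamma}(M)$ indexed by circuits inside a single hyperplane are highly dependent. Everything else is linear algebra and a Laplace expansion.
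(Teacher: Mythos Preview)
Your reduction to the kernel of $\mathcal{M}^{\gamma}(M)$ is exactly the paper's approach, and your Laplace expansion is correct. The gap is in the third step: you misidentify the space of \emph{degenerate} lifts. You claim a lift $\widetilde{\gamma}_p=(y_p,1,t_p)$ is degenerate iff $t$ is constant, giving a $1$-dimensional ``trivial'' subspace of the kernel. But $\widetilde{\gamma}$ is degenerate iff all $\widetilde{\gamma}_p$ lie in a common hyperplane $H$ (necessarily with $q\notin H$), and writing $H=\{a_1x_1+\cdots+a_nx_n=0\}$ with $a_n\neq 0$ shows this happens iff $t_p$ is an affine-linear function of $(y_p,1)$, an $(n-1)$-parameter family. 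The paper makes this explicit: choose $n-1$ points $p_1,\ldots,p_{n-1}$ with $\gamma_{p_1},\ldots,\gamma_{p_{n-1}}$ independent, assign their $t$-values arbitrarily, and project every other $\gamma_p$ onto the hyperplane these span; this produces a degenerate element of $V_{\mathcal{C}(M)}$ for every choice, so $\dim\ker\mathcal{M}^{\gamma}(M)\ge n-1$ always, and equality holds exactly when \emph{every} lift is degenerate.

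Thus non-degenerate liftability is equivalent to $\dim\ker\ge n$, i.e.\ $\rank\le |M|-n$, i.e.\ the vanishing of all $(|M|-n+1)$-minors --- the stated bound falls out directly, with no need for your handwave about ``combinatorial rank drops'' among rows. Your own worry in the last paragraph is exactly the issue: for $n\ge 3$ the kernel always has dimension $\ge n-1\ge 2$, so a non-constant $t$ in the kernel always exists, yet $\gamma$ need not be liftable --- those non-constant $t$'s may all correspond to tilted but still degenerate hyperplanes. Replace ``$t$ constant'' by ``$t$ affine in $(y_p,1)$'' (equivalently, argue via the $(n-1)$-dimensional family of degenerate lifts as above) and your proof goes through and matches the paper's.
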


\begin{proof}
Let $\widetilde{\gamma}=\{\widetilde{\gamma}_{p}= (y_{p},z_{p}): p\in \mathcal{P}\}$ be a lifting of $\gamma$ from the vector $q$. For $\widetilde{\gamma}$ to be on $V_{\mathcal{C}(M)}$, the vectors $\set{ \pare{y_{c_{i}},z_{c_{i}}}},1\leq i \leq n$ must be dependent for any circuit 
$c=\{c_{1},\ldots,c_{n}\}\in \mathcal{C}(M)$, which is equivalent to the determinant of the matrix with them as column vectors being $0$. If we expand this determinant in the last row (the row with the numbers $z_{c_{i}}$), we obtain that 
\[\sum_{i=1}^{n}z_{c_{i}}(-1)^{i-1}\corch{c_{1},c_{2},\ldots,c_{i-1},\hat{c_{i}},c_{i+1},\ldots, c_{n}}_{\gamma}=0.
\]
However, the coefficients multiplying $z_{c_{i}}$ in this expression are precisely the non-zero entries of the row corresponding to $c$ in the matrix $\mathcal{M}^{\gamma}(M)$. Consequently, we can deduce that the collection of vectors $\widetilde{\gamma}$ belongs to $V_{\mathcal{C}(M)}$ if and only if the vector $z=\{z_{p}:p\in \mathcal{P}\}$ is in the kernel of the matrix $\mathcal{M}^{\gamma}(M)$.

\medskip
 
{\bf Claim.} $\dim(\ker \mathcal{M}^{\gamma}(M)) \geq n-1$.

\medskip
Since the vectors of $\gamma$ have rank $n-1$, we can select points $p_{1},\ldots,p_{n-1}\in \mathcal{P}$ such that $\gamma_{p_{1}},\ldots,\gamma_{p_{n-1}}$ are linearly independent.  To prove the claim, we will see that for any collection $w_{1},\ldots,w_{n-1}$ of numbers in $\CC$, there exists $\widetilde{\gamma}=\{\widetilde{\gamma}_{p}= (y_{p},z_{p}): p\in \mathcal{P}\}$, a lifting of $\gamma$, such that $z_{p_{i}}=w_{i}$ for every $i\in [n-1]$.
Consider $H$ as the hyperplane generated by the vectors 
$\{ \pare{y_{p_{i}},w_{i}}:i\in \corch{n-1}\}$, and note that $q\notin H$. Then, for each $p\in \mathcal{P}$, if we choose the number $z_{p}$ such that $ (y_{p},z_{p})$ belongs uniquely to $H$ (we project the point $\gamma_{p}$ onto the hyperplane $H$ from the vector $q$), it follows that the vectors $\{ (y_{p},z_{p}):p\in \mathcal{P}\}$ all belong to $H$. 
Thus, this collection of vectors specifically belongs~to~$V_{\mathcal{C}(M)}$.  Also note that $z_{p_{i}}=w_{i}$ for every $i\in [n-1]$.
Consequently, the vector $z$ belongs to the kernel of $\mathcal{M}^{\gamma}(M)$. 
However,  since $w_{1},\ldots,w_{n-1}$ are arbitrary, we conclude that the dimension of the kernel is at least $n-1$.

\medskip
From this argument, we easily conclude that the dimension of the kernel is $n-1$ if and only if every vector in the kernel can be represented in this form. This equivalence implies that all liftings of $\gamma$ to a collection of vectors in $V_{\mathcal{C}(M)}$ are degenerate.
Hence, the vectors of $\gamma$ can be lifted from $q$ in a non-degenerate manner if and only if $\dim(\ker \mathcal{M}^{\gamma}(M))\geq n$. This condition holds if and only if the rank of the matrix is at most $\lvert M\rvert-n$, which in turn occurs if and only if all $(\lvert M\rvert-n+1)$-minors of the matrix vanish. This completes the proof.
\end{proof}

\begin{definition}[Liftable matroid]\normalfont\label{liftable}
Let $M$ be an $n$-paving matroid. We say that $M$ is $\textit{liftable}$ if every collection of vectors $\gamma=\{\gamma_{p}:p\in \mathcal{P}\}\subset \mathbb{C}^{n}$ of rank $n-1$ within a hyperplane $H$ can be lifted in a non-degenerate manner to a collection of vectors in $V_{\mathcal{C}(M)}$ from any vector $q\notin H$.
\end{definition}

\begin{remark}\label{different definition}
 It is important to note that our definition of a  liftable matroid differs from the definition of liftable point-line configuration in \cite{Fatemeh3}. In \cite{Fatemeh3}, a point-line configuration is considered liftable if every rank-two collection of vectors $\gamma=\{\gamma_{p}:p\in \mathcal{P}\}\subset \mathbb{C}^{3}$ can be lifted to a collection of vectors in $\Gamma_{M}$. However, for our purposes, the definition provided in Definition~\ref{liftable} is more suitable.
\end{remark}

\begin{example}
 The quadrilateral set, in Figure~\ref{fig:combined} (Center), was shown to be non-liftable in \cite{Fatemeh3}. On the other hand, the non-liftability of the matroid in Figure~\ref{fig:combined} (Right) follows easily from Example~\ref{conic}.
\end{example}

For the following definition, recall Notation~\ref{initial notation}.

\begin{definition}[Liftability matrix]\label{lif mat}
Let $M$ be an $n$-paving matroid, and let $q\in \mathbb{C}^{n}$. We define the \textit{liftability matrix} $\mathcal{M}_{q}(M)$ of $M$ and $q$, as the matrix with columns indexed by the points $\mathcal{P}$ and rows indexed by the $n$-circuits of $M$. The entries of this matrix are defined as follows: for each $n$-circuit $c=\{c_{1},c_{2},\ldots,c_{n}\}$, the $c_{i}^{\text{th}}$ coordinate of the corresponding row is given by the polynomial 
\[ 
(-1)^{i-1}[c_{1},c_{2},\ldots,c_{i-1},\hat{c_{i}},c_{i+1},\ldots, c_{n},q],
\]
while the other entries of the row are set to $0$.
Note that the entries of the matrix are polynomials, not numbers. For a collection of vectors $\gamma=\{\gamma_{p}:p\in \mathcal{P}\}\subset \CC^{n}$, we denote $\mathcal{M}_{q}^{\gamma}(M)$ as the matrix obtained by evaluating the entries of $\mathcal{M}_{q}(M)$ at the vectors of $\gamma$,   i.e.~substituting the vector of $n$ indeterminates associated with each $p\in \mathcal{P}$ by the vector $\gamma_{p}$.
\end{definition}

\begin{example}
 Consider the quadrilateral set, depicted in Figure~\ref{fig:combined} (Center). The quadrilateral set $QS$ has points $\mathcal{P}=\set{p_{1},p_2,p_3,p_4,p_5,p_6}$ and its circuits of size $3$ are \[\set{\{p_1,p_2,p_3\},\{p_1,p_5,p_6\},\{p_2,p_4,p_6\},\{p_3,p_4,p_5\}}.\] Therefore, for any $q\in \CC^{3}$, its liftability matrix is the following matrix:
\begin{equation}\label{matrix quad}\mathcal{M}_{q}(QS)=\begin{pmatrix}
\corch{p_2p_3q} & -\corch{p_1p_3q} & \corch{p_1p_2q} &0 &0 &0 \\
\corch{p_5p_6q} & 0 &0 &0 &-\corch{p_1p_6q} &\corch{p_1p_5q} \\ 
 0  &\corch{p_4p_6q} &0 &-\corch{p_2p_6q} & 0&\corch{p_2p_4q}    \\
 0& 0 & \corch{p_4p_5q} &-\corch{p_3p_5q}& \corch{p_3p_4q}& 0
\end{pmatrix}.\end{equation}
\end{example}

The~following proposition is crucial for constructing polynomials in $I_M$ from its liftability~matrices. 

\begin{proposition} \label{lp}
The $(\lvert M\rvert-n+1)$-minors of 
$\mathcal{M}_{q}(M)$ are polynomials within $I_{M}$, for every $q\in \mathbb{C}^{n}$.
\end{proposition}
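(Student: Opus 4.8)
The plan is to show that the $(\lvert M\rvert-n+1)$-minors of the liftability matrix $\mathcal{M}_q(M)$ vanish on every realization $\gamma\in\Gamma_M$, and then pass to the Zariski closure $V_M$. Since $I_M = I(V_M)$ and $\Gamma_M$ is dense in $V_M$, it suffices to prove that each such minor, viewed as a polynomial in the entries of the vectors $\gamma_p$, vanishes whenever $\gamma$ is an actual realization of $M$.

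The key observation linking this proposition to Lemma~\ref{lift} is that projecting a full-rank realization onto a hyperplane produces a collection of vectors that is liftable by construction. More precisely, fix $\gamma\in\Gamma_M$, so the vectors $\gamma_p$ have rank $n$, and fix $q\in\mathbb{C}^n$. First I would reduce to the case where $q$ is not in the span of $\{\gamma_p : p\in l\}$ for any hyperplane $l\in\mathcal{L}_M$; the complementary case should be handled by a separate (easier) argument or by a density/continuity argument in $q$, noting that the minors are polynomial in $q$ as well. Assuming $q$ is outside $\gamma$, let $H$ be a hyperplane of $\mathbb{C}^n$ not containing $q$, and let $\gamma' = \{\gamma'_p : p\in\mathcal{P}\}$ be the projection of $\gamma$ onto $H$ from $q$. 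Then $\gamma'$ has rank $n-1$, lies in $H$, and — crucially — $\gamma$ itself is a non-degenerate lift of $\gamma'$ from $q$ lying in $V_{\mathcal{C}(M)}$ (indeed in $\Gamma_M \subset V_{\mathcal{C}(M)}$). After a linear change of coordinates carrying $H$ to $\{x_n=0\}$ and $q$ to the canonical vector $(0,\ldots,0,1)$, the matrix $\mathcal{M}^{\gamma'}(M)$ of Definition~\ref{def_lift} is exactly $\mathcal{M}_q^{\gamma}(M)$ evaluated appropriately, up to the nonzero scalars coming from the change of basis; here I would check carefully that the bracket $[c_1,\ldots,\hat{c_i},\ldots,c_n,q]$ evaluated at $\gamma$ equals (a fixed nonzero multiple of) the bracket $[c_1,\ldots,\hat{c_i},\ldots,c_n]_{\gamma'}$, which is the content of expanding the $n\times n$ determinant along the $q$-column.

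Now run the argument in the proof of Lemma~\ref{lift} in reverse: since $\gamma$ is a \emph{non-degenerate} lift of $\gamma'$ in $V_{\mathcal{C}(M)}$, the kernel of $\mathcal{M}^{\gamma'}(M)$ has dimension at least $n$ (the degenerate lifts already contribute an $(n-1)$-dimensional space, and $\gamma$ provides one more independent direction, since a non-degenerate lift is not a linear combination of degenerate ones). Hence $\mathrm{rank}\,\mathcal{M}_q^{\gamma}(M) \le \lvert M\rvert - n$, so all $(\lvert M\rvert-n+1)$-minors vanish at $\gamma$. As $\gamma\in\Gamma_M$ and $q$ were arbitrary (modulo the genericity reduction on $q$), each such minor is a polynomial vanishing on a dense subset of $V_M$, hence lies in $I_M$.

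The main obstacle I anticipate is the bookkeeping around the change of coordinates and the "non-degenerate lift implies $\dim\ker \ge n$" step: one must argue that a genuine lift $\gamma$ whose vectors do not all lie in one hyperplane genuinely adds a dimension to the kernel beyond the span of the degenerate (hyperplane-confined) lifts. This is essentially the equivalence already extracted in Lemma~\ref{lift} — "all liftings are degenerate iff $\dim\ker = n-1$" — so the work is to invoke it cleanly rather than reprove it. A secondary technical point is justifying the reduction to $q$ outside $\gamma$; the cleanest route is to observe that the set of $q$ for which the claim might fail is contained in a proper closed subset, and the minors are polynomials in $q$, so vanishing on a dense set of $q$ forces identical vanishing.
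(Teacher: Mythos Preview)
Your proposal is correct and follows essentially the same approach as the paper: project a realization onto a hyperplane from $q$, observe that the realization itself is a non-degenerate lift, invoke Lemma~\ref{lift}, and identify the bracket entries of $\mathcal{M}_q(M)$ at the realization with those of $\mathcal{M}^{\gamma'}(M)$ at the projection. The paper streamlines the bookkeeping by applying the projective transformation \emph{first} (sending $q$ to $(0,\ldots,0,1)$) and then noting directly that $[\gamma_{p_1},\ldots,\gamma_{p_{n-1}},q]=[\delta_{p_1},\ldots,\delta_{p_{n-1}},q]$, which removes the need for your separate genericity reduction on $q$ and the ``up to nonzero scalars'' discussion.
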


\begin{proof}
Given that $\Gamma_{M}$ is preserved by projective transformations and the vanishing of the minors is also a projective invariant (due to their homogeneity), we can apply a projective transformation to assume, without loss of generality, that $q=(0,\ldots,0,1)$. Consider $\delta=\{\delta_{p}:p\in \mathcal{P}\}$, which is any realization of $M$. For each $p\in \mathcal{P}$, we express the vector $\delta_{p}$ as  $\delta_{p}=(y_{p},z_{p})$, where $y_{p}\in\mathbb{C}^{n-1}$. Then, we define the collection of vectors
$
\gamma:=\{\gamma_{p}= (y_{p},0):p\in \mathcal{P}\}
$
within the hyperplane $x_{n}=0$.
The vectors of $\gamma$ are projections of the vectors of $\delta$ onto that hyperplane. Consequently, it is evident that the points $\{\gamma_{p}:p\in \mathcal{P}\}$ can be lifted in a non-degenerate manner from the point $q$ to a collection of vectors in $V_{\mathcal{C}(M)}$ (specifically, that collection of vectors is $\delta$).

Applying Lemma~\ref{lift}, we establish that the $(\lvert M\rvert-n+1)$-minors of the matrix $\mathcal{M}_{q}^{\gamma}(M)$ vanish. However,  considering that ${[\gamma_{p_{1}},\ldots,\gamma_{p_{n-1}},q]=[\delta_{p_{1}},\ldots,\delta_{p_{n-1}},q]}$ for any $p_{1},\ldots,p_{n-1}\in \mathcal{P}$, we recognize that these minors are identical to those of the matrix $\mathcal{M}_{q}^{\delta}(M)$, which also vanish. Hence, we have shown that the $(\lvert M\rvert-n+1)$-minors of $\mathcal{M}_{q}(M)$ vanish at every element of $\Gamma_{M}$,  since $\delta \in \Gamma_{M}$ is arbitrary,
thereby confirming that they are polynomials within $I_{M}$.
\end{proof}

\begin{example}
 Applying Proposition~\ref{lp} for the quadrilateral set, we obtain that the $4\times 4$ minors of the matrix $\mathcal{M}_{q}(QS)$ of Equation~\eqref{matrix quad} are polynomials within the ideal $I_{QS}$, 
for any $q\in \CC^{3}$.
\end{example}

\begin{lemma}\label{lifting 3}
Let $\gamma=\set{\gamma_{p}:p\in \mathcal{P}}$ be a collection of vectors of rank $n-1$ within a hyperplane $H$. Then there exists a  non-degenerate lifting of the vectors of $\gamma$ from a vector $q\notin H$ to a collection of vectors in  $V_{\mathcal{C}(M)}$ if and only if the $(\size{M}-n+1)$-minors of the matrix $\mathcal{M}_{q}^{\gamma}\pare{M}$ vanish. Moreover, if this lifting exists, then we can make the lifted vectors arbitrarily close to the original ones.
\end{lemma}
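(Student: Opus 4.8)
The plan is to build on Lemma~\ref{lift} and its proof. That proof already contains, in a normalized coordinate system, the equivalence between the existence of a non-degenerate lifting and the vanishing of the $(\lvert M\rvert-n+1)$-minors; so the two genuinely new points to address here are (i) passing from the canonical vector $q=(0,\ldots,0,1)$ used there to an arbitrary $q\notin H$, and (ii) strengthening bare existence to the quantitative ``arbitrarily close'' statement.

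First I would put things in standard position, exactly as in the proof of Proposition~\ref{lp}. Since $q\notin H$ we have $\mathbb{C}^{n}=H\oplus\mathbb{C}q$, so there is a linear change of coordinates $T$ with $T(H)=\{x_{n}=0\}$ and $Tq=(0,\ldots,0,1)$, and by composing with a suitable automorphism of $\{x_{n}=0\}$ (which a generic choice of $T$ realizes, since the finitely many projected points need only avoid one hyperplane) I may further assume $T\gamma_{p}=(y_{p},1,0)$ as in Definition~\ref{def_lift}. Applying $T$ simultaneously to all vectors maps $V_{\mathcal{C}(M)}$ to itself, preserves non-degeneracy of a collection and the property of being a lifting from the relevant vector, and — because $[Tv_{1},\ldots,Tv_{n-1},Tq]=\det(T)\,[v_{1},\ldots,v_{n-1},q]$ — rescales every entry of $\mathcal{M}_{q}^{\gamma}(M)$ by the nonzero constant $\det(T)$, hence preserves its rank and the vanishing of its $(\lvert M\rvert-n+1)$-minors. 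So it suffices to treat the normalized situation, where $\mathcal{M}_{q}^{\gamma}(M)$ agrees up to sign with the matrix $\mathcal{M}^{\gamma}(M)$ of Lemma~\ref{lift}.

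In that situation I would invoke the proof of Lemma~\ref{lift}, which gives exactly what is needed: liftings $\widetilde{\gamma}_{p}=(y_{p},1,z_{p})$ lying in $V_{\mathcal{C}(M)}$ correspond bijectively to vectors $z=(z_{p})_{p\in\mathcal{P}}\in\ker\mathcal{M}_{q}^{\gamma}(M)$, and such a lifting is \emph{degenerate} (all vectors in one hyperplane) precisely when $z_{p}=\ell(y_{p},1)$ for some linear functional $\ell$ on $\mathbb{C}^{n-1}$; since $\gamma$ has rank $n-1$, the vectors $(y_{p},1)$ span $\mathbb{C}^{n-1}$, so these $z$ form a linear subspace $K_{0}\subseteq\ker\mathcal{M}_{q}^{\gamma}(M)$ of dimension exactly $n-1$ — this is the subspace produced in that proof by projecting $\gamma$ onto the hyperplane spanned by a chosen basis. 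Therefore a non-degenerate lifting into $V_{\mathcal{C}(M)}$ exists if and only if $\ker\mathcal{M}_{q}^{\gamma}(M)\supsetneq K_{0}$, i.e. $\dim\ker\mathcal{M}_{q}^{\gamma}(M)\ge n$, i.e. $\rank\mathcal{M}_{q}^{\gamma}(M)\le\lvert M\rvert-n$, i.e. all $(\lvert M\rvert-n+1)$-minors of $\mathcal{M}_{q}^{\gamma}(M)$ vanish; this is the asserted equivalence. For the ``moreover'' part I would, assuming such a lifting exists, pick $z^{*}\in\ker\mathcal{M}_{q}^{\gamma}(M)\setminus K_{0}$; because both $\ker\mathcal{M}_{q}^{\gamma}(M)$ and $K_{0}$ are linear subspaces, $tz^{*}$ still lies in $\ker\mathcal{M}_{q}^{\gamma}(M)$ and still avoids $K_{0}$ for every scalar $t\neq 0$, so $\widetilde{\gamma}^{(t)}_{p}:=(y_{p},1,tz^{*}_{p})$ is a non-degenerate lifting of $\gamma$ contained in $V_{\mathcal{C}(M)}$, and $\widetilde{\gamma}^{(t)}_{p}\to(y_{p},1,0)=\gamma_{p}$ as $t\to 0$. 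Pulling back by $T^{-1}$ — linear, continuous, and compatible with non-degeneracy and with membership in $V_{\mathcal{C}(M)}$ — then yields non-degenerate liftings of the original $\gamma$ from the original $q$ that converge to $\gamma$.

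The only steps needing real care, and where I expect any difficulty, are the identification of the degenerate liftings with the \emph{linear} (not merely affine) subspace $K_{0}$ of dimension precisely $n-1$, which is exactly where the rank-$(n-1)$ hypothesis on $\gamma$ enters, and the observation that scaling by $t$ keeps the lifting non-degenerate — true precisely because $K_{0}$ is a linear subspace, so $z^{*}\notin K_{0}$ forces $tz^{*}\notin K_{0}$ for all $t\neq 0$. Everything else is a re-reading of Lemma~\ref{lift} combined with the projective-invariance bookkeeping already carried out for Proposition~\ref{lp}.
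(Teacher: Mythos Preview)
Your proposal is correct and follows essentially the same approach as the paper's proof: reduce to the normalized coordinates $q=(0,\ldots,0,1)$, $H=\{x_n=0\}$ via a projective transformation, invoke Lemma~\ref{lift}, and transport the conclusion back using the invariance of the minor-vanishing condition. Your treatment of the ``moreover'' clause is in fact more careful than the paper's---you explicitly identify the degenerate liftings with a linear subspace $K_{0}$ and use the scaling argument $tz^{*}\notin K_{0}$ for $t\neq 0$, whereas the paper simply says one chooses $z$ in the kernel with small coordinates without spelling out why such small $z$ may be taken non-degenerate.
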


\begin{proof}
Let $T$ be a projective transformation that sends $q$ to $\pare{0,\ldots,0,1}$ and the hyperplane $H$ to the hyperplane $x_{n}=0$. We know that the vectors of $\gamma$ are liftable in a non-degenerate way from $q$ if and only if the vectors $\set{T(\gamma_{p}):p\in \mathcal{P}}$ can be lifted in a non-degenerate way from  $T(q)=\pare{0,\ldots,0,1}$. Using Lemma~\ref{lift}, we know that this happens if and only if the $\lvert M\rvert-n+1$ minors of the matrix $\mathcal{M}_{T(q)}(M)$ vanish on the vectors $\{T(\gamma_{p}):p\in \mathcal{P}\}$. However, this occurs if and only if the $(\lvert M\rvert-n+1)$-minors of the matrix $\mathcal{M}_{q}(M)$ vanish on the vectors of $\gamma$, since we are using $T(q)=(0,\ldots,0,1)$ and the vanishing of the minors is invariant under projective transformations. This proves the first statement.

Now, suppose that the non-degenerate lifting exists. Since the vectors of $\gamma$ can be lifted in a non-degenerate way from $q$, we have that the vectors $\{T(\gamma_{p}):p\in \mathcal{P}\}$ can also be lifted in a non-degenerate way from the point $T(q)=(0,\ldots,0,1)$. Consequently, we can make this lifting arbitrarily close to the original vectors by choosing the vector $z$ in the kernel with small coordinates. Therefore, the same holds true for the vectors of $\gamma$ and $q$, which completes the proof of the second statement.
\end{proof}

The following lemma provides a sufficient condition for a paving matroid to be liftable.

\begin{lemma}\label{lifts}
Let $M$ be an $n$-paving matroid. Suppose that the number of circuits of size $n$ of $M$ is $k$, and $\size{M}\geq k+n$. Then, $M$ is liftable. 
\end{lemma}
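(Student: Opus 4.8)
The plan is to derive liftability of $M$ directly from the characterizations already established, via an essentially trivial dimension count. Recall from Definition~\ref{liftable} that to show $M$ is liftable I must show that every collection of vectors $\gamma=\{\gamma_{p}:p\in\mathcal{P}\}\subset\mathbb{C}^{n}$ of rank $n-1$ lying in a hyperplane $H$ can be lifted in a non-degenerate manner to a collection in $V_{\mathcal{C}(M)}$ from any $q\notin H$. By Lemma~\ref{lifting 3}, for a fixed such pair $(\gamma,q)$ this is equivalent to the vanishing of all $(\size{M}-n+1)$-minors of the evaluated matrix $\mathcal{M}_{q}^{\gamma}(M)$, which, following the argument in the proof of Lemma~\ref{lift}, is in turn equivalent to $\dim(\ker\mathcal{M}_{q}^{\gamma}(M))\geq n$.

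Next I would simply count rows and columns of this matrix. By Definition~\ref{lif mat}, $\mathcal{M}_{q}^{\gamma}(M)$ has one column per point of $M$, hence $\size{M}$ columns, and one row per circuit of size $n$, hence exactly $k$ rows. Therefore $\rank\mathcal{M}_{q}^{\gamma}(M)\leq k$, so that $\dim(\ker\mathcal{M}_{q}^{\gamma}(M))\geq\size{M}-k$. The hypothesis $\size{M}\geq k+n$ gives $\size{M}-k\geq n$, hence $\dim(\ker\mathcal{M}_{q}^{\gamma}(M))\geq n$. Equivalently, since $\size{M}-n+1 > k$ is exactly the number of rows, the matrix has no $(\size{M}-n+1)$-minors at all, so the vanishing condition of Lemma~\ref{lifting 3} is vacuously satisfied. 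As $\gamma$ and $q$ were an arbitrary admissible pair, $M$ is liftable.

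I do not expect a genuine obstacle here: all the substance lives in Lemmas~\ref{lift} and~\ref{lifting 3}, and the present statement merely records the observation that a paving matroid with few $n$-circuits relative to its number of points has a liftability matrix too ``thin'' for its large minors to survive. The only points requiring care are bookkeeping: keeping the two indexings straight (rows indexed by $n$-circuits, columns by points), and verifying that the inequality $\size{M}\geq k+n$ is precisely what forces the relevant minor size $\size{M}-n+1$ to exceed the number of rows $k$.
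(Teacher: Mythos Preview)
Your argument is correct and essentially identical to the paper's: both bound $\rank\mathcal{M}_{q}^{\gamma}(M)\leq k\leq\size{M}-n$, deduce $\dim(\ker\mathcal{M}_{q}^{\gamma}(M))\geq n$, and invoke Lemma~\ref{lifting 3}. Your additional remark that the $(\size{M}-n+1)$-minors vanish vacuously (since $\size{M}-n+1>k$) is a harmless elaboration of the same dimension count.
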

\begin{proof}
We need to prove that any collection of vectors $\gamma=\{\gamma_{p}:p\in \mathcal{P}\}$ of rank $n-1$ within a hyperplane $H$ 
is liftable from any $q\notin H$. Since $\text{rank} \mathcal{M}_{q}^{\gamma}(M)\leq k\leq \lvert M\rvert-n$, the dimension of the kernel of $\mathcal{M}_{q}^{\gamma}(M)$ is at least $n$. This implies that the $(\lvert M\rvert-n+1)$-minors of this matrix vanish. Then the claim follows by applying Lemma~\ref{lifting 3}.
\end{proof}

\begin{figure}[H]
    \centering
    \includegraphics[width=0.5\textwidth, trim=0 0 0 0, clip]{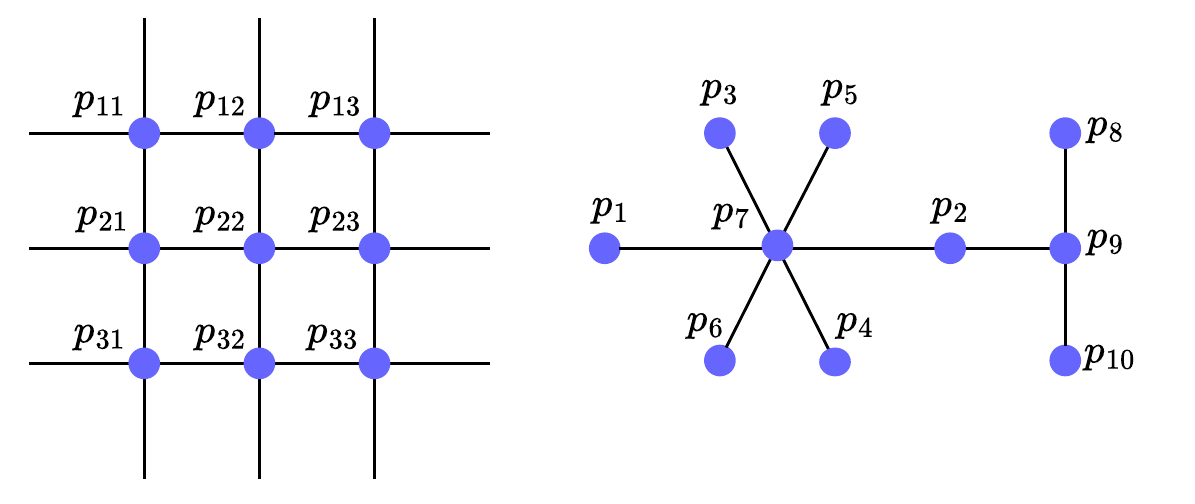}
    \caption{(Left) $3\times 3$ grid; (Right) Forest configuration.}
    \label{new figure}
\end{figure}

\begin{example}\label{example 3 grid}
 Consider the $3 \times 3$ grid depicted in Figure~\ref{new figure} (Left). This point-line configuration has 6 circuits of size 3, corresponding to the 6 lines of the grid, and 9 elements in its ground set. Therefore, by applying Lemma~\ref{lifts}, we conclude that this matroid is liftable. On the other hand, the configuration illustrated in Figure~\ref{new figure} (Right) is a forest-type configuration, as defined in \textup{\cite[Definition~5.1]{clarke2021matroid}}. These types of configurations were shown to be liftable in \textup{\cite[Lemma~3.2]{Fatemeh3}}. In particular, the point-line configurations in Figures~\ref{fig:combined} (Left) and Figure~\ref{new figure} (Right) are liftable, since both are forest configurations.
\end{example}


\begin{definition}\label{cl 2}
 We define $I_{M}^{\text{lift}}$ 
as the ideal generated by all the $(\lvert N\rvert-n+1)$-minors of the liftability matrices $\mathcal{M}_{q}(N)$, where $N$ varies over all the submatroids of $M$ of full rank and $q$ varies over $\mathbb{C}^{n}$. 
\end{definition}

\noindent Note that providing a finite generating set for the ideal $I_{M}^{\text{lift}}$ is not straightforward, as $q$ varies over~$\mathbb{C}^{n}$.

\medskip
Now, we present the main result of this section, which generalizes Proposition~\ref{lp}.

\begin{theorem}\label{sub}
Let~$M$ be an~$n$-paving matroid and $N$ be a submatroid of~$M$ with full rank. Then, for any $q\in \mathbb{C}^{n}$, the $(|N|-n+1)$-minors of $\mathcal{M}_{q}(N)$ are polynomials within $I_{M}$. Moreover,~$I_{M}^{\text{lift}}\subseteq I_M$.
\end{theorem}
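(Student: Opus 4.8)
The plan is to derive Theorem~\ref{sub} from Proposition~\ref{lp} together with Lemma~\ref{lem sub}, reducing the statement about a submatroid $N$ to the already-established statement about the matroid itself. The key observation is that Proposition~\ref{lp} applies to \emph{any} $n$-paving matroid, and $N$ is an $n$-paving matroid in its own right (being a full-rank submatroid of the $n$-paving matroid $M$, as recorded in Definition~\ref{subm}). Hence, applying Proposition~\ref{lp} to $N$ rather than to $M$, the $(|N|-n+1)$-minors of the liftability matrix $\mathcal{M}_q(N)$ are polynomials in $I_N$, for every $q\in\mathbb{C}^n$.

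Next I would invoke Lemma~\ref{lem sub}, which gives $I_N\subseteq I_M$ for any submatroid $N\subseteq M$. Combining the two facts, every $(|N|-n+1)$-minor of $\mathcal{M}_q(N)$ lies in $I_N\subseteq I_M$, which is exactly the first assertion of the theorem. One small point to address carefully: the polynomial ring in Proposition~\ref{lp} and Definition~\ref{cir} is built on an $n\times d$ matrix of indeterminates, where $d=|M|$; when we apply Proposition~\ref{lp} to $N$ we get polynomials in the variables indexed by the points of $N$, which we regard as a subset of the variables of $R=K[X]$ for $M$. Under this identification $I_N$, and hence the minors, sit inside $R$, and the inclusion $I_N\subseteq I_M$ of Lemma~\ref{lem sub} is literally an inclusion of ideals of $R$; so no extra bookkeeping is needed beyond noting this identification explicitly.

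For the final sentence, $I_M^{\mathrm{lift}}\subseteq I_M$: by Definition~\ref{cl 2}, $I_M^{\mathrm{lift}}$ is generated by all the ideals $L_q(M)$, $q\in\mathbb{C}^n$, and each $L_q(M)$ is in turn generated by the $(|N|-n+1)$-minors of $\mathcal{M}_q(N)$ as $N$ ranges over all full-rank submatroids of $M$. The first part of the theorem shows every such generator lies in $I_M$; since $I_M$ is an ideal, the ideal generated by all these generators is contained in $I_M$, giving $I_M^{\mathrm{lift}}\subseteq I_M$.

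I do not anticipate a genuine obstacle here: the theorem is essentially a packaging of Proposition~\ref{lp} (which does the real geometric work via the projection/lifting argument of Lemma~\ref{lift} and Lemma~\ref{lifting 3}) with the elementary monotonicity Lemma~\ref{lem sub}. The only place requiring a modicum of care is the variable-identification remark above, ensuring that "minors of $\mathcal{M}_q(N)$" are being read as elements of the same ambient ring $R$ in which $I_M$ lives; once that is stated, the proof is a two-line deduction.
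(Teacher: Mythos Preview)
Your proposal is correct and matches the paper's proof essentially verbatim: apply Proposition~\ref{lp} to the full-rank submatroid $N$ (which is itself an $n$-paving matroid by Definition~\ref{subm}) to place the minors in $I_N$, then invoke Lemma~\ref{lem sub} to pass to $I_M$. Your additional remarks on the variable identification and the deduction of $I_M^{\mathrm{lift}}\subseteq I_M$ from Definition~\ref{cl 2} are correct elaborations that the paper leaves implicit.
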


\begin{proof}
Since $N$ has full rank, we know that it is also an $n$-paving matroid. By Proposition~\ref{lp}, we deduce that the $(\lvert N\rvert-n+1)$-minors of $\mathcal{M}_{q}(N)$ are within $I_{N}$. Then, the result holds by Lemma~\ref{lem sub}.
\end{proof}

\section{
Paving matroids
with no points of degree greater than $2$
}\label{sec 4}
Throughout this section, we assume that $M$ is a paving matroid with no points of degree greater than~$2$. We will avoid repeating this assumption in the statements of lemmas. We apply the construction outlined in Section~\ref{constructing} to provide a complete generating set,  up to radical, for the associated matroid ideal in Theorem~\ref{theo lift}. Additionally, we present a decomposition of the circuit variety 
in~Proposition~\ref{dec}.

\subsection{Infinitesimal motion}
\begin{definition}
 Consider $\gamma$ a collection of vectors in $\CC^{n}$. 
Let $X$ denote a specific property. We say that it is possible to apply an {\em infinitesimal motion} to $\gamma$ to yield a new collection of vectors satisfying property $X$, if for every $\epsilon>0$, it is feasible to select, for each vector $v\in \gamma$, a vector $\widetilde{v}$ such that $\lVert v-\widetilde{v} \rVert <\epsilon$, ensuring that the new collection of vectors adheres to property $X$. Generally, when we use the term {\em infinitesimal}, 
we are referring to a motion that can be chosen to be arbitrarily close to $0$.
When discussing an {\em infinitesimal motion} of a $k$-dimensional subspace $S\subset \mathbb{C}^{n}$, we are considering $S$ as an extensor $v_{1}
 \cdots v_{k}$, and applying an infinitesimal motion to $\{v_{i}:i\in [k]\}$ to obtain another $k$-dimensional subspace corresponding to the extensor with the new vectors.
\end{definition}


\begin{definition}[Lifting number]\normalfont \label{lif num}
 Recall the notion of $R_{\gamma}$ from Definition~\ref{pav}. Let $M$ be an $n$-paving matroid, and let $\gamma$ be a collection of vectors in $V_{\mathcal{C}(M)}$. We define the \textit{lifting number} of $\gamma$ as:
\[
c(M,\gamma) = \size{\{(l_{1},l_{2}) \in R_{\gamma} \times R_{\gamma} \ \text{such that} \ l_{1} \neq l_{2} \ \text{and} \ \gamma_{l_{1}} = \gamma_{l_{2}}\}}.
\]
\end{definition}
With the notation established, our first main result in this section is the following proposition. 
\begin{proposition}\label{coincide}
Let $M$ be an $n$-paving matroid, 
and let $\gamma$ be a collection of vectors in $V_{\mathcal{C}(M)}$ such that $c(M,\gamma)=0$. Then, it is possible to perform an infinitesimal motion on the vectors of $\gamma$ 
 and obtain a collection of vectors in $\Gamma_{M}$.
\end{proposition}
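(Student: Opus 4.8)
Since $V_M=\overline{\Gamma_M}$, proving the proposition is the same as showing that $\gamma$ is a limit of configurations in $\Gamma_M$, which is exactly the asserted infinitesimal motion. I read the hypothesis $M_\gamma=0$ as saying that all of the lifting polynomials of Section~\ref{constructing} vanish at $\gamma$; in particular, by Lemma~\ref{lifting 3}, every sub-configuration of $\gamma$ lying in a hyperplane admits a non-degenerate lifting arbitrarily close to it. The plan is to induct on the lexicographically ordered pair
\[
\Big(\ \#\{l\in\mathcal{L}:\dim\gamma_l\le n-2\}\,,\ \#\{\text{non-singleton blocks of }\overline{R_\gamma}\}\ \Big),
\]
the base case being a regular $\gamma$ whose hyperplane spans $\{\gamma_l\}_{l\in\mathcal{L}}$ are pairwise distinct.

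\textbf{Base case.} A regular $\gamma$ with pairwise distinct spans already prescribes an arrangement of $(n-1)$-dimensional subspaces, one per hyperplane of $M$. Since $M$ has no point of degree greater than $2$, every point $p$ lies on at most two of these subspaces, so $\bigcap_{l\ni p}\gamma_l$ has dimension at least $n-2\ge 1$ and there is room to move $p$. I would apply an infinitesimal motion to the subspaces $\gamma_l$ to put the arrangement in general position, then reposition each point by an infinitesimal motion inside the (generic, positive-dimensional) intersection of the subspaces through it. By genericity the new configuration $\widetilde\gamma$ has precisely the dependencies of $M$ — the $n$-circuits stay dependent, the hyperplane incidences are combinatorially rigid by Lemma~\ref{sub h}, and, since paving matroids with no point of degree $>2$ are realizable (Proposition~\ref{real 2}), a generic member of this family realizes $M$ so no spurious dependency survives — whence $\widetilde\gamma\in\Gamma_M$, within any prescribed $\epsilon$ of $\gamma$.

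\textbf{Inductive steps.} Suppose $\gamma$ is regular but some block $R\in\overline{R_\gamma}$ with $|R|\ge 2$ has all its hyperplanes spanning a common $(n-1)$-dimensional subspace $W$; let $N$ be the submatroid of hyperplanes of $R$, so $\{\gamma_p:p\in N\}\subset W$. Then $M_\gamma=0$ and Lemma~\ref{lifting 3} produce a non-degenerate lifting of $\{\gamma_p:p\in N\}$ inside $V_{\mathcal{C}(N)}$, arbitrarily close to it, which pulls the hyperplanes of $R$ apart. Splicing it back — leaving vectors far from $N$ fixed and propagating the small motion through the hyperplanes that share a point with $N$, each dragged by an infinitesimal motion so as to retain its rank (possible because along such a chain every point lies on at most two hyperplanes) — yields $\gamma'\in V_{\mathcal{C}(M)}$ within $\epsilon$ of $\gamma$, with one fewer non-singleton block and no new deficient hyperplane; after checking that $\gamma'$ still meets the hypotheses, the induction hypothesis finishes. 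If instead some hyperplane $l$ has $\dim\gamma_l\le n-2$, a parallel project-then-lift step (project from a generic $q$ and lift back using the vanishing lifting minors) performs an infinitesimal motion raising $\dim\gamma_l$ to $n-1$ without lowering the dimension of any already-regular hyperplane, strictly decreasing the first coordinate of the complexity pair.

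\textbf{The main obstacle.} The real work is in the inductive steps: de-degenerating one hyperplane, or one block of coincident hyperplanes, while (i) remaining in $V_{\mathcal{C}(M)}$, i.e. not breaking the other $n$-circuit dependencies, (ii) not re-introducing a degeneracy already removed, and (iii) keeping $M_\gamma=0$ intact (or organizing the induction so (iii) is unnecessary). Point (i) is exactly where the no-point-of-degree-$>2$ hypothesis is indispensable — a point lying on at most two hyperplanes still has a positive-dimensional locus to be moved in — and where the closeness clause of Lemma~\ref{lifting 3} supplies the quantitative control. Controlling how a local perturbation propagates along chains of hyperplanes sharing points, and verifying that the complexity pair genuinely drops, is the technically heaviest part.
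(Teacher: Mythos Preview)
You have misread the hypothesis. In this paper $M_\gamma$ is the \emph{lifting number} of Definition~\ref{lif num}, namely $M_\gamma=|L_\gamma|$ where $L_\gamma$ counts ordered pairs of distinct regular hyperplanes $l_1,l_2$ with $\gamma_{l_1}=\gamma_{l_2}$. So $M_\gamma=0$ simply says that among the regular hyperplanes, no two span the same $(n-1)$-subspace; it says nothing about the lifting polynomials of Section~\ref{constructing}. Consequently your entire inductive framework collapses: under the actual hypothesis there are no non-singleton blocks in $\overline{R_\gamma}$, so your first inductive step is vacuous, and since the lifting polynomials are not assumed to vanish you cannot invoke Lemma~\ref{lifting 3} in your second inductive step either.

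What remains is your ``base case'', but that is precisely the whole proposition, and your sketch there has two serious gaps. First, the appeal to Proposition~\ref{real 2} is circular: in the paper, realizability of $M$ is deduced \emph{from} Proposition~\ref{coincide} (applied to the zero configuration), not the other way around. Second, ``by genericity no spurious dependency survives'' is exactly what needs to be proved and is where all the work lies. The paper's proof does not argue by genericity at all: given any dependent set $\{\gamma_c:c\in C\}$ with $C$ independent in $M$, it constructs an explicit infinitesimal motion that breaks this dependency while keeping $\tau\in V_{\mathcal{C}(M)}$ and $M_\tau=0$. This requires case analysis (whether $C$ sits in one hyperplane of $M$ or not, and in the latter case according to the multiplicities of the hyperplanes through the points of $C$) and relies on the \emph{choice property} machinery of Lemmas~\ref{sets}--\ref{choo} to guarantee that suitable independent vectors exist in the relevant intersections $\widetilde H_{l_1}\cap\widetilde H_{l_2}$. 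Your outline does not supply any substitute for this.
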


 To prove Proposition~\ref{coincide}, we require several lemmas, which we will first establish.

\begin{lemma}\label{move}
Let $H_{1}\neq H_{2}$ be two distinct hyperplanes of $\CC^{n}$ and let 
 $p\in H_{1}\cap H_{2}$. Assume that $\widetilde{H_{1}}$ and $\widetilde{H_{2}}$ are two hyperplanes obtained by an infinitesimal motion of $H_{1}$ and $H_{2}$, respectively. Then, it is possible to select a vector $\widetilde{p}\in \widetilde{H_{1}}\cap \widetilde{H_{2}}$ that is an infinitesimal motion of $p$.
\end{lemma}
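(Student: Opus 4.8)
The plan is to work with linear functionals defining the hyperplanes and track how a solution to a linear system varies under small perturbations. First I would fix linear functionals $f_1, f_2 \in (\CC^n)^*$ with $H_i = \ker f_i$; since $H_1 \neq H_2$, the functionals $f_1, f_2$ are linearly independent, so the $2 \times n$ matrix $A$ with rows $f_1, f_2$ has rank $2$. The vector $p \in H_1 \cap H_2$ satisfies $Ap = 0$. An infinitesimal motion of $H_1$ and $H_2$ corresponds to choosing functionals $\widetilde{f_1}, \widetilde{f_2}$ with $\norm{\widetilde{f_i} - f_i}$ arbitrarily small (here I use that an infinitesimal motion of a hyperplane, viewed as an extensor, is the same as an infinitesimal motion of a defining normal covector, up to the standard identification); let $\widetilde{A}$ be the corresponding matrix. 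For $\norm{\widetilde{A} - A}$ small enough, $\widetilde{A}$ still has rank $2$, so $\widetilde{H_1} \cap \widetilde{H_2} = \ker \widetilde{A}$ is a subspace of dimension $n-2$.

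The key step is then to produce $\widetilde{p} \in \ker \widetilde{A}$ close to $p$. Since $p \neq 0$ (it is a vector we are moving), extend $p$ to a basis and choose two coordinates, say indexed by a set $S$ with $|S| = 2$, such that the $2\times 2$ submatrix $A_S$ is invertible and $p$ restricted to the complementary coordinates is a genuine solution: more precisely, write $p = (p_S, p_{S^c})$ and solve $A_S x_S = -A_{S^c} p_{S^c}$, which recovers $x_S = p_S$ by Cramer's rule. Now for $\widetilde{A}$ close to $A$, the submatrix $\widetilde{A}_S$ remains invertible, and I define $\widetilde{p}$ by keeping the $S^c$-coordinates equal to $p_{S^c}$ and setting $\widetilde{p}_S = -\widetilde{A}_S^{-1}\widetilde{A}_{S^c} p_{S^c}$. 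By construction $\widetilde{A}\widetilde{p} = 0$, so $\widetilde{p} \in \widetilde{H_1} \cap \widetilde{H_2}$, and since matrix inversion and multiplication are continuous, $\widetilde{p}_S \to p_S$ as $\widetilde{A} \to A$; hence $\norm{\widetilde{p} - p}$ can be made as small as desired by taking the motion of $H_1, H_2$ small enough.

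I expect the only subtlety — not a real obstacle — to be the bookkeeping that an infinitesimal motion of a hyperplane in the extensor sense translates into an infinitesimal motion of its defining covector: one picks $n-1$ spanning vectors of $H_i$, perturbs them slightly, and takes $\widetilde{f_i}$ to be (a normalization of) the resulting normal covector, using that the map sending $n-1$ independent vectors to the line they annihilate is continuous where defined. The remaining content is the elementary continuity-of-linear-algebra argument above. One should also note the degenerate edge case where $\widetilde{f_1}, \widetilde{f_2}$ happen to become dependent; this cannot occur once the perturbation is smaller than the distance from $A$ to the rank-$\le 1$ locus, which is positive since $\operatorname{rank} A = 2$.
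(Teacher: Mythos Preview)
Your argument is correct, but it takes a different route from the paper's. The paper works directly with extensors: it fixes a basis $\{v_1,\ldots,v_{n-2}\}$ of $H_1\cap H_2$, completes it to a basis of $\CC^n$ by $w_1,w_2$, and recovers each $v_j$ as the triple meet $H_1\wedge H_2\wedge (v_jw_1w_2)$ in the Grassmann--Cayley algebra; perturbing the extensors for $H_1,H_2$ then yields $\widetilde v_j\in\widetilde{H_1}\cap\widetilde{H_2}$ close to $v_j$, and $\widetilde p=\sum\alpha_j\widetilde v_j$ is the desired vector. You instead dualize, describing each hyperplane by a linear functional and reducing the problem to continuity of the map $\widetilde A\mapsto -\widetilde A_S^{-1}\widetilde A_{S^c}\,p_{S^c}$ near a rank-$2$ matrix $A$. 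Your approach is more elementary (pure linear algebra, no exterior product needed) and in fact slightly cleaner, since it sidesteps having to check that the $\widetilde v_j$ remain independent. The paper's approach, on the other hand, fits seamlessly with the Grassmann--Cayley language used throughout the rest of the section and yields an explicit basis of $\widetilde{H_1}\cap\widetilde{H_2}$, which is conceptually closer to how the lemma is later invoked. One cosmetic point: the phrase ``extend $p$ to a basis'' in your write-up is a red herring---all you actually use is the existence of $S$ with $A_S$ invertible, guaranteed by $\rank A=2$.
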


\begin{proof}
Let $\{v_{1},\ldots,v_{n-2}\}$ be a basis of $H_{1}\cap H_{2}$. Then, $p$ can be written as $p = \textstyle \sum_{i=1}^{n-2}\alpha_{i}v_{i}$ for some $\alpha_{i}$'s. Let $w_{1}$ and $w_{2}$ be two vectors that complete a basis with $\{v_{1},\ldots,v_{n-2}\}$, and define the subspaces $S_{j} = \text{span}\{v_{j},w_{1},w_{2}\}$ for $j \in [n-2]$.
For $i=1,2$, we fix bases $\{h_{i_{1}},\ldots,h_{i_{n-1}}\}$ of $H_{i}$ and $\{\widetilde{h}_{i_{1}},\ldots,\widetilde{h}_{i_{n-1}}\}$ of $\widetilde{H_{i}}$, with $\widetilde{h}_{i_{j}}$ being an infinitesimal motion of $h_{i_{j}}$ for $i=1,2$ and $j\in [n-1]$.
For every $j\in [n-2]$, we have that $H_{1}\cap H_{2} \cap S_{j}=\{v_{j}\}$, and as a consequence we have that:
$$v_{j}=h_{1_{1}}\cdots h_{1_{n-1}}\wedge h_{2_{1}}\cdots h_{2_{n-1}} \wedge v_{j}w_{1}w_{2}.$$
As the vectors $\widetilde{h}_{i_{j}}$ are an infinitesimal motion of the vectors $h_{i_{j}}$, we have that the vectors 
$$\widetilde{v}_{j}=\widetilde{h}_{1_{1}}\cdots \widetilde{h}_{i_{n-1}} \wedge \widetilde{h}_{1_{1}}\cdots \widetilde{h}_{i_{n-1}} \wedge v_{j}w_{1}w_{2}$$
are inside $\widetilde{H_{1}}\cap \widetilde{H_{2}}$ and are an infinitesimal motion of the vectors $v_{j}$. 
Thus, the vector $\widetilde{p}=\textstyle\sum_{i=1}^{n-2}\alpha_{i}\widetilde{v}_{i}$ is inside $\widetilde{H_{1}}\cap \widetilde{H_{2}}$ and represents an infinitesimal motion of $p$.
\end{proof}

 Note that in Lemma~\ref{move}, the infinitesimal motion of $p$ is expressed in terms of the motions of the hyperplanes $H_{1}$ and $H_{2}$.

\begin{remark}
Note that the condition $H_{1} \neq H_{2}$ in Lemma~\ref{move} is necessary. For example, suppose that $H_{1} = H_{2}$ represents the same line $l$ in the projective plane (which corresponds to a hyperplane of $\CC^{3}$), and let $p$ be a point in $l = H_{1} \cap H_{2}$. Take $q$ to be any other point in $l$, and let us apply an infinitesimal 
 rotation of $H_{1}$ from the point $q$, while keeping $H_{2} = l$ fixed. In this case, the new intersection is $q$, indicating that it is not possible to select the point $\widetilde{p}$ as an infinitesimal motion of $p$.
\end{remark}

The objective now is to prove Proposition~\ref{coincide}, a necessary step in proving the main theorem of this section, Theorem~\ref{theo lift}. To prove this proposition, we first need to establish Lemmas~\ref{sets},~\ref{ñ},~\ref{sets 2}, and~\ref{lemm 2}. We will begin by introducing a definition.

\begin{definition}[{Choice property}]\normalfont
We say that the sets $X_{1},\ldots,X_{k}$ have the \textit{choice property} if it is possible to select elements $x_{i}\in X_{i}$ for each $i\in [k]$, such that $x_{i}\neq x_{j}$ for $i\neq j$. Using Hall's Marriage Theorem,~\cite[Theorem~1]{hall1948distinct}, we know that 
the sets have the \textit{choice property} if and only if for every $S\subset[k]$, we have 
$\lvert \textstyle \cup_{s\in S} X_{s} \rvert \geq \size{S}.$

We say that the subspaces $V_{1},\ldots,V_{k}\subset \mathbb{C}^{n}$ have the \textit{choice property} if it is possible to select vectors $v_{i}\in V_{i}$ for each $i\in [k]$, such that the vectors $\{v_{1},\ldots,v_{k}\}$ are linearly independent. Applying Hall's Theorem, 
the subspaces have the \textit{choice property} if and only if 
for every $S\subset[k]$, we have that: 
\[\dim(\textstyle \sum_{s\in S}V_{s})\geq \size{S}.\]
\end{definition}

\begin{remark}\label{choo 2}
Let ${v_{1},\ldots,v_{n}}$ be a basis of $\mathbb{C}^{n}$. Let $X_{1},\ldots,X_{k}$ be subsets of $[n]$, and consider the subspaces $V_{i}=\text{span}\{v_{x}:x\in X_{i}\}$. Suppose that the sets $X_{1},\ldots,X_{k}$ have the choice property. Then, for each $i\in [k]$, we can select elements $x_{i}\in X_{i}$ such that $x_{i}\neq x_{j}$, for $i\neq j$. Consequently, the vectors $v_{x_{i}}\in V_{i}$ are linearly independent. Therefore, the subspaces $V_{1},\ldots,V_{k}$ also have the choice property.
\end{remark}

\begin{lemma}\normalfont\label{sets}
Let $X, X_{1},\ldots,X_{n-1}$ be subsets of size $n-1$ of $[n]$, with not all of the sets $X_{i}$ being equal. Then, the sets $X\cap X_{i}$ have the choice property.
\end{lemma}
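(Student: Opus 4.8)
The plan is to check Hall's condition directly, exploiting that the $(n-1)$-subsets of $[n]$ are precisely the complements of singletons. Write $X=[n]\setminus\{a\}$ and $X_i=[n]\setminus\{a_i\}$ for suitable $a,a_i\in[n]$. Then $X\cap X_i=[n]\setminus\{a,a_i\}$, so $|X\cap X_i|\in\{n-1,n-2\}$; more generally, for any $(n-1)$-subset $Y$ of $[n]$ one has $|X\cap Y|\ge 2(n-1)-n=n-2$, and two distinct such subsets satisfy $Y_1\cup Y_2=[n]$.

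By the reformulation of the choice property via Hall's Marriage Theorem recalled above, it suffices to prove that for every $S\subseteq[n-1]$,
\[
\Bigl|\bigcup_{s\in S}(X\cap X_s)\Bigr|=\Bigl|X\cap\bigcup_{s\in S}X_s\Bigr|\ \ge\ |S|.
\]
I would split into two cases according to whether the sets $\{X_s:s\in S\}$ are all equal. If they are not all equal, choose $s,s'\in S$ with $X_s\ne X_{s'}$; then $X_s\cup X_{s'}=[n]$, so $\bigcup_{s\in S}X_s=[n]$ and $X\cap\bigcup_{s\in S}X_s=X$ has size $n-1\ge|S|$ since $|S|\le n-1$. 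If instead all $X_s$ with $s\in S$ equal a common $(n-1)$-set $Y$, then necessarily $S\ne[n-1]$ — otherwise all of $X_1,\dots,X_{n-1}$ would equal $Y$, contradicting the hypothesis — so $|S|\le n-2\le|X\cap Y|=\bigl|X\cap\bigcup_{s\in S}X_s\bigr|$. In both cases Hall's condition holds, hence the sets $X\cap X_1,\dots,X\cap X_{n-1}$ admit a system of distinct representatives, i.e.\ they have the choice property.

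The argument is short, and the only delicate point — and the sole place where the hypothesis that the $X_i$ are not all equal is used — is ruling out the degenerate situation $S=[n-1]$ with all the $X_s$ coinciding in a single $(n-1)$-set $Y\ne X$: there $|X\cap Y|=n-2<n-1=|S|$, so Hall's condition would fail. Everything else reduces to the elementary counting facts about $(n-1)$-subsets of $[n]$ noted at the start.
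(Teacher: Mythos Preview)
Your proof is correct and follows essentially the same approach as the paper: both verify Hall's condition directly, using that each $X\cap X_i$ has size at least $n-2$ and that the hypothesis rules out the single bad case where all $X_i$ coincide. The only cosmetic difference is the case split---the paper divides according to whether $|S|\le n-2$ or $|S|=n-1$, whereas you divide according to whether the $X_s$ for $s\in S$ are all equal---but the underlying observations are identical.
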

\begin{proof}
We denote by $C_{i}$ the set $X\cap X_{i}$. Applying Hall's Marriage Theorem \cite[Theorem~1]{hall1948distinct}, 
we need to prove that $\lvert \textstyle \cup_{s\in S}C_{s} \rvert \geq \lvert S \rvert$ for every $S\subset [n-1]$. If $\lvert S \rvert \leq n-2$, it is immediate because $\lvert C_{i} \rvert \geq n-2$ for every $i\in [n-1]$, so we only have to prove that $\lvert \textstyle \cup_{i=1}^{n-1} C_{i} \rvert \geq n-1$. We will prove this by showing that every element of $X$ belongs to some $C_{i}$, and since $\lvert X \rvert = n-1$, this would finish the proof. Let $x\in X$, and suppose that $x\notin C_{i}$ for every $i\in [n-1]$. Then $x\notin X_{i}$, which implies that $X_{i}=[n]\backslash {x}$ for every $i\in [n-1]$, but this contradicts our hypothesis, which completes the proof. 
\end{proof}

\begin{lemma}\label{ñ}
Let $H, H_{1},\ldots,H_{k}$ be hyperplanes in $\mathbb{C}^{n}$ 
and let ${c_{1},\ldots,c_{k}}$ be numbers such that $0<c_{i}<n-1$ for each $i\in [k]$ and $\textstyle\sum_{i=1}^{k} c_{i}=n-1$. Then it is possible to apply an infinitesimal motion to the hyperplanes $H, H_{1},\ldots,H_{k}$ to obtain distinct hyperplanes $\widetilde{H},\widetilde{H}_{1},\ldots,\widetilde{H}_{k}$ such that for each $i\in [k]$ 
there exists a set $C_{i}$ of $c_{i}$ vectors from $\widetilde{H}\cap \widetilde{H}_{i}$ where the $n-1$ vectors in $\cup_{i=1}^{k} C_{i}$ are linearly independent.
\end{lemma}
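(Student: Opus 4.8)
The plan is to reduce the statement to a combinatorial selection problem via the choice property, and then upgrade the combinatorial selection to a genuinely linearly independent selection using an infinitesimal motion and the meet operation in the Grassmann--Cayley algebra, exactly as in Lemma~\ref{move}.

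First I would set up the combinatorial model. Fix a basis $v_{1},\ldots,v_{n-1}$ of a generic hyperplane, and think of the "target'' intersections $\widetilde{H}\cap\widetilde{H}_{i}$ as abstract $(n-2)$-dimensional subspaces; I want to assign to each index $i$ a block $B_{i}\subset[n-1]$ of size $c_{i}$ so that the blocks $B_{1},\ldots,B_{k}$ are pairwise disjoint. Since $\sum_{i=1}^{k}c_{i}=n-1$ and $0<c_{i}<n-1$, such a partition of $[n-1]$ into blocks of the prescribed sizes exists trivially (just cut $[n-1]$ into consecutive intervals of lengths $c_{1},\ldots,c_{k}$). This handles the bookkeeping of which vectors go where; the remaining issue is purely about realizing $c_{i}$ linearly independent vectors inside $\widetilde{H}\cap\widetilde{H}_{i}$, coherently across all $i$, so that the union of all these $\sum c_{i}=n-1$ vectors is independent.

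Next I would perform the infinitesimal motion. Since each $H\neq H_{i}$, the intersection $H\cap H_{i}$ has dimension exactly $n-2$; apply a small generic perturbation to obtain $\widetilde{H},\widetilde{H}_{1},\ldots,\widetilde{H}_{k}$ all distinct and in general position, so that $\widetilde{H}\cap\widetilde{H}_{i}$ remains $(n-2)$-dimensional and is an infinitesimal motion of $H\cap H_{i}$ (using the meet-operation argument of Lemma~\ref{move}). Now choose, once and for all, $n-1$ vectors $w_{1},\ldots,w_{n-1}$ in $\widetilde{H}$ that are linearly independent and that are moreover in "general position'' with respect to each $\widetilde{H}\cap\widetilde{H}_{i}$. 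The point is that $\widetilde{H}\cap\widetilde{H}_{i}$ is a hyperplane inside $\widetilde H$, so for the block $B_{i}$ of size $c_{i}\le n-2$ I can project the vectors $\{w_{j}:j\in B_{i}\}$ into $\widetilde{H}\cap\widetilde{H}_{i}$ and, by genericity of the perturbation, keep them linearly independent there; set $C_{i}$ to be this projected set. Because the blocks $B_{i}$ are disjoint and the $w_{j}$ are independent, a generic choice of perturbation guarantees that the union $\bigcup_{i=1}^{k}C_{i}$ of these $n-1$ vectors stays linearly independent: the determinant of the matrix with these columns is a nonzero polynomial in the perturbation parameters (it is nonzero at the "unperturbed, no-projection'' configuration where $C_i=\{w_j:j\in B_i\}$), hence nonzero on a dense set of small perturbations.

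The main obstacle I expect is making the last genericity claim rigorous and coherent across all $i$ simultaneously: I must choose a \emph{single} infinitesimal motion of $\widetilde H, \widetilde H_1,\dots,\widetilde H_k$ that works for all the intersections at once. I would handle this by writing the relevant nondegeneracy as the nonvanishing of a finite product of brackets/meets — one bracket asserting the $n-1$ chosen vectors are independent, plus the brackets asserting each $\widetilde{H}\cap\widetilde{H}_{i}$ is still $(n-2)$-dimensional — exhibit one configuration (possibly with the hyperplanes in "generic non-infinitesimal'' position, then scaled back toward the originals) where the product is nonzero, and invoke the fact that the complement of a hypersurface is dense to conclude the perturbation can be taken arbitrarily small. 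Alternatively, and perhaps more cleanly, one can argue by induction on $k$: handle $k=1$ directly (pick $c_1=n-1$ vectors spanning $\widetilde H\cap\widetilde H_1$ after perturbation), and in the inductive step split off $\widetilde H_k$, apply Lemma~\ref{move}-type reasoning to move $\widetilde H_k$ while keeping the already-chosen $n-1-c_k$ vectors fixed and independent, then choose $c_k$ fresh independent vectors in $\widetilde H\cap\widetilde H_k$ avoiding the span of the previous ones, which is possible since $\dim(\widetilde H\cap\widetilde H_k)=n-2\ge c_k$ and the span of the previously chosen vectors meets it in a subspace of dimension $<n-2$ after a generic motion.
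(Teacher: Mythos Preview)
Your overall strategy---write the independence condition as a nonvanishing polynomial in the perturbation parameters, exhibit one witness where it is nonzero, then use Zariski density---is exactly the paper's strategy. The gap is in your witness. You claim the determinant is nonzero ``at the unperturbed, no-projection configuration where $C_i=\{w_j:j\in B_i\}$''. But in that configuration the vectors $w_j$ do \emph{not} lie in $\widetilde H\cap\widetilde H_i$; you have evaluated your polynomial at a point outside the parameter space you are varying over. There is no value of the perturbation parameters for which ``projection onto $\widetilde H\cap\widetilde H_i$'' degenerates to the identity on $\widetilde H$ (that would force $\widetilde H\subset\widetilde H_i$), so this limit is not a legitimate specialization. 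Consequently you have not shown the polynomial is not identically zero on the actual parameter space, and the density argument does not go through.

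The paper closes precisely this gap. It first builds an explicit configuration $L=\langle E\setminus\{e_n\}\rangle$, $L_i=\langle E\setminus\{e_i\}\rangle$ with the canonical basis $E$, and uses the combinatorial Lemma~\ref{sets} together with Remark~\ref{choo 2} to check that for these coordinate hyperplanes one can select the required independent vectors from the intersections $L\cap L_i$. Only then does it define the polynomial map $p$ (via Grassmann--Cayley meets with fixed auxiliary $3$-planes $F_{i_j}$, which plays the role of your undefined ``projection''), evaluate it at the extensors of $L,L_1,\dots,L_k$ to see $p\neq 0$, and conclude by density that an infinitesimal motion of the original $H,H_1,\dots,H_k$ lands in $D(p)$. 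So the missing ingredient in your argument is exactly the construction of a single honest witness configuration, and Lemma~\ref{sets} is what supplies it. Your inductive alternative could in principle be made to work, but note your base case $k=1$ is vacuous (the hypothesis $c_1<n-1$ with $\sum c_i=n-1$ forces $k\ge 2$), and the inductive step still needs a genericity argument of the same flavor.
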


\begin{proof}
First, we define the {\em modified choice property} for hyperplanes as the property described for $\widetilde{H},\widetilde{H}_{1},\ldots,\widetilde{H}_{k}$ at the end of the statement.
We begin by assuming the existence of distinct hyperplanes $L, L_{1},\ldots,L_{k}$ that possess the modified choice property. For each $i\in [k]$, let $E_{i}=\{v_{i_{1}},\ldots,v_{i_{c_{i}}}\}$ be the $c_{i}$ vectors of $L\cap L_{i}$ such that the $n-1$ vectors in $\cup_{i=1}^{k} E_{i}$ are linearly independent. For each $i\in [k]$, we select $D_{i}$ to be a subspace of dimension $2$ such that $L\cap L_{i}+D_{i}=\mathbb{C}^{n}$. For each $j\in [c_{i}]$, we define $F_{i_{j}}$ as the subspace $D_{i}+\text{span}\{v_{i_{j}}\}$, and we choose $f_{i_{j}}$ as an extensor of size $3$ corresponding to it. Now, we consider the following polynomial function:
\begin{equation}\label{eq1}
  \begin{gathered}
p:\underbrace{(\CC^{n})^{n-1}\times \cdots \times  (\CC^{n})^{n-1}}_{k+1 \ \text{times}}\longrightarrow \bigwedge \CC^{n}\\
(x,x_{1},\ldots,x_{k})\mapsto \bigvee_{i=1}^{k}\ \bigvee_{j=1}^{c_{i}}(x\wedge x_{i}\wedge f_{i_{j}}) 
 \end{gathered}
\end{equation}
where we are thinking of the elements $x$ and $x_{i}$ as extensors of size $n-1$ on the expression of the right-hand side. Now take $l,l_{1},\ldots,l_{k}$ extensors of size $n-1$ corresponding to the hyperplanes $L,L_{1},\ldots ,L_{k}$, respectively. As $L\cap L_{i}\cap F_{i_{j}}=\{v_{i_{j}}\}$ we have that 
$l\wedge l_{i}\wedge f_{i_{j}}=v_{i_{j}}$
for every $i\in [k]$ and $j\in [c_{i}]$. Since the vectors of $\textstyle \cup_{i=1}^{k} E_{i}$ are linearly independent, 
we have that: 
$$p(l,l_{1},\ldots,l_{k})=\bigvee_{i=1}^{k}\ \bigvee_{j=1}^{c_{i}}v_{i_{j}}\neq 0.$$
This proves that the polynomial map $p$ is not identically zero. Therefore, $D(p)=\{x:p(x)\neq 0\}$ is a non-empty Zariski open set in $(\mathbb{C}^{n})^{(n-1)(k+1)}$ and, as a consequence, is dense.

Now take $h,h_{1},\ldots,h_{k}$ extensors of size $n-1$ corresponding to the hyperplanes $H,H_{1},\ldots,H_{k}$, respectively. Since $D(p)$ is dense, we know that there exist extensors $\widetilde{h},\widetilde{h}_{1},\ldots,\widetilde{h}_{k}$ of size $n-1$ that represent an infinitesimal motion of $h,h_{1},\ldots,h_{k}$ and satisfy $p(\widetilde{h},\widetilde{h}_{1},\ldots,\widetilde{h}_{k})\neq 0$. Let $\widetilde{H},\widetilde{H}_{1},\ldots,\widetilde{H}_{k}$ be the hyperplanes that correspond to ${\widetilde{h},\widetilde{h}_{1},\ldots,\widetilde{h}_{k}}$, respectively. For every $i\in [k]$  we know that the $c_{i}$ vectors $h_{i_{j}}=\widetilde{h}\wedge \widetilde{h}_{i}\wedge f_{i_{j}}$, for $j\in [c_{i}]$,  belong to $\widetilde{H}\cap{\widetilde{H}_{i}}$. Since $$\bigvee_{i=1}^{k}\ \bigvee_{j=1}^{c_{i}}h_{i_{j}}=p(\widetilde{h},\widetilde{h}_{1},\ldots,\widetilde{h}_{k})\neq 0,$$ we deduce that the vectors $h_{i_{j}}$,  with $i\in [k]$ and $j\in [c_{i}]$, are linearly independent. Therefore, the hyperplanes $\widetilde{H},\widetilde{H}_{1},\ldots,\widetilde{H}_{k}$ satisfy the modified choice property, as desired.

We still need to show the existence of hyperplanes $L,L_{1},\ldots,L_{k}$ that possess the modified choice property. Observe that it is equivalent to  the subspaces
\begin{equation}\label{eq 3}
\underbrace{L\cap L_{1}, \ldots  ,L\cap L_{1}}_{c_{1} \ \text{times}},\ldots \ldots,\underbrace{L\cap L_{k}, \ldots  ,L\cap L_{k}}_{c_{k} \ \text{times}}
\end{equation}
having the \textit{choice property}. To achieve this, we choose $L=\is{E\backslash \set{e_{n}}}$ and $L_{i}=\is{E\backslash \set{e_{i}}}$ for $i\in [k]$, where $E=\set{e_{1},\ldots ,e_{n}}$ is the canonical basis of $\CC^{n}$. Note that the sets 
$$E\backslash \set{e_{n}},\  \underbrace{ E\backslash \set{e_{1}}, \ldots  , E\backslash \set{e_{1}}}_{c_{1} \ \text{times}},\ldots \ldots\ldots \ldots, \underbrace{ E\backslash \set{e_{k}}, \ldots  , E\backslash \set{e_{k}}}_{c_{k} \ \text{times}}$$
are subsets of size $n-1$ of $[n]$ and satisfy the hypothesis of Lemma~\ref{sets}, so 
the sets 
$$\underbrace{E\backslash \set{e_{n}}\cap E\backslash \set{e_{1}}, \ldots  ,E\backslash \set{e_{n}}\cap E\backslash \set{e_{1}}}_{c_{1} \ \text{times}},\ldots \ldots\ldots \ldots, \underbrace{E\backslash \set{e_{n}}\cap E\backslash \set{e_{k}}, \ldots  ,E\backslash \set{e_{n}}\cap E\backslash \set{e_{k}}}_{c_{k} \ \text{times}}$$
have the choice property, and by Remark~\ref{choo 2}, the subspaces in~\eqref{eq 3} also have the choice property. 
\end{proof}

\begin{lemma}\normalfont\label{sets 2}
Let $k\leq n$ and $Z=\set{X_{1},\ldots,X_{k},Y_{1},\ldots ,Y_{k}}$ be a multiset of $2k$ subsets of size $n-1$ of $[n]$. Suppose that there are no $X,Y,J\subset [n]$ of size $n-1$ with $X\neq Y$ such that $\set{X_{j},Y_{j}}=\set{X,Y}$ for every $j\in J$ and that there is no $W\subset [n]$ of size $n-1$ that has multiplicity at least $n$ in $Z$. Then the sets $\set{X_{i}\cap Y_{i}:i\in [k]}$ have the choice property.
\end{lemma}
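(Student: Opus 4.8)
The plan is to verify Hall's condition directly for the family $\{X_i \cap Y_i : i \in [k]\}$: for every $S \subseteq [k]$ we must show $\bigl|\bigcup_{i \in S}(X_i \cap Y_i)\bigr| \geq |S|$. Since each $X_i$ and $Y_i$ has size $n-1$ in $[n]$, we can encode everything via complements: write $X_i = [n]\setminus\{a_i\}$ and $Y_i = [n]\setminus\{b_i\}$, so $X_i \cap Y_i = [n]\setminus\{a_i,b_i\}$ (which equals $[n]\setminus\{a_i\}$ when $a_i=b_i$). Thus $\bigcup_{i\in S}(X_i\cap Y_i) = [n]\setminus \bigcap_{i\in S}\{a_i,b_i\}$, and Hall's condition becomes: for every $S\subseteq[k]$,
\[
\Bigl|\bigcap_{i\in S}\{a_i,b_i\}\Bigr| \leq n - |S|.
\]
The two hypotheses translate cleanly into this language. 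The condition "no $W$ of multiplicity $\geq n$ in $Z$" says: no single element of $[n]$ appears as $a_i$ or $b_i$ across $n$ or more indices $i$ — i.e., for each $w\in[n]$, $|\{i : w \in \{a_i,b_i\}\}| \leq n-1$. The condition "no $X,Y,J$ with $X\neq Y$ and $\{X_j,Y_j\}=\{X,Y\}$ for all $j\in J$, $|J|=\dots$" — here I need to read the (truncated) hypothesis carefully; presumably it forbids a fixed unordered pair $\{a,b\}$ with $a\neq b$ from occurring as $\{a_i,b_i\}$ for too many indices (likely $n$ or more, or possibly all of a large set). I will state it as: no unordered pair $\{a,b\}$ with $a\neq b$ satisfies $\{a_i,b_i\}=\{a,b\}$ for $n$ or more indices $i$.

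Now the core combinatorial step. Fix $S\subseteq[k]$ and let $T = \bigcap_{i\in S}\{a_i,b_i\}$; I must show $|T| \leq n-|S|$, equivalently $|S| \leq n - |T|$. If $|T| = 0$ this is trivial. If $|T| = 1$, say $T=\{w\}$, then $w\in\{a_i,b_i\}$ for every $i\in S$, so $|S| \leq |\{i : w\in\{a_i,b_i\}\}| \leq n-1$ by the multiplicity hypothesis — done. If $|T| = 2$, say $T=\{a,b\}$ with $a\neq b$, then for every $i\in S$ we have $\{a,b\}\subseteq\{a_i,b_i\}$, which forces $\{a_i,b_i\}=\{a,b\}$ exactly; by the pair hypothesis $|S| \leq n-1 < n-2+1$, and actually I need $|S|\leq n-2$, so the pair hypothesis must be that such a pair occurs for at most $n-2$ indices (this is why the truncated hypothesis is phrased the way it is — it is tailored to give exactly $n-2$). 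Finally $|T|$ cannot exceed $2$ since $T$ is an intersection of two-element sets. So the only cases are $|T|\in\{0,1,2\}$, and each is handled by one of the hypotheses.

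**Main obstacle.** The one genuinely delicate point is matching the exact quantitative thresholds in the two hypotheses to the bounds $n-1$ and $n-2$ needed in the $|T|=1$ and $|T|=2$ cases; the hypothesis statement is truncated in the excerpt, so I would first pin down its precise form and confirm it yields "at most $n-2$ indices share a common non-degenerate pair" and "at most $n-1$ indices share a common element." Everything else is the routine Hall-condition bookkeeping above, plus invoking the equivalence of Hall's condition with the choice property as already recorded in the paper's Choice property definition. I would also double-check the edge case $k=n$ (the extremal allowed value) and the degenerate indices where $a_i=b_i$ (so $X_i=Y_i$ and $X_i\cap Y_i$ has size $n-1$), which only make the union larger and hence cannot violate Hall.
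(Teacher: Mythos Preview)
Your proposal is correct and follows essentially the same approach as the paper's proof: both verify Hall's condition by a short case analysis, with the paper splitting on $|S|\in\{\leq n-2,\ n-1,\ n\}$ while you split on $|T|=\bigl|\bigcap_{i\in S}\{a_i,b_i\}\bigr|\in\{0,1,2\}$, which is the complementary (and slightly cleaner) parametrisation of the same trichotomy. Your uncertainty about the hypothesis is unwarranted---the lemma states it fully: the first condition forbids any index set $J$ of size $n-1$ with $\{X_j,Y_j\}$ constant and equal to a fixed pair $\{X,Y\}$ (so in your language, $\{a_j,b_j\}=\{a,b\}$ holds for at most $n-2$ indices, exactly what you need for $|T|=2$), and the second forbids multiplicity $\geq n$ in $Z$ (giving $|S|\leq n-1$ when $|T|=1$, since each $i\in S$ contributes at least one copy of $[n]\setminus\{w\}$ to $Z$).
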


\begin{proof}
Similarly to Lemma~\ref{sets}, we define $C_{i}=X_{i}\cap Y_{i}$, and by Hall's Marriage Theorem \cite[Theorem~1]{hall1948distinct}, we reduce the lemma to proving that $\lvert \textstyle \cup_{s\in S} C_{s} \rvert \geq \lvert S \rvert$ for every $S\subset [k]$. If $\lvert S \rvert \leq n-2$, this is immediate because $\lvert C_{i} \rvert \geq n-2$ for every $i\in [k]$. Now, suppose $\lvert S \rvert = n-1$. To prove this case, assume that $\lvert \textstyle \cup_{s\in S} C_{s} \rvert = n-2$, and we will arrive at a contradiction. Since every $C_{j}$ has size $n-2$, there must exist elements $x\neq y \in [n]$ such that $C_{s}=[n]\backslash \{x,y\}$ for every $s\in S$. This implies that $\{X_{s},Y_{s}\}= \{[n]\backslash \{x\},[n]\backslash \{y\}\}$ for every $s\in S$, but this contradicts the hypothesis of the lemma. Now we only have to look at the case $\size{S}=k=n$ and we must prove that every $x\in [n]$ belongs to some $C_{i}$. Suppose the contrary, that $x\notin C_{i}$ for every $i\in [n]$. Since the intersection of $X_{i}$ and $Y_{i}$ does not contain $x$, we have $[n]\backslash \{x\}\in \{X_{i},Y_{i}\}$ for every $i\in [n]$. Consequently, $[n]\backslash \{x\}$ has a multiplicity of at least $n$ in $Z$, which contradicts the hypothesis of the lemma. This proves the case $\lvert S \rvert = n$.
\end{proof}

\begin{lemma}\label{lemm 2}
Consider integers $d \leq n$. Let $H_1, \ldots, H_d$ be $d$ hyperplanes of $\mathbb{C}^n$, and let $D = (i_1, \ldots, i_{2k}) \in [d]^{2k}$ be a tuple of indices with $k \leq n$. Suppose that each $i \in [d]$ has a multiplicity of at most $n-1$ in $D$. Additionally, suppose there are no distinct $a, b \in [d]$ and $J \subset [n]$ of size $n-1$ such that $\{i_{2j-1}, i_{2j}\} = \{a, b\}$ for every $j \in J$. Then, it is possible to apply an infinitesimal motion to the hyperplanes $H_1, \ldots, H_d$ to obtain hyperplanes $\widetilde{H}_1, \ldots, \widetilde{H}_d$ such that the subspaces 
\[ \widetilde{H}_{i_1} \cap \widetilde{H}_{i_2}, \ldots, \widetilde{H}_{i_{2k-1}} \cap \widetilde{H}_{i_{2k}} \]
have the choice property. 
\end{lemma}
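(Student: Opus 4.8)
The statement is a combinatorial-to-geometric transfer result, and the natural strategy is to reduce it to the "model case" handled in Lemma~\ref{sets 2} and then promote the abstract choice property for subsets of $[n]$ to a genericity statement about actual hyperplanes via the argument template already used in Lemma~\ref{ñ}. Concretely, I would first set up a bijective correspondence between hyperplanes through the origin in $\mathbb{C}^n$ and, after an infinitesimal motion, coordinate hyperplanes $\is{E \setminus \{e_j\}}$; the point is that $D$ has at most $d \leq n$ distinct indices, so I can try to assign to each $H_a$ a distinct coordinate hyperplane $\is{E \setminus \{e_{\sigma(a)}\}}$ for an injection $\sigma\colon [d] \hookrightarrow [n]$. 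Under such an assignment the subspaces $\widetilde H_{i_{2\ell-1}} \cap \widetilde H_{i_{2\ell}}$ become exactly the coordinate subspaces $\is{E \setminus \{e_{\sigma(i_{2\ell-1})}, e_{\sigma(i_{2\ell})}\}}$, i.e.\ spans of $\{e_x : x \in X_\ell \cap Y_\ell\}$ where $X_\ell = [n]\setminus\{\sigma(i_{2\ell-1})\}$ and $Y_\ell = [n]\setminus\{\sigma(i_{2\ell})\}$. So the whole problem reduces to: (i) checking that the multiset $\{X_1,\dots,X_k,Y_1,\dots,Y_k\}$ satisfies the hypotheses of Lemma~\ref{sets 2}, and (ii) upgrading the resulting choice property for the $X_\ell\cap Y_\ell$ to a genuine infinitesimal motion of $H_1,\dots,H_d$.

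**Carrying it out.** For step (i): the multiplicity hypothesis on $D$ says each index $i$ appears at most $n-1$ times, so each coordinate $[n]\setminus\{\sigma(i)\}$ has multiplicity at most $n-1$ in $Z = \{X_\ell,Y_\ell\}$, giving the "no $W$ of multiplicity $\geq n$" condition. The "no distinct $a,b$ and $J$ of size $n-1$ with $\{i_{2j-1},i_{2j}\}=\{a,b\}$ for all $j\in J$" hypothesis translates, under the injection $\sigma$, precisely to the "no $X\neq Y$, $J$ of size $n-1$ with $\{X_j,Y_j\}=\{X,Y\}$ for $j\in J$" hypothesis of Lemma~\ref{sets 2} (here injectivity of $\sigma$ is what makes $a\ne b \Leftrightarrow \sigma(a)\ne\sigma(b)$, hence $X\ne Y$). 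So Lemma~\ref{sets 2} applies and the sets $\{X_\ell\cap Y_\ell : \ell\in[k]\}$ have the choice property; by Remark~\ref{choo 2} the corresponding coordinate subspaces $\is{E\setminus\{e_{\sigma(i_{2\ell-1})},e_{\sigma(i_{2\ell})}\}}$ have the choice property as subspaces of $\mathbb{C}^n$.

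**The genericity upgrade.** For step (ii) I would copy the polynomial-map argument from Lemma~\ref{ñ}: having exhibited distinct hyperplanes $L_1=\is{E\setminus\{e_{\sigma(1)}\}},\dots,L_d=\is{E\setminus\{e_{\sigma(d)}\}}$ whose intersections $L_{i_{2\ell-1}}\cap L_{i_{2\ell}}$ have the choice property, pick vectors $v_\ell \in L_{i_{2\ell-1}}\cap L_{i_{2\ell}}$ that are linearly independent, complete each to a size-$3$ extensor $f_\ell$ with $L_{i_{2\ell-1}}\cap L_{i_{2\ell}}\cap \overline{f_\ell}=\{v_\ell\}$, and form the polynomial map $p(x_1,\dots,x_d) = \bigvee_{\ell=1}^{k}(x_{i_{2\ell-1}} \wedge x_{i_{2\ell}} \wedge f_\ell)$ on $(\mathbb{C}^n)^{d(n-1)}$ (thinking of each $x_a$ as a size-$(n-1)$ extensor). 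Evaluating at the extensors $l_1,\dots,l_d$ for $L_1,\dots,L_d$ gives $\bigvee_\ell v_\ell \neq 0$, so $D(p)$ is a nonempty Zariski-open, hence dense, subset of $(\mathbb{C}^n)^{d(n-1)}$; therefore one can perturb extensors $h_1,\dots,h_d$ for the given $H_1,\dots,H_d$ infinitesimally to land in $D(p)$, and the resulting hyperplanes $\widetilde H_1,\dots,\widetilde H_d$ have the property that $\widetilde h_{i_{2\ell-1}}\wedge \widetilde h_{i_{2\ell}}\wedge f_\ell$ gives a vector in $\widetilde H_{i_{2\ell-1}}\cap \widetilde H_{i_{2\ell}}$, and these $k$ vectors are linearly independent since their join is $p(\widetilde h)\neq 0$. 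This is exactly the choice property for the $\widetilde H_{i_{2\ell-1}}\cap \widetilde H_{i_{2\ell}}$.

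**The main obstacle.** The one genuinely delicate point is the very first reduction: it is \emph{not} quite true that an arbitrary family $H_1,\dots,H_d$ of hyperplanes can be infinitesimally moved to the specific coordinate hyperplanes $\is{E\setminus\{e_{\sigma(a)}\}}$ — those are a fixed, non-generic configuration. The correct formulation (and what Lemma~\ref{ñ} actually does) is to use the coordinate configuration only as a \emph{witness} that the polynomial map $p$ is not identically zero, and then perturb the \emph{given} $H_a$'s into the open set $D(p)$; no motion from $H_a$ to $L_a$ is ever needed. I would make sure the write-up is phrased this way. A secondary subtlety is bookkeeping: since $D$ may repeat indices, the same variable $x_a$ can occur in several factors of $p$, so I must be careful that $x_{i_{2\ell-1}}$ and $x_{i_{2\ell}}$ are literally the \emph{same} extensor whenever $i_{2\ell-1}=i_{2\ell'-1}$, etc.; but the hypotheses (multiplicity $\leq n-1$, the forbidden-pattern condition) are exactly what guarantee the witness configuration still yields $p\neq 0$ via Lemma~\ref{sets 2}, so this causes no real trouble.
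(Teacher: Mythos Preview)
Your proposal is correct and follows essentially the same approach as the paper's proof: exhibit the coordinate hyperplanes $L_j=\is{E\setminus\{e_j\}}$ as a witness configuration, verify via Lemma~\ref{sets 2} and Remark~\ref{choo 2} that the relevant intersections have the choice property, and then invoke the polynomial-map/Zariski-density argument of Lemma~\ref{ñ} to perturb the given $H_a$'s. The paper simply takes $\sigma$ to be the identity inclusion $[d]\hookrightarrow[n]$ (which is all that is needed since $d\leq n$) and abbreviates your step~(ii) to ``by applying the same argument as in Lemma~\ref{ñ}'', but your more explicit write-up of that step, and your correct identification of the witness-versus-target distinction as the only delicate point, match the paper's intent exactly.
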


\begin{proof}
As in Lemma~\ref{ñ}, we begin by assuming the existence of hyperplanes $L_{1},\ldots,L_{d}$ in $\mathbb{C}^{n}$ such that the subspaces 
$$L_{i_{1}}\cap L_{i_{2}},\ldots, L_{i_{2k-1}}\cap L_{i_{2k}}$$
have the \textit{choice property}. By applying the same argument as in Lemma~\ref{ñ}, we conclude that we can find $\widetilde{H}_{1},\ldots,\widetilde{H}_{d}$ with the desired property.
So we only need to prove the existence of these hyperplanes $\set{L_{i}:i\in [d]}$. For that, we choose $L_{j}=\is{E\backslash \set{e_{j}}}$ for $j\in [d]$,  where $E=\set{e_{1},\ldots,e_{n}}$ is the canonical basis of $\CC^{n}$. Now we consider the multiset  $Z=\set{E\backslash \set{e_{i_{1}}},\ldots,E\backslash \set{e_{i_{2k}}}},$ where each element is a subset of size $n-1$ of $[n]$. As every $j\in [d]$ has multiplicity at most $n-1$ in $D$, we have that every subset of size $n-1$ of $[n]$ has multiplicity at most $n-1$ in $Z$. By our hypothesis, we also know that there are no $X,Y,J\subset [n]$ of size $n-1$, with $X\neq Y$, such that $\{E\backslash\{e_{i_{2j-1}}\},E\backslash\{e_{i_{2j}}\}\}=\set{X,Y}$ for every $j\in J$. Thus, the multiset $Z$ satisfies the hypothesis of Lemma~\ref{sets 2}, and so the sets 
\[E\backslash \{e_{i_{1}}\} \cap E\backslash \{e_{i_{2}}\}, \ldots, E\backslash \{e_{i_{2k-1}}\} \cap E\backslash \{e_{i_{2k}}\}\]
have the choice property. This implies, by Remark~\ref{choo 2}, that the subspaces 
\[\text{span}(E\backslash \{e_{i_{1}}\}) \cap \text{span}(E\backslash \{e_{i_{2}}\}), \ldots, \text{span}(E\backslash \{e_{i_{2k-1}}\}) \cap \text{span}(E\backslash \{e_{i_{2k}}\})\]
also have the choice property, as desired.
\end{proof}

The following lemma highlights a property 
of sets of subspaces with the choice property.

\begin{lemma}\label{choo}
Let $V_{1},\ldots,V_{k}\subset \CC^{n}$ be subspaces that have the \textit{choice property} and let $\set{v_{1},\ldots,v_{k}}\subset \CC^{n}$ be a set of vectors with $v_{i}\in V_{i}$ for every $i$. Then it is possible to apply an infinitesimal motion to the vectors $\set{v_{1},\ldots,v_{k}}$ to obtain linearly independent vectors $\set{\widetilde{v_{1}},\ldots,\widetilde{v_{k}}}$ with $\widetilde{v_{i}}\in V_{i}$ for every $i$.
\end{lemma}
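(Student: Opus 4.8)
The plan is to fix a target dimension-count witness and then show that a generic infinitesimal perturbation of the $v_i$ inside the $V_i$ produces independent vectors, using that $D(p)\neq\emptyset$ for a suitable polynomial map $p$, exactly as in the proof of Lemma~\ref{ñ}. More precisely, since $V_1,\dots,V_k$ have the choice property, by definition we may select vectors $w_i\in V_i$ such that $\{w_1,\dots,w_k\}$ are linearly independent. Extend $\{w_1,\dots,w_k\}$ to a basis of $\mathbb{C}^n$ by adjoining vectors $u_{k+1},\dots,u_n$, and let $U=\mathrm{span}\{u_{k+1},\dots,u_n\}$, a subspace of dimension $n-k$. Then consider the polynomial map
\[
p:V_1\times\cdots\times V_k\longrightarrow \bigwedge\nolimits^{n}\mathbb{C}^n,\qquad
(x_1,\dots,x_k)\mapsto x_1\cdots x_k\, u_{k+1}\cdots u_n,
\]
where each $V_i$ is viewed as an affine space (choose a basis of each $V_i$ so that $p$ becomes polynomial in coordinates). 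This map is not identically zero, because $p(w_1,\dots,w_k)=w_1\cdots w_k u_{k+1}\cdots u_n\neq 0$ by construction, as $\{w_1,\dots,w_k,u_{k+1},\dots,u_n\}$ is a basis of $\mathbb{C}^n$.

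Next I would invoke the standard fact, used repeatedly above, that the nonvanishing locus $D(p)=\{x\in V_1\times\cdots\times V_k : p(x)\neq 0\}$ of a nonzero polynomial map is a nonempty Zariski-open, hence dense (in the Euclidean topology), subset of $V_1\times\cdots\times V_k$. In particular, given the prescribed vectors $(v_1,\dots,v_k)\in V_1\times\cdots\times V_k$ and any $\epsilon>0$, density of $D(p)$ yields a point $(\widetilde v_1,\dots,\widetilde v_k)\in D(p)$ with $\norm{v_i-\widetilde v_i}<\epsilon$ for every $i$ and $\widetilde v_i\in V_i$. Since $p(\widetilde v_1,\dots,\widetilde v_k)=\widetilde v_1\cdots\widetilde v_k u_{k+1}\cdots u_n\neq 0$, the $n$ vectors $\widetilde v_1,\dots,\widetilde v_k,u_{k+1},\dots,u_n$ are linearly independent, and therefore in particular $\widetilde v_1,\dots,\widetilde v_k$ are linearly independent. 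As $\epsilon$ was arbitrary, this $(\widetilde v_1,\dots,\widetilde v_k)$ is the desired infinitesimal motion of $(v_1,\dots,v_k)$, proving the lemma.

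The only mild subtlety — and the place where one must be a little careful rather than the genuine obstacle — is making the identification of $V_1\times\cdots\times V_k$ with an affine space $\mathbb{C}^{\dim V_1}\times\cdots\times\mathbb{C}^{\dim V_k}$ so that "$p$ is a polynomial map" and "$D(p)$ is dense" are literally correct statements, and checking that the Euclidean metric in these coordinates is comparable (up to a change of basis, hence bi-Lipschitz) to the ambient norm on $\mathbb{C}^n$, so that closeness in coordinates really does give an infinitesimal motion in the sense of the paper. This is routine linear algebra and is morally identical to the density argument already carried out in Lemma~\ref{ñ} and Lemma~\ref{lemm 2}, so I expect no real difficulty; the heart of the proof is simply the observation that the choice property furnishes one good point $(w_1,\dots,w_k)$, which certifies $p\not\equiv 0$, after which genericity does the rest.
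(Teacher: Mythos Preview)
Your proof is correct, but it takes a somewhat different route from the paper's. The paper argues along a single line: having chosen the independent witnesses $w_i\in V_i$, it sets $\widetilde v_i=v_i+\lambda w_i$ and considers the univariate polynomial $p(\lambda)=\det(x_1+\lambda z_1,\ldots,x_k+\lambda z_k)$, where $x_i,z_i$ are the first $k$ coordinates of $v_i,w_i$ (after reordering so that the corresponding $k\times k$ minor of the $w_i$ is nonzero). The leading coefficient of $p$ is $\det(z_1,\ldots,z_k)\neq 0$, so $p$ has only finitely many roots and an infinitesimal $\lambda$ with $p(\lambda)\neq 0$ exists. Your argument instead works on the full parameter space $V_1\times\cdots\times V_k$ and uses Zariski density of $D(p)$, exactly mirroring the style of Lemma~\ref{ñ}. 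The paper's version is a bit more elementary and explicit (no topology beyond ``a nonzero polynomial in one variable has finitely many zeros'', and it hands you the actual one-parameter motion), while yours is cleaner conceptually and sidesteps the coordinate-projection step; either approach is perfectly adequate here, and the ``mild subtlety'' you flag about comparing norms is genuinely routine.
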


\begin{proof}
Consider the vectors $w_{i}\in V_{i}$ such that $w_{1},\ldots,w_{k}$ are linearly independent. Since they are independent, there exists a $k$-minor of the $n\times k$ matrix with columns ${w_{1},\ldots,w_{k}}$ that does not vanish. Without loss of generality, assume that it is the minor formed by the first $k$ coordinates of each $w_{i}$.

For each $i\in [k]$, let $z_{i}$ and $x_{i}$ denote the vectors containing the first $k$ coordinates of $w_{i}$ and $v_{i}$, respectively. We observe that $\det(z_{1},\ldots,z_{k})\neq 0$. Now, consider the following polynomial
$$p(\lambda)=\det(x_{1}+\lambda z_{1},\ldots,x_{k}+\lambda{z_{k}}).$$
By the linearity property of the determinant with respect to its columns, we can deduce that the leading coefficient of $p$ is $\det(z_{1},\ldots,z_{k})\neq 0$. Hence, $p$ is a non-zero polynomial, implying it has only a finite number of roots. As a result, we can select an infinitesimal $\lambda$ such that $p(\lambda)\neq 0$. Then, we define $\widetilde{v_{i}}=v_{i}+\lambda w_{i}\in V_{i}$ for every $i\in [k]$. Consequently, the minor of the submatrix formed by the first $k$ coordinates of each $\widetilde{v_{i}}$ has determinant 
$\det(x_{1}+\lambda z_{1},\ldots,x_{k}+\lambda z_{k})=p(\lambda)\neq 0.$
This implies that the vectors $\{\widetilde{v_{1}},\ldots,\widetilde{v_{k}}\}$ are linearly independent, leading us to the desired conclusion.
\end{proof}

We are now ready to prove Proposition~\ref{coincide}. Before stating the proof, we recall the notions of  $\gamma$-regular hyperplanes $R_\gamma$, 
from Definition~\ref{pav}, and lifting number $M_\gamma$ from Definition~\ref{lif num}.

\begin{proof}[{\bf Proof of Proposition~\ref{coincide}}]
Given that no two hyperplanes $\gamma_{l_{1}},\gamma_{l_{2}}$ coincide for $l_{1}\neq l_{2}\in R_{\gamma}$, we can select a hyperplane $H_{l}$  of $\CC^{n}$ for each $l\in \mathcal{L}$ such that $\gamma_{l}\subset H_{l}$ and $H_{l_{1}}\neq H_{l_{2}}$ for $l_{1}\neq l_{2}\in \mathcal{L}$.

Let $C$ be a set of independent points of $M$ such that $
\{\gamma_{c}:c\in C\}$ forms a dependent set on $\gamma$. We will show that it is possible to enact an infinitesimal motion on $\gamma$ to obtain a collection of vectors $\tau\in V_{\mathcal{C}(M)}$, where the set $\{\tau_{c}:c\in C\}$ 
is independent. 
As the motion is infinitesimal, we can
ensure that there is no set of points $D\subset \mathcal{P}$ such that the vectors $\{\tau_{p}:p\in D\}$ are linearly dependent while the vectors $\{\gamma_{p}:p\in D\}$ are linearly independent. In other words, we can ensure that $\tau\leq \gamma$ as matroids.  Moreover, we can ensure that $c(M,\tau)=0$. 
Thus, by repeating this procedure, we eventually reach a collection of points $\kappa\in \Gamma_{M}$ after a finite number of steps. Thus, we only need to prove that we can find such $\tau$ for every set $C$ with these conditions. We begin 
by considering different cases.

\medskip
{\bf Case 1.} Assume that $C=\{p_{1},\ldots,p_{n-1}\}$ is 
 a set of points lying within a dependent hyperplane $l\in \mathcal{L}$. Note that 
the set $\{p_{1},\ldots,p_{n-1}\}$ is independent in $M$. Our objective now is to demonstrate that we can apply an infinitesimal motion to the vectors of $\gamma$  
 to obtain $\tau\in V_{\mathcal{C}(M)}$ where the set $\{\tau_{p_{i}}:i\in [n-1]\}$ is independent. For each $i\in [n-1]$, we consider the other dependent hyperplane in $\mathcal{L}_{p_{i}}$ distinct from $l$, denoted as $l_{p_{i}}$.

Note that not all hyperplanes $l_{p_{i}}$ are equal; otherwise, this dependent hyperplane would share $n-1$ points with $l$, which 
 is not possible for two distinct hyperplanes. Consequently, we have that the hyperplanes $H_{l}$ and $\{H_{l_{p_{i}}}\}_{i\in [n-1]}$ 
satisfy the hypothesis of Lemma~\ref{ñ}, where the numbers $c_{i}$ of Lemma~\ref{ñ} correspond to the multiplicity of the dependent hyperplanes $\{l_{p_{i}}\}_{i\in [n-1]}$ 
 within the multiset $\{l_{p_{1}},\ldots,l_{p_{n-1}}\}$. Therefore, by Lemma~\ref{ñ}, we can perform an infinitesimal motion on these hyperplanes to obtain hyperplanes $\widetilde{H}_{l}$ and $\{\widetilde{H}_{l_{p_{i}}}\}_{i\in [n-1]}$ such that the following subspaces have the choice property: 
\begin{equation}\label{ex}
\widetilde{H}_{l}\cap \widetilde{H}_{l_{p_{1}}},\ldots,\widetilde{H}_{l}\cap \widetilde{H}_{l_{p_{n-1}}}
\end{equation}
 To simplify notation, we set $\widetilde{H}_{l'}=H_{l'}$, for $l' \not \in \{l,l_{p_{1}},\ldots,l_{p_{n-1}}\}$. 
For each $i\in [n-1]$, since $H_{l}\neq H_{ l_{p_{i}}}$ by construction, we apply Lemma~\ref{move} to select a vector $\gamma_{p_{i}}' \in\widetilde{H}_{l}\cap \widetilde{H}_{l_{p_{i}}}$, which represents an infinitesimal motion of $\gamma_{p_{i}}$.  By applying Lemma~\ref{choo} and utilizing that  the subspaces in \eqref{ex} have the choice property, we select linearly independent vectors $\tau_{p_{i}} \in\widetilde{H}_{l}\cap \widetilde{H}_{l_{p_{i}}}$ that represent infinitesimal motions of the vectors $\gamma_{p_{i}}'$ for $i\in [n-1]$. 
For  each $q \notin \{p_{1},\ldots,p_{n-1}\}$,  
we apply Lemma~\ref{move} to select a vector $\tau_{q}  \in \widetilde{H}_{l_{1}}\cap \widetilde{H}_{l_{2}}$ which represents an infinitesimal motion of $\gamma_{q}$,  where $\mathcal{L}_{q}=\{l_{1},l_{2}\}$. 

The new collection of vectors $\tau$ is obtained by applying an infinitesimal motion to the vectors of $\gamma$. We observe that $\tau_{l'}\subset \widetilde{H}_{l'}$ for every $l' \in \mathcal{L}$, and consequently, $\tau\in V_{\mathcal{C}(M)}$. Since the motion is infinitesimal, we can ensure that $\widetilde{H}_{l_{1}}\neq\widetilde{H}_{l_{2}}$ for $l_{1}\neq l_{2}$, implying $c(M,\tau)=0$. Since $\{\tau_{p_{1}},\ldots,\tau_{p_{n-1}}\}$ are independent and $c(M,\tau)=0$, we have found $\tau$ for this case.

\medskip
\textbf{Case 2.} Suppose $C= \{q_{1},\ldots,q_{k}\}$ with $k\leq n$ is a set of points not all lying in the same dependent hyperplane of $M$, 
 such that the vectors ${\gamma_{q_{1}},\ldots,\gamma_{q_{k}}}$ form a circuit.
Note that 
the points $\{q_{1},\ldots,q_{k}\}$ are independent in $M$. Therefore, our objective now is to demonstrate that we can apply an infinitesimal motion to the vectors of $\gamma$  to obtain $\tau\in V_{\mathcal{C}(M)}$ where the set $\{\tau_{q_{i}}:i\in [k]\}$ is independent. 
For each $i\in [k]$, we denote $l_{i_{1}},l_{i_{2}}$ as the dependent hyperplanes of $\mathcal{L}_{q_{i}}$. 


\medskip
{\bf Case 2.1.} Assume that all the dependent hyperplanes of the multiset $\{l_{i_{j}}:i\in [k],j\in [2]\}$ have a multiplicity of at least two. Let $N$ be the number of distinct dependent hyperplanes in $\{l_{i_{j}}:i\in [k],j\in [2]\}$. In particular, we have $2N\leq 2k \leq 2n$, implying $N\leq n$. Additionally, there 
 do not exist dependent hyperplanes $l_{1}\neq l_{2}\in \mathcal{L}$ and  a subset $J\subset [n]$ of size $n-1$  such that $\{l_{j_{1}},l_{j_{2}}\}=\{l_{1},l_{2}\}$ for every $j\in J$. If that were the case, $l_{1}$ and $l_{2}$ would share at least $n-1$ points of $M$, which is not possible  for two distinct hyperplanes. Furthermore, since not all points $\{q_{1},\ldots,q_{k}\}$ belong to the same dependent hyperplane, every dependent hyperplane in the multiset $ \{l_{i_{j}}:i\in [k],j\in [2]\}$ has a multiplicity of at most $n-1$. Therefore, all the hypotheses of Lemma~\ref{lemm 2} are satisfied for the dependent hyperplanes in this multiset. Consequently, there 
 exist hyperplanes $\{\widetilde{H}_{l_{i_{j}}}:i\in [k], j\in \{1,2\}\}$ that represent an infinitesimal motion of the hyperplanes $\{H_{l_{i_{j}}}:i\in [k], j\in \{1,2\}\}$ such that the subspaces 
\begin{equation}\label{eq 2k} \widetilde{H}_{l_{1_{1}}}\cap \widetilde{H}_{l_{1_{2}}},\ldots,\widetilde{H}_{l_{k_{1}}}\cap \widetilde{H}_{l_{k_{2}}} \end{equation}
have the choice property.
To simplify notation,  we 
set $\widetilde{H}_{l'}=H_{l'}$ 
for $l' \not \in 
\{l_{1_{1}},\ldots,l_{k_{2}}\}$. For each $i\in [k]$, we know that $H_{l_{i_{1}}}\neq H_{l_{i_{2}}}$ by construction. Then, applying Lemma~\ref{move}, we select a vector $\gamma_{q_{i}}'$ from $\widetilde{H}_{l_{i_{1}}}\cap \widetilde{H}_{l_{i_{2}}}$, which represents an infinitesimal motion of $\gamma_{q_{i}}$. As the subspaces of~\eqref{eq 2k} have the choice property, we know by applying Lemma~\ref{choo} that we can choose  linearly independent vectors $\tau_{q_{i}}\in\widetilde{H}_{l_{i_{1}}}\cap \widetilde{H}_{l_{i_{2}}}$, representing infinitesimal motions of the vectors $\gamma_{q_{i}}'$ for $i\in [k]$ (and consequently of the vectors $\gamma_{q_{i}}$). 
 For each $q \notin \{q_{1},\ldots,q_{k}\}$,   
we apply Lemma~\ref{move} to select a vector $\tau_{q} \in \widetilde{H}_{l_{1}}\cap \widetilde{H}_{l_{2}}$ which represents an infinitesimal motion of $\gamma_{q}$, where $\mathcal{L}_{q}=\{l_{1},l_{2}\}$.

We have $\tau_{l'}\subset \widetilde{H}_{l'}$ for every $l' \in \mathcal{L}$, which implies that $\tau \in V_{\mathcal{C}(M)}$. As the motion is infinitesimal, we can ensure that $\widetilde{H}_{l_{1}}\neq\widetilde{H}_{l_{2}}$ for $l_{1}\neq l_{2}$, and this implies that $c(M,\gamma)=0$. Since $\{\tau_{q_{1}},\ldots,\tau_{q_{k}}\}$ are independent and ${c(M,\tau)}=0$, we have found $\tau$ for this case.

\medskip
\textbf{Case 2.2.}  Assume there is a dependent hyperplane with multiplicity one in the multiset $ \{l_{i_{j}}:i\in [k],j\in [2]\}$, and without loss of generality, let this hyperplane be $l_{1_{1}}$. Since not all points $q_{i}$ belong to $l_{1_{2}}$, we can enact an infinitesimal motion on the hyperplane $H_{l_{1_{2}}}$ to ensure that  $\{\gamma_{q_{2}},\ldots,\gamma_{q_{k}}\}\not \subset H_{l_{1_{2}}}$. 
Hence, we may make this assumption. Thus, we have
\[ 
H_{l_{1_{2}}}\wedge \gamma_{q_{2}}\cdots \gamma_{q_{k}}\neq 0. \]
Then we can apply an infinitesimal motion to $H_{l_{1_{1}}}$ to obtain a hyperplane $\widetilde{H}_{l_{1_{1}}}$ such that 
\[ \widetilde{H}_{l_{1_{1}}}\wedge H_{l_{1_{2}}}\wedge \gamma_{q_{2}}\cdots \gamma_{q_{k}}\neq 0. 
\]
In particular, for this hyperplane, we have that $\widetilde{H}_{l_{1_{1}}}\cap H_{l_{1_{2}}}\not \subset \langle \gamma_{q_{2}},\ldots ,\gamma_{q_{k}} \rangle$, so we can take a vector 
\[ w\in \widetilde{H}_{l_{1_{1}}}\cap H_{l_{1_{2}}}\backslash \langle \gamma_{q_{2}},\ldots ,\gamma_{q_{k}} \rangle. \]
As $H_{l_{1_{1}}}\neq H_{l_{1_{2}}}$, applying Lemma~\ref{move}, we 
 select a vector $\gamma_{q_{1}}'\in \widetilde{H}_{l_{1_{1}}}\cap H_{l_{1_{2}}}$ 
 which represents an infinitesimal motion of $\gamma_{q_{1}}$.
Since $w\notin \langle \gamma_{q_{2}},\ldots ,\gamma_{q_{k}} \rangle$, we can choose an infinitesimal $\lambda$ such that 
\[
\tau_{q_{1}}=\gamma_{q_{1}}'+\lambda w\in \widetilde{H}_{l_{1_{1}}}\cap H_{l_{1_{2}}} \backslash \langle \gamma_{q_{2}},\ldots ,\gamma_{q_{k}} \rangle.
\]
 We set $\tau_{q_{i}}=\gamma_{q_{i}}$ for $2\leq i \leq k$, thus ensuring that $\{\tau_{q_{1}},\tau_{q_{2}},\ldots,\tau_{q_{k}}\}$ are linearly independent. 
 To simplify notation, we also set $\widetilde{H}_{l'}=H_{l'}$ for $l'\neq l_{1_{1}}$. 
 For each $q \notin \{q_{1},\ldots,q_{k}\}$,   
we apply Lemma~\ref{move} to select a vector $\tau_{q} \in \widetilde{H}_{l_{1}}\cap \widetilde{H}_{l_{2}}$ which represents an infinitesimal motion of $\gamma_{q}$, where $\mathcal{L}_{q}=\{l_{1},l_{2}\}$.
As before, the new collection of vectors $\tau$ is obtained after applying an infinitesimal motion to the vectors of $\gamma$. Since $\tau_{l'}\subset \widetilde{H}_{l'}$ for every $l' \in \mathcal{L}$, we have that $\tau\in V_{\mathcal{C}(M)}$. As the motion is infinitesimal, we can ensure that $\widetilde{H}_{l_{1}}\neq\widetilde{H}_{l_{2}}$ for $l_{1}\neq l_{2}$, which implies ${c(M,\tau)}=0$. Since $\{\tau_{q_{1}},\ldots,\tau_{q_{k}}\}$ are independent and ${c(M,\tau)}=0$, we have found $\tau$ for this case.

\medskip
After iteratively applying this procedure to each of these cases, we eventually arrive, within a finite number of steps, at a collection of points $\kappa\in V_{\mathcal{C}(M)}$ such that $\{\kappa_{c}:c\in C\}$ is independent for every set of points $C$ belonging to one of these two types:
\begin{itemize}
\item $C$ consists of $n-1$ points lying within the same dependent hyperplane of $M$.
\item $C$ consists of at most $n$ points, not all lying in the same dependent hyperplane of $M$.
\end{itemize}
Thus, $\kappa \in \Gamma_{M}$, and since all motions were carried out infinitesimally, we conclude that $\gamma \in V_{M}$.
\end{proof}

\begin{remark}
In the proof of Proposition~\ref{coincide}, we assumed $\size{\mathcal{L}_{p}}=2$ for every $p\in C$. However, it is also possible that $\size{\mathcal{L}_{p}}=1$. In such case, we can introduce an additional hyperplane passing through the vector $\gamma_{p}$ without coinciding with any hyperplane $\{H_{l}:l\in \mathcal{L}\}$, and apply the same argument.
\end{remark}

\subsection{Liftability property}


 This subsection aims to prove Proposition~\ref{propo 2}, which is essential for establishing the main result of this section, Theorem~\ref{theo lift}. 
We begin by introducing some definitions. Throughout this subsection, recall Definitions~\ref{pav} and~\ref{lif num}, and the notion of {\em submatroid of hyperplanes} from Definition~\ref{submatroid of hyperplanes}.


\begin{definition}[Fully liftable]\normalfont
Let $M$ be an $n$-paving matroid and let $\gamma\in V_{\mathcal{C}(M)}$. We say that $\gamma$ is \textit{fully liftable} from a vector $q\in \mathbb{C}^{n}$ outside $\gamma$ if, for every subset $L$ containing at least two dependent hyperplanes from $R_{\gamma}$, the vectors $\{\gamma_{p}: p\in  \mathcal{P}_{M^{L}}\}$ can be lifted in a non-degenerate manner from $q$ to a set of vectors in $V_{\mathcal{C}(M^{L}})$.
\end{definition}

\begin{remark}\label{lif}
Note that by Lemma~\ref{lifting 3}, if $\gamma$ is a collection of vectors in $V_{\mathcal{C}(M)}\cap V(I_{M}^{\lift})$, then $\gamma$ is \textit{fully liftable} from any vector $q\in \mathbb{C}^{n}$ outside $\gamma$.
\end{remark}

\begin{definition}\label{def Sm}
To any $n$-paving matroid $M$, and a dependent hyperplane $l \in \mathcal{L}$, we associate the sets: 
\[
S_{l}=\{p \in l : |\mathcal{L}_{p}| \geq 2\} \quad \text{and}\quad S_M=\{p \in \mathcal{P} : |\mathcal{L}_{p}| \geq 2\}.
\]
\end{definition}


\begin{lemma}\label{lift n-2}
Let $M$ be an $n$-paving matroid, and let $\gamma\in V_{\mathcal{C}(M)}$ be a \textit{regular} collection of vectors 
with $\rank \{\gamma_{p}: p \in S_{l}\} \leq n-2$ for every $l \in \mathcal{L}$. Suppose $q \in \mathbb{C}^{n}$ is a vector \textit{outside} $\gamma$. Then, 
$\gamma$ can be non-degenerately lifted 
from $q$ to a regular collection 
$\widetilde{\gamma}\in V_{\mathcal{C}(M)}$ with ${c(M,\widetilde{\gamma})} = 0$. Moreover, $\widetilde\gamma$ can be chosen 
 as an infinitesimal motion of $\gamma$.
\end{lemma}

\begin{proof}
 For each $l\in \mathcal{L}$, let $V_{S_{l}}$ the subspace spanned by $\{\gamma_{p}: p \in S_{l}\}$. Since $\dim(V_{S_{l}}) \leq n-2$, 
we can choose a hyperplane $H_{l}$ of dimension $n-1$ that does not contain $q$, such that $V_{S_{l}} \subset H_{l}$, and $H_{l_{1}} \neq H_{l_{2}}$ for any $l_{1} \neq l_{2} \in \mathcal{L}$. For each point $p \in l$, we define $\widetilde{\gamma}_{p}$ as the projection of $\gamma_{p}$ onto the hyperplane $H_{l}$ from the vector $q$. This projection is well-defined because if $p \in l_{1}\cap l_{2}$, then $p \in V_{S_{l_{1}}}\cap V_{S_{l_{2}}}$, meaning that $p$ projects to itself in $H_{l_{1}}$ and $H_{l_{2}}$. Consequently $\widetilde{\gamma}_{l}=H_{l}$, ensuring all hyperplanes are distinct, which completes the proof. Moreover, $\widetilde\gamma$ can be chosen to be infinitesimal by selecting, for each $l \in \mathcal{L}$, the hyperplane $H_{l}$ as an infinitesimal motion of the hyperplane $\gamma_{l}$.
\end{proof}

\begin{lemma}\label{lift otra}
Let $M$ be an $n$-paving matroid and let $\gamma_{1}$ and $\gamma_{2}$ be two \textit{regular} collections of vectors, indexed by $\mathcal{P}$,  inside a hyperplane $H\subset \CC^{n}$. Suppose that  ${\gamma_{1}}_{p}={\gamma_{2}}_{p}$ for every $p\in S_{M}$ and that the vectors of $\gamma_{1}$ can be lifted in a non-degenerate way from a vector $q\notin H$   to a collection of vectors in $V_{\mathcal{C}(M)}$, then we can also find such a lifting from the point $q$ for the vectors of $\gamma_{2}$.
\end{lemma}

\begin{proof}
Let $\widetilde{\gamma_{1}}\in V_{\mathcal{C}(M)}$ be the lifting of the vectors of $\gamma_{1}$ from the point $q$, by hypothesis we know that not all the hyperplanes $\set{\widetilde{\gamma_{1}}_{l}:l\in \mathcal{L}}$ are the same. Now we define the collection of vectors $\widetilde{\gamma_{2}}$, where for the point $p\in l$ the vector $\widetilde{\gamma_{2}}_{p}$ is defined as the projection of ${\gamma_{2}}_{p}$  onto the hyperplane ${\widetilde{\gamma_{1}}}_{l}$  from the vector $q$. This is well-defined, because if $p\in l_{1}\cap l_{2}$ then $p\in S_{M}$ and ${\gamma_{1}}_{p}={\gamma_{2}}_{p}$, so the vector ${\gamma_{2}}_{p}$ 
 is projected to  ${\widetilde{\gamma_{1}}}_{p}$. Then, we have $\widetilde{\gamma_{1}}_{l}=\widetilde{\gamma_{2}}_{l}$ for every $l\in \mathcal{L}$, implying that not all the vectors of $\widetilde{\gamma_{2}}$ lie on the same hyperplane. Therefore, $\widetilde{\gamma_{2}}$ is the desired lifting.
\end{proof}

\begin{proposition}\label{propo}
Let $M$ be an $n$-paving matroid. 
Suppose $\gamma\in V_{\mathcal{C}(M)}$ is a \textit{regular} collection of vectors that is \textit{fully liftable} from a point $q\in \mathbb{C}^{n}$ outside $\gamma$, and ${c(M,\gamma)}\neq 0$. Then, we can apply an infinitesimal motion to the vectors of $\gamma$ to obtain a regular collection of vectors $\sigma\in V_{\mathcal{C}(M)}$ that remains \textit{fully liftable} from $q$ and with ${c(M,\sigma)}<{c(M,\gamma)}$. Moreover, we can ensure that $\sigma_{p}\in \is{\gamma_{p},q}$ for every $p\notin S_{M}$.
\end{proposition}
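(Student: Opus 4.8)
The plan is to lift, from $q$, a sub-collection of $\gamma$ sitting inside a coincident hyperplane, and then to propagate that lift to the rest of $M$. Since $M_\gamma\neq 0$, first pick a block $R\in\overline{R_\gamma}$ with $\size{R}\geq 2$. As $\gamma$ is regular, all the subspaces $\gamma_l$, $l\in R$, coincide with one fixed $(n-1)$-dimensional subspace $W$; writing $N$ for the submatroid of hyperplanes of $R$, the collection $\gamma|_N$ has rank $n-1$, lies in $W$, and all of its hyperplanes equal $W$. Applying the full liftability of $\gamma$ to the family $L=R$, and then Lemma~\ref{lifting 3}, I obtain an \emph{infinitesimal}, non-degenerate lifting $\widetilde{\gamma^{N}}\in V_{\mathcal{C}(N)}$ of $\set{\gamma_p:p\in N}$ from $q$. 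Lower semicontinuity of the rank forces $\dim\widetilde{\gamma^{N}}_l=n-1$ for every $l\in R$, so $\widetilde{\gamma^{N}}$ is regular on $N$; it is a lift from $q$, hence $\widetilde{\gamma^{N}}_p\in\is{\gamma_p,q}$ for all $p\in N$; and non-degeneracy says the hyperplanes $\set{\widetilde{\gamma^{N}}_l:l\in R}$ are not all equal, so the number of coincident (ordered) pairs among the hyperplanes of $R$ has strictly decreased.

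Next I extend $\widetilde{\gamma^{N}}$ to a collection $\sigma$ on all of $M$. Set $\sigma_p=\widetilde{\gamma^{N}}_p$ for $p\in N$, and $H_l:=\widetilde{\gamma^{N}}_l$ for $l\in R$. For a hyperplane $l\notin R$, regularity of $\gamma$ gives $\gamma_l\neq W$ and hence $\dim(\gamma_l\cap W)=n-2$; every $p\in l\cap N$ has degree two, with $\mathcal{L}_p=\{l,l'\}$ for a unique $l'\in R$, so $\gamma_p\in\gamma_l\cap W$, and I choose an infinitesimal motion $H_l$ of $\gamma_l$ that contains all the vectors $\set{\widetilde{\gamma^{N}}_p:p\in l\cap N}$. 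Then I set $\sigma_p$ for $p\notin N$: if $p$ has degree at most one, lying in $l$, let $\sigma_p$ be the projection of $\gamma_p$ onto $H_l$ from $q$, so that $\sigma_p\in\is{\gamma_p,q}$ — this already yields the last assertion, since any $p\notin S_M$ has degree at most one; if $p$ has degree two with $\mathcal{L}_p=\{l_1,l_2\}$, apply Lemma~\ref{move} to pick $\sigma_p\in H_{l_1}\cap H_{l_2}$ infinitesimally close to $\gamma_p$, and Lemma~\ref{choo} to preserve independence of the relevant circuits. By construction $\sigma_l\subseteq H_l$ for every $l$, so $\sigma\in V_{\mathcal{C}(M)}$ and (since the motion is infinitesimal) $\sigma$ is regular, $q$ lies outside $\sigma$, and no two previously distinct hyperplanes become coincident, whereas a coincident pair of $R$ has split; hence $M_\sigma<M_\gamma$. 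Finally $\sigma$ remains fully liftable from $q$, which I would verify directly — any sub-collection of $\sigma$ inside one of its coincident hyperplanes is, after a projective change of coordinates, a lift from $q$ of the corresponding liftable sub-collection of $\gamma$, in the spirit of Lemma~\ref{lift otra} and Remark~\ref{lif}.

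The hard part is the \emph{simultaneous} consistency of the auxiliary hyperplanes $H_l$, $l\notin R$: each $H_l$ must be an infinitesimal motion of $\gamma_l$, must contain the already-fixed vectors $\set{\widetilde{\gamma^{N}}_p:p\in l\cap N}$, and must be generic enough that every degree-two point falls into the appropriate intersection $H_{l_1}\cap H_{l_2}$ via Lemma~\ref{move}. The containment requirement couples all the hyperplanes meeting $N$ — in particular it forces one to move the hyperplanes of any other coincident block that shares a point with $N$, and one must then check the degree-two points inside such a block can still be placed — so the lift $\widetilde{\gamma^{N}}$ cannot be chosen freely in $\ker\mathcal{M}_q(N)$: it has to be taken so that, for every $l\notin R$, the lifted vectors of $l\cap N$ do not increase their rank, equivalently so that they stay inside an infinitesimal perturbation of the $(n-2)$-dimensional subspace $\gamma_l\cap W$ rather than tilting off along $q$. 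Encoding this as additional linear conditions on $\ker\mathcal{M}_q(N)$ and verifying that a non-degenerate lift still survives, together with the independence bookkeeping of the kind carried out in the proof of Proposition~\ref{coincide}, is the technical core of the argument.
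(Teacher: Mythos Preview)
Your outline follows the paper's broad strategy, but your propagation step introduces a real obstruction that the paper avoids with one simple move. The issue is that you set $\sigma_p=\widetilde{\gamma^N}_p$ for \emph{every} $p\in N$, including boundary points $p$ with $\mathcal{L}_p=\{l_1,l_2\}$, $l_1\in R$, $l_2\notin R$. This forces each external $H_{l_2}$ to contain the lifted vectors $\{\widetilde{\gamma^N}_p:p\in l_2\cap N\}$, coupling the lift to all hyperplanes meeting $N$. Concretely, if $\gamma_{l_2}=\ker\phi$, a nearby hyperplane $\ker(\phi+\epsilon\psi)$ contains $\gamma_p+z_pq$ only when $\psi(\gamma_p)=-(z_p/\epsilon)\phi(q)$; once the vectors $\{\gamma_p:p\in l_2\cap N\}\subset\gamma_{l_2}\cap W$ are linearly dependent (e.g.\ whenever $|l_2\cap N|>n-2$), consistency forces the $z_p$'s to satisfy the same relation --- a constraint not implied by membership in $\ker\mathcal{M}_q(N)$, and you have not shown that any non-degenerate lift survives all of these simultaneously.

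The paper sidesteps this entirely: it keeps $\gamma_{l_2}$ \emph{fixed} for every $l_2\notin R$ and does \emph{not} take $\sigma_p=\gamma^i_p$ at boundary points. Since $R$ is a full block of $\overline{R_\gamma}$, one has $\gamma_{l_1}=W\neq\gamma_{l_2}$, so Lemma~\ref{move} (applied to the infinitesimal motion $\gamma^i_{l_1}$ of $\gamma_{l_1}$, with $\gamma_{l_2}$ unchanged) yields $\sigma_p\in\gamma^i_{l_1}\cap\gamma_{l_2}$ close to $\gamma_p$. Then $\sigma_l=\gamma_l$ for all $l\notin R$, so there is no consistency problem, $M_\sigma<M_\gamma$ is immediate, and full liftability of $\sigma$ follows directly from Lemma~\ref{lift otra}: for any remaining coincident block with submatroid $N'$, every $p\in S_{N'}$ satisfies $\sigma_p=\gamma^j_p$, which is exactly that lemma's hypothesis.
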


\begin{proof}
Recall that since $\gamma$ is regular, $\dim(\gamma_{l})=n-1$ for every $l\in \mathcal{L}$, implying that each subspace $\gamma_{l}$ is a hyperplane in $\mathbb{C}^{n}$. We can establish a disjoint partition of $\mathcal{L}$ as 
$\mathcal{L}=\cup_{i=1}^{d}\mathcal{L}_{i}$,
where two dependent hyperplanes $l_{1}\neq l_{2}\in \mathcal{L}$ belong to the same subset of the partition if and only if $\gamma_{l_{1}}=\gamma_{l_{2}}$.

 Observe that not all subsets $\mathcal{L}_{i}$ consist of a single element, 
 since ${c(M,\gamma)}>0$. 
Note that the vectors $\{\gamma_{p}:p\in  \mathcal{P}_{M^{\mathcal{L}_{i}}}\}$ lie within the same hyperplane in $\mathbb{C}^{n}$,  for each $i\in [d]$. Since $\gamma$ is fully liftable from $q$,  for each $i\in [d]$ there exists a lifting of the vectors $\{\gamma_{p}:p\in  \mathcal{P}_{M^{\mathcal{L}_{i}}}\}$ from $q$ to a collection of vectors $\gamma^{i}=\set{\gamma^{i}_{p}:p\in  \mathcal{P}_{M^{\mathcal{L}_{i}}}}\in V_{\mathcal{C}(M^{\mathcal{L}_{i}})}$,  such that not all vectors lie in the same hyperplane when $\size{\mathcal{L}_{i}}\geq 2$. By employing Lemma~\ref{lifting 3}, we can ensure that these liftings are infinitesimal.

\medskip
We now define the collection of vectors $\sigma=\{\sigma_{p}:p\in  \mathcal{P}_{M}\}$ as follows:
\begin{itemize}
\item If $\emptyset \subsetneq \mathcal{L}_{p}\subset \mathcal{L}_{i}$  for some $i\in [d]$, we set $\sigma_{p}=\gamma^{i}_{p}$.
\item If $p\in  \mathcal{P}_{M}$ belongs to the dependent hyperplanes $l_{1}$ and $l_2$, where $l_{1}\in \mathcal{L}_{i},l_{2} \in \mathcal{L}_{ j}$ and $i\neq j$, 
then by applying Lemma~\ref{move}, there exists a vector in $\gamma^{i}_{l_{1}}\cap \gamma^{j}_{l_{2}}$ that represents an infinitesimal motion of $\gamma_{p}$. We define $\sigma_{p}$ as this vector.
\item  If $\mathcal{L}_{p}=\emptyset$, we set $\sigma_{p}=\gamma_{p}$.
\end{itemize}

Note that by definition, the vectors in $\sigma$ represent an infinitesimal motion of the vectors in $\gamma$. Furthermore,  for every $i\in [d]$ and each 
$l\in \mathcal{L}_{i}$, we have $\sigma_{l}=\gamma^{i}_{l}$. 
Thus, $\dim(\sigma_{l})\leq n-1$ for all $l\in \mathcal{L}$, implying $\sigma\in V_{\mathcal{C}(M)}$. We can ensure that $\sigma$ is \textit{regular}, since $\gamma$ was \textit{regular} and the motions are infinitesimal. Moreover,  for each $i$ with $\size{\mathcal{L}_{i}}\geq 2$, not all hyperplanes in $\{\sigma_{l}:l\in \mathcal{L}_{i}\}$ are the same.  This is because the vectors in $\gamma^{i}$ do not all lie on the same hyperplane, as the lifting of the vectors $\{\gamma_{p}:p\in  \mathcal{P}_{M^{\mathcal{L}_{i}}}\}$ is non-degenerate. This implies that the lifting number is strictly reduced. Additionally, if $p$ is a point not in $S_{M}$, then by the definition of $\sigma_{p}$, we have $\sigma_{p}\in \is{\gamma_{p},q}$. However, we still need to prove that $\sigma$ is fully liftable from $q$.

Consider a subset $L$ of $\mathcal{L}$ 
containing at least two dependent hyperplanes such that the vectors $\{\sigma_{p}: p 
 \in \cup_{l\in L}l\}$ all lie in the same hyperplane. We have to prove that these vectors are liftable from the point $q$. 
Since all vectors $\{\gamma_{p}: p\in  \mathcal{P}_{M^{L}}\}$ were originally situated on the same hyperplane,  there exists $j\in [d]$ such that $L\subset \mathcal{L}_{j}$. As $\gamma$ is \textit{fully liftable} from $q$, the collection of vectors $\{\gamma_{p}: p\in  \mathcal{P}_{M^{L}}\}$ can be lifted in a non-degenerate manner from the point $q$. Furthermore, all vectors $\{\gamma^{j}_{p}: p\in  \mathcal{P}_{M^{L}}\}$ belong to the same hyperplane, 
and they are obtained as a lifting of the vectors $\{\gamma_{p}: p\in  \mathcal{P}_{M^{L}}\}$ from the point $q$. Hence, we conclude that they are also liftable in a non-degenerate manner from the point $q$.

Consider the collections of vectors $\{\gamma^{j}_{p}: p\in  \mathcal{P}_{M^{L}}\}$ and $\{\sigma_{p}: p\in  \mathcal{P}_{M^{L}}\}$. They span the same hyperplane, and the vectors in $\{\gamma^{j}_{p}: p\in  \mathcal{P}_{M^{L}}\}$ are liftable in a non-degenerate manner from the point $q$. Moreover, by the definition of $\sigma$, if $p\in S_{M^{L}}$ (a point of degree two within the submatroid $M^{L}$), then $\sigma_{p}=\gamma^{j}_{p}$. Additionally, both $\gamma$ and $\sigma$ are \textit{regular} collections. Therefore, all the hypotheses of Lemma~\ref{lift otra} are satisfied for these two collections of vectors. Consequently, Lemma~\ref{lift otra} asserts that the vectors $\{\sigma_{p}: p\in  \mathcal{P}_{M^{L}}\}$ are also liftable from the point $q$ in a non-degenerate manner, as desired.
\end{proof}

\begin{corollary}\label{coro}
Let $M$ be an $n$-paving matroid. 
Suppose $\gamma\in V_{\mathcal{C}(M)}$ is a \textit{regular} collection of vectors that is \textit{fully liftable} from the point $q\in \mathbb{C}^{n}$ outside $\gamma$. Then, we can apply an infinitesimal motion to the vectors of $\gamma$ to obtain a regular collection of vectors $\kappa\in V_{\mathcal{C}(M)}$ such that ${c(M,\kappa)}=0$ and $\kappa_{p}\in\is{\gamma_{p},q}$ for every point $p\notin S_{M}$.
\end{corollary}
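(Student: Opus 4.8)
\emph{Proof plan.} The corollary follows by iterating Proposition~\ref{propo}, so the plan is to run an induction on the lifting number $M_\gamma$, which is a non-negative integer. The base case $M_\gamma=0$ is immediate: take $\kappa=\gamma$, which is regular, lies in $V_{\mathcal{C}(M)}$, has $M_\kappa=0$, and satisfies $\gamma_p\in\is{\gamma_p,q}$ trivially for every $p$.

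For the inductive step, assume $M_\gamma>0$ and that the statement holds for every regular collection in $V_{\mathcal{C}(M)}$ that is fully liftable from $q$ and has strictly smaller lifting number. Fix $\epsilon>0$. First I would apply Proposition~\ref{propo} to $\gamma$ to produce a regular collection $\sigma\in V_{\mathcal{C}(M)}$, still fully liftable from $q$, with $M_\sigma<M_\gamma$, with $\sigma_p\in\is{\gamma_p,q}$ for every $p\notin S_M$, and --- using the infinitesimality built into that proposition --- with $\norm{\gamma_p-\sigma_p}<\epsilon/2$ for all $p$. Next I would invoke the induction hypothesis on $\sigma$: one checks that $\sigma$ meets the hypotheses (it is regular, lies in $V_{\mathcal{C}(M)}$, and is fully liftable from $q$, in particular $q$ is still outside $\sigma$), so with parameter $\epsilon/2$ it yields a regular $\kappa\in V_{\mathcal{C}(M)}$ with $M_\kappa=0$, with $\kappa_p\in\is{\sigma_p,q}$ for $p\notin S_M$, and with $\norm{\sigma_p-\kappa_p}<\epsilon/2$. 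The triangle inequality gives $\norm{\gamma_p-\kappa_p}<\epsilon$ for all $p$; and since $\sigma_p\in\is{\gamma_p,q}$ forces $\is{\sigma_p,q}\subseteq\is{\gamma_p,q}$, we conclude $\kappa_p\in\is{\gamma_p,q}$ for every $p\notin S_M$. As $\epsilon$ was arbitrary, $\kappa$ is the desired infinitesimal motion of $\gamma$.

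Two points merit a quick check rather than constituting a real difficulty. First, $S_M$ depends only on the matroid $M$, not on the realization, so ``$p\notin S_M$'' is unambiguous and stable throughout the iteration, and the nested span inclusions produced at each application of Proposition~\ref{propo} are exactly what make the final membership $\kappa_p\in\is{\gamma_p,q}$ go through. Second, one must ensure that ``fully liftable from $q$'' keeps making sense at each stage, i.e.\ that $q$ stays outside the perturbed collection; since $\gamma$ is regular, each subspace $\gamma_l$ is a hyperplane missing $q$, an open condition preserved by a sufficiently small motion. The substantive content --- producing a perturbation that strictly lowers the lifting number while preserving full liftability --- is entirely absorbed into Proposition~\ref{propo}; what remains here is just the bookkeeping of the $\epsilon$'s and the span containments, so I do not anticipate a genuine obstacle.
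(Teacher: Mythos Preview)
Your proof is correct and takes essentially the same approach as the paper's: iterate Proposition~\ref{propo} until the lifting number reaches zero. Your version is simply a more detailed write-up of that iteration, spelling out the induction on $M_\gamma$, the $\epsilon/2$ bookkeeping, the nesting $\is{\sigma_p,q}\subseteq\is{\gamma_p,q}$, and the check that $q$ remains outside the perturbed collection---all of which the paper leaves implicit in its one-sentence argument.
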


\begin{proof}
The result follows by repeatedly applying Proposition \ref{propo} to the collection of vectors $\gamma$. After a finite number of steps, we arrive at the collection of vectors $\kappa$ with the desired properties. Since all motions involved are infinitesimal, $\kappa$ is an infinitesimal motion of $\gamma$, thus proving the result.
\end{proof}

We will now show that the assumption of $\gamma$ being regular can be removed from Proposition~\ref{propo}.

\begin{proposition}\label{propo 2}
Let $M$ be an $n$-paving matroid. 
Suppose $\gamma\in V_{\mathcal{C}(M)}$ is a collection of vectors that is fully liftable from the point $q\in \CC^{n}$ outside $\gamma$. Then, we can apply an infinitesimal motion to $\gamma$ to obtain a collection of vectors in $V_{\mathcal{C}(M)}$ with a lifting number equal to zero.
\end{proposition}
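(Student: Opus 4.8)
The plan is to reduce the statement to Corollary~\ref{coro}, which already gives the conclusion once $\gamma$ is \emph{regular}. So the real task is to produce an infinitesimal motion $\sigma$ of $\gamma$ that is regular, still lies in $V_{\mathcal{C}(M)}$, still has $q$ outside it, and is still fully liftable from $q$; Corollary~\ref{coro} applied to $\sigma$ then yields an infinitesimal motion $\kappa$ of $\sigma$ — hence of $\gamma$ — in $V_{\mathcal{C}(M)}$ with $M_\kappa=0$, as required.

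To regularize, call $l\in\mathcal{L}$ \emph{irregular} if $\dim\gamma_l\le n-2$, and choose target hyperplanes $H_l\subset\mathbb{C}^n$: set $H_l=\gamma_l$ for $l\in R_\gamma$, and for irregular $l$ take a generic $H_l\supseteq\gamma_l$ with $q\notin H_l$, with the $H_l$ of irregular $l$ pairwise distinct, distinct from all $\gamma_{l'}$ ($l'\in R_\gamma$), and in general position. Put $V_p=\bigcap_{l\ni p}H_l$ (so $\gamma_p\in V_p$ since $\gamma_p\in\gamma_l\subseteq H_l$ for each $l\ni p$), and let $\mathcal{X}$ be the affine — hence irreducible — space of tuples $(\sigma_p)_{p\in\mathcal{P}}$ with $\sigma_p\in V_p$ for all $p$ and $\sigma_p=\gamma_p$ whenever $p$ lies on no irregular hyperplane; then $\gamma\in\mathcal{X}$, and every $\sigma\in\mathcal{X}$ satisfies $\sigma_l\subseteq H_l$, hence $\sigma\in V_{\mathcal{C}(M)}$ with $q$ outside $\sigma$. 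I would then look for $\sigma$ in the Zariski-open locus $\mathcal{U}=\{\sigma\in\mathcal{X}:\dim\sigma_l=n-1\ \forall l\}$ close to $\gamma$. For $l\in R_\gamma$ this holds near $\gamma$; for an irregular $l$ which has a point lying on no other hyperplane, or a point on another irregular hyperplane, or points lying on at least two distinct blocks of $\overline{R_\gamma}$, the subspaces $\{V_p:p\in l\}$ (each of dimension $n-1$ or $n-2$) span $H_l$ by a Hall/choice-property count in the spirit of Lemmas~\ref{sets 2} and~\ref{choo}, using $\lvert l\rvert\ge n$ and $\lvert l\cap l'\rvert\le n-2$ (Lemma~\ref{sub h}). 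Since $\mathcal{X}$ is irreducible, $\mathcal{U}$ is then nonempty dense open, so it contains such a $\sigma$; by construction $\sigma_l=\gamma_l$ for $l\in R_\gamma$ (as $\sigma_l\subseteq H_l=\gamma_l$ has dimension $n-1$), while $\sigma_l=H_l$ is a fresh hyperplane for irregular $l$, so the parts of $\overline{R_\sigma}$ are exactly the blocks of $\overline{R_\gamma}$ together with one singleton per irregular~$l$.

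The single obstruction to the previous step is a \emph{trapped} irregular hyperplane $l$ — one all of whose points lie on hyperplanes of a single block $B$ of $\overline{R_\gamma}$, which forces $\gamma_l\subseteq\gamma_{l'}$ for $l'\in B$ and $\lvert B\rvert\ge 2$. For such an $l$ I would first use full liftability of $\gamma$ to lift the block $B$, exactly as in Proposition~\ref{propo}: lifting $\{\gamma_p:p\in N_B\}$ non-degenerately and infinitesimally from $q$ (Lemma~\ref{lifting 3}), where $N_B$ is the submatroid of hyperplanes of $B$, both strictly decreases $M_\gamma$ and frees the points of $l$ from the common span of $B$. Carrying this out as an induction on the pair $(M_\gamma,\#\{\text{irregular hyperplanes}\})$ — block-lifting steps decrease the first coordinate, the regularization step above decreases the second without increasing the first — eventually produces a regular collection while staying in $V_{\mathcal{C}(M)}$; the delicate point is that each step must be arranged to preserve full liftability from $q$.

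Finally, full liftability of the regular $\sigma$ coming out of the regularization step follows from Lemma~\ref{lift otra}. Let $L\subseteq R_\sigma$ with $\lvert L\rvert\ge 2$ be such that $\{\sigma_p:p\in N\}$ lies in a common hyperplane $H$, where $N$ is the submatroid of hyperplanes of $L$; then $\sigma_l=H$ for all $l\in L$, and since irregular hyperplanes received pairwise-distinct fresh spans, every $l\in L$ lies in a single block of $\overline{R_\gamma}$, whence $\gamma_l=H$ as well, so $\{\gamma_p:p\in N\}$ lies in $H$ and lifts non-degenerately from $q\notin H$ by full liftability of $\gamma$. Moreover any $p\in S_N$ lies on two hyperplanes of $L\subseteq R_\gamma$, hence on no irregular hyperplane, so $\sigma_p=\gamma_p$ there; since $\{\gamma_p:p\in N\}$ and $\{\sigma_p:p\in N\}$ are regular collections for $N$ inside $H$ agreeing on $S_N$, Lemma~\ref{lift otra} transfers the lifting to $\{\sigma_p:p\in N\}$. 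The main obstacle is exactly this interplay between regularizing the irregular hyperplanes and keeping full liftability — in particular the trapped case, which forces one to interleave regularization with block-lifting (hence the induction) and to freeze every point lying off the irregular hyperplanes, so that the submatroids $N$ that must be tested stay inside the old blocks of $\overline{R_\gamma}$ where Lemma~\ref{lift otra} is available.
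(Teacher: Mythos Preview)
Your strategy --- regularize first, then apply Corollary~\ref{coro} --- is the reverse of the paper's. The paper restricts to the submatroid $N$ of hyperplanes of $R_\gamma$, applies Corollary~\ref{coro} there (crucially using the extra conclusion $\kappa_p\in\langle\gamma_p,q\rangle$ for $p\notin S_N$, which forces every irregular $l$ to satisfy $\tau_l\subseteq\langle\gamma_l,q\rangle$), and only then separates the hyperplanes that have \emph{become} regular, via repeated use of Lemma~\ref{lift n-2} with fresh centres $q_1,q_2,\ldots$. After the first step the paper never needs full liftability again; it iterates on the strictly growing set $R_\tau\subseteq R_\pi\subseteq\cdots$, which must stabilize. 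Your regularization step (when no hyperplane is trapped) and your transfer of full liftability to $\sigma$ via Lemma~\ref{lift otra} are correct and are an interesting alternative.

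The genuine gap is the block-lifting step you invoke for the trapped case. You appeal to Proposition~\ref{propo} ``exactly'', but that proposition assumes $\gamma$ is regular; its case analysis has no clause for a point $p$ lying on $l_1\in B$ and on an \emph{irregular} $l_2$. Any reasonable extension (e.g.\ $\sigma_p=\gamma_p^{\,i}$, or $\sigma_p\in\gamma_{l_1}^{\,i}\cap\langle\gamma_{l_2},q\rangle$ via Lemma~\ref{move}) lands $\sigma_p$ in $\langle\gamma_p,q\rangle$, so $\sigma_{l_2}\subseteq\langle\gamma_{l_2},q\rangle$; when $\dim\gamma_{l_2}=n-2$ this generically makes $l_2$ \emph{regular} with span $\langle\gamma_{l_2},q\rangle\ni q$. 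Two previously irregular hyperplanes $l_2,l_2'$ can then acquire the \emph{same} span (this happens whenever $\gamma_{l_2},\gamma_{l_2'}$ sit in a common hyperplane through $q$), producing brand-new coincidences in $R_\sigma$ that are not controlled by the drop coming from splitting $B$. Hence ``block-lifting decreases $M_\gamma$'' is unproven and can fail, so your lexicographic induction on $(M_\gamma,\#\{\text{irregular}\})$ need not terminate. A second, related issue: once $q\in\sigma_{l_2}$, the point $q$ is no longer \emph{outside} $\sigma$, so you cannot feed $\sigma$ back into Corollary~\ref{coro} (or even into the next block-lift) with the same $q$. The paper avoids both problems by postponing the treatment of irregular hyperplanes to a stage where only the much weaker hypothesis of Lemma~\ref{lift n-2} is needed.
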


\begin{proof}
Consider the submatroid of hyperplanes of $R_{\gamma}$,  denote as $M^{R_{\gamma}}$. By the definition of $R_{\gamma}$, we know that the collection of vectors $\{\gamma_{p}: p\in  \mathcal{P}_{M^{R_{\gamma}}}\}$ is regular and fully liftable from the point $q$ (with respect to $M^{R_{\gamma}}$), by hypothesis. Then, applying Corollary~\ref{coro}, we know that there exists a collection of vectors $\kappa=\{\kappa_{p}: p\in  \mathcal{P}_{M^{R_{\gamma}}}\}$ that is an infinitesimal motion of $\restr{\gamma}{ \mathcal{P}_{M^{R_{\gamma}}}}$, with ${c(M,\kappa)}=0$ and $\kappa_{p}\in  \is{\gamma_{p},q}$ for every $p\notin S_{M^{R_{\gamma}}}$. Now, we define the collection of vectors $\tau=\{\tau_{p}: p\in  \mathcal{P}_{M}\}$ as follows:
\begin{itemize}
\item If $p\in  \mathcal{P}_{M^{R_{\gamma}}}$, we define $\tau_{p}=\kappa_{p}$.
\item If $p\notin  \mathcal{P}_{M^{R_{\gamma}}}$, we define $\tau_{p}=\gamma_{p}$.
\end{itemize}

By definition, if $l\in R_{\gamma}$, then $\tau_{l}=\kappa_{l}$. Now, let $l\notin R_{\gamma}$. For this dependent hyperplane, we know that $\dim(\gamma_{l})\leq n-2$. Since $\size{\mathcal{L}_{p}}\leq 2$, every point $p\in l$ does not belong to $S_{M^{R_{\gamma}}}$. Hence, $\tau_{p}\in \is{\gamma_{p},q}$ for every $p\in l$. Consequently, $\tau_{l}\subset \is{\gamma_{l},q}$, implying that $\dim(\tau_{l})\leq n-1$. From this, we deduce that $\tau \in V_{\mathcal{C}(M)}$.

\medskip

Note that 
 $\tau_{l_{1}}\neq \tau_{l_{2}}$ for any two dependent hyperplanes $l_{1}\in R_{\gamma}$ and $\ l_{2}\in \mathcal{L}\backslash R_{\gamma}$, since $q$ 
does not lie in $\tau_{l_{1}}$, and $\tau_{l_{2}}$ must contain $q$ to achieve dimension $n-1$.  Moreover, due to the construction of $\tau$ and the fact that ${c(M,\kappa)}=0$, we have $\tau_{l_{1}}\neq \tau_{l_{2}}$ for any pair of $l_1,l
_2\in R_{\gamma}$ with $l_{1}\neq l_{2}$. However, it is still possible to have dependent hyperplanes $l_{1}\neq l_{2} \in R_{\tau}\backslash R_{\gamma}$ with $\tau_{l_{1}}=\tau_{l_{2}}$.

\medskip

We now denote the submatroid of hyperplanes of $R_{\tau}\backslash R_{\gamma}$ by $K$.
We fix a point $q_{1}$ outside $\tau$. 
Note that if a point $p\in  \mathcal{P}_{M}$ belongs to two dependent hyperplanes of $K$, then it was not moved during the liftings. This implies that $\tau_{p}=\gamma_{p}$ for every $p\in S_{K}$. Consequently, for every $l\in R_{\tau}\backslash R_{\gamma}$, we have:
$$
\rank \left\{\tau_{p}:p\in S_{l}\right\}=\rank \left\{\gamma_{p}:p\in S_{l}\right\}\leq \dim(\gamma_{l})\leq n-2,
$$
where $S_{l}$ is taken with respect to the matroid $K$ and not with respect to $M$.

Therefore, the  matroid $K$ and the collection of vectors $\set{\tau_{p}:p\in  \mathcal{P}_{K}}$ satisfy the hypotheses of Lemma~\ref{lift n-2} (since we also have $\dim(\tau_{l})=n-1$ for any $l\in R_{\tau}$). Consequently, there exists  an infinitesimal lifting of these vectors from the point $q_{1}$ to a collection of vectors $\widetilde{\tau}=\set{\widetilde{\tau}_{p}:p\in  \mathcal{P}_{K}}$ such that $\widetilde{\tau}_{l_{1}}\neq \widetilde{\tau}_{l_{2}}$ for every $l_{1}\neq l_{2}\in R_{\tau}\backslash R_{\gamma}$.
With this, we define a collection of vectors $\pi=\set{\pi_{p}:p\in  \mathcal{P}_{M}}$ as follows:
\begin{itemize}
\item If $p\notin  \mathcal{P}_{K}$ we define $\pi_{p}=\tau_{p}$.
\item If $p\in  \mathcal{P}_{K}$ and $p$ does not belong to any dependent hyperplane of $R_{\gamma}$ we define $\pi_{p}=\widetilde{\tau}_{p}$.
\item If $\mathcal{L}_{p}=\set{l_{1},l_{2}}$ with $l_{1}\in R_{\tau}\backslash R_{\gamma}$ and $l_{2}\in R_{\gamma}$ we know that $\tau_{l_{1}}\neq \tau_{l_{2}}$, then applying Lemma~\ref{move} we select a vector in $\widetilde{\tau}_{l_{1}}\cap \tau_{l_{2}}$ that is an infinitesimal motion of $\tau_{p}$ and we define $\pi_{p}$ as this vector. 
\end{itemize}
Observe that by definition, the vectors of $\pi$ are an infinitesimal motion of the vectors of $\tau$. For each $l\in \mathcal{L}$, the subspace $\pi_{l}$ is defined as follows:
\begin{itemize}
\item If $l\in R_{\gamma}$, then $\pi_{l}=\tau_{l}$.
\item If $l\in R_{\tau}\backslash R_{\gamma}$, then $\pi_{l}=\widetilde{\tau}_{l}$.
\item If $l\notin R_{\tau} $, then we have $\pi_{l}\subset \is{\tau_{l},q_{1}}$. Moreover, by the definition of $R_{\tau}$, we know that $\dim(\tau_{l})\leq n-2$, hence $\dim(\pi_{l})\leq n-1$.
\end{itemize}
As a consequence of this analysis, we conclude that $\pi \in V_{\mathcal{C}(M)}$. Note that:
\begin{itemize}
    \item For any $l_{1}\neq l_{2}\in R_{\gamma}$, we have  $\pi_{l_{1}}\neq \pi_{l_{2}}$.
    \item For any $l_{1}\neq l_{2}\in R_{\tau}\backslash R_{\gamma}$,  the definition of $\widetilde{\tau}$ ensures that $\widetilde{\pi}_{l_{1}}\neq \widetilde{\pi}_{l_{2}}$.
    \item For any $l_{1}\in R_{\gamma}$ and $l_{2}\in R_{\tau}\backslash R_{\gamma}$, we have $\pi_{l_{1}}\neq \pi_{l_{2}}$ because $\tau_{l_{1}}\neq \tau_{l_{2}}$.
\end{itemize}
Thus, we deduce that $\pi_{l_{1}}\neq \pi_{l_{2}}$ for any two dependent hyperplanes  $l_{1}\neq l_{2}\in R_{\tau}$.

\medskip

Similarly, for each point $p\in  \mathcal{P}_{M}$ belonging to two dependent hyperplanes of $R_{\pi}\backslash R_{\tau}$, we have $\pi_{p}=\tau_{p}$.  Consequently, 
by repeating the above argument, we can obtain $\rho\in V_{\mathcal{C}(M)}$, which represents an infinitesimal motion of the vectors of $\pi$, and satisfies $\rho_{l_{1}}\neq \rho_{l_{2}}$ for any $l_{1}\neq l_{2}\in R_{\pi}$.
Continuing this procedure, after a finite number of steps, we obtain a collection of vectors $\gamma'\in V_{\mathcal{C}(M)}$ that satisfies ${\gamma'}_{l_{1}}\neq {\gamma'}_{l_{2}}$ for any $l_{1}\neq l_{2}\in R_{\gamma'}$, which completes the proof. 
\end{proof}

\subsection{Lifting polynomials}
Now, we 
prove the main theorem of this section, providing a generating set,  up to radical, for $I_{M}$.

\begin{theorem}\label{theo lift}
Let $M$ be an $n$-paving matroid with no points of degree greater than two. Then: $$I_{M}=\sqrt{I_{\mathcal{C}(M)}+I_{M}^{\lift}}$$
\end{theorem}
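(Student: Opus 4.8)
The inclusion $\sqrt{I_{\mathcal{C}(M)}+I_M^{\lift}}\subseteq I_M$ is immediate: by Definition~\ref{cir} we have $I_{\mathcal{C}(M)}\subseteq I_M$, by Theorem~\ref{sub} we have $I_M^{\lift}\subseteq I_M$, hence $I_{\mathcal{C}(M)}+I_M^{\lift}\subseteq I_M$, and $I_M=I(V_M)$ is radical. For the reverse inclusion, the plan is to pass to varieties. Since $K=\mathbb{C}$ is algebraically closed, Hilbert's Nullstellensatz gives $\sqrt{I_{\mathcal{C}(M)}+I_M^{\lift}}=I\big(V(I_{\mathcal{C}(M)})\cap V(I_M^{\lift})\big)=I\big(V_{\mathcal{C}(M)}\cap V(I_M^{\lift})\big)$, so it suffices to prove the variety equality $V_M=V_{\mathcal{C}(M)}\cap V(I_M^{\lift})$. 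The inclusion $V_M\subseteq V_{\mathcal{C}(M)}\cap V(I_M^{\lift})$ is dual to $I_{\mathcal{C}(M)}+I_M^{\lift}\subseteq I_M$, so the whole theorem reduces to establishing $V_{\mathcal{C}(M)}\cap V(I_M^{\lift})\subseteq V_M$.

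\textbf{The main step.} Fix $\gamma\in V_{\mathcal{C}(M)}\cap V(I_M^{\lift})$. Since $\mathcal{L}$ is finite and each $\gamma_l$ is a proper subspace of $\mathbb{C}^n$, and a finite union of proper subspaces cannot exhaust $\mathbb{C}^n$, there exists a vector $q\in\mathbb{C}^n$ outside $\gamma$. By Remark~\ref{lif}, $\gamma$ is fully liftable from $q$. I would then chain the two main propositions of the section: given $\epsilon>0$, Proposition~\ref{propo 2} produces $\gamma'\in V_{\mathcal{C}(M)}$ with $\norm{\gamma-\gamma'}<\epsilon/2$ and lifting number $M_{\gamma'}=0$; applying Proposition~\ref{coincide} to $\gamma'$ produces $\gamma''\in\Gamma_M$ with $\norm{\gamma'-\gamma''}<\epsilon/2$. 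Hence $\norm{\gamma-\gamma''}<\epsilon$ with $\gamma''\in\Gamma_M$, so $\gamma$ lies in the (Euclidean, hence Zariski) closure of $\Gamma_M$, i.e. $\gamma\in V_M$. This gives $V_{\mathcal{C}(M)}\cap V(I_M^{\lift})\subseteq V_M$; combined with the reverse inclusion above we get $V_M=V(I_{\mathcal{C}(M)}+I_M^{\lift})$, and taking ideals via the Nullstellensatz yields $I_M=\sqrt{I_{\mathcal{C}(M)}+I_M^{\lift}}$.

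\textbf{Where the work is.} Essentially all of the difficulty has already been absorbed into Propositions~\ref{coincide} and~\ref{propo 2} and their supporting lemmas (the choice-property lemmas built on Hall's theorem, together with the geometric perturbation Lemmas~\ref{move},~\ref{lift n-2}, and~\ref{lift otra}), so the proof of the theorem itself is a short assembly. The points that need to be stated carefully are: the passage from $\gamma\in V(I_M^{\lift})$ to full liftability, which is exactly Remark~\ref{lif} (resting on Lemma~\ref{lifting 3}); the existence of a point $q$ outside $\gamma$; and the observation that composing two infinitesimal motions is again an infinitesimal motion, so that one can land in $\Gamma_M$ arbitrarily close to $\gamma$ and conclude membership in $V_M=\overline{\Gamma_M}$. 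I do not expect any genuine obstacle beyond this bookkeeping, since the only real hypothesis — no points of degree greater than two — is already used inside Propositions~\ref{coincide} and~\ref{propo 2}.
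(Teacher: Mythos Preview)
Your proposal is correct and follows exactly the same route as the paper: reduce to the variety inclusion $V(I_{\mathcal{C}(M)}+I_M^{\lift})\subseteq V_M$, pick $q$ outside $\gamma$, invoke Remark~\ref{lif} to get full liftability, then apply Proposition~\ref{propo 2} followed by Proposition~\ref{coincide} to approximate $\gamma$ by elements of $\Gamma_M$. The only difference is that you spell out more of the bookkeeping (existence of $q$, the $\epsilon/2$ triangle inequality, the Nullstellensatz step), which the paper leaves implicit.
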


\begin{proof}
We denote the ideal on the right-hand side by $I$. By Theorem~\ref{sub}, we have that $I\subset I_{M}$. To establish the other inclusion, we prove that $V(I)\subset V(I_{M})=\overline{\Gamma_{M}}$. 

Consider $\gamma$, a collection of vectors indexed on the points of $M$ and contained in $V(I)$, in particular, $\gamma\in V_{\mathcal{C}(M)}$.  We will prove that $\gamma\in V_{M}$. Applying Remark~\ref{lif}, and selecting a point $q\in \mathbb{C}^{n}$ outside $\gamma$, we know that $\gamma$ is fully liftable from the vector $q$. By Proposition~\ref{propo 2}, we can apply an infinitesimal motion to $\gamma$, yielding a collection of vectors $\tau \in V_{\mathcal{C}(M)}$ with ${c(M,\tau)}=0$. Finally, by applying Proposition~\ref{coincide}, we can infinitesimally perturb $\tau$ to obtain a collection of vectors in $\Gamma_{M}$.  Hence, $\gamma \in \closure{\Gamma_{M}}=V_{M}$, as desired.
\end{proof}

\begin{remark}\label{obs 4}
Note that Theorem~\ref{theo lift} provides a generating set for $I_{M}$,  up to radical, thereby generalizing Theorems~4.8 and~5.7 from \cite{Fatemeh3}. 
However, the computed generating set 
is not finite. Therefore, in the following sections, we focus on finding a finite generating set for $I_{M}$,  up to radical. Nevertheless, Theorem~\ref{theo lift} will remain essential in achieving this goal.Moreover, we would like to emphasize that we have neither empirical nor theoretical reasons to believe that the radical could be omitted in this case.
\end{remark}

\begin{proposition}\label{real 2}
Let $M$ be an $n$-paving matroid with no points of degree greater than two. 
Then $M$ is realizable in $\CC^{n}$.
\end{proposition}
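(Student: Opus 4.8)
The plan is to reduce the statement to producing a single point of $V_{\mathcal{C}(M)}$ with lifting number zero, and then to invoke Proposition~\ref{coincide}. Recall that $M$ is realizable in $\mathbb{C}^n$ exactly when $\Gamma_M \neq \emptyset$, and that Proposition~\ref{coincide} says that any $\gamma \in V_{\mathcal{C}(M)}$ with $M_\gamma = 0$ may be infinitesimally perturbed into a collection lying in $\Gamma_M$. So it will suffice to exhibit one such $\gamma$. First I would dispose of the cases $n \le 2$: every matroid of rank at most two is realizable over any infinite field (send the loops to $0$ and each parallel class to its own nonzero direction), so from now on assume $n \ge 3$.

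Next I would write down the required configuration. Fix a nonzero vector $v \in \mathbb{C}^n$ and take $\gamma = \{\gamma_p : p \in \mathcal{P}\}$ with $\gamma_p = v$ for every $p$ --- that is, collapse all points onto a common line through the origin. For each hyperplane $l \in \mathcal{L}$ one then has $\gamma_l = \mathrm{span}(v)$, so $\dim(\gamma_l) = 1$. Since $n \ge 3$ this gives $\dim(\gamma_l) \le n-1$ for every $l$, so $\gamma \in V_{\mathcal{C}(M)}$ by the characterization in Definition~\ref{pav}; and since $\dim(\gamma_l) = 1 < n-1$, no hyperplane of $M$ is regular for $\gamma$, that is $R_\gamma = \emptyset$. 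Hence the coincidence set $L_\gamma \subseteq R_\gamma \times R_\gamma$ is empty and $M_\gamma = |L_\gamma| = 0$ (Definition~\ref{lif num}).

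With this in hand the conclusion is immediate: $\gamma \in V_{\mathcal{C}(M)}$, $M_\gamma = 0$, and $M$ has no point of degree greater than two, so Proposition~\ref{coincide} produces an infinitesimal motion of $\gamma$ lying in $\Gamma_M$; in particular $\Gamma_M \neq \emptyset$.

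As for where the difficulty sits: essentially all of it is already inside Proposition~\ref{coincide} (and, through it, Lemmas~\ref{move}, \ref{ñ}, \ref{lemm 2} and \ref{choo}), so the present proof is short. The only thing that genuinely needs checking is that such a radically degenerate $\gamma$ really does meet the hypotheses of that proposition --- but these hypotheses are only ``$\gamma \in V_{\mathcal{C}(M)}$'' and ``$M_\gamma = 0$'', both verified above. If one prefers a less collapsed starting point, an equally good choice is to place the $\gamma_p$ generically inside a fixed $(n-2)$-dimensional subspace of $\mathbb{C}^n$; the same two computations ($\dim\gamma_l \le n-2 < n-1$ and $\dim\gamma_l \le n-1$) still go through verbatim.
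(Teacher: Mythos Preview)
Your proof is correct and follows essentially the same approach as the paper: exhibit a trivially degenerate $\gamma\in V_{\mathcal{C}(M)}$ with $M_\gamma=0$ and invoke Proposition~\ref{coincide}. The only cosmetic difference is that the paper takes $\gamma_p=0$ for all $p$ (so $\dim\gamma_l=0$ and no case split on $n$ is needed), while you take $\gamma_p=v\neq 0$ and handle $n\le 2$ separately; both choices work for the same reason.
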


\begin{proof}
The collection of zero vectors indexed  by $\mathcal{P}$ satisfies the hypothesis of Proposition~\ref{coincide}. Therefore, applying this proposition implies that there exists a realization of $M$.
\end{proof}

Using this proposition, we observe that as a particular case, every point-line configuration without points of degree greater than two is realizable. This has been proven in \textup{\cite[Proposition 5.7]{clarke2021matroid}} for forest-type point-line configurations. Thus, Proposition~\ref{real 2} extends the family of known realizable matroids. 

\subsection{Decomposing circuit varieties}

Now, we present a decomposition of the circuit variety. But before that, we 
prove a lemma. 

\begin{definition}
Recall the notion of $\overline{R_{\gamma}}$ from Definition~\ref{pav}.
A collection of vectors $\gamma \in V_{\mathcal{C}(M)}$ is called \textit{inflexible} if,  for each subset $R$ of the partition $\overline{R_{\gamma}}$ 
with $\size{R}>1$, the collection of vectors $\{\gamma_{p}:p\in  \mathcal{P}_{M^{R}}\}$ cannot be lifted in a non-degenerate way to a collection of vectors $\widetilde{\gamma}\in V_{\mathcal{C} (M^{R}})$ from a  vector outside $\gamma$. Otherwise, we say that $\gamma$ is {\em flexible}.
\end{definition}

 For the following lemma, recall Definition~\ref{lif num}.

\begin{lemma}\label{lem_never_liftable}
Let $M$ be an $n$-paving matroid 
and let $\gamma$ be a collection of vectors in $V_{\mathcal{C}(M)}$ that is flexible. Then, there exists a collection of vectors in $V_{\mathcal{C}(M)}$ that is an infinitesimal motion of $\gamma$ and has smaller lifting number. Furthermore, we can apply an infinitesimal motion to $\gamma$ to obtain an inflexible collection in $V_{\mathcal{C}(M)}$.
\end{lemma}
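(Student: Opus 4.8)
The plan is to first produce a single infinitesimal motion that strictly decreases the lifting number, and then iterate. For the ``moreover'' statement: if the collection we obtain is still not unliftable we repeat the procedure; since $M_{(\cdot)}$ is a non‑negative integer that strictly drops at each application, after finitely many steps we reach a collection that is unliftable (note $M_{(\cdot)}=0$ already forces unliftability, as then $\overline{R_{(\cdot)}}$ has only singleton blocks), and a finite composition of infinitesimal motions is again infinitesimal. So everything reduces to: given $\gamma\in V_{\mathcal{C}(M)}$ that is not unliftable, find $\kappa\in V_{\mathcal{C}(M)}$, an infinitesimal motion of $\gamma$, with $M_\kappa<M_\gamma$.

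\emph{Setup and the single lift.} Since $\gamma$ is not unliftable, there is a block $R\in\overline{R_\gamma}$ with $|R|>1$ such that, writing $N$ for the submatroid of hyperplanes of $R$, the vectors $\{\gamma_p:p\in N\}$ admit a non‑degenerate lifting from some $q$ outside $\gamma$. All $l\in R$ share the hyperplane $V:=\gamma_l$, so $\restr{\gamma}{N}$ is regular; by Lemma~\ref{lifting 3} we may take the lifting $\gamma^N\in V_{\mathcal{C}(N)}$ infinitesimal, and then $\dim\gamma^N_l=n-1$, $q\notin\gamma^N_l$ for every $l\in R$, while the $\{\gamma^N_l:l\in R\}$ are not all equal. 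Following the construction in the proof of Proposition~\ref{propo} together with the projection device from Proposition~\ref{propo 2}, define $\kappa^{0}$ on $\mathcal{P}$ by: $\kappa^{0}_p=\gamma^N_p$ if $\emptyset\neq\mathcal{L}_p\subseteq R$; if $\mathcal{L}_p=\{l_1,l_2\}$ with $l_1\in R$, $l_2\in R_\gamma\setminus R$, use Lemma~\ref{move} (here $\gamma^N_{l_1}\neq\gamma_{l_2}$ since $\gamma^N_{l_1}$ is near $V\neq\gamma_{l_2}$) to choose $\kappa^{0}_p\in\gamma^N_{l_1}\cap\gamma_{l_2}$ near $\gamma_p$; if $\mathcal{L}_p=\{l_1,l_2\}$ with $l_1\in R$, $l_2\notin R_\gamma$, let $\kappa^{0}_p$ be the projection of $\gamma_p$ onto $\gamma^N_{l_1}$ from $q$, so $\kappa^{0}_p\in\langle\gamma_p,q\rangle$; and $\kappa^{0}_p=\gamma_p$ if $\mathcal{L}_p\cap R=\emptyset$. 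Then $\kappa^{0}_l$ lies inside $\gamma^N_l$, $\gamma_l$, or $\langle\gamma_l,q\rangle$ according to whether $l\in R$, $l\in R_\gamma\setminus R$, or $l\notin R_\gamma$, so $\kappa^{0}\in V_{\mathcal{C}(M)}$, $\kappa^{0}$ is an infinitesimal motion of $\gamma$, one has $R_\gamma\subseteq R_{\kappa^{0}}$ with $\kappa^{0}_l=\gamma^N_l$ on $R$ and $\kappa^{0}_l=\gamma_l$ on $R_\gamma\setminus R$, and $q\in\kappa^{0}_l$ exactly for $l\in R_{\kappa^{0}}\setminus R_\gamma$.

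\emph{Counting coincidences and cleaning up.} Comparing $L_{\kappa^{0}}$ with $L_\gamma$: the number of ordered pairs inside $R$ drops from $|R|(|R|-1)$ to $\sum_i|R_i|(|R_i|-1)$, where $R=\bigsqcup_i R_i$ is the partition (with at least two blocks) by the hyperplanes $\gamma^N_l$; pairs inside $R_\gamma\setminus R$ are unchanged; a pair split between $R$ and $R_\gamma\setminus R$ never occurs, for $\gamma$ or $\kappa^{0}$, since one span equals $V$ (or is close to it) and the other is $\neq V$; no new pair appears inside $R_\gamma$; and a pair with one leg in $R_\gamma$ and the other in $R_{\kappa^0}\setminus R_\gamma$ is impossible because $q$ lies in exactly one of the two spans. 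Thus the only coincidences possibly created are among $R_{\kappa^{0}}\setminus R_\gamma$, and we remove them exactly as in the last part of the proof of Proposition~\ref{propo 2}: iteratively, at stage $j\geq1$ apply Lemma~\ref{lift n-2} to separate the newest layer $R_{\kappa^{j-1}}\setminus R_{\kappa^{j-2}}$ (with $\kappa^{-1}:=\gamma$), choosing the new hyperplanes generically so that they are pairwise distinct and distinct from all previously frozen spans. The hypothesis of Lemma~\ref{lift n-2} holds because any point lying on two hyperplanes of that layer lies on no earlier regular hyperplane, in particular on none of $R$, hence was never moved, so its rank is inherited from $\gamma$, where the relevant spans have dimension $\leq n-2$; each stage is an infinitesimal motion, keeps all spans on previously regular hyperplanes frozen, and pairs whose legs lie in different layers are separated by the ``$q$ lies in one span only'' / genericity argument. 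Since $R_{\kappa^{j}}$ is non‑decreasing and bounded by $\mathcal{L}$, the layers are eventually empty and the process terminates at some $\kappa:=\kappa^{m}\in V_{\mathcal{C}(M)}$, an infinitesimal motion of $\gamma$, whose only coincidences are those inside $R_\gamma\setminus R$ (unchanged from $\gamma$) and those inside the refinement $\{R_i\}$ of $R$. Hence $M_\kappa=\sum_i|R_i|(|R_i|-1)+c<|R|(|R|-1)+c=M_\gamma$, where $c$ is the number of coincidences of $\gamma$ inside $R_\gamma\setminus R$.

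\emph{Main obstacle.} The delicate point is precisely the cleanup step: lifting a single block $R$ of a possibly non‑regular $\gamma$ forces the spans of the points with a leg outside $R_\gamma$ into the planes $\langle\gamma_l,q\rangle$, which may jump to dimension $n-1$ and create fresh coincidences among hyperplanes that were not regular before. Controlling this requires the ``layer'' bookkeeping of Proposition~\ref{propo 2} and, crucially, verifying that the spans on the already‑regular hyperplanes — and therefore the gain obtained among the pairs inside $R$ — are never disturbed by the subsequent separating lifts.
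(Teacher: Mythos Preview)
Your proof is correct and follows exactly the approach the paper indicates: you carry out in detail the ``same argument as in Propositions~\ref{propo} and~\ref{propo 2}'' that the paper merely cites, combining the single-block lift of Proposition~\ref{propo} with the projection-and-layered-cleanup device of Proposition~\ref{propo 2}, and then iterate using the integer descent on $M_{(\cdot)}$. The bookkeeping you provide (the four cases for $\kappa^0_p$, the $q$-membership argument separating $R_\gamma$ from newly regular hyperplanes, and the verification that points on two hyperplanes of a new layer were never moved so that Lemma~\ref{lift n-2} applies) is precisely what those two propositions contain, specialized to lifting one block $R$ rather than all of $R_\gamma$.
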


\begin{proof}
The first claim follows using the same argument as in Propositions \ref{propo} and \ref{propo 2}. The second claim follows by applying the first procedure repeatedly starting with $\gamma$.
\end{proof}

With this, we have the following proposition, in which we denote $V_{\gamma}$ for the matroid variety corresponding to the matroid defined by the vectors of $\gamma$. 

\begin{proposition}\label{dec}
Let $M$ be an $n$-paving matroid 
and let $U(M)$ be the set of all inflexible $\gamma$'s for $M$. Then we have the following decomposition:
$$V_{\mathcal{C}(M)}=\bigcup_{\gamma\in U(M)}V_{\gamma}$$
\end{proposition}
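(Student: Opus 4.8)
The plan is to prove the two inclusions of the set equality separately. For the inclusion $\bigcup_{\gamma \in U(M)} V_\gamma \subseteq V_{\mathcal{C}(M)}$, observe that whenever $\gamma \in V_{\mathcal{C}(M)}$, every collection of vectors in $V_\gamma$ also lies in $V_{\mathcal{C}(M)}$: indeed, if $\delta$ is a realization of the matroid $M_\gamma$ defined by $\gamma$, then since $\gamma \le M_\gamma$ as matroids (the dependencies of $M$ that were imposed by being in $V_{\mathcal C(M)}$ are among the dependencies of $M_\gamma$, hence are satisfied by $\delta$), we get $\delta \in V_{\mathcal{C}(M)}$. Taking Zariski closure gives $V_\gamma \subseteq V_{\mathcal{C}(M)}$ for each $\gamma \in U(M)$, and the union is contained in $V_{\mathcal{C}(M)}$ as well. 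The only subtle point to check here is that unliftable collections do in fact lie in $V_{\mathcal C(M)}$, which is immediate from Definition~\ref{def_lift}, since the notion of unliftability is only defined for $\gamma \in V_{\mathcal C(M)}$.

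For the reverse inclusion $V_{\mathcal{C}(M)} \subseteq \bigcup_{\gamma \in U(M)} V_\gamma$, take any $\delta \in V_{\mathcal{C}(M)}$. By Lemma~\ref{lem_never_liftable}, applied to $\delta$, there exists an unliftable collection $\kappa \in V_{\mathcal{C}(M)}$ obtained as an infinitesimal motion of $\delta$; more precisely, one iterates the first part of the lemma, each time strictly decreasing the lifting number, until a collection with no further liftable coincidences is reached, and that terminal collection is unliftable since its lifting number cannot be decreased further. The key observation is then that, because $\kappa$ is an \emph{infinitesimal} motion of $\delta$, we may arrange that $\kappa \le \delta$ as matroids, exactly as in the proofs of Propositions~\ref{propo} and~\ref{propo 2} and~\ref{coincide}: an infinitesimal perturbation cannot create new linear dependencies among the vectors, so every dependent set of $\kappa$ is already dependent in $\delta$. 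This means $\delta \in V_{M_\kappa}$, since $V_{M_\kappa}$ is cut out by the vanishing of the minors corresponding to the circuits of $M_\kappa$, and all of those dependencies hold for $\delta$ as well. Hence $\delta \in V_\kappa$ with $\kappa \in U(M)$, completing the inclusion.

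I should be slightly careful about one point in the argument above: Lemma~\ref{lem_never_liftable} gives an infinitesimal motion $\kappa$ of a \emph{fixed} $\delta$, but the resulting $\kappa$ may itself be a limit of perturbations rather than literally within $V_\delta$; what I actually need is the containment $\delta \in V_{M_\kappa}$, and this follows purely from the matroid inequality $M_\kappa \le \delta$ together with the fact that $V_{M_\kappa} = V(I_{\mathcal C(M_\kappa)}) \cap \{\text{appropriate closure}\}$ contains every collection whose dependency set extends that of $M_\kappa$ — in particular it contains $\delta$ regardless of whether $\kappa$ can be reached from $\delta$ by a path of perturbations.

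\textbf{Main obstacle.} The routine parts are the set-theoretic bookkeeping; the real content is the guarantee that the motion producing $\kappa$ can be taken so that $\kappa \le \delta$ as matroids — i.e.\ that the perturbation only destroys dependencies and never creates them. This is the same technical device used throughout Section~\ref{sec 4} (it appears explicitly in the proof of Proposition~\ref{coincide}, where one ensures ``$\tau \le \gamma$ as matroids''), so it is already available; but it is the step one must invoke carefully, since without it the inclusion $\delta \in V_{M_\kappa}$ fails. A secondary point worth a sentence is verifying that the decreasing-lifting-number iteration in Lemma~\ref{lem_never_liftable} actually terminates at an \emph{unliftable} collection rather than merely at one of smaller lifting number — this follows because $M_\kappa$ is a nonnegative integer that strictly decreases at each step, so the process halts, and when it halts no further reduction is possible, which is precisely unliftability.
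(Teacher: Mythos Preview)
Your argument for the inclusion $\supset$ is correct. The problem is in the other direction. You write that $M_\kappa \leq M_\delta$ implies $\delta \in V_{M_\kappa}$ because ``$V_{M_\kappa}$ is cut out by the vanishing of the minors corresponding to the circuits of $M_\kappa$''. But that description defines the \emph{circuit variety} $V_{\mathcal{C}(M_\kappa)}$, not the \emph{matroid variety} $V_{M_\kappa}=\overline{\Gamma_{M_\kappa}}$; distinguishing these two is the whole point of the paper. So from $M_\kappa \leq M_\delta$ you only obtain $\delta \in V_{\mathcal{C}(M_\kappa)}$, and there is no reason this forces $\delta \in V_{M_\kappa}$. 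Your ``careful point'' paragraph repeats the same conflation. The control ``$\kappa \leq \delta$ as matroids'' that you identify as the main obstacle is in fact irrelevant to this step.

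The paper's (terse) argument uses the definition of infinitesimal motion directly. Lemma~\ref{lem_never_liftable} says that for every $\epsilon>0$ there is an unliftable $\kappa_\epsilon\in V_{\mathcal{C}(M)}$ with $\|\kappa_\epsilon-\delta\|<\epsilon$. Each $\kappa_\epsilon$ realizes its own matroid, so $\kappa_\epsilon\in\Gamma_{M_{\kappa_\epsilon}}\subset V_{\kappa_\epsilon}\subset\bigcup_{\gamma\in U(M)}V_\gamma$. Now $V_\gamma$ depends only on the matroid $M_\gamma$, and there are only finitely many matroids on the ground set $\mathcal{P}$; hence the right-hand side is a \emph{finite} union of Zariski-closed sets, so it is closed. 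Passing to the limit gives $\delta\in\bigcup_{\gamma\in U(M)}V_\gamma$, as required.
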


\begin{proof}
The inclusion $\supset$ is clear. The other inclusion follows from Lemma~\ref{lem_never_liftable}. 
\end{proof}

Now we 
apply this decomposition to two particular cases. 

\begin{proposition}\label{ico}
Let $M$ be an $n$-paving matroid. 
Suppose that any submatroid of hyperplanes of $M$ is liftable, then we have $V_{\mathcal{C}(M)}=V_{M}.$
\end{proposition}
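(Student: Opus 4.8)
The plan is to establish the two inclusions $V_M\subseteq V_{\mathcal{C}(M)}$ and $V_{\mathcal{C}(M)}\subseteq V_M$ separately. The first is automatic, since the circuit polynomials vanish on every realization of $M$, i.e.\ $I_{\mathcal{C}(M)}\subseteq I_M$. For the reverse inclusion $V_{\mathcal{C}(M)}\subseteq V_M=\overline{\Gamma_M}$ I would run essentially the argument used to prove Theorem~\ref{theo lift}, simply replacing the hypothesis ``$\gamma\in V(I_M^{\lift})$'' there by the liftability assumption of the present statement. So fix an arbitrary $\gamma\in V_{\mathcal{C}(M)}$ and choose a vector $q$ outside $\gamma$; such a $q$ exists because $\mathcal{L}$ is finite and each $\gamma_l$ is a proper subspace of $\mathbb{C}^n$, so the subspaces $\gamma_l$ cannot cover $\mathbb{C}^n$.

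The one substantive step is to verify that $\gamma$ is \emph{fully liftable} from $q$. Let $L$ be a set of hyperplanes containing at least two elements of $R_\gamma$, and let $N$ be the submatroid of hyperplanes of $L$, which is again an $n$-paving matroid of full rank $n$. Since at least two of the spans $\gamma_l$ (for $l\in L$) have dimension $n-1$, the span $W$ of $\{\gamma_p:p\in N\}$ has dimension $n-1$ or $n$. If $\dim W=n$, then $\gamma|_{N}$ already lies in $V_{\mathcal{C}(N)}$ — every $n$-circuit of $N$ is an $n$-circuit of $M$, and every circuit of $N$ of size $n+1$ is automatically dependent in $\mathbb{C}^n$ — and it is non-degenerate, so there is nothing to lift. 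If $\dim W=n-1$, then $W=\gamma_l$ for every regular $l\in L$, and since $q$ is outside $\gamma$ we have $q\notin W$; now the hypothesis that $N$ is liftable (Definition~\ref{liftable}) furnishes a non-degenerate lift of $\{\gamma_p:p\in N\}$ from $q$ to $V_{\mathcal{C}(N)}$. Hence $\gamma$ is fully liftable from $q$.

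From here the proof closes mechanically. By Proposition~\ref{propo 2} there is an infinitesimal motion of $\gamma$ to a collection $\tau\in V_{\mathcal{C}(M)}$ with lifting number $M_\tau=0$, and then by Proposition~\ref{coincide} there is an infinitesimal motion of $\tau$ into $\Gamma_M$; composing these two motions (each taken within $\epsilon/2$ of the previous configuration) shows $\gamma\in\overline{\Gamma_M}=V_M$. This yields $V_{\mathcal{C}(M)}\subseteq V_M$ and hence equality. Alternatively, the same reasoning can be packaged through Proposition~\ref{dec}: the liftability hypothesis rules out any block of size larger than one in $\overline{R_\gamma}$ for an unliftable $\gamma$, so every $\gamma\in U(M)$ has $M_\gamma=0$ and therefore lies in $V_M$ by Proposition~\ref{coincide}, giving $V_{\mathcal{C}(M)}=\bigcup_{\gamma\in U(M)}V_\gamma\subseteq V_M$. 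In either presentation, the only real obstacle is the verification that $\gamma$ is fully liftable — recognizing that ``every submatroid of hyperplanes is liftable'' is precisely the input needed to feed Proposition~\ref{propo 2}; after that, everything is routine bookkeeping with infinitesimal motions.
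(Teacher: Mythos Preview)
Your proposal is correct, and your \emph{alternative} route via Proposition~\ref{dec} is exactly the paper's proof: under the hypothesis, any unliftable $\gamma$ must have $M_\gamma=0$ (otherwise some block $R\in\overline{R_\gamma}$ with $|R|>1$ would give a liftable submatroid of hyperplanes, contradicting unliftability), and then Proposition~\ref{coincide} places each such $\gamma$ in $V_M$, so $V_{\mathcal{C}(M)}=\bigcup_{\gamma\in U(M)}V_\gamma\subseteq V_M$.

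Your primary approach --- verify that an arbitrary $\gamma\in V_{\mathcal{C}(M)}$ is fully liftable from some $q$ outside $\gamma$, then invoke Proposition~\ref{propo 2} followed by Proposition~\ref{coincide} --- is also valid, and is essentially the argument of Theorem~\ref{theo lift} with the $I_M^{\lift}$ hypothesis replaced by the liftability assumption. This route works but is longer: it re-derives, for this particular $\gamma$, the content of Lemma~\ref{lem_never_liftable} (which underlies Proposition~\ref{dec}) rather than citing it. The paper's packaging via Proposition~\ref{dec} is more economical because the unliftability formulation already isolates exactly the obstruction, making the conclusion a one-line observation.
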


\begin{proof}
In this case, the only inflexible collections of vectors $\gamma$ are those with ${c(M,\gamma)}=0$. However, by applying Proposition~\ref{coincide}, we know that any of these collections belong to $V_{M}$. Then, using Proposition~\ref{dec}, we obtain the desired result.
\end{proof}
\noindent This proposition generalizes \cite[Theorem~3.17]{Fatemeh3} from dimension 3 to higher dimensions.~See~Remark~\ref{different definition}. 

\begin{example}
 Consider the point-line configuration depicted in Figure~\ref{new figure} (Left). 
As noted in Example~\ref{example 3 grid}, this configuration is liftable. Moreover, by Lemma~\ref{lifts}, all of its submatroids are liftable. Therefore, it satisfies the conditions of Proposition~\ref{ico}, implying that its circuit and matroid varieties coincide. 
Furthermore, this also holds for forest-type configurations, as discussed in Example~\ref{example 3 grid}. 
\end{example}

\begin{lemma}\normalfont\label{less n-2}
Let $M$ be an $n$-paving matroid with no points of degree greater than two such that $\size{S_{l}}\leq n-2$ for every $l\in \mathcal{L}$. Then $V_{\mathcal{C}(M)}=V_{M}$. 
\end{lemma}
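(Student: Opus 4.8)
I want to show that under the hypothesis $|S_l|\le n-2$ for every hyperplane $l$, every $\gamma\in V_{\mathcal C(M)}$ lies in $V_M$; the reverse inclusion $V_M\subset V_{\mathcal C(M)}$ is automatic since $I_{\mathcal C(M)}\subset I_M$. By Proposition~\ref{dec} it suffices to check that every \emph{unliftable} $\gamma\in V_{\mathcal C(M)}$ belongs to $V_M$. So fix such a $\gamma$. The key observation is that the hypothesis $|S_l|\le n-2$ forces $\mathrm{rank}\{\gamma_p : p\in S_l\}\le n-2$ for \emph{every} $l\in\mathcal L$, regardless of $\gamma$ — this is just because a set of at most $n-2$ vectors spans a subspace of dimension at most $n-2$.

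\medskip

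\textbf{Step 1: reduce to the regular part.} Let $N$ be the submatroid of hyperplanes of $R_\gamma$, so that $\{\gamma_p : p\in N\}$ is a regular collection in $V_{\mathcal C(N)}$, and it still satisfies $\mathrm{rank}\{\gamma_p : p\in S_l\}\le n-2$ for every $l\in R_\gamma$ (here $S_l$ taken in $N$ is contained in $S_l$ taken in $M$). Pick a vector $q$ outside $\gamma$. Now apply Lemma~\ref{lift n-2} to the matroid $N$ and the regular collection $\restr{\gamma}{N}$: it yields an infinitesimal lifting of $\{\gamma_p:p\in N\}$ from $q$ to a regular collection $\kappa=\{\kappa_p:p\in N\}\in V_{\mathcal C(N)}$ with $M_\kappa=0$, and with $\kappa_p\in\langle\gamma_p,q\rangle$ for the projected points.

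\medskip

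\textbf{Step 2: extend to all of $M$ and kill the remaining coincidences.} Extend $\kappa$ to a collection $\tau$ on all of $\mathcal P$ by setting $\tau_p=\kappa_p$ for $p\in N$ and $\tau_p=\gamma_p$ otherwise. For $l\in R_\gamma$ we get $\tau_l=\kappa_l$, while for $l\notin R_\gamma$ every point $p\in l$ lies outside $S_N$ (since $|\mathcal L_p|\le 2$ forces $p$ into at most one hyperplane of $N$), so $\tau_p\in\langle\gamma_p,q\rangle$ and hence $\dim(\tau_l)\le n-1$; thus $\tau\in V_{\mathcal C(M)}$. By $M_\kappa=0$ and the usual argument (as in the proof of Proposition~\ref{propo 2}), no coincidence $\tau_{l_1}=\tau_{l_2}$ occurs between two hyperplanes of $R_\gamma$, and none between a regular hyperplane of $R_\gamma$ and a hyperplane not in $R_\gamma$ (the latter must contain $q$). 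The only remaining coincidences are among hyperplanes in $R_\tau\setminus R_\gamma$; but for such hyperplanes the points of $S_l$ were never moved, so $\mathrm{rank}\{\tau_p:p\in S_l\}=\mathrm{rank}\{\gamma_p:p\in S_l\}\le n-2$ (using $|S_l|\le n-2$), and we may again invoke Lemma~\ref{lift n-2} on the submatroid of hyperplanes of $R_\tau\setminus R_\gamma$ from a fresh point $q_1$ outside $\tau$, stitching the result to $\tau$ via Lemma~\ref{move} exactly as in Proposition~\ref{propo 2}. Iterating finitely many times produces $\gamma'\in V_{\mathcal C(M)}$, an infinitesimal motion of $\gamma$, with $M_{\gamma'}=0$.

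\medskip

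\textbf{Step 3: conclude via Proposition~\ref{coincide}.} Now $\gamma'\in V_{\mathcal C(M)}$ has lifting number zero, so Proposition~\ref{coincide} lets us infinitesimally perturb $\gamma'$ into $\Gamma_M$. Since all motions were infinitesimal, $\gamma\in\overline{\Gamma_M}=V_M$, as desired. Actually, once $M_\gamma=0$ can always be reached by an infinitesimal motion, the hypothesis on $|S_l|$ shows that every $\gamma\in V_{\mathcal C(M)}$ is ``not unliftable'' unless already $M_\gamma=0$, so Proposition~\ref{dec} directly gives $V_{\mathcal C(M)}=\bigcup V_\gamma$ over $\gamma$ with $M_\gamma=0$, and Proposition~\ref{coincide} collapses this union to $V_M$.

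\medskip

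\textbf{Main obstacle.} The routine steps are the bookkeeping of well-definedness of the projections where two hyperplanes meet — handled verbatim by Lemma~\ref{move} and the degree-$\le 2$ hypothesis. The one point requiring genuine care is confirming that after the first lifting the \emph{new} coincident hyperplanes still satisfy the rank bound needed to reapply Lemma~\ref{lift n-2}; this is exactly where the uniform hypothesis $|S_l|\le n-2$ (rather than merely a rank condition on the original $\gamma$) is essential, since it survives arbitrary infinitesimal motions. Verifying that the iteration terminates — i.e., that $R_\tau\setminus R_\gamma$ strictly shrinks the defect at each stage — is the same finiteness argument already used in Proposition~\ref{propo 2}, so it can be cited rather than repeated.
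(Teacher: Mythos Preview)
Your proof is correct, and the final paragraph of Step~3 is precisely the paper's two-line argument: Lemma~\ref{lift n-2} applied to any partition class $R\in\overline{R_\gamma}$ with $|R|>1$ (the hypothesis $|S_l|\le n-2$ guarantees the rank condition) shows that no $\gamma$ with $M_\gamma>0$ can be unliftable; hence by Proposition~\ref{dec} and Proposition~\ref{coincide} (exactly the argument of Proposition~\ref{ico}) one concludes $V_{\mathcal C(M)}=V_M$.

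Everything in Steps~1--3 is therefore redundant. You open by reducing via Proposition~\ref{dec} to unliftable $\gamma$, but then never use unliftability; instead you rebuild the machinery of Proposition~\ref{propo 2} from scratch to push an \emph{arbitrary} $\gamma$ down to $M_{\gamma'}=0$. That argument is valid (and your use of the cardinality bound $|S_l|\le n-2$ to re-establish the rank hypothesis after each iteration is fine, though the phrase ``the points of $S_l$ were never moved'' is not literally correct---some points of $S_l^M$ may lie in $N$ and be projected; what you actually need, and what your parenthetical supplies, is simply $|S_l|\le n-2$). But all of this is already packaged in the paper as Lemma~\ref{lem_never_liftable}/Proposition~\ref{dec}, and the single-sentence observation in your last paragraph gets you there directly. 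The paper's proof is just that sentence.
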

\begin{proof}
By Lemma~\ref{lift n-2}, every collection of vectors $\gamma \in V_{\mathcal{C}(M)}$ is fully liftable from any  vector outside it. Consequently, the only {inflexible} collections of vectors $\gamma$ are those with ${c(M,\gamma)}=0$. Then, applying the same argument as in Proposition~\ref{ico}, we obtain the desired result.
\end{proof}

\begin{example}
 Let $M$ be the $4$-paving matroid on $\mathcal{P}=\{p_{1},\ldots,p_{9}\}$ with the set of dependent hyperplanes \[\mathcal{L}=\{\{p_{1},p_{2},p_{3},p_{4}\},\{p_{4},p_{5},p_{6},p_{7}\},\{p_{1},p_{5},p_{8},p_{9}\}\}.\] Note that $S_{M}=\{p_{1},p_{4},p_{5}\}$, which implies that $\size{S_{l}}\leq 2$ for every $l\in \mathcal{L}$. Consequently, by applying Lemma~\ref{less n-2}, we find that $V_{\mathcal{C}(M)}=V_{M}$.
\end{example}

\begin{proposition}\label{des 2}
Let $M$ be an $n$-paving matroid. 
Suppose that all the proper submatroids of hyperplanes of $M$ are liftable. Then we have 
\[ V_{\mathcal{C}(M)}=V_{M}\cup V_{U_{n-1,\size{M}}},\] 
where $U_{n-1,\size{M}}$ is the uniform matroid. 
Furthermore, $I_{M}$ equals to the radical of the ideal generated by $I_{\mathcal{C}(M)}$ and the $(\size{M}-n+1)$-minors of the liftability matrices $\mathcal{M}_{q}(M)$, with $q$ varying over $\CC^{n}$. 
\end{proposition}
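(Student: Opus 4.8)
The plan is to decompose $V_{\mathcal{C}(M)}$ using Proposition~\ref{dec}, i.e.\ as the union of $V_{\gamma}$ over all unliftable $\gamma$. First I would analyze what an unliftable $\gamma$ looks like under the hypothesis that every \emph{proper} submatroid of hyperplanes of $M$ is liftable. Recall that $\gamma$ is unliftable means: for every block $R\in\overline{R_{\gamma}}$ with $|R|>1$, the vectors $\{\gamma_p: p\in N\}$ (with $N$ the submatroid of hyperplanes of $R$) cannot be lifted non-degenerately from a point outside $\gamma$. Since every proper submatroid of hyperplanes is liftable, the only way a block $R$ with $|R|>1$ can fail to admit such a lifting is if $R=\mathcal{L}_M$ itself, i.e.\ $\gamma$ is regular and \emph{all} of its hyperplanes coincide: $\gamma_{l_1}=\gamma_{l_2}$ for all $l_1,l_2\in\mathcal{L}$. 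In that case all vectors of $\gamma$ lie in a common hyperplane of $\mathbb{C}^n$, and since $\gamma$ is regular this hyperplane has dimension $n-1$; every $n$-subset of each hyperplane of $M$ is then automatically dependent — but so is \emph{every} $n$-subset of $\mathcal{P}$, since all vectors lie in an $(n-1)$-dimensional subspace. This forces the matroid defined by $\gamma$ to be (a quotient of, hence equal up to the non-degeneracy condition to) the uniform matroid $U_{n-1,|M|}$, so $V_\gamma\subset V_0$. Conversely, any $\gamma$ with $M_\gamma=0$ is unliftable (no block of size $>1$ exists), and by Proposition~\ref{coincide} it lies in $V_M$; also $V_0\subset V_{\mathcal{C}(M)}$ trivially since vectors in an $(n-1)$-plane satisfy all circuit equations. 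Combining, $V_{\mathcal{C}(M)}=\bigcup_{\gamma\in U(M)}V_\gamma = V_M\cup V_0$.

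For the second statement, I would argue as follows. Let $I$ denote the radical of the ideal generated by $I_{\mathcal{C}(M)}$ and all $(|M|-n+1)$-minors of $\mathcal{M}_q(M)$ for $q\in\mathbb{C}^n$. By Proposition~\ref{lp} (applied to $N=M$) these minors lie in $I_M$, and $I_{\mathcal{C}(M)}\subset I_M$, and $I_M$ is radical, so $I\subset I_M$. For the reverse, it suffices to show $V(I)\subset \overline{\Gamma_M}=V_M$. Take $\gamma\in V(I)$; then $\gamma\in V_{\mathcal{C}(M)}=V_M\cup V_0$. If $\gamma\in V_M$ we are done. If $\gamma\in V_0$, then $\gamma$ lies in $V_{\mathcal{C}(M)}$ and all $(|M|-n+1)$-minors of $\mathcal{M}_q^\gamma(M)$ vanish for every $q$; pick $q$ outside $\gamma$ (possible as long as $\gamma$ is not, say, zero — and the zero collection is a limit of realizations so lies in $V_M$ by Proposition~\ref{real 2} anyway). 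By Lemma~\ref{lifting 3}, $\gamma$ admits a non-degenerate lifting from $q$ to some $\widetilde\gamma\in V_{\mathcal{C}(M)}$ that can be taken arbitrarily close to $\gamma$; moreover this lifting can be made to have lifting number zero by Proposition~\ref{propo 2} (the relevant full-liftability following from the vanishing of the minors, via Lemma~\ref{lifting 3} applied to submatroids, using that proper submatroids are liftable). Then Proposition~\ref{coincide} perturbs it into $\Gamma_M$. Since all perturbations are infinitesimal, $\gamma\in V_M$, giving $V(I)\subset V_M$ and hence $I_M\subset I$, so $I_M=I$.

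The main obstacle I anticipate is the careful handling of full-liftability in the case $\gamma\in V_0$: one must check that the vanishing of the $(|M|-n+1)$-minors of $\mathcal{M}_q(M)$ (only for the \emph{full} matroid $M$, not all submatroids) together with the liftability of all \emph{proper} submatroids of hyperplanes is enough to conclude $\gamma$ is fully liftable in the sense required by Proposition~\ref{propo 2}. For a proper subset $L\subsetneq\mathcal{L}$ the submatroid $N$ of hyperplanes of $L$ is a proper submatroid of hyperplanes, hence liftable by hypothesis, so $\{\gamma_p:p\in N\}$ lifts automatically; for $L=\mathcal{L}$ itself the vanishing of the minors of $\mathcal{M}_q(M)$ gives the lifting via Lemma~\ref{lifting 3}. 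So full-liftability does hold, but this bookkeeping — distinguishing the full matroid from its proper submatroids and matching it against the two definitions of liftability — is where the argument needs the most care. A secondary point requiring attention is confirming that the matroid of a regular $\gamma$ all of whose hyperplanes coincide is exactly $U_{n-1,|M|}$ (so that $V_\gamma\subset V_0$, not merely contained in some larger degenerate locus), which follows since such $\gamma$ spans an $(n-1)$-dimensional space and conversely lies in $\Gamma_{U_{n-1,|M|}}$ when the vectors are otherwise generic in that space.
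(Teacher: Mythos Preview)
Your proposal is correct and follows the paper's approach: the paper identifies the unliftable $\gamma$ as those with $M_\gamma=0$ (hence in $V_M$ by Proposition~\ref{coincide}) or with all vectors in a common hyperplane (hence in $V_0$), then invokes Proposition~\ref{dec} for the first statement and Theorem~\ref{theo lift} for the second. Your second-part argument takes an unnecessary detour through the decomposition $V_{\mathcal{C}(M)}=V_M\cup V_0$ --- once you verify that any $\gamma\in V(I)$ is fully liftable (proper $L\subsetneq\mathcal{L}$ via the hypothesis, $L=\mathcal{L}$ via the vanishing minors and Lemma~\ref{lifting 3}), Propositions~\ref{propo 2} and~\ref{coincide} give $\gamma\in V_M$ directly, which is exactly how the paper is invoking Theorem~\ref{theo lift}.
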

\begin{proof}
The only {inflexible} collections of vectors $\gamma\in V_{\mathcal{C}(M)}$ are those where ${c(M,\gamma)}=0$, or where all vectors belong to the same hyperplane.
By Proposition~\ref{coincide}, any collection of the first type is in $V_{M}$,  while any collection of the second type belongs to $V_{U_{n-1,\size{M}}}$. Consequently, the desired decomposition follows by Proposition~\ref{dec}.
The second statement is a direct consequence of Theorem~\ref{theo lift}, using the fact that every proper submatroid of hyperplanes is inherently liftable.
\end{proof}
Proposition~\ref{des 2} is a generalization of Corollary 3.20 and Proposition 3.22 in \cite{Fatemeh3},  from dimension three to higher dimensions. (See also Remark~\ref{different definition}). 

\begin{example}\label{quad}
 By applying Lemma~\ref{lifts}, we find that all proper submatroids of the matroids $QS$ and $G_{3\times 4}$, depicted
in Figure~\ref{fig:combined}~(Center) and Figure~\ref{fig:combined 2}~(Left), respectively, are liftable. 
Then, by Proposition~\ref{des 2}, we have that:\quad
$V_{\mathcal{C}(QS)}=V_{QS}\cup V_{U_{2,6}}\quad\text{and}\quad V_{\mathcal{C}(G_{3\times 4})}=V_{G_{3\times 4}}\cup V_{U_{2,12}}.$
\end{example}

\section{Computing graph polynomials
}\label{sec 5}
Here, we introduce a combinatorial approach for generating polynomials in $I_{M}$; see Theorem~\ref{thm ci}. Applying this theorem, we proceed in Section~\ref{rel 6} to derive a finite generating set for $I_{M}$,  up to radical.  Throughout this section, we use Notation~\ref{initial notation}.




\begin{notation}\label{notation graph G}
 Let $G$ be a loopless directed graph, with vertices $V(G) = \{p_i : i \in [n]\}$ and directed edges $E(G)$. Each edge $(p_{i},p_{j})\in E(G)$ is assigned a weight $\alpha_{i,j}$, an indeterminate.
The associated $n\times n$ matrix $M(G)$  
 is defined as
\begin{equation*}
M(G)_{ij}=
\begin{cases}
1, \qquad \text{if $i=j,$}\\ 
-\alpha_{i,j}, \ \text{if $(p_{i},p_{j})\in E(G),$}\\
0,  \qquad  \text{otherwise.}
\end{cases}
\end{equation*}
A \textit{cycle}  of $G$ is a sequence of vertices $w = (v_0, v_1, \dots, v_s)$ such that $(v_{i-1}, v_i) \in E(G)$ is an edge of $G$ for all $i \in [s]$, and  $v_0, \dots, v_{s-1}$ are distinct and $v_s = v_0$. The  edges of $w$ are denoted by $E(w) = \{(v_{i-1}, v_i) : i \in [s] \}$. We define 
\[
P(w) = \prod_{(p_i, p_j) \in E(W)} \alpha_{i,j},
\]
which is the product of weights of edges in $w$. A pair of cycles $\mathcal K$ and $\mathcal L$ are \textit{disjoint} if they do not share any vertices.
We denote by 
\[
C(G) = \{\{\mathcal K_1, \dots, \mathcal K_m\} : m \ge 1, \, \mathcal K_i \text{ and } \mathcal K_j \text{ are disjoint cycles for all } i,j \in [m]  \text{ with } i \neq j\}
\]
the set of all non-empty collections of disjoint cycles. 
For each $K\in C(G)$, we define 
$$P(K)=\prod_{\mathcal{K}\in K}P(\mathcal{K}).$$
\end{notation}

\begin{lemma}\label{lemma: first way}
 With the notation above, $\det(M(G))$ has the following expression:
\begin{equation}
\label{eq r}
   \det(M(G)) = 1 +\sum_{K \in C(G)} (-1)^{|K|} P (K).
\end{equation}
\end{lemma}
\begin{proof}
 Observe that $M(G)=\Id_{n}-A(G)$, where $A(G)$ denotes the weighted adjacency matrix of the graph $G$. From this, we deduce that $\det(M(G))=p_{A(G)}(1)$, where $p_{A(G)}$ is the characteristic polynomial of $A(G)$. By evaluating $p_{A(G)}$ at $1$, using the expression of this polynomial provided in  \textup{\cite[\S2]{cavers2012skew}}, we obtain the desired expression for $\det(M(G))$.
\end{proof}

\begin{theorem}\label{thm: first way}
 Let $p_1, \dots , p_n$ be a collection of non-zero 
vectors in $\CC^r$. Assume that for each $i \in [n]$, there exists a linear dependency of the form:
\begin{equation}\label{9}
p_i =
\sum_{\substack{j \in [n]\\ j \neq i}} \beta_{i,j} p_j,
\end{equation}
where $\beta_{i,j} \in \CC$. 
Let $G$ be the directed graph with vertices $V(G)=\{p_{i}:i\in [n]\}$ and directed edges $E(G)=\{(p_{i},p_{j}):\beta_{i,j}\neq 0\}$. Then, 
the polynomial in~\eqref{eq r} vanishes when the variables $\alpha_{i,j}$ are evaluated as $\beta_{i,j}$ $\pare{\text{where}\ (p_{i},p_{j})\in E(G)}$.
\end{theorem}

\begin{proof}
 Let $M$ be the $n\times n$ matrix obtained from $M(G)$ by substituting each variable $\alpha_{i,j}$ with $\beta_{i,j}$, where $(p_{i},p_{j})\in E(G)$ . By Lemma~\ref{lemma: first way}, proving the statement reduces to showing that $\det(M)=0$.
Consider the matrix $P$ of size $n\times r$, where the rows 
correspond to the vectors ${p_{1},\ldots,p_{n}}$. 
Using Equation~\eqref{9} we deduce that $MP=0$, which implies that $M$ is not invertible (since $P\neq 0$) and its determinant is zero.
\end{proof}


\begin{example}
 With the notation of Theorem~\ref{thm: first way}, suppose there is a vertex $p_{1}\in V(G)$ such that $p_{1}$ is contained in all cycles of $G$. Consequently, any two cycles of $G$ are not disjoint, as they both contain $p_{1}$. Thus, by Theorem~\ref{thm: first way}, we obtain:
\begin{equation*}
1 = \sum_{K \ \text{cycle}} P(K),
\end{equation*}
where these products are taken over the coefficients $\beta_{i,j}$. A particular instance of this occurs when the graph $G$ is a cycle, in which case we have $1 = P(G)$.
\end{example}

\begin{example}
 Let $\{p_{1},p_{2},p_{3},p_4,p_5\}\subset \CC^{r}$ be a set of non-zero vectors. Suppose that 
\begin{equation*}
p_1=\beta_{1,2}p_2+\beta_{1,3}p_3,\quad p_2=\beta_{2,4}p_4,\quad p_3=\beta_{3,5}p_5,\quad 
p_4=\beta_{4,1}p_1+\beta_{4,5}p_5,\quad \text{and} \quad   p_5=\beta_{5,1}p_1+\beta_{5,4}p_4,
\end{equation*}
with $\beta_{i,j}\in \CC$. Let $G$ be the graph constructed from these linear dependencies, as above. We have $V(G)=\{p_{1},p_{2},p_{3},p_4,p_5\}$ and \[E(G)=\{(p_1,p_2),(p_1,p_3),(p_2,p_4),(p_3,p_5),(p_4,p_1),(p_4,p_5),(p_5,p_1),(p_5,p_4)\}.\]
The cycles of $G$ are $(p_1,p_2,p_4),(p_1,p_2,p_4,p_5),(p_1,p_3,p_5),(p_1,p_3,p_5,p_4)$. By~Theorem~\ref{thm: first way},~we obtain:
\[ \beta_{1,2}\beta_{2,4}\beta_{4,1}+\beta_{1,3}\beta_{3,5}\beta_{5,1}+\beta_{1,2}\beta_{2,4}\beta_{4,5}\beta_{5,1}+\beta_{1,3}\beta_{3,5}\beta_{5,4}\beta_{4,1}=1.\]
\end{example}

Theorem~\ref{thm: first way} provides a method to construct polynomials that vanish in the coefficients of the linear dependencies of a given collection of vectors. To generate polynomials inside $I_{M}$, we need one extra step, the following remark, that connects Theorem~\ref{thm: first way} with matroids and their realization spaces.

\begin{remark} \label{rem 5}
Let $M$ be a matroid of rank $n$,  $\set{p_{1},\ldots,p_{m}}\in \mathcal{C}(M)$ a circuit of $M$  with $m\leq n$, and $\gamma \in \Gamma_{M}$ a realization of $M$ in $\CC^{n}$. We know that there exists coefficients $\alpha_{j},2\leq j\leq m$ such that
\begin{equation}\label{suma}
\gamma_{p_{1}}=\sum_{j=2}^{m}\alpha_{j}\gamma_{p_{j}}
\end{equation}
Then for any collection of $n-m+1$ vectors  $\set{q_{1},\ldots,q_{n-m+1}}\subset \CC^{n}$  and any $2\leq i\leq m$, we have 
\begin{eqnarray*}
\corch{\gamma_{p_{1}},\ldots, \gamma_{p_{i-1}},\hat{\gamma}_{p_{i}},\ldots,\gamma_{p_{m}},q_{1},\ldots,q_{n-m+1}} & = & 
\sum_{j=2}^{m}\alpha_{j}\corch{\gamma_{p_{j}},\gamma_{p_{2}},\ldots, \gamma_{p_{i-1}},\hat{\gamma}_{p_{i}},\ldots,\gamma_{p_{m}},q_{1},\ldots,q_{n-m+1}}\\ & = & 
\alpha_{i} (-1)^{i}\corch{\hat{\gamma}_{p_{1}},\ldots, \gamma_{p_{i-1}},\gamma_{p_{i}},\ldots,\gamma_{p_{m}},q_{1},\ldots,q_{n-m+1}}
\end{eqnarray*}
The second equality arises from expanding $\gamma_{p_{1}}$ using~\eqref{suma}, and leads to the following expressing of $\alpha_{i}$
 \begin{equation}
 \alpha_{i}= (-1)^{i}\frac{\corch{\gamma_{p_{1}},\ldots, \gamma_{p_{i-1}},\hat{\gamma_{p_{i}}},\ldots,\gamma_{p_{m}},q_{1},\ldots,q_{n-m+1}}}{ \corch{\hat{\gamma_{p_{1}}},\ldots, \gamma_{p_{i-1}},\gamma_{p_{i}},\ldots,\gamma_{p_{m}},q_{1},\ldots,q_{n-m+1}}}.\label{7}
 \end{equation}
  Note that Equation~\eqref{7} is obtained from evaluating the quotient of polynomials
\begin{equation}\label{29}
 (-1)^{i}\frac{\corch{p_{1},\ldots, p_{i-1},\hat{p_{i}},\ldots,p_{m},q_{1},\ldots,q_{n-m+1}}}{\corch{\hat{p_{1}},\ldots, p_{i-1},p_{i},\ldots,p_{m},q_{1},\ldots,q_{n-m+1}}}
\end{equation}
at the vectors of $\gamma$, i.e.~substituting the vector of $n$ indeterminates associated with each $p\in \mathcal{P}$ by the vector $\gamma_{p}$.
 If $m=n$, then the above quotient is 
\begin{equation}\label{8}
 \alpha_{i}= (-1)^{i}\frac{\corch{\gamma_{p_{1}},\ldots, \gamma_{p_{i-1}},\hat{\gamma_{p_{i}}},\ldots,\gamma_{p_{n}},q}}{\corch{\hat{\gamma_{p_{1}}},\ldots, \gamma_{p_{i-1}},\gamma_{p_{i}},\ldots,\gamma_{p_{n}},q}},
\end{equation}
for any vector $q\in \CC^{n}$. This is always the case if $M$ is an $n$-paving matroid.  Note that Equation~\eqref{8} is obtained from evaluating the quotient of polynomials
\begin{equation}\label{19}
 (-1)^{i}\frac{\corch{p_{1},\ldots, p_{i-1},\hat{p_{i}},\ldots,p_{n},q}}{\corch{\hat{p_{1}},\ldots, p_{i-1},p_{i},\ldots,p_{n},q}}
\end{equation}
at the vectors of $\gamma$. 
Note that these equations hold for every choice of vectors $\{q_{1},\ldots,q_{n-m+1}\} \subset \mathbb{C}^{n}$. We will commonly refer to these vectors as the \textit{extra vectors}.
\end{remark}

\noindent Remark~\ref{rem 5} shows how to express the coefficients of the linear dependencies of the vectors $\{\gamma_{p} : p \in \mathcal{P}\}$ as a quotient of brackets for any $\gamma \in \Gamma_{M}$. Before stating our main result, we establish our notation.

\begin{notation}\label{notacion grafo G}  Let $M$ be an $n$-paving matroid. 
Consider subsets $J$ and $P=\{p_{1},\ldots,p_{k}\}$ of $\mathcal{P}$ such that $P\cap \overline{J}=\emptyset$. Let $C=\{c_{1},\ldots,c_{k}\}$ be a set of circuits of size $n$ 
included in $P\cup J$, with $p_{i}\in c_{i}$ for all $i\in [k]$. We represent the elements of $c_{i}$ as $$c_{i}=\{p_{i_{1}},\ldots,p_{i_{a_{i}}},
r_{i,1},\ldots,r_{i,n-a_{i}}\},$$ where $p_{i_{1}}=p_{i}$ and $
\{r_{i,1},\ldots,r_{i,n-a_{i}}\}\subset J$. 
We construct the directed weighted graph $G$, where the vertex set is $V(G)=P$ and the edge set is $E(G)=\{(p_{i},p_{j}):p_{j}\in c_{i}\}$.
Each edge $(p_{i},p_{j})\in E(G)$ is assigned a weight $\alpha_{i,j}$, an indeterminate.
From Lemma~\ref{lemma: first way}, $\det(M(G))$ has the following expression:
\begin{equation}\label{matrix 18}
1 +\sum_{K \in C(G)} (-1)^{|K|} P (K)\in \CC[\alpha_{i,j}]_{(p_{i},p_{j})\in E(G)}.
\end{equation}
\end{notation}

\begin{theorem}\label{thm ci}
 The polynomial derived from~\eqref{matrix 18} 
by substituting the variables $\alpha_{i,j}$ $\pare{\text{where}\ p_{j}\in c_{i}}$ 
 with the bracket quotients 
\[\frac{\pare{-1}^{s-1}\corch{p_{i},p_{i_{2}},\ldots, \hat{p_{j}},r_{i,1},\ldots,r_{i,n-a_{i}},q_{p_{i}}}}{\corch{\hat{p_{i}},p_{i_{2}},\ldots, p_{j},r_{i,1},\ldots,r_{i,n-a_{i}},q_{p_{i}}}},\] 
where $p_{j}=p_{i_{s}}\in c_{i}$, and subsequently multiplying by the denominators, belongs to $I_{M}$ for any choice of \textit{extra vectors} $\set{q_{p}:p\in P}\subset \mathbb{C}^{n}$.
\end{theorem}

\begin{proof}
 Let $\gamma\in \Gamma_{M}$ be any realization of $M$ in $\CC^{n}$. For each $i\in [k]$, the set of vectors $\{\gamma_{p}:p\in c_{i}\}$ forms a circuit. Hence, there exist coefficients $\{\beta_{i,j}:j\in [k],j\neq i\}$ such that
\begin{equation}\label{ci}
\gamma_{p_{i}}=\sum_{\substack{i\neq j \in [k] 
}}\beta_{i,j}\gamma_{p_{j}}+\text{some combination of vectors in }  \{\gamma_{p}:p\in J\}.
\end{equation}
Consider the subspace $J'=\text{span}\{\gamma_{p}: p \in J\}$. Since $P \cap \overline{J} = \emptyset$, the vectors $\{\gamma_{p}: p \in P\}$ do not belong to $J'$. Equivalently, none of the vectors $\{\gamma_{p}: p \in P\}$ is the zero vector in the vector space $V=\mathbb{C}^{n}/J'$.
Then, in $V$, Equation~\eqref{ci} can be expressed as:
\begin{equation}
    \gamma_{p_{i}} = \sum_{\substack{i\neq j \in [k] 
    }}\beta_{i,j}\gamma_{p_{j}}.
\end{equation} 
Consequently, by applying Theorem~\ref{thm: first way} in the vector space $V$ to the vectors $\{\gamma_{p}:p\in P\}$, we obtain that $\det(M(G))$ vanishes when each variable $\alpha_{i,j}$ $\pare{\text{where}\ p_{j}\in c_{i}}$ is evaluated as $\beta_{i,j}$.
By substituting the coefficients $\beta_{i,j}$ with their expressions as quotients of brackets in~\eqref{8}, and subsequently clearing the denominators, we obtain that the polynomial of the statement vanishes at the vectors of $\gamma$. Note that Lemma~\ref{lemma: first way} is also being applied. Since $\gamma$ is an arbitrary element of $\Gamma_{M}$, it follows that this polynomial belongs to $I_{M}$.
\end{proof}

\begin{remark}\label{for any matroid}
\begin{itemize}
    \item[(i)]  Observe that one may be concerned about the potential vanishing of the denominators in Equations~\eqref{7} and~\eqref{8}. However, since Theorem~\ref{thm ci} involves multiplying by the denominators, this does not yield any problem.

    \item[(ii)]  Observe that Theorem~\ref{thm ci} applies to any matroid by replacing the variables $\alpha_{i,j}$ $\pare{\text{where}\ p_{j}\in c_{i}}$ 
 with the bracket quotients from \eqref{29}, for any choice of \textit{extra vectors}. However, for clarity, we have stated the theorem only in the context of paving matroids.
\end{itemize}
\end{remark}

\noindent See Section~\ref{sec 7} for examples of applying Theorem~\ref{thm ci} to generate polynomials within the matroid ideal. 


\begin{definition}[Graph ideal]\label{cl}
Let $M$ be an $n$-paving matroid. The polynomials constructed in Theorem~\ref{thm ci} are called the graph polynomials of $M$. We define $I_{M}^{\gr}$ as the ideal generated by all graph polynomials, where any choice of sets $J$, $P$, and $C$ satisfying the hypotheses of the theorem, along with any selection of additional vectors $\set{q_{p}:p\in P}\subset \mathbb{C}^{n}$, are considered.
\end{definition}

The next remark is fundamental for finding a finite generating set,  up to radical, for $I_{M}$. 

\begin{remark}\label{free}
Let $M$ be an $n$-paving matroid. We note that for any choice of sets $J$, $P$, and $C$ satisfying the hypothesis of Theorem~\ref{thm ci}, the polynomial constructed has the property that each additional vector $\{q_{p}:p\in P\}\subset \CC^{n}$ appears exactly once within each 
 monomial on the brackets.  Therefore, using the multilinearity of the determinant, a finite generating set of $I_{M}^{\gr}$  can be obtained by choosing the extra vectors $\set{q_{p}:p\in P}\subset \CC^{n}$ as vectors of the canonical basis $E$ of $\mathbb{C}^{n}$.

\end{remark}

\section{Relation between lifting polynomials and graph polynomials 
}\label{rel 6}
In this section, focusing on $n$-paving matroids, we describe the relationship between the lifting polynomials introduced in Section~\ref{constructing} and the graph polynomials established in Section~\ref{sec 5}. Specifically, we will demonstrate the inclusion $I_{M}^{\lift}\subset I_{M}^{\gr}$, as shown in Theorem~\ref{inc}. Using this ideal containment, we proceed to present a finite 
set of defining equations for $V_{M}$, where $M$ represents an $n$-paving matroid devoid of vertices of degree exceeding two;  refer to Corollary~\ref{fin} and Remark~\ref{fin rel}. Throughout this section, we use the notation from Section~\ref{sec 5}, and Notation~\ref{initial notation}.


\medskip
We begin by establishing a lemma crucial for the proof of Theorem~\ref{inc}. In this lemma, we adopt the same context as in  Notation~\ref{notacion grafo G}, 
where $M$ represents an $n$-paving matroid, $P=\{p_{1},p_{2},\ldots,p_{k}\}$ denotes a set of points lying outside the closure of $J$, and $C=\{c_{1},\ldots,c_{k}\}$ denotes a set of circuits  of size $n$ contained within $P\cup J$, with $p_{i}\in c_{i}$ for each $i\in [k]$. We represent the elements of $c_{i}$ as $$c_{i}=\{p_{i_{1}},\ldots,p_{i_{a_{i}}},
 r_{i,1},\ldots,r_{i,n-a_{i}}\}$$ where $p_{i_{1}}=p_{i}$ and $
 \{r_{i,1},\ldots,r_{i,n-a_{i}}\}\subset J$. We also assume that the circuits within $C$ are distinct.

\begin{lemma}\label{det}
Let $q\in\mathbb{C}^{n}$. Consider the $k\times k$ submatrix $N$ of $\mathcal{M}_{q}(M)$ with the columns corresponding to the points of $P$ and the rows corresponding to the circuits of $C$. Then the determinant of this submatrix coincides with the polynomial generated in Theorem~\ref{thm ci}  for $P,J$, and $C$,
with all extra vectors being $q$.
\end{lemma}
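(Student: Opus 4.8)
The plan is to compute the determinant of $N$ by Laplace expansion and match it term-by-term with the polynomial produced by Theorem~\ref{thm ci}. First I would set up both sides carefully. Recall the row of $\mathcal{M}_{q}(M)$ indexed by the circuit $c_i = \{p_{i_1},\ldots,p_{i_{a_i}},q_{i_1},\ldots,q_{i_{n-a_i}}\}$: its entry in the column of an element $c \in c_i$ is $\pm[\,c_i\setminus\{c\},q\,]$, and all other entries (in particular those in the columns of the other points of $P$, and those outside $c_i$) are zero. So the submatrix $N$, which keeps only the $P$-columns, has $N_{i,j} = 0$ unless $p_j \in c_i$, and when $p_j \in c_i$ the entry is $(-1)^{?}[\,c_i\setminus\{p_j\},q\,]$ up to the sign coming from the position of $p_j$ in the ordering of $c_i$. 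In particular the diagonal entry $N_{i,i}$ equals $[\,c_i\setminus\{p_i\},q\,] = [p_{i_2},\ldots,p_{i_{a_i}},q_{i_1},\ldots,q_{i_{n-a_i}},q]$ (here I use that $p_{i_1}=p_i$ is listed first). This diagonal entry is exactly the denominator that appears in the expression \eqref{8} for the coefficient $\alpha_{i,i}$... more precisely it is the common denominator $D_i := [\,c_i\setminus\{p_i\},q\,]$ that clears the $i$-th equation in \eqref{ci}.

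Next I would recall from Remark~\ref{rem 5}, Equation~\eqref{8}, that the coefficient $\alpha_{i,j}$ of $\gamma_{p_j}$ in the dependency \eqref{ci} coming from circuit $c_i$ is, up to sign, $[\,c_i\setminus\{p_j\}\text{ with }p_i\text{ reinserted},q\,]/D_i$; write $\alpha_{i,j} = \varepsilon_{i,j} A_{i,j}/D_i$ where $A_{i,j} = [\,(c_i\setminus\{p_j\})\cup\{p_i\},q\,]$ (a bracket in which $p_j$ is removed from $c_i$ and $p_i$ is present) and $\varepsilon_{i,j}\in\{\pm1\}$. Note $A_{i,i} = [\,c_i,q\,]$-type expression but actually with $p_i$ both removed and reinserted it collapses — one must be slightly careful: when $j=i$, removing $p_j=p_i$ and reinserting $p_i$ gives $c_i$ itself, but $[\,c_i,q\,]$ has $n+1$ vectors... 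Actually $c_i$ has size $n$, so $[\,c_i,q\,]$ is an $n\times(n+1)$ thing which is not a bracket; the correct reading is that the "diagonal" term in the matrix $M$ of Theorem~\ref{thm: first way} is $1$, which corresponds to $\alpha_{i,i}$ being absent, i.e. the diagonal of $N$ is precisely $D_i$ and after factoring out $\prod_i D_i$ the diagonal becomes all $1$'s. So the clean statement is: $N = \mathrm{diag}(D_1,\ldots,D_k)\cdot M'$ where $M'_{i,j} = 1$ if $i=j$ and $M'_{i,j} = -\alpha_{i,j}$ otherwise (with $\alpha_{i,j}=0$ when $p_j\notin c_i$), after one checks the signs $\varepsilon_{i,j}$ work out to produce the "$-\alpha_{i,j}$" with the correct sign as in \eqref{matrix}. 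Hence $\det N = \left(\prod_{i=1}^{k} D_i\right)\det M'$, and by Theorem~\ref{thm: first way} $\det M' = 1 + \sum_{K\in C(G)}(-1)^{|K|}P(K)$, which upon multiplying through by $\prod_i D_i$ is exactly the polynomial constructed in Theorem~\ref{thm ci}. That finishes the proof modulo the sign bookkeeping.

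The key steps, in order: (1) write out the entries of $N$ explicitly, identifying the nonzero pattern ($N_{i,j}\neq 0 \iff p_j\in c_i$) and the diagonal entries $N_{i,i}=D_i$; (2) recall the bracket expression \eqref{8} for $\alpha_{i,j}$ and observe $N_{i,j}$ and $\alpha_{i,j}$ differ exactly by the factor $D_i$ and a sign; (3) pin down the signs: show the sign in front of $N_{i,j}$ (coming from the Laplace-type $(-1)^{\text{position of }p_j\text{ in }c_i}$ in the definition of $\mathcal{M}_q(M)$) combined with the sign $(-1)^{i-2}$ in \eqref{8} yields precisely $-\alpha_{i,j}$, matching \eqref{matrix}; (4) factor $N = \mathrm{diag}(D_i)\,M'$, take determinants, and invoke Theorem~\ref{thm: first way} together with the denominator-clearing description in Theorem~\ref{thm ci}.

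The main obstacle I anticipate is step (3), the sign verification. The definition of the row of $\mathcal{M}_q(M)$ uses the sign $(-1)^{c\text{-index}-1}$ relative to a fixed ordering $c_i = (c_1,\ldots,c_n)$ of the circuit, whereas the coefficient formula \eqref{8} uses the ordering with $p_i$ in the first slot and has its own sign $(-1)^{i-2}$; reconciling these requires being careful about how the circuit $c_i$ is ordered (the convention seems to be $p_i = p_{i_1}$ first, then the rest), and about the permutation needed to move a given $p_j$ into the position where the bracket $A_{i,j}$ naturally places it. I expect that with the stated convention "$p_{i_1}=p_i$" everything lines up so that the off-diagonal entries are genuinely $-\alpha_{i,j}$, but this is the place where a careless sign would break the identity, so it deserves the most attention. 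Everything else is a straightforward factorization-of-determinant argument plus a direct appeal to Theorem~\ref{thm: first way}.
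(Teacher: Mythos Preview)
Your proposal is correct and matches the paper's proof essentially step for step: the paper also factors out the diagonal entries $D_i=[\hat{p_i},p_{i_2},\ldots,q_{i_{n-a_i}},q]$ from $N$ (by dividing each row by its diagonal entry), identifies the resulting matrix with the matrix \eqref{matrix} attached to the graph $G$, and then invokes Theorem~\ref{thm: first way} to conclude that clearing denominators gives exactly the polynomial of Theorem~\ref{thm ci}. Your flagged concern about the sign bookkeeping in step~(3) is legitimate but routine; the paper handles it just as tersely, simply asserting $L_{ij}=-\alpha_{i,j}$ from the bracket formula \eqref{8}.
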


\begin{proof}
The $i^{\rm th}$ diagonal entry of $N$  (corresponding to $c_{i}$ and $p_{i}$), for every $i$, is given by 
$$N_{ii}=\corch{\hat{p_{i}},p_{i_{2}},\ldots,p_{i_{a_{i}}}, r_{i,1},\ldots,r_{i,n-a_{i}}},q.$$ 
For every $i\in [k]$, we divide all the entries of the $i^{\text{th}}$ 
 row of $N$  (corresponding to $c_{i}$) by this 
 polynomial, and we denote the resulting matrix as $L$. We easily obtain
\begin{equation}\label{mult}
\det(N)=\prod_{i=1}^{k}\corch{\hat{p_{i}},p_{i_{2}},\ldots,p_{i_{a_{i}}}, r_{i,1},\ldots,r_{i,n-a_{i}}},q\det(L)
\end{equation}
We can express the entries of $L$ as follows:
$$L_{ij}=\begin{cases}
1 & \text{if } i=j,\\
0 & \text{if } p_{j}\notin c_{i}, \\
\frac{\pare{-1}^{s-1}\corch{p_{i},p_{i_{2}},\ldots, \hat{p_{j}},r_{i,1},\ldots,r_{i,n-a_{i}},q}}{\corch{\hat{p_{i}},p_{i_{2}},\ldots, p_{j},r_{i,1},\ldots,r_{i,n-a_{i}},q}}
& \text{if } p_{j}=p_{i_{s}}\in c_{i},
\end{cases}$$
 Note that $\det(L)$ is obtained from $\det(M(G))$ by replacing the variables $\alpha_{i,j}$ $\pare{\text{where}\ p_{j}\in c_{i}}$ 
 with the bracket quotients from \eqref{19}, with all extra vectors being $q$. 
Finally, multiplying $\det(L)$ by the 
product of brackets in Equation~\eqref{mult} corresponds to multiplying by the denominators of the expressions in Equation~\eqref{19}. Therefore, $\det(N)$ matches the polynomial in Theorem~\ref{thm ci}.
\end{proof}

\begin{remark}\label{is zero}
Examining the proof of Lemma~\ref{det}, we infer that if two circuits of $C$ were identical, the determinant of the submatrix would be zero. This implies that the polynomial constructed in Theorem~\ref{thm ci} using $C$ would also evaluate to zero.
\end{remark}

Using Lemma~\ref{det}, we have the following Corollary.

\begin{corollary}\label{dependent}
Let $P=\{p_{1},\ldots,p_{k}\}$ and $J$ be subsets of points such that $P\cap \overline{J}=\emptyset$. For any $q\in \mathbb{C}^{n}$, all the minors of the $k\times k$ submatrices of $\mathcal{M}_{q}(M)$ with columns corresponding to the points of $P$ and rows corresponding to $k$ circuits of size $n$ in $J\cup P$ lie in the ideal  $I_{M}^{\gr}$.
\end{corollary}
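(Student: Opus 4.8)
The plan is to reduce the statement about arbitrary $k \times k$ submatrices to the special case already handled by Lemma~\ref{det}. Fix $P = \{p_1, \dots, p_k\}$ and $J$ with $P \cap \overline{J} = \emptyset$, fix $q \in \mathbb{C}^n$, and consider a $k \times k$ submatrix $N$ of $\mathcal{M}_q(M)$ whose columns are indexed by the points of $P$ and whose rows are indexed by $k$ circuits $C = \{c_1, \dots, c_k\}$ of size $n$, all contained in $J \cup P$. I want to show $\det(N) \in M_J(P)$. If some $c_i$ contains none of the points of $P$, then the entire $i^{\rm th}$ row of $N$ is zero (the nonzero entries of that row of $\mathcal{M}_q(M)$ sit in columns indexed by the points of $c_i \subset J$, which are not among the columns of $N$), so $\det(N) = 0 \in M_J(P)$ trivially. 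So I may assume each $c_i$ meets $P$.

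Next I would handle the indexing. For each $i$, pick a point $p_{j(i)} \in c_i \cap P$; the map $i \mapsto j(i)$ need not be a bijection, but I claim that if $\det(N) \neq 0$ then we can rearrange so that the diagonal entries are nonzero — more precisely, expanding $\det(N)$ along the Leibniz formula, every nonzero term corresponds to a permutation $\sigma$ of $[k]$ with $N_{i,\sigma(i)} \neq 0$ for all $i$, i.e. $p_{\sigma(i)} \in c_i$ for all $i$. Fixing one such $\sigma$ and relabelling the points of $P$ by $\sigma$, I arrange that $p_i \in c_i$ for every $i \in [k]$. This is exactly the hypothesis under which Theorem~\ref{thm ci} and Lemma~\ref{det} apply, with this particular choice of $P$, $J$, and $C$. (If no such $\sigma$ exists, $\det(N) = 0$.) Since relabelling columns only changes the determinant by a sign, $\det(N) = \pm \det(N')$ where $N'$ is the submatrix with the diagonal-compatible ordering.

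Now I would invoke Lemma~\ref{det} directly: it tells us that $\det(N')$ equals the polynomial produced by Theorem~\ref{thm ci} applied to these very sets $J$, $P$, and $C$, with all extra vectors set equal to $q$. By Definition~\ref{cl}, the ideal $M_J(P)$ is generated by \emph{all} polynomials arising from applying Theorem~\ref{thm ci} to $J$ and $P$ (over all admissible choices of circuit sets $C$ and of extra vectors); in particular the polynomial $\det(N')$ is one of these generators, hence $\det(N') \in M_J(P)$, and therefore $\det(N) = \pm \det(N') \in M_J(P)$. This covers every $k \times k$ submatrix of the stated form, completing the argument.

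The main obstacle — really the only subtlety — is the bookkeeping in the middle step: Lemma~\ref{det} is phrased for a submatrix whose rows correspond to circuits $c_i$ with $p_i \in c_i$ (a fixed, diagonal-compatible pairing between the columns and the rows), whereas the corollary allows an arbitrary choice of $k$ circuits with no prescribed pairing. One must be careful that the permutation argument genuinely reduces to Lemma~\ref{det}'s setup, that the sign introduced by column reordering is harmless (it is, since we only care about ideal membership), and that the degenerate cases — a row with no column support in $P$, or repeated circuits (where the determinant vanishes, as noted in the remark following Lemma~\ref{det}) — are all subsumed by "$\det(N) = 0 \in M_J(P)$." None of this requires new ideas beyond what is already established; it is a matter of carefully matching hypotheses.
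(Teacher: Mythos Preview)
Your proposal is correct and follows essentially the same approach as the paper: both arguments reduce to Lemma~\ref{det} once a diagonal-compatible pairing $p_i \in c_i$ is found, and show $\det(N)=0$ otherwise. The only cosmetic difference is that the paper invokes Hall's Marriage Theorem to detect when no such pairing exists, whereas you read it off directly from the Leibniz expansion; the content is the same.
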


\begin{proof}
Let $C$ be a collection of $k$ circuits of size $n$  within $J\cup P$. Let $A$ be the corresponding $k\times k$ submatrix of $\mathcal{M}_{q}(M)$ on the rows indexed by $C$. 
To prove that $\det(A)\in  I_{M}^{\gr}$, we consider two cases:

\medskip{\bf Case 1.} Suppose it is possible to choose for each point $p\in P$ a circuit $c_{p}\in C$ such that $p\in c_{p}$ and $c_{p}\neq c_{q}$ for $p\neq q$. In this case, by Lemma~\ref{det}, $\det(A)$ is one of the polynomials generated in Theorem~\ref{thm ci} for $P,J,C$,  with all extra vectors set equal to $q$. Hence, we conclude that $\det(A)\in  I_{M}^{\gr}$.

\medskip{\bf Case 2.} Suppose it is not possible to select for each point of $P$ a distinct circuit from $C$ containing it. Then we can apply the Hall Marriage Theorem \cite[Theorem~1]{hall1948distinct}, which asserts that there exist $r$ points $\{p_{1},p_{2},\ldots,p_{r}\}\subset P$ that belong to at most $r-1$ circuits of $C$, denoted as $\{c_{1},\ldots,c_{r-1}\}$. Consequently, in the $r$ columns of $A$ corresponding to $\{p_{1},p_{2},\ldots, p_{r}\}$, the only non-zero entries occur in the $r-1$ rows corresponding to the circuits $\{c_{1},\ldots,c_{r-1}\}$. As a result, $\det(A)$ is zero, implying it is contained in $ I_{M}^{\gr}$. 
\end{proof}

Recall the notions $I_{M}^{\lift}$ and $I_{M}^{\gr}$ from Definitions~\ref{cl 2} and~\ref{cl}, the ideals generated by the lifting and graph polynomials, respectively.

\begin{theorem}\label{inc}
Let $M$ be an $n$-paving matroid. Then $I_{M}^{\lift}\subset I_{M}^{\gr}.$
\end{theorem}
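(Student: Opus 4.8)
The plan is to show that every generator of $I_{M}^{\lift}$ lies in $I_{M}^{\gr}$. Recall from Definition~\ref{cl 2} that $I_{M}^{\lift}$ is generated by the $(|N|-n+1)$-minors of the liftability matrices $\mathcal{M}_{q}(N)$, where $N$ ranges over full-rank submatroids of $M$ and $q$ over $\mathbb{C}^{n}$. So it suffices to fix such an $N$ and $q$, fix a maximal minor of $\mathcal{M}_{q}(N)$, i.e.\ a square submatrix of size $|N|-n+1$ obtained by choosing $|N|-n+1$ columns (points) $P$ and $|N|-n+1$ rows ($n$-circuits) $C$ of $N$, and prove that its determinant lies in $I_{M}^{\gr}$.

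First I would record that the liftability matrix $\mathcal{M}_{q}(N)$, when its entries are regarded as polynomials in the variables of $M$, is a submatrix (on the appropriate rows and columns) of $\mathcal{M}_{q}(M)$, because every $n$-circuit of $N$ is an $n$-circuit of $M$ and every point of $N$ is a point of $M$; the nonzero entries depend only on the circuit and $q$, not on the ambient matroid. Hence the chosen $(|N|-n+1)\times(|N|-n+1)$ submatrix of $\mathcal{M}_{q}(N)$ is exactly a $k\times k$ submatrix of $\mathcal{M}_{q}(M)$ with $k=|N|-n+1$, whose columns correspond to the point set $P$ and whose rows correspond to the circuit set $C$, all circuits of size $n$. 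The key structural point to verify is that $P$ and $J:=\mathcal{P}_{M}\setminus P$ can be taken to satisfy $P\cap\overline{J}=\emptyset$ in the relevant sense, or more precisely that $C$ consists of circuits contained in $P\cup J'$ for a suitable closed set $J'$ with $P\cap\overline{J'}=\emptyset$; this is where the hypothesis that we are inside a full-rank submatroid $N$ with $|N|=k+n-1$ gets used, since then the remaining $n-1$ points of $N$ form an independent set and we may take $J'$ to be (the closure in $M$ of) that set, which has rank $n-1$, forcing every point of $P$ to lie outside $\overline{J'}$. Once this setup is in place, Corollary~\ref{dependent} applies directly: it states precisely that every $k\times k$ minor of $\mathcal{M}_{q}(M)$ with columns indexed by $P$ and rows indexed by $k$ circuits of size $n$ in $J'\cup P$ lies in the ideal $M_{J'}(P)\subset I_{M}^{\gr}$. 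Summing over all such minors, all submatroids $N$, and all $q$ gives $I_{M}^{\lift}\subset I_{M}^{\gr}$.

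I expect the main obstacle to be the bookkeeping in the previous paragraph: checking that the row/column index sets of a maximal minor of $\mathcal{M}_{q}(N)$ really match the hypotheses of Corollary~\ref{dependent}, in particular identifying the correct closed set $J'$ with $P\cap\overline{J'}=\emptyset$ and confirming that all circuits in $C$ lie in $P\cup J'$. A subtlety is that the $n$-circuit rows of $\mathcal{M}_{q}(N)$ may use points of $N$ outside $P$; these are exactly the points we must absorb into $J'$, and we need $\overline{J'}$ (closure taken in $M$) to still avoid $P$. Since $N$ has rank $n$, any independent subset of $N\setminus P$ spans at most a rank-$(n-1)$ flat, and because the points of $P$ together with such a set must remain independent-or-dependent consistently with $N$ being a matroid of rank $n$ on $k+n-1$ elements, one deduces $P\cap\overline{J'}=\emptyset$; the details amount to the observation that a point of $P$ lying in $\overline{J'}$ would contradict either the rank count or the structure of $n$-circuits. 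After that, the argument is essentially an invocation of Corollary~\ref{dependent} term by term. I would also remark, as the paper does elsewhere, that degenerate cases (e.g.\ a column of $P$ meeting no circuit of $C$, or repeated circuits) only make the minor vanish, hence lie trivially in $I_{M}^{\gr}$, so no generality is lost.
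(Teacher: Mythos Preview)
Your overall strategy is right and matches the paper's: reduce to Corollary~\ref{dependent}. But the crucial claim that $P\cap\overline{J'}=\emptyset$ is false in general, and the justification you sketch (``rank count'' and ``structure of $n$-circuits'') does not go through. Here is a concrete counterexample: take $n=3$, $N$ the $3$-paving matroid on $\{1,2,3,4,5\}$ with lines $\{1,2,3\}$ and $\{1,4,5\}$, and choose the minor with column set $P=\{1,4,5\}$, so $J=N\setminus P=\{2,3\}$. Then $\{1,2,3\}$ is a circuit, hence $1\in\overline{J}$, so $P\cap\overline{J}\neq\emptyset$. Thus Corollary~\ref{dependent} does not apply to the pair $(P,J')$ you propose, and your argument stops.

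The paper's proof repairs exactly this. Instead of forcing $P=N\setminus J$ into the corollary, it shrinks to $P'=N\setminus\overline{J}$ (closure in $N$, equivalently in $M$ restricted to $N$), for which $P'\cap\overline{J}=\emptyset$ holds tautologically. Corollary~\ref{dependent} then gives that the $|P'|\times|P'|$ minors of the column-submatrix $B$ on $P'$ lie in $I_M^{\gr}$. The last step, which your proposal is missing, is a Laplace expansion: since $B$ is a column-submatrix of the matrix $A$ on columns $N\setminus J$, every $(|N|-n+1)$-minor of $A$ expands along the extra columns $\overline{J}\setminus J$ as a combination of $|P'|$-minors of $B$, hence also lies in $I_M^{\gr}$. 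So the fix is not to argue $P\cap\overline{J'}=\emptyset$ (it need not be), but to pass to the smaller $P'$ and recover the original minor via Laplace expansion.
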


\begin{proof}
Let $N$ be any submatroid of $M$ of rank $n$. We fix  an arbitrary set $J$ of points in  $\mathcal{P}_{N}$ with $|J|=n-1$ and consider the submatrix $A$ obtained by removing from $\mathcal{M}_{q}(N)$ the columns corresponding to $J$. We will prove that all $(|N|-(n-1))$-minors of $A$ are contained in $I_{M}^{\gr}$.  Since $N$ and $J$ are arbitrary, this will imply the result.

We apply Corollary~\ref{dependent} for the $n$-paving matroid $N$, taking the sets $\overline{J}$ and $P= \mathcal{P}_{N}\setminus \overline{J}$ (with closure taken in $N$), to conclude that the $(\size{N}-\size{\closure{J}})$-minors of the submatrix $B$, obtained by removing the columns corresponding to $\overline{J}$ from the matrix $\mathcal{M}_{q}(N)$, lie within $I_{M}^{\gr}$. Since $B$ is a submatrix of $A$, it follows that the $(|N|-(n-1))$-minors of $A$ belong to the ideal generated by the $(|N|-|\overline{J}|)$-minors of the matrix $B$. This implies the desired result.
\end{proof}

As an immediate consequence of Theorem~\ref{inc}, we obtain the following corollary. 

\begin{corollary}\label{fin}
Let $M$ be an $n$-paving matroid with no points of degree greater than two. Then: 
\[
I_{M}=\sqrt{I_{\mathcal{C}(M)}+I_{M}^{\gr}}.
\]
\end{corollary}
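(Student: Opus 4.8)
The plan is to deduce this corollary by combining Theorem~\ref{theo lift} with Theorem~\ref{inc}. First, recall that Theorem~\ref{theo lift} already gives us the identity $I_{M}=\sqrt{I_{\mathcal{C}(M)}+I_{M}^{\lift}}$, valid precisely under the hypothesis that $M$ is an $n$-paving matroid with no points of degree greater than two. So it suffices to show that the two radical ideals $\sqrt{I_{\mathcal{C}(M)}+I_{M}^{\lift}}$ and $\sqrt{I_{\mathcal{C}(M)}+I_{M}^{\gr}}$ coincide, which amounts to a double inclusion at the level of varieties or, more directly, at the level of ideals up to radical.

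For the inclusion $\sqrt{I_{\mathcal{C}(M)}+I_{M}^{\lift}} \subseteq \sqrt{I_{\mathcal{C}(M)}+I_{M}^{\gr}}$, I would invoke Theorem~\ref{inc}, which states $I_{M}^{\lift}\subset I_{M}^{\gr}$; hence $I_{\mathcal{C}(M)}+I_{M}^{\lift}\subseteq I_{\mathcal{C}(M)}+I_{M}^{\gr}$, and taking radicals preserves the inclusion. For the reverse inclusion $\sqrt{I_{\mathcal{C}(M)}+I_{M}^{\gr}}\subseteq \sqrt{I_{\mathcal{C}(M)}+I_{M}^{\lift}}$, the key point is that $I_{M}^{\gr}\subseteq I_{M}$: every graph polynomial lies in the matroid ideal by Theorem~\ref{thm ci} (and the surrounding discussion establishing $I_M^{\gr}\subset I_M$ by construction). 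Combined with $I_{\mathcal{C}(M)}\subseteq I_{M}$, this gives $I_{\mathcal{C}(M)}+I_{M}^{\gr}\subseteq I_{M}$, and since $I_{M}$ is radical (it is the ideal of a variety), we get $\sqrt{I_{\mathcal{C}(M)}+I_{M}^{\gr}}\subseteq I_{M}$. But by Theorem~\ref{theo lift}, $I_{M}=\sqrt{I_{\mathcal{C}(M)}+I_{M}^{\lift}}$, which closes the loop and yields equality throughout.

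Putting these together: $I_{M}=\sqrt{I_{\mathcal{C}(M)}+I_{M}^{\lift}}\subseteq \sqrt{I_{\mathcal{C}(M)}+I_{M}^{\gr}}\subseteq I_{M}$, so all containments are equalities, giving $I_{M}=\sqrt{I_{\mathcal{C}(M)}+I_{M}^{\gr}}$ as claimed. I do not expect any serious obstacle here, since this corollary is essentially a formal consequence of the two main theorems already proved; the only thing to be careful about is making sure we genuinely have $I_M^{\gr}\subseteq I_M$ (so that the chain of inclusions sandwiches correctly) and that we are applying Theorem~\ref{theo lift} under exactly its stated hypothesis, which matches the hypothesis of this corollary verbatim. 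One could also phrase the argument purely in terms of varieties: $V(I_{\mathcal{C}(M)}+I_M^{\gr})$ sits between $V(I_M)=\overline{\Gamma_M}$ and $V(I_{\mathcal{C}(M)}+I_M^{\lift})=\overline{\Gamma_M}$, forcing all three to agree — but the ideal-theoretic sandwich is cleaner and avoids any subtlety about whether the varieties determine the radical ideals over the (algebraically closed) field $K$.
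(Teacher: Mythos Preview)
Your proof is correct and is exactly the argument the paper intends: the corollary is stated as an immediate consequence of Theorem~\ref{inc}, and the implicit reasoning is precisely your sandwich $I_M=\sqrt{I_{\mathcal{C}(M)}+I_M^{\lift}}\subseteq\sqrt{I_{\mathcal{C}(M)}+I_M^{\gr}}\subseteq I_M$, using Theorem~\ref{theo lift}, Theorem~\ref{inc}, and the containment $I_M^{\gr}\subseteq I_M$ from Theorem~\ref{thm ci}.
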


\begin{remark}\label{fin rel}
In particular, Corollary~\ref{fin} gives rise to a finite generating set for $I_{M}$,  up to radical, by selecting the sets $J,P,C$ that satisfy the hypothesis of Theorem~\ref{thm ci} and the additional vectors $\{q_{p}: p\in P\}$ as elements of the canonical basis $E=\{e_{1},\ldots,e_{n}\}$ of $\mathbb{C}^{n}$, as described in Remark~\ref{free}. We then take the union of these polynomials with the finite set of generators of $I_{\mathcal{C}(M)}$.
\end{remark}


\begin{example}\label{quad2}
 Consider the point-line configuration $QS$ depicted in Figure~\ref{fig:combined}~(Center). By applying Proposition~\ref{des 2}, the ideal $I_{QS}$ is the radical of the ideal generated by $I_{\mathcal{C}(QS)}$ and 
the $4\times 4$ minors of the $4\times 6$ liftability matrices $\mathcal{M}_{q}(QS)$, where $q$ varies over $\CC^{3}$. Using Corollary~\ref{fin}, we obtain a finite generating set for the matroid ideal $I_{QS}$, up to radical.
\end{example}

\begin{example}\label{quad3}
 Consider the point-line configuration $G_{3\times 4}$ shown in Figure~\ref{fig:combined 2}~(Left). By applying Proposition~\ref{des 2}, the ideal $I_{G_{3\times 4}}$ is the radical of the ideal generated by $I_{\mathcal{C}(G_{3\times 4})}$ and 
the $10\times 10$ minors of the $16\times 12$ liftability matrices $\mathcal{M}_{q}(G_{3\times 4})$, where $q$ varies over $\CC^{3}$. Using Corollary~\ref{fin}, we obtain a finite generating set for the matroid ideal $I_{G_{3\times 4}}$, up to radical.
\end{example}

\section{Examples and applications}\label{sec 7}
We now apply Theorem~\ref{thm ci} and Corollary~\ref{fin} to specific matroids and compute polynomials within their ideals.   Throughout this section, we use Notations~\ref{notacion grafo G} and~\ref{initial notation}.

\medskip
\noindent
We show that the polynomials constructed using our methods are not necessarily included among those from the Grassmann-Cayley approach. Specifically, we demonstrate that for the Pascal configuration discussed in Example~\ref{conic}, the ideal $I_{M}^{\lift}$ is not contained in the ideal generated by the polynomials from \cite[Theorem~3.0.2]{sidman2021geometric} using the Grassmann-Cayley algebra. See Example~\ref{conic} and Remark~\ref{GC polynomials}.

Furthermore, we 
construct polynomials in $I_{CS}$ using Theorem~\ref{thm ci}, where $I_{CS}$ represents the ideal of the matroid, constructed in \cite{sidman2021geometric}, derived from $d+4$ points on a rational normal curve of degree $d$.


\begin{example}[{Cycle}]\label{cycle}
 Consider an $n$-paving matroid $M$, and let $J$, $P=\{p_{1},\ldots,p_{k}\}$, and $C=\{c_{1},\ldots,c_{k}\}$ be the sets used to construct $G$, with $p_{i}\in c_{i}$ for $i\in [k]$, as described in 
Notation~\ref{notacion grafo G}. Suppose that $G$ is a cycle of size $k$. Without loss of generality, we may assume that 
$E(G)=\{(p_{i},p_{i+1}): i\in[k]\}$.
For $i\in [k]$, denote the elements of the circuit $c_{i}$ as:
$$c_{i}=\set{p_{i},p_{i+1}, r_{i,1},r_{i,2},\ldots,r_{i,n-2}},$$ 
with the points $r_{i,j}$ being inside $J$. 
 Applying Theorem~\ref{thm ci}, we deduce that the polynomial
$$\prod_{i=1}^{k}\corch{p_{i}, r_{i,1},r_{i,2},\ldots,r_{i,n-2},q_{i}}-\prod_{i=1}^{k}\corch{p_{i+1}, r_{i,1},r_{i,2},\ldots,r_{i,n-2},q_{i}}$$
belongs to $I_{M}$ for any choice of  extra vectors $q_{i}\in \mathbb{C}^{n}$. A special instance arises when $n=3$ and $k=3$, which corresponds to the quadrilateral 
 set $QS$, 
depicted in Figure~\ref{fig:combined}~(Center). 
 For the matroid $QS$, we choose $J=\{p_{1},p_{5},p_{6}\}$, $P= \{p_{2},p_{4},p_{3}\}$, and the set of circuits $C=\{\{p_{2},p_{4},p_{6}\}, \{p_{4},p_{3},p_{5}\},\{p_{3},p_{2},p_{1}\}\}$. Applying Theorem~\ref{thm ci}, we find that the polynomial
\[
\corch{p_{2},p_{6},q_{1}}\corch{p_{4},p_{5},q_{2}}\corch{p_{3},p_{1},q_{3}}-\corch{p_{4},p_{6},q_{1}}\corch{p_{3},p_{5},q_{2}}\corch{p_{2},p_{1},q_{3}}
\]
belongs to 
$I_{QS}$ for any  choice of extra vectors 
$\{q_{1},q_{2},q_{3}\}\subset \CC^{3}$. Indeed, this polynomial coincides with the one constructed in \cite{Fatemeh3} for the quadrilateral set.
\end{example}

Expanding on Example~\ref{cycle}, we present the configuration consisting of a set of lines and all their intersection points,  which we denote by $L_{n}$.



\begin{example}\label{ex n lines}
 Consider $\mathcal{L}$ a collection of $n$ lines in $\mathbb{P}^{2}$, where no three lines are concurrent. Let $\mathcal{P}$ be the set of their $\textstyle \binom{n}{2}$ intersection points. Let $L_{n}$ denote the point-line configuration with the points $\mathcal{P}$ and the lines $\mathcal{L}$,
where three points in $\mathcal{P}$ are dependent if and only if they lie on the same line of $\mathcal{L}$.

 Choose a line $l$ in $\mathcal{L}$, and let the $n-1$ 
points on this line be $r_{1},r_{2},\ldots,r_{n-1}$.
Additionally, label $l_{1},l_{2},\ldots,l_{n-1}$ as the lines  distinct from $l$, containing the points 
$r_{1},r_{2},\ldots,r_{n-1}$, respectively.
We denote $\sigma=(c_{1},c_{2},\ldots,c_{n-1})$ as a single cycle permutation of $\{1,2,\ldots,n-1\}$ (having only one cycle of size $n-1$ in its decomposition). Then, for each $i\in [n-1]$, we denote  $p_{i}$ for the intersection of the lines $l_{c_{i}}$ and $l_{c_{i+1}}$, with $l_{c_{n}}=l_{c_{1}}$. Notably, for every $i\in [n-1]$, the points $p_{i-1}$ and $p_{i}$ align with  $r_{c_{i}}$.
We define 
\[
P=\set{p_{1},p_{2},\ldots,p_{n-1}},\quad  J=\set{ r_{1},\ldots,r_{n-1}},\quad \text{and} \quad  C=\set{ \set{p_{i},p_{i-1}, r_{c_{i}}}}:i\in [n-1].
\]
 The associated graph $G$ is a cycle on the vertices $\{p_{i}:i\in [n-1]\}$. By applying Theorem~\ref{thm ci} for $P$, $J$, and $C$, we derive that the polynomial
\[
\prod_{i=1}^{n-1}\corch{p_{i}, r_{c_{i}},q_{i}}-\prod_{i=1}^{n-1}\corch{p_{i-1}, r_{c_{i}},q_{i}}
\]
belongs to 
 $I_{L_{n}}$ for any choice of extra vectors $q_{i}\in \mathbb{C}^{3}$.  Note that the same procedure can be applied to any subset of lines of $\mathcal{L}$.
Consequently,  this approach enables us to construct a collection of polynomials within  $I_{L_n}$. 
First, we can select any subset of $m$ lines,  fix a line among them, and subsequently a single cycle permutation of $\mathbb{S}_{ m-1}$. For each of these selections, we 
construct a polynomial within  $I_{L_{n}}$. 
\end{example}

We now present a proof of Menelaus' Theorem for $n$-gons, using the construction of graph polynomials. We refer to \cite{shephard1995ceva} and the references therein for further details.
\begin{corollary}[\textup{\cite[Theorem~1]{shephard1995ceva}}]\label{menelaus} Let $v_{1},\ldots,v_{n}$ be distinct non-zero vectors in $\CC^{3}$. For each $i\in [n]$, let $w_{i}=\alpha_{i}v_{i}+\beta_{i}v_{i+1}$ be a vector in the $\text{span}\{{v_{i},v_{i+1}}\}$. Suppose that the points $\{w_{i}:i\in [n]\}$ lie on the same line in the projective plane, and that this line does not contain any of the other points. Then: 
\[\prod_{i\in [n]}\alpha_{i}=(-1)^{n}\prod_{i\in [n]}\beta_{i}.\]
\end{corollary}

\begin{proof}
Let $M$ be the point-line configuration whose set of points and lines are  $\mathcal{P}=\{v_{i},w_{j}:i,j\in [n]\}$ and $\mathcal{L}=\{\{v_{i},v_{i+1},w_{i}\},\{w_{1},\ldots,w_{n}\}:i\in [n]\}$, respectively. We select 
\[P=\{v_{1},\ldots,v_{n}\},\quad J=\{w_{1},\ldots,w_{n}\},\quad\text{and}\quad C=\{\{v_{i},v_{i+1},w_{i}\}:i\in [n]\}.
\]
 The associated graph $G$ is a cycle on the vertices $\{v_{i}:i\in [n]\}$. By applying Theorem~\ref{thm ci} for $P$, $J$, and $C$, for any choice of vectors $q_{i}\in \CC^{3}$, we obtain: 
\[\prod_{i=1}^{n}\corch{v_{i},w_{i}, q_{i}}=\prod_{i=1}^{n}\corch{v_{i+1},w_{i}, q_{i}}.
\]
By substituting $w_{i}$ with $\alpha_{i}v_{i}+\beta_{i}v_{i+1}$ within each bracket and using the multilinearity of the determinant, we obtain:
\[\prod_{i=1}^{n}\corch{v_{i},v_{i+1}, q_{i}}\beta_{i}=\prod_{i=1}^{n}\corch{v_{i},v_{i+1}, q_{i}}(-\alpha_{i}).\]
Finally, by selecting the vectors $q_{i}$ in such a way that none of the brackets vanish, and dividing both sides by $\textstyle \prod_{i=1}^{n-1}\corch{v_{i}, v_{i+1},q_{i}}$, we obtain the desired equality.
\end{proof}

\begin{example}[{Pascal matroid}]\label{conic}
Consider the 
Pascal matroid, denoted as $M_{\text{Pascal}}$, consisting of $9$ points and $7$ lines, as depicted in Figure~\ref{fig:combined}~(Right), where the lines represent circuits of size $3$ within the matroid. We denote the ideal of this matroid as $I_{\text{Pascal}}$. We generate 
polynomials within $I_{\text{Pascal}}$ using Theorem~\ref{thm ci}.
To do this, we select 
\begin{equation*}
\begin{aligned}
& \qquad \qquad \qquad J=\set{  {p_7},  {p_8},  {p_9}}, \ P= \set{  {p_1},  {p_6},  {p_5},  {p_4},  {p_3},  {p_2}},\\
&   C=\set{ \set{  {p_1},  {p_6},  {p_9}},\set{  {p_6},  {p_5},  {p_8}},\set{  {p_5},  {p_4},  {p_7}},\set{  {p_4},  {p_3},  {p_9}},\set{  {p_3},  {p_2},  {p_8}},\set{  {p_2},  {p_1},  {p_7}}}.
\end{aligned}
\end{equation*}
This establishes that the directed graph with vertices $\{  {p_1},  {p_6},  {p_5},  {p_4},  {p_3},  {p_2}\}$ forms a cycle. Applying Theorem~\ref{thm ci} for $J$, $P$, and $C$, we conclude that the polynomial
\begin{equation}
\begin{aligned}\label{eq:prom_notin}
&\left[  {p_1},  {p_9},q_{1}\right]\left[  {p_6},  {p_8},q_{2}\right]\left[  {p_5},  {p_7},q_{3}\right]\left[  {p_4},  {p_9},q_{4}\right]\left[  {p_3},  {p_8},q_{5}\right]\left[  {p_2},  {p_7},q_{6}\right]\
\\ &- \left[  {p_6},  {p_9},q_{1}\right]\left[  {p_5},  {p_8},q_{2}\right]\left[  {p_4},  {p_7},q_{3}\right]\left[  {p_3},  {p_9},q_{4}\right]\left[  {p_2},  {p_8},q_{5}\right]\left[  {p_1},  {p_7},q_{6}\right]
\end{aligned}
\end{equation}
belongs to $I_{\text{Pascal}}$ for any choice of vectors $q_{i}\in \mathbb{C}^{3}$.  Recall that in Equation~\eqref{eq:prom_notin} we are using Notation~\ref{initial notation}. Therefore, we can interpret this expression as a polynomial in $27$ variables, where each point $p_{1},\ldots,p_{9}$ corresponds to three indeterminates, and the vectors $q_{i}$ are elements of $\CC^{3}$ 
that contribute to the coefficients in the expansion of the bracket polynomials.
\end{example}

\begin{remark}\label{GC polynomials}
 In \cite[Theorem~3.0.2]{sidman2021geometric}, using the Grassmann-Cayley algebra, certain polynomials within the matroid ideal of $M_{\text{Pascal}}$ were constructed. However, as noted in Remark~\ref{relation between varieties}, the definition of the matroid ideal given in \cite{sidman2021geometric} differs from ours. Nevertheless, the polynomials constructed in \cite{sidman2021geometric} lead to polynomials within $I_{\text{Pascal}}$, as discussed in Remark~\ref{relation between varieties}. From now on, when we refer to the polynomials constructed in \cite{sidman2021geometric} for $M_{\text{Pascal}}$, we assume them to be inside $I_{\text{Pascal}}$.
\end{remark}

In Proposition~\ref{notin}, we demonstrate that the polynomials constructed in Example~\ref{conic} are not encompassed within the ideal generated by the polynomials constructed in \cite[Theorem~3.0.2]{sidman2021geometric} using the Grassmann-Cayley algebra. For brevity, we denote this ideal as $GC_{\text{Pascal}}$.

\begin{proposition}\label{notin}
 Let $q\in \CC^{3}$ be any non-zero vector. The polynomial in~\eqref{eq:prom_notin}  obtained by setting all vectors $q_{i}$ equal to $q$
does not belong to $GC_{\text{Pascal}}$.
\end{proposition}

\begin{proof}
We will first prove the statement for $q=e_{3}=(0,0,1)$. We denote by $P$ the polynomial of  Equation~\eqref{eq:prom_notin} where 
 all vectors $q_{i}$ are set equal to $e_{3}$. To demonstrate that $P\notin GC_{\text{Pascal}}$, we prove that $V(GC_{\text{Pascal}})\not\subset V(P)$.
 For each $i\in [9]$, 
we consider a vector $p_{i}$ of the form $(x_{i},1,0)$, where $x_{i}\in \mathbb{C}$ are values that we will now define. Denote by $\gamma$ the collection of vectors $\{p_{i}:i\in [9]\}$. Notably, $[p_{i},p_{j},e_{3}]=x_{i}-x_{j}$ for every $i,j\in [9]$.
All vectors in $\gamma$ are in the subspace $x_{3}=0$, implying that $[p_{i},p_{j},p_{k}]=0$ for every $i,j,k \in [9]$. Consequently, $\gamma\in V(GC_{\text{Pascal}})$ since the generators of $GC_{\text{Pascal}}$ involve polynomials in these brackets.
\smallskip 

On the other hand, we have that $P(\gamma)=0$ if and only if 
\[
(x_{1}-x_{9})(x_{6}-x_{8})(x_{5}-x_{7})(x_{4}-x_{9})(x_{3}-x_{8})(x_{2}-x_{7})=
(x_{6}-x_{9})(x_{5}-x_{8})(x_{4}-x_{7})(x_{3}-x_{9})(x_{2}-x_{8})(x_{1}-x_{7}).     
\]              
Therefore, by selecting the numbers $\{x_{i}:i\in [9]\}$ such that this equality does not hold, we ensure that $\gamma\in V(GC_{\text{Pascal}})$ while $\gamma \notin V(P)$. Thus, we have proved that $P\notin GC_{\text{Pascal}}$.

Now, consider an arbitrary  non-zero vector $q\in \mathbb{C}^{3}$, and let $Q$ denote  the polynomial of Equation~\eqref{eq:prom_notin} where 
all vectors $q_{i}$ are set equal to $q$.
Consider $\gamma \in V(GC_{\text{Pascal}})\backslash V(P)$, and let $T$ be a projective transformation such that $T(q)=e_{3}$. Then,
$$Q(T(\gamma))=\det(T)^{6}P(\gamma)\neq 0,$$
where $T(\gamma)$ is obtained by applying $T$ to all the vectors of $\gamma$. Furthermore, since $\gamma\in V(GC_{\text{Pascal}})$ and $V(GC_{\text{Pascal}})$ is closed under projective transformations (due to all generators of $GC_{\text{Pascal}}$ being homogeneous polynomials in the brackets), we conclude that $T(\gamma)\in V(GC_{\text{Pascal}})$. Therefore, we have:
$$T(\gamma)\in V(GC_{\text{Pascal}})\backslash V(Q),$$ 
which completes the proof.
\end{proof}

We can extend Example~\ref{conic} to higher dimensions. In \cite{caminata2021pascal}, Caminata and Schaffler established a necessary and sufficient condition for $d+4$ points in $\mathbb{P}^{d}$ to lie on a rational normal curve of degree $d$. We will leverage this result to introduce our next example. Note that any collection of $d+1$ points on a rational normal curve of degree $d$ are linearly independent.

\begin{theorem}[\textup{\cite[Corollary~5.2]{caminata2021pascal}}] \label{teo cur}
Let $P=\set{p_{1},\ldots,p_{d+4}}$ be $d+4$ points in $\mathbb{P}^{d}$, not all lying in the same hyperplane. Then, the points of $P$ lie on a quasi-Veronese curve if and only if the following equality holds in the Grassmann-Cayley algebra for every $I=\{i_{1}<\cdots<i_{6}\}$ and $I^{c}=\{j_{1}<\cdots<j_{d-2}\}$: 
\begin{equation}\label{eq gc}
(p_{i_{1}}p_{i_{2}}\wedge p_{i_{4}}p_{i_{5}}H_{I}) \vee (p_{i_{2}}p_{i_{3}}\wedge p_{i_{5}}p_{i_{6}}H_{I}) \vee (p_{i_{3}}p_{i_{4}}\wedge p_{i_{6}}p_{i_{1}}H_{I}) \vee H_{I} = 0,
\end{equation}
where $H_{I}=j_{1}\cdots j_{d-2}.$
\end{theorem}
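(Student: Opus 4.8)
The plan is to reduce Theorem~\ref{teo cur} to the classical Pascal theorem in $\mathbb{P}^{2}$ via \emph{Gale duality} (the self-association of point configurations). Since the $d+4$ points $p_{1},\dots,p_{d+4}$ do not all lie in a hyperplane, they span $\mathbb{P}^{d}$, so one may form their Gale dual configuration $q_{1},\dots,q_{d+4}\in\mathbb{P}^{2}$, well defined up to a projective transformation of $\mathbb{P}^{2}$. The defining feature of this duality is that it exchanges a maximal bracket of the $p$'s with the complementary maximal bracket of the $q$'s: there is a nonzero constant $c$, independent of the index set, with $\corch{p_{k}:k\in K}=\pm c\,\corch{q_{l}:l\in[d+4]\setminus K}$ for every $K\subset[d+4]$ of size $d+1$. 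The second ingredient is the classical self-association theorem (going back to Coble; see Dolgachev--Ortland and Eisenbud--Popescu): $p_{1},\dots,p_{d+4}$ lie on a rational normal curve of degree $d$ (more generally, on a quasi-Veronese curve) if and only if the dual points $q_{1},\dots,q_{d+4}$ lie on a conic in $\mathbb{P}^{2}$. The appearance of the complementary index set $I^{c}$ and of the ``filling'' extensor $H_{I}$ in \eqref{eq gc} is precisely the signature of Gale duality, which is what makes this the natural route.

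The first step is to unwind the left-hand side of \eqref{eq gc}. Fix $I=\{i_{1}<\cdots<i_{6}\}$ and $I^{c}=\{j_{1}<\cdots<j_{d-2}\}$, and expand each meet using the definition of the meet operation: the extensor $p_{i_{c}}p_{i_{d}}H_{I}$ has length $2+(d-2)=d$, and since $2+d\ge d+1$ the meet $p_{i_{a}}p_{i_{b}}\wedge p_{i_{c}}p_{i_{d}}H_{I}$ is a length-$1$ extensor, equal to a signed combination of $p_{i_{a}}$ and $p_{i_{b}}$ whose two coefficients are the maximal brackets $\corch{p_{i_{a}},p_{i_{c}},p_{i_{d}},p_{j_{1}},\dots,p_{j_{d-2}}}$ and $\corch{p_{i_{b}},p_{i_{c}},p_{i_{d}},p_{j_{1}},\dots,p_{j_{d-2}}}$. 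Substituting the three such expressions into $(\cdot)\vee(\cdot)\vee(\cdot)\vee H_{I}$ and expanding multilinearly, only the terms in which the three chosen points from $I$ are pairwise distinct survive (a repeated point makes the resulting length-$(d+1)$ extensor vanish), and each survivor is a product of four maximal brackets of the $p$'s, every one of them containing $I^{c}$ among its indices. By Gale duality each such maximal bracket is $\pm c$ times a $3\times 3$ bracket of the $q$'s whose indices are the $3$-element complement inside $I$ of the corresponding indices; hence \eqref{eq gc}, after dividing by $c^{4}$ and tracking signs, becomes a degree-four polynomial identity in the $3\times 3$ brackets of $q_{i_{1}},\dots,q_{i_{6}}$, which one recognizes as the bracket form of Pascal's theorem for these six points. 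Thus \eqref{eq gc} holds for a given $I$ if and only if $\{q_{i}:i\in I\}$ lie on a conic. Quantifying over all $I$, and using that a conic through five points in general position is unique (so, for $d\ge 2$, every six of the $q_{i}$ lying on a conic forces all $d+4$ of them onto a single conic; the case $d=2$ is literally Pascal, with $H_{I}$ the empty extensor), \eqref{eq gc} holds for all $I$ if and only if all the $q_{i}$ lie on a conic, which by self-association is equivalent to $p_{1},\dots,p_{d+4}$ lying on a rational normal curve of degree $d$. The degenerate configurations — where the $q_{i}$ fail to be in general position, or the conic degenerates — correspond exactly to the degenerate quasi-Veronese curves and are absorbed by passing to Zariski closure in the relevant configuration space.

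The main obstacle is the self-association theorem, which is a nontrivial input (it can be obtained, for instance, from the structure of the $\mathrm{SL}$-invariant ideals of point configurations, or from Kempf's collapsing of the incidence variety of configurations lying on a rational normal curve), together with the bookkeeping of signs and of the common factor $c$ in the Gale-duality step. A more self-contained but more laborious alternative avoids Gale duality: project $p_{1},\dots,p_{d+4}$ from the $d-2$ points indexed by $I^{c}$ to $\mathbb{P}^{2}$, read \eqref{eq gc} directly as the assertion that the three Pascal points of the image hexagon $\{\pi_{I}(p_{i}):i\in I\}$ are collinear — hence, by Pascal's theorem and its converse, that this hexagon lies on a conic — and then reconstruct the rational normal curve from the family of these planar conic conditions. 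On that route, the reconstruction step is the delicate point: one must show that the intersection of the quadric cones $\pi_{I}^{-1}(\mathrm{conic})$ over enough subsets $I$ is exactly the curve, for which it is cleanest to use the catalecticant (determinantal) description of the ideal of a rational normal curve rather than a dimension count, and again the non-generic configurations must be handled by a closure argument.
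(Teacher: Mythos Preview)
The paper does not contain a proof of this theorem; it is quoted verbatim from \cite{caminata2021pascal} and used only as background for Example~\ref{ext}. There is therefore nothing to compare your proposal against in the present paper.

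That said, your outline is essentially the argument of the original source \cite{caminata2021pascal}: pass to the Gale dual configuration in $\mathbb{P}^{2}$, invoke the self-association theorem identifying points on a quasi-Veronese curve with dual points on a conic, and read \eqref{eq gc} as the bracket form of Pascal's theorem for the six dual points indexed by $I$. The alternative you sketch (projecting from the points indexed by $I^{c}$ and recognizing \eqref{eq gc} as the Pascal collinearity for the projected hexagon) also appears there as the geometric interpretation. Your caveats about sign bookkeeping, the nontriviality of the self-association input, and the closure argument for degenerate configurations are accurate and are exactly the points where the original proof does real work.
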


Note that when $d=2$, Equation~\eqref{eq gc} yields the quartic from Example~\ref{conic}. In \cite{sidman2021geometric}, the authors introduced a matroid using Theorem~\ref{teo cur}. We provide the details in the following definition.

\begin{definition}[\cite{sidman2021geometric}]
Suppose $P=\set{p_{1},\ldots,p_{d+4}}\subset \mathbb{P}^{d}$ lies on a rational normal curve of degree $d$. Using the notation of Theorem~\ref{teo cur}, consider the set 
\[
\mathcal{Q}=\bigcup_{I=\set{i_{1}<\cdots <i_{6}}} \set{
(p_{i_{1}}p_{i_{2}} \wedge p_{i_{4}}p_{i_{5}} H_{I}),
(p_{i_{2}}p_{i_{3}} \wedge p_{i_{5}}p_{i_{6}} H_{I}),
(p_{i_{3}}p_{i_{4}} \wedge p_{i_{6}}p_{i_{1}} H_{I})
}.
\]
Then we define $A_{CS}$ as the matroid defined by the $d+4+\size{\mathcal{Q}}$ vectors of $P \cup \mathcal{Q}$, and denote the associated matroid ideal by $I_{CS}$.
\end{definition}

In \cite[Theorem~4.2.2]{sidman2021geometric}, non-trivial quartics in $I_{CS}$ were constructed using the Grassmann-Cayley algebra. In Example~\ref{ext}, we construct polynomials in $I_{CS}$ using Theorem~\ref{thm ci}.  Note that, as discussed in Remark~\ref{for any matroid}(ii), the matroid $A_{CS}$ is not paving.

\begin{example}\label{ext}
Consider the matroid $A_{CS}$ and take any $I=\set{i_{1}<\cdots <i_{6}}$. Consider the points: 
\[
x_1=(p_{i_{1}}p_{i_{2}}\wedge p_{i_{4}}p_{i_{5}}H_{I}),
\quad x_2=(p_{i_{2}}p_{i_{3}}\wedge p_{i_{5}}p_{i_{6}}H_{I}),\quad x_3=(p_{i_{3}}p_{i_{4}}\wedge p_{i_{6}}p_{i_{1}}H_{I}).
\]
Note that since any subset of $d+1$ points of $P$ is linearly independent, the points  $\{p_{i_1},\ldots,p_{i_6}\}$ lie outside the hyperplane  $\is{H_{I},x_{1},x_{2},x_{3}}$  (this is a hyperplane due to Theorem~\ref{teo cur}). Now we select 
$$J=\set{p_{j_{1}},\ldots,p_{j_{d-2}}}\cup \set{x_{1},x_{2},x_{3}},\quad P=\set{p_{i_{1}},\ldots,p_{i_{6}}},$$
where $\set{j_{1},\ldots,j_{d-2}}=I^{c}$, and the set of circuits $C$ as 
\begin{align*}
&C=\{\set{p_{i_{1}},p_{i_{2}},x_{1}},\set{p_{i_{2}},p_{i_{3}},x_{2}},\set{p_{i_{3}},p_{i_{4}},x_{3}},\set{p_{i_{4}},p_{i_{5}},p_{j_{1}},\ldots,p_{j_{d-2}}, x_{1}},\\
& \quad \qquad\set{p_{i_{5}},p_{i_{6}},p_{j_{1}},\ldots,p_{j_{d-2}}, x_{2}},\{p_{i_{6}},p_{i_{1}},p_{j_{1}},\ldots,p_{j_{d-2}}, x_{3}\}\}.
\end{align*}
Note that $P\cap \overline{J}=\emptyset$. Thus, the directed graph in $\{p_{i_{1}},\ldots,p_{i_{6}}\}$ forms a cycle. By applying Theorem~\ref{thm ci} and Remark~\ref{rem 5} with $J,P,C$, we deduce that the polynomial 
\begin{align*}&\qquad \left[p_{i_{1}},x_{1}, q_{1}^{1},\ldots,q_{d-1}^{1}\right]\corch{p_{i_{2}},x_{2}, q_{1}^{2},\ldots,q_{d-1}^{2}}\left[p_{i_{3}},x_{3}, q_{1}^{3},\ldots,q_{d-1}^{3}\right]\cdot \\
&\corch{p_{i_{4}},x_{1},p_{j_{1}},\ldots,p_{j_{d-2}},q_{1}}\corch{p_{i_{5}},x_{2},p_{j_{1}},\ldots,p_{j_{d-2}},q_{2}}\corch{p_{i_{6}},x_{3},p_{j_{1}},\ldots,p_{j_{d-2}},q_{3}}-\\[0.7 ex] 
& \qquad \left[p_{i_{2}},x_{1}, q_{1}^{1},\ldots,q_{d-1}^{1}\right]\corch{p_{i_{3}},x_{2}, q_{1}^{2},\ldots,q_{d-1}^{2}}\corch{p_{i_{4}},x_{3}, q_{1}^{3},\ldots,q_{d-1}^{3}}\cdot \\
&\corch{p_{i_{5}},x_{1},p_{j_{1}},\ldots,p_{j_{d-2}},q_{1}}\corch{p_{i_{6}},x_{2},p_{j_{1}},\ldots,p_{j_{d-2}},q_{2}}\corch{p_{i_{1}},x_{3},p_{j_{1}},\ldots,p_{j_{d-2}},q_{3}}
\end{align*}
belongs to $I_{CS}$ for any election of the extra vectors in these brackets.
\end{example}


\begin{example}[$n\times k$ grid]\label{grid}
 Consider the $n\times k$ matrix $\mathcal{Y}=(p_{ij})$, 
where $k\geq 2n-2$. For each $i\in [n]$ and $j\in [k]$, we denote the rows and columns of $\mathcal{Y}$ by 
\[
R_{i}=\{p_{i1},p_{i2},\ldots,p_{ik}\},\quad\text{ and }\quad
C_{j}=\{p_{1j},\ldots,p_{nj}\}.
\]
We define $G_{n\times k}$ as the $n$-paving matroid with the set of points $\mathcal{P}=\{p_{i,j} : (i,j)\in [n]\times [k]\}$ and the set of dependent hyperplanes \[\mathcal{L}=\{R_{1},\ldots,R_{n}\}\cup \{C_{1},\ldots,C_{k-(n-2)}\}\cup \{\bigcup_{j\geq k-n+3}C_{j}\},\]
where the last dependent hyperplane is formed by the last $n-2$ columns of $\mathcal{Y}$. Using Lemma~\ref{sub h}, it is clear that this defines an $n$-paving matroid.
 Now, we define
\[P=\{p_{ij}: j\leq k-n+2\},\quad \text{and} \quad J=\{p_{ij}: j\geq k-n+3\}=\bigcup_{j\geq k-n+3}C_{j}.\]
Observe that $P\cap \closure{J}=P\cap J=\emptyset$.
To apply Theorem~\ref{thm ci}, we must choose a circuit for each point in $P$. We do this by selecting a subset $\mathcal{D}\subset P$ such that $\mathcal{D}$ contains at most one point from each column and at least one point from each row (this requires $k\geq 2n-2$ for $\mathcal{D}$ to exist). For each $p_{ij}\in \mathcal{D}$, we choose its circuit as the column $C_{j}$. For each $p_{ij}\notin \mathcal{D}$, we select a point $q\in \mathcal{D}$ in the row $R_{i}$, and define its circuit as $\{p_{ij},q,p_{i,k-n+3},\ldots,p_{i,k}\}$. This construction yields a polynomial within $I_{G_{n\times k}}$ by applying Theorem~\ref{thm ci}.
\end{example}

 Note that the condition in Example~\ref{grid}, requiring  $\mathcal{D}$ to contain at most one point from each column, is essential to avoid obtaining zero polynomials, as discussed in Remark~\ref{is zero}.

\begin{example}[{$3\times 4$ grid}]\label{34}
 We follow the procedure of Example~\ref{grid} for the point-line configuration $G_{3\times 4}$ 
depicted in Figure~\ref{fig:combined 2}~(Left). 
 We choose 
\[J=\set{p_{14}, p_{24}, p_{34}},\quad \text{and} \quad  P=\set{p_{ij}:j\in [3]}.\]
Then, we select a subset $\mathcal{D}\subset P$ (using the same notation as in the previous example) consisting of three points, with no two in the same row or column. Using $\mathcal{D}$, we select one circuit for each point in 
 $P$, as previously described. Additionally, for each $j\in [3]$, we select the same  extra vector in $\CC^{3}$ associated with the two points in $\{p_{j1},p_{j2},p_{j3}\}\backslash \mathcal{D}$. This results in six extra vectors. Regardless of the choice of $\mathcal{D}$, we obtain the polynomial:
\[\sum_{\sigma \in \mathbb{S}_{3}}\sign\pare{\sigma}\corch{p_{1\sigma\pare{1}},p_{14},q_{1}}\corch{p_{2\sigma\pare{2}},p_{24},q_{2}} [p_{3\sigma\pare{3}},p_{34},q_{3}]
[r_{\sigma,1,1},r_{\sigma,1,2},q_{4}]
[r_{\sigma,2,1},r_{\sigma,2,2},q_{5}]
[r_{\sigma,3,1},r_{\sigma,3,2},q_{6}]
\]
where the $q_{i}$'s are arbitrary vectors in $\CC^{3}$ (the extra vectors), and  $r_{\sigma,k,1}$ and $r_{\sigma,k,2}$ refer to the other two points in the column $C_{k}$ besides $p_{\sigma^{-1}(k) k}$. 
It is shown in \cite[Theorem~5.7]{Fatemeh3} that these polynomials, combined with $I_{\mathcal{C}(G_{3\times 4})}$, generate the matroid ideal $I_{G_{3\times 4}}$.
\end{example}

\begin{figure}[h]
  \begin{subfigure}[b]{0.3\textwidth}
        \centering

\tikzset{every picture/.style={line width=0.75pt}} 

\begin{tikzpicture}[x=0.75pt,y=0.75pt,yscale=-1,xscale=1]

\draw    (102.2,130.67) -- (103.35,223) ;
\draw    (125.26,130.67) -- (126.42,223) ;
\draw    (171.4,130.67) -- (172.55,223) ;
\draw    (149.49,130.67) -- (150.64,223) ;
\draw    (80.67,153.37) -- (194.85,153.37) ;
\draw    (80.67,176.45) -- (194.85,176.45) ;
\draw    (80.67,199.53) -- (194.85,199.53) ;
\draw  [fill={rgb, 255:red, 173; green, 216; blue, 230}, fill opacity=1  ,fill opacity=1 ] (99.51,153.68) .. controls (99.51,152.02) and (100.85,150.67) .. (102.51,150.67) .. controls (104.17,150.67) and (105.52,152.02) .. (105.52,153.68) .. controls (105.52,155.34) and (104.17,156.69) .. (102.51,156.69) .. controls (100.85,156.69) and (99.51,155.34) .. (99.51,153.68) -- cycle ;
\draw  [fill={rgb, 255:red, 173; green, 216; blue, 230}, fill opacity=1  ,fill opacity=1 ] (168.71,153.68) .. controls (168.71,152.02) and (170.06,150.67) .. (171.72,150.67) .. controls (173.38,150.67) and (174.72,152.02) .. (174.72,153.68) .. controls (174.72,155.34) and (173.38,156.69) .. (171.72,156.69) .. controls (170.06,156.69) and (168.71,155.34) .. (168.71,153.68) -- cycle ;
\draw  [fill={rgb, 255:red, 173; green, 216; blue, 230}, fill opacity=1  ,fill opacity=1 ] (168.52,176.83) .. controls (168.52,175.17) and (169.86,173.82) .. (171.52,173.82) .. controls (173.19,173.82) and (174.53,175.17) .. (174.53,176.83) .. controls (174.53,178.5) and (173.19,179.84) .. (171.52,179.84) .. controls (169.86,179.84) and (168.52,178.5) .. (168.52,176.83) -- cycle ;
\draw  [fill={rgb, 255:red, 173; green, 216; blue, 230}, fill opacity=1  ,fill opacity=1 ] (169.86,199.85) .. controls (169.86,198.19) and (171.21,196.84) .. (172.87,196.84) .. controls (174.53,196.84) and (175.88,198.19) .. (175.88,199.85) .. controls (175.88,201.51) and (174.53,202.86) .. (172.87,202.86) .. controls (171.21,202.86) and (169.86,201.51) .. (169.86,199.85) -- cycle ;
\draw  [fill={rgb, 255:red, 173; green, 216; blue, 230}, fill opacity=1  ,fill opacity=1 ] (146.79,153.68) .. controls (146.79,152.02) and (148.14,150.67) .. (149.8,150.67) .. controls (151.46,150.67) and (152.81,152.02) .. (152.81,153.68) .. controls (152.81,155.34) and (151.46,156.69) .. (149.8,156.69) .. controls (148.14,156.69) and (146.79,155.34) .. (146.79,153.68) -- cycle ;
\draw  [fill={rgb, 255:red, 173; green, 216; blue, 230}, fill opacity=1  ,fill opacity=1 ] (147.05,176.82) .. controls (147.05,175.16) and (148.4,173.81) .. (150.06,173.81) .. controls (151.72,173.81) and (153.07,175.16) .. (153.07,176.82) .. controls (153.07,178.49) and (151.72,179.83) .. (150.06,179.83) .. controls (148.4,179.83) and (147.05,178.49) .. (147.05,176.82) -- cycle ;
\draw  [fill={rgb, 255:red, 173; green, 216; blue, 230}, fill opacity=1  ,fill opacity=1 ] (146.79,199.85) .. controls (146.79,198.19) and (148.14,196.84) .. (149.8,196.84) .. controls (151.46,196.84) and (152.81,198.19) .. (152.81,199.85) .. controls (152.81,201.51) and (151.46,202.86) .. (149.8,202.86) .. controls (148.14,202.86) and (146.79,201.51) .. (146.79,199.85) -- cycle ;
\draw  [fill={rgb, 255:red, 173; green, 216; blue, 230}, fill opacity=1  ,fill opacity=1 ] (122.57,153.68) .. controls (122.57,152.02) and (123.92,150.67) .. (125.58,150.67) .. controls (127.24,150.67) and (128.59,152.02) .. (128.59,153.68) .. controls (128.59,155.34) and (127.24,156.69) .. (125.58,156.69) .. controls (123.92,156.69) and (122.57,155.34) .. (122.57,153.68) -- cycle ;
\draw  [fill={rgb, 255:red, 173; green, 216; blue, 230}, fill opacity=1  ,fill opacity=1 ] (122.83,176.68) .. controls (122.83,175.02) and (124.18,173.67) .. (125.84,173.67) .. controls (127.5,173.67) and (128.85,175.02) .. (128.85,176.68) .. controls (128.85,178.34) and (127.5,179.69) .. (125.84,179.69) .. controls (124.18,179.69) and (122.83,178.34) .. (122.83,176.68) -- cycle ;
\draw  [fill={rgb, 255:red, 173; green, 216; blue, 230}, fill opacity=1  ,fill opacity=1 ] (122.57,198.69) .. controls (122.57,197.03) and (123.92,195.68) .. (125.58,195.68) .. controls (127.24,195.68) and (128.59,197.03) .. (128.59,198.69) .. controls (128.59,200.36) and (127.24,201.7) .. (125.58,201.7) .. controls (123.92,201.7) and (122.57,200.36) .. (122.57,198.69) -- cycle ;
\draw  [fill={rgb, 255:red, 173; green, 216; blue, 230}, fill opacity=1  ,fill opacity=1 ] (100.66,199.85) .. controls (100.66,198.19) and (102.01,196.84) .. (103.67,196.84) .. controls (105.33,196.84) and (106.67,198.19) .. (106.67,199.85) .. controls (106.67,201.51) and (105.33,202.86) .. (103.67,202.86) .. controls (102.01,202.86) and (100.66,201.51) .. (100.66,199.85) -- cycle ;
\draw  [fill={rgb, 255:red, 173; green, 216; blue, 230}, fill opacity=1  ,fill opacity=1 ] (99.77,176.83) .. controls (99.77,175.17) and (101.11,173.82) .. (102.77,173.82) .. controls (104.43,173.82) and (105.78,175.17) .. (105.78,176.83) .. controls (105.78,178.5) and (104.43,179.84) .. (102.77,179.84) .. controls (101.11,179.84) and (99.77,178.5) .. (99.77,176.83) -- cycle ;

\draw (85,136.98) node [anchor=north west][inner sep=0.75pt]   [align=left] {{\tiny $p_{11}$}};
\draw (116.52-8,160.68) node [anchor=north west][inner sep=0.75pt]   [align=left] {{\tiny $p_{22}$}};
\draw (140.91-8,161.45) node [anchor=north west][inner sep=0.75pt]   [align=left] {{\tiny $p_{23}$}};
\draw (157.27-3,160.76) node [anchor=north west][inner sep=0.75pt]   [align=left] {{\tiny $p_{24}$}};
\draw (140.91-8,183.53) node [anchor=north west][inner sep=0.75pt]   [align=left] {{\tiny $p_{33}$}};
\draw (157.27-3,183.15) node [anchor=north west][inner sep=0.75pt]   [align=left] {{\tiny $p_{34}$}};
\draw (117.65-8.5,183.15) node [anchor=north west][inner sep=0.75pt]   [align=left] {{\tiny $p_{32}$}};
\draw (157.27-3,137.75) node [anchor=north west][inner sep=0.75pt]   [align=left] {{\tiny $p_{14}$}};
\draw (140.72-8,136.98) node [anchor=north west][inner sep=0.75pt]   [align=left] {{\tiny $p_{13}$}};
\draw (116.5-8,136.98) node [anchor=north west][inner sep=0.75pt]   [align=left] {{\tiny $p_{12}$}};
\draw (93.43-8,183.15) node [anchor=north west][inner sep=0.75pt]   [align=left] {{\tiny $p_{31}$}};
\draw (92.28-8,160.06) node [anchor=north west][inner sep=0.75pt]   [align=left] {{\tiny $p_{21}$}};

\end{tikzpicture}
        \label{fig:quadrilateral}
    \end{subfigure}
\hspace{-3cm}  
\begin{tikzpicture}
\clip(-8.684444444444446,-4.0388888888889) rectangle (6.008888888888889,-1.49222222222222344);
\draw [line width=0.75pt, color=black] (-4.24,-3.73)-- (-1.6088888888888893,-3.7344444444444527);
\draw [line width=0.75pt, color=black] (-3.0687046559256035,-1.8320123232163457)-- (-1.6088888888888893,-3.7344444444444527);
\draw [line width=0.75pt, color=black] (-3.015298016155543,-3.7320687533510926)-- (-1.697766319984402,-1.8343280974324667);
\draw [line width=0.75pt, color=black] (-4.26,-1.83)-- (-3.015298016155543,-3.7320687533510926);
\draw [line width=0.75pt, color=black] (-4.24,-3.73)-- (-3.0687046559256035,-1.8320123232163457);
\draw [line width=0.75pt, color=black] (-4.26,-1.83)-- (-1.697766319984402,-1.8343280974324667);
\draw [line width=0.75pt, color=black] (0.94732799070567,-1.8387961621464783)-- (2.2632632791006335,-3.7409852420255283);
\draw [line width=0.75pt, color=black] (2.2632632791006335,-3.7409852420255283)-- (3.307889010724878,-1.8427835963019235);
\draw [line width=0.75pt, color=black] (2.2632632791006335,-3.7409852420255283)-- (2.1872224466953636,-1.8408905784572756);
\draw [line width=0.75pt, color=black] (1.5990046895995609,-2.7807971185609333)-- (2.790064098030882,-2.783729307015079);
\draw [line width=0.75pt, color=black] (0.94732799070567,-1.8387961621464783)-- (3.307889010724878,-1.8427835963019235);
\begin{scriptsize}
\draw [fill={rgb, 255:red, 173; green, 216; blue, 230}, fill opacity=1] (-4.24,-3.73) circle (2pt);
\draw[color=black] (-4.251111111111112,-3.9477777777777863) node {${p_3}$};
\draw [fill={rgb, 255:red, 173; green, 216; blue, 230}, fill opacity=1] (-1.6088888888888893,-3.7344444444444527) circle (2pt);
\draw[color=black] (-1.6111111111111115,-3.9366666666666753) node {${p_2}$};
\draw [fill={rgb, 255:red, 173; green, 216; blue, 230}, fill opacity=1] (-4.26,-1.83) circle (2pt);
\draw[color=black] (-4.231111111111112,-1.5966666666666698) node {${p_8}$};
\draw [fill={rgb, 255:red, 173; green, 216; blue, 230}, fill opacity=1] (-1.697766319984402,-1.8343280974324667) circle (2pt);
\draw[color=black] (-1.6444444444444448,-1.6144444444444477) node {${p_6}$};
\draw [fill={rgb, 255:red, 173; green, 216; blue, 230}, fill opacity=1] (-3.0687046559256035,-1.8320123232163457) circle (2pt);
\draw[color=black] (-3.004444444444445,-1.5966666666666698) node {${p_7}$};
\draw [fill={rgb, 255:red, 173; green, 216; blue, 230}, fill opacity=1] (-3.015298016155543,-3.7320687533510926) circle (2pt);
\draw[color=black] (-2.8977777777777782,-3.9388888888888974) node {${p_1}$};
\draw [fill={rgb, 255:red, 173; green, 216; blue, 230}, fill opacity=1] (0.94732799070567,-1.8387961621464783) circle (2pt);
\draw[color=black] (0.7733333333333334,-1.632222222222226) node {${p_7}$};
\draw [fill={rgb, 255:red, 173; green, 216; blue, 230}, fill opacity=1] (2.1872224466953636,-1.8408905784572756) circle (2pt);
\draw[color=black] (2.1,-1.6144444444444477) node {${p_6}$};
\draw [fill={rgb, 255:red, 173; green, 216; blue, 230}, fill opacity=1] (3.307889010724878,-1.8427835963019235) circle (2pt);
\draw[color=black] (3.231111111111112,-1.592222222222226) node {${p_5}$};
\draw [fill={rgb, 255:red, 173; green, 216; blue, 230}, fill opacity=1] (-3.6462544213195143,-2.767884170562566) circle (2pt);
\draw[color=black] (-3.922222222222223,-2.7188888888888946) node {${p_4}$};
\draw [fill={rgb, 255:red, 173; green, 216; blue, 230}, fill opacity=1] (-2.348118764187554,-2.771079962778145) circle (2pt);
\draw[color=black] (-2.103333333333334,-2.7366666666666725) node {${p_5}$};
\draw [fill={rgb, 255:red, 173; green, 216; blue, 230}, fill opacity=1] (2.2632632791006335,-3.7409852420255283) circle (2pt);
\draw[color=black] (2.3,-3.97) node {${p_1}$};
\draw [fill={rgb, 255:red, 173; green, 216; blue, 230}, fill opacity=1] (1.5990046895995609,-2.7807971185609333) circle (2pt);
\draw[color=black] (1.2933333333333336,-2.7633333333333393) node {${p_4}$};
\draw [fill={rgb, 255:red, 173; green, 216; blue, 230}, fill opacity=1] (2.2248986974008673,-2.7823379646095874) circle (2pt);
\draw[color=black] (2.48,-2.5566666666666724) node {${p_3}$};
\draw [fill={rgb, 255:red, 173; green, 216; blue, 230}, fill opacity=1] (2.790064098030882,-2.783729307015079) circle (2pt);
\draw[color=black] (3.124444444444445,-2.69) node {${p_2}$};
\end{scriptsize}
\end{tikzpicture}
  \caption{(Left) $3\times 4$ grid; (Center) Matroid in Example~\ref{Ex:8 nodes}; (Right) Matroid in Example~\ref{Ex:7 nodes}.}    \label{fig:combined 2}
\end{figure}

\begin{example}\label{Ex:8 nodes}
Consider the point-line configuration $M$ depicted in Figure~\ref{fig:combined 2}~(Center). 
For  
\begin{equation*}
\begin{aligned}& \qquad \qquad J=\{ {p_6}, {p_7}, {p_8}\}, P=\{ {p_1}, {p_2}, {p_3}, {p_4}, {p_5}\},\ \text{and}\\
&C=\{\{ {p_1}, {p_2}, {p_3}\},\{ {p_2}, {p_5}, {p_7}\},\{ {p_3}, {p_4}, {p_7}\},\{ {p_4}, {p_1}, {p_8}\},\{ {p_5}, {p_1}, {p_6}\}\},
\end{aligned}
\end{equation*}
the associated graph $G$ has the vertex set $V(G)=P$, the edge set \[E(G)=\{ ({p_1}, {p_2}), ({p_1}, {p_3}), ({p_2}, {p_5}), ({p_3}, {p_4}), ({p_4}, {p_1}), ({p_5}, {p_1})\},\] and the cycles $\{ ({p_1}, {p_2}, {p_5}), ({p_1}, {p_3}, {p_4})\}$.
By Theorem~\ref{thm ci}, the polynomial 
\begin{equation*}
\begin{gathered}
\corch{ {p_1}, {p_3},q_{1}}\corch{ {p_2}, {p_7},q_{2}}\corch{ {p_5}, {p_6},q_{3}}\corch{ {p_4}, {p_7},q_{4}}\corch{ {p_1}, {p_8},q_{5}}-\corch{ {p_1}, {p_2},q_{1}}\corch{ {p_3}, {p_7},q_{4}}\corch{ {p_4}, {p_8},q_{5}}\corch{ {p_5}, {p_7},q_{2}}\corch{ {p_1}, {p_6},q_{3}}\\ -\corch{ {p_2}, {p_3},q_{1}}\corch{ {p_5}, {p_7},q_{2}}\corch{ {p_1}, {p_6},q_{3}}\corch{ {p_4}, {p_7},q_{4}}\corch{ {p_1}, {p_8},q_{5}},
\end{gathered}
\end{equation*}
is in the ideal $I_{M}$, for any collection of vectors $\{q_{1},q_{2},q_{3},q_{4},q_{5}\}$ in $\CC^{3}$.  
\end{example}

\begin{example}\label{Ex:7 nodes}
Let $M$ be the point-line configuration depicted in Figure~\ref{fig:combined 2}~(Right). 
We choose $J=\{ {p_5}, {p_6}, {p_7}\}$, $P=\{ {p_1}, {p_2}, {p_3}, {p_4}\}$, and $C=\{\{ {p_1}, {p_3}, {p_6}\},\{ {p_2}, {p_1}, {p_5}\},\{ {p_3}, {p_4}, {p_2}\},\{ {p_4}, {p_1}, {p_7}\}\}.$
The associated graph on $V(G)=P$ 
has the edge set $E(G)=\{ ({p_1}, {p_3}), ({p_2}, {p_1}), ({p_3}, {p_2}), ({p_3}, {p_4}), ({p_4}, {p_1})\}$ and the cycles 
$\{ ({p_3}, {p_2}, {p_1}), ({p_3}, {p_4}, {p_1})\}$.
By Theorem~\ref{thm ci}, for any collection of vectors 
$\{q_{1},q_{2},q_{3},q_{4}\}$ in $\CC^{3}$, the following polynomial is in $I_M$.
\begin{equation*}
\begin{gathered}
\corch{ {p_3}, {p_4},q_{3}}\corch{ {p_2}, {p_5},q_{2}}\corch{ {p_1}, {p_6},q_{1}}\corch{ {p_1}, {p_7},q_{4}}-\corch{ {p_3}, {p_2},q_{3}}\corch{ {p_4}, {p_7},q_{4}}\corch{ {p_1}, {p_6},q_{1}}\corch{ {p_1}, {p_5},q_{2}}\\
 -\corch{ {p_3}, {p_6},q_{1}}\corch{ {p_2}, {p_4},q_{3}}\corch{ {p_1}, {p_5},q_{2}}\corch{ {p_1}, {p_7},q_{4}}.
\end{gathered}
\end{equation*} 
\end{example}

\begin{example}
Let $M$ be the $4$-paving matroid on $\mathcal{P}=\{p_1,\ldots,p_9\}$ with the set of dependent hyperplanes \[\mathcal{L}=\{\{p_1,p_2,p_3,p_4\},\{p_2,p_5,p_6,p_7\},\{p_3,p_5,p_8,p_9\},\{p_4,p_5,p_6,p_8\},\{p_5,p_1,p_7,p_9\},\{p_6,p_7,p_8,p_9\}\}.\]
Using Lemma~\ref{sub h}, we deduce that $M$ is indeed a $4$-paving matroid. We choose the following sets: 
\begin{equation*}
\begin{gathered}
J=\{p_6,p_7,p_8,p_9\},\ P=\{p_1,p_2,p_3,p_4,p_5\},\ \text{and}\\
C=\{\{p_1,p_2,p_3,p_4\},\{p_2,p_5,p_6,p_7\},\{p_3,p_5,p_8,p_9\},\{p_4,p_5,p_6,p_8\},\{p_5,p_1,p_7,p_9\}\}.
\end{gathered}
\end{equation*} 

The associated graph has the edge set \[E(G)=\{(p_1,p_2),(p_1,p_3),(p_1,p_4),(p_2,p_5),(p_3,p_5),(p_4,p_5),(p_5,p_1)\},\]
and the cycles $\{(p_1,p_2,p_5),(p_1,p_3,p_5),(p_1,p_4,p_5)\}.$
By Theorem~\ref{thm ci}, the following polynomial is in $I_M$ for any collection of vectors 
$\{q_{1},q_{2},q_{3},q_{4},q_{5}\}$ in $\CC^{4}$. 
\begin{equation*}
\begin{aligned}
&[p_1,p_3,p_4,q_{1}][p_2,p_6,p_7,q_{2}]
[p_5,p_8,p_9,q_{3}]
[p_5,p_6,p_8,q_{4}][p_5,p_7,p_9,q_{5}]-\\
&[p_1,p_2,p_4,q_{1}][p_5,p_6,p_7,q_{2}][p_3,p_8,p_9,q_{3}][p_5,p_6,p_8,q_{4}][p_5,p_7,p_9,q_{5}]+
\\ 
&[p_1,p_2,p_3,q_{1}][p_5,p_6,p_7,q_{2}][p_5,p_8,p_9,q_{3}][p_4,p_6,p_8,q_{4}][p_5,p_7,p_9,q_{5}]-\\
&[p_2,p_3,p_4,q_{1}][p_5,p_6,p_7,q_{2}][p_5,p_8,p_9,q_{3}][p_5,p_6,p_8,q_{4}][p_1,p_7,p_9,q_{5}].
\end{aligned}
\end{equation*}
\end{example}

We conclude this paper with a brief remark on potential applications of our results.

\begin{remark}
The lifting construction and graph polynomials introduced in Definitions~\ref{cl 2} and~\ref{cl} may be used to compute complete generating sets for matroid ideals in various families. For instance, this approach has been applied in \cite{liwski2025algebraic} to the Pascal, Pappus, and cactus matroids. These problems are also studied from an algorithmic point of view; see \cite{liwski2025efficient, liwski2025minimal}.
\end{remark}

\printbibliography

\medskip
\noindent 
\footnotesize{\textbf{Authors' addresses}
\noindent
Department of Mathematics, KU Leuven, Celestijnenlaan 200B, B-3001 Leuven, Belgium
\\ E-mail address: {\tt emiliano.liwski@kuleuven.be}

\medskip  \noindent
Department of Computer Science, KU Leuven, Celestijnenlaan 200A, B-3001 Leuven, Belgium\\ 
   Department of Mathematics, KU Leuven, Celestijnenlaan 200B, B-3001 Leuven, Belgium\\ 
   E-mail address: {\tt fatemeh.mohammadi@kuleuven.be}
\end{document}